\def\cx{{\textstyle{\cdot}}}
\newcommand{\ep}{\epsilon}
\newcommand{\ov}{\overline}
\newcommand{\id}{{\rm id}}
\newcommand{\und}{\underline}
\newcommand{\ot}{\otimes}
\def\mex#1{e^{#1}}
\newcommand{\tr}{\mathbf {tr}}
\newcommand{\ie}{\textit{i.e.}}
\newcommand{\sone}{\mathbf S^1}
\newcommand{\tp}{{\rm top}}
\newcommand{\lag}{{\rm log}}
\newcommand{\et}{{\rm\acute et}}
\newcommand{\ket}{{\rm k\et}}
\newcommand{\g}{{\rm gp}}
\newcommand{\an}{{\rm an}}
\newcommand{\ch}{\mathop{\rm ch}\nolimits}
\newcommand{\cLL}{\mathcal{L}}
\newcommand{\cA}{\mathcal{A}}
\newcommand{\cF}{\mathcal{F}}
\newcommand{\cM}{\mathcal{M}}
\newcommand{\Hh}{\mathcal H}
\newcommand{\cQ}{{\cal Q}}
\newcommand{\cK}{\mathcal{K}}
\newcommand{\pee}{\textfrak{p}}
\newcommand{\qu}{\textfrak{q}}
\newcommand{\bc}{{\bf C}}
\newcommand{\br}{{\bf R}}
\newcommand{\bq}{{\bf Q}}
\newcommand{\bn}{{\bf N}}
\newcommand{\bz}{{\bf Z}}
\newcommand{\I}{{\bf I}}
\newcommand{\R}{{\rm R}}
\def\lt#1{\mathop{\mathsf{T}_{#1}}}
\def\lv#1{\mathop{\mathsf{V}_{#1}}}
\def\lr#1{\mathop{\mathsf{R}_{#1}}}
\def\li#1{\mathop{\mathsf{I}_{#1}}}
\def\varprojlim{\mathop{\vtop{\ialign{$##$\cr
 \hfil{\fam0lim}\hfil\cr\noalign{\nointerlineskip}%
 {\leftarrow}\mkern-6mu\cleaders\hbox{$\mkern-2mu{-}\mkern-2mu$}\hfill
 \mkern-6mu{-}\cr\noalign{\nointerlineskip\kern-.2326ex}\cr}}}}
\def\dirlim{\mathop{\vtop{\ialign{$##$\cr
 \hfil{\fam0lim}\hfil\cr\noalign{\nointerlineskip}%
 {-}\mkern-6mu\cleaders\hbox{$\mkern-2mu{-}\mkern-2mu$}\hfill
 \mkern-6mu{\to}\cr\noalign{\nointerlineskip\kern-.2326ex}\cr}}}\nolimits}
\newcommand{\spec}{\mathop{\rm Spec}\nolimits}
\newcommand{\Cyl}{{\rm Cyl}}
\newcommand{\Ker}{\mathop{\rm Ker}\nolimits}
\newcommand{\cok}{\mathop{\rm Cok}\nolimits}
\newcommand{\Cok}{\mathop{\rm Cok}\nolimits}
\newcommand{\im}{\mathop{\rm Im}\nolimits}
\def\angles#1#2{\langle{#1},{#2}\rangle}
\newcommand{\gr}{\mathop{\rm Gr}\nolimits}
\newcommand{\Hom}{\mathop{\rm Hom}\nolimits}
\newcommand{\Hhom}{{\mathcal{H}{\rm om}}} % sheaf Hom
\newcommand{\Ext}{\mathop{\rm Ext}\nolimits}
\renewcommand{\bigwedge}{\mbox{\large $\wedge$}}
\newcommand{\Div}{\mathop{\rm Div }\nolimits}
\newcommand{\Kos}{\mathop{\rm Kos}\nolimits}
\newcommand{\Con}{\mathop{\rm Cone}\nolimits}
\def\oh#1{{\cal O}_{#1}}
\def\toh#1{{\tilde{\mathcal{O}}_{#1}}}
\newcommand{\ocM}{{\ov \cM}}
\def\ms#1{\mathop{\und{\mathsf{A}}_{{#1}}}}  %change these
\def\mss#1{\mathop{\und{\mathsf{A}}^*_{#1}}}
\def\ls#1{\mathop{\mathsf{A}_{#1}}}
\def\lslag#1{\mathop{\mathsf{A^\lag_{#1}}}}
\def\tlslag#1{\mathop{\mathsf{\tilde A^\lag_{#1}}}}
\def\lsan#1{\mathop{\mathsf{A}^\an_{#1}}}
\def\lL#1{\mathop{\mathsf{L}_{#1}}}
\def\b#1#2{({#1}|{#2})}
\newcommand{\ttau}{{\tilde \tau}}
\newcommand{\tX}{{\tilde X}}
\newcommand{\rTo}[1]{\xlongrightarrow{#1}}
\newcommand{\To}{\longrightarrow}
\newcommand{\tto}[1]{\xrightarrow{#1}}  
\newcommand*\isommap{\xrightarrow{\raisebox{-0.2 em}{\smash{\ensuremath{\sim}}}}}
\newcommand*\isomlong{\xlongrightarrow{\raisebox{-0.2 em}{\smash{\ensuremath{\sim}}}}}
\newcommand*\isomlongleft{\xlongleftarrow{\raisebox{-0.2 em}{\smash{\ensuremath{\sim}}}}}
\declaretheoremstyle[bodyfont=\normalfont\slshape]{slanted}
\theoremstyle{slanted}
\newtheorem{theorem}{Theorem}[subsection]
\newtheorem{definition}[theorem]{Definition}
\newtheorem{corollary}[theorem]{Corollary}
\newtheorem{proposition}[theorem]{Proposition}
\newtheorem{lemma}[theorem]{Lemma}
\newtheorem{claim}[theorem]{Claim}
\newtheorem{conjecture}[theorem]{Conjecture}
\theoremstyle{definition}
\newtheorem{remark}[theorem]{Remark}
\newcommand{\xx}{{\ov x}}
\newcommand{\cJ}{\mathcal{J}}
  \numberwithin{equation}{subsection}
\numberwithin{theorem}{subsection} %still doesn't work
  \author{
    Piotr Achinger
    \thanks{
      Institute of Mathematics, Polish Academy of Sciences, ul. \'Sniadeckich 8, 00-656 Warszawa, Poland.  \newline 
      %Institut des Hautes \'Etudes Scientifiques, 35 rte de Chartres, 91440 Bures-sur-Yvette, France. \newline
      The first author was supported by NCN OPUS grant number UMO-2015/17/B/ST1/02634. 
    } 
    \and 
    Arthur Ogus
    \thanks{
      Department of Mathematics,  University of California,  Berkeley,  CA 94720, USA
    }
  }
  \title{Monodromy and Log Geometry}
\begin{document}

\maketitle

\abstract{A now classical construction due to Kato and Nakayama attaches a
topological space (the  ``Betti realization'') to a log scheme over $\bc$.  We 
show that in the case of a log smooth degeneration  over the standard log disc, this  construction allows one to recover the topology of the germ of  the family  from the  log special fiber alone.   
We go on to  give combinatorial formulas for the monodromy and the $d_2$ differentials acting on the nearby cycle complex in terms of the log structures. We also provide variants of these results for the Kummer \'etale topology. In the case of curves, these data are essentially equivalent to those 
encoded by the dual graph of a semistable degeneration, including the
monodromy pairing and the  Picard-Lefschetz formula. 
\tableofcontents

\section{Introduction} 
\label{intro.s}

Log geometry was introduced with the purpose of studying compactification and degeneration in a wide context of geometric and arithmetic situations.  For example, moduli problems usually give rise to spaces $U$ which are not compact, and it is often desirable to construct an understandable  compactification $X$ of $U$. Typically  the points of $D:=X\setminus U$ correspond to ``degenerate but decorated'' versions of the objects classified by points of $U$. 
% for example one can hope that $X\setminus U$ is a divisor with normal crossings. 
In classical language, one keeps track of the difference between $X$ and $U$ by remembering the 
sheaf of functions on $X$ which vanish on $D$, a sheaf of ideals  in
$\oh X$.  Log geometry takes the complementary point of view, encoding
instead the sheaf  functions on $X$ which become invertible on $U$, a
sheaf of multiplicative  monoids in $\oh X$.  In general, a  {\em log
  scheme} is a scheme  $X$ endowed with a homomorphism of  sheaves of
commutative monoids $\alpha_X \colon \cM_X \to \oh X$; it is
convenient to also require that the induced map $\alpha_X^{-1}(\oh
X^*) \to \oh X^*$ be an isomorphism. Thus there is a natural ``exact sequence'':
\[ 0 \To \oh X^* \To \cM_X \To \ocM_X \To 0,\]
where the quotient is a sheaf of monoids which is essentially
combinatorial in nature.
 The resulting formalism allows
one to study the properties of $U$ locally along the complement $D$,
and in a relative situation, provides a very appealing  picture of the
theory of nearby cycles.  Furthermore, log structures behave well under base change,
and the log structure induced on  $D$  can often be related to the ``decoration''  needed to define the compactified moduli problem represented by $X$. 

In the complex analytic context, a construction~\cite{kkcn.lblelsc} of Kato and Nakayama gives a key insight into the {working of} log geometry. Functorially associated to any fine log analytic space $X$ is a topological space $X_\lag$, together with a natural proper and surjective continuous map $\tau_X \colon X_\lag \to X_\tp$, where $X_\tp$ is the topological space underlying $X$.   For each point  $x$ of  $X_\tp$, the fiber $\tau_X^{-1}(x)$ is naturally a torsor under $\Hom(\ov \cM_{X,x}, \sone)$. 
The  morphism  $\tau_X$ fits into a commutative diagram:
\[ 
  \xymatrix{
    & X_\lag \ar[d]^{\tau_X} \\
    X^*_\tp  \ar[ur]^{j_\lag} \ar[r]_{j_\tp} & X_\tp, 
  } 
\]
where $X^*$ is the open set on which the log structure is
  trivial and $j  \colon X^* \to X$ is the inclusion.  If the log scheme $X$ is (logarithmically) smooth over $\bc$, then the morphism $j_\lag$ is aspheric~\cite[3.1.2]{o.lrhc}, and in particular it induces an equivalence between the categories of locally constant sheaves on $X^*_\tp$ and on $X_\lag$.  Thus $\tau_X$ can be viewed as a compactification of the open immersion $j$; it has the advantage of preserving the local homotopy theory of $X^*$.  In particular, the behavior of a locally constant sheaf $\cF$  on $X_\lag$ can be studied locally over points of $X \setminus X^*$,  a very agreeable way of investigating local monodromy. 

We shall apply the above philosophy to study the behavior of a morphism $f \colon X \to Y$ of fine saturated log analytic spaces. Our goal is to exploit the log structures of $X$ and $Y$ to describe the topological behaviour
 of $f$ locally in a neighborhood of a point $y$ of $Y$, especially
 when $y$ is a point over which $f$ is smooth in the logarithmic sense
 but singular in the classical sense. The philosophy of log geometry
 suggests that (a~large part of) this topology can be computed just
 from the log fiber $X_y \to y$. For example, we show  that if $Y$ is
 a standard log disc and $X \to Y$ is smooth, proper, and vertical,
 then the
 %\edit{germ of $X_\tp \to Y_\tp$ is homeomorphic to the  (open)   mapping cylinder of the map $\tau_{X_y} \colon X_{y,\lag} \to  X_{y,\tp}$.}
 {germs of $X_\tp$ and $Y_\tp$ are homeomorphic to the (open)
 mapping cylinders of the maps $\tau_{X_y} \colon X_{y,\lag} \to
 X_{y,\tp}$ and $\tau_y \colon y_\lag \to y_\tp$ respectively,
compatibly with the map $f_\tp \colon X_\tp \to Y_\tp$.}
 (See Proposition~\ref{discr.t}
 for  a precise statement and Conjecture~\ref{noglob.c} for a hoped for generalization.) Furthermore, it is shown in \cite[8.5]{ikn.qlrhc} that, in the above context, the classical  complex of nearby cycles $\Psi_{X/Y}$ on $X_{y,\tp}$ can be computed directly from the morphism  of log spaces $X_y \to y$, and in fact can be identified with (a~relative version of) $R\tau_{X_y*}(\bz)$.  (See \S 4 for the precise statement.) In particular,
the map $X_{y,\lag} \to y_ \lag$ serves as an ``asymptotic approximation'' to 
 the map $X \to Y$  near~$y$.

With the above motivation in mind, we devote our main attention to the study of a  morphism $f  \colon X  \to S$, where $X$ is a fine saturated log analytic space and $S$ is the  fine saturated split log point  associated to a fine sharp monoid $P$.  To emphasize the geometric point of view, we work mainly in the context of complex analytic geometry, describing the \'etale analogs of our main results in Section~\ref{etale.s}.
 % We assume that $f$ is smooth, (?) proper, and saturated.  
We assume that $f$ is saturated; this implies that the homomorphism  $f^\flat \colon P^\g= \ocM_S^\g \to \ocM_X^\g$ is injective and has torsion-free cokernel.  The map $X_\lag \to S_\lag$ is a topological fibration, trivial over the universal cover $\tilde S_\lag$ of $S_\lag$, and the cohomology of $\tilde X_\lag:= X_\lag \times_{S_\lag}  \tilde S_\lag $ is isomorphic to the cohomology  of a fiber.  The fundamental group $\li P$ of $S_\lag$ is canonically isomorphic to $\Hom (P^\g,\bz(1))$ and acts naturally on this cohomology and on the ``nearby cycle complex'' $\Psi_{X/S} := \R\ttau_{X*}(\bz)$, where $\ttau_{X} \colon  \tilde X_\lag \to X_\tp$ is the natural map. This situation is illustrated by the diagram
\[
  \xymatrix{
    \tilde X_\lag\ar[r]\ar[d]\ar@/^2em/[rr]^{\ttau_X} & X_\lag \ar[d] \ar[r]_{\tau_X} & X_\tp \ar[d] \\
    \tilde S_\lag \ar[r] & S_\lag \ar[r] & S_\tp = {\rm pt.}
  }
\]

Our first observation is that if $X/S$ is (log) smooth,
then  $X/\bc$  becomes (log) smooth
when  $X$ is endowed with the idealized log structure induced from the maximal ideal of $P$.
Theorem~\ref{idealcut.t} 
 shows that the normalization of a smooth and reduced idealized log scheme
can be endowed with  a natural ``compactifying'' log structure  which makes it smooth (without idealized structure).
This construction gives  a canonical way of cutting our $X$ into pieces, each of whose
Betti realizations is a family of manifolds with boundary, canonically trivialized  over $S_\lag$.

We then turn to our main goal, which is 
 to describe the topology of $\tilde X_\lag$, together with the monodromy action, directly in terms of log geometry.  
We  use the exact sequences:
\begin{equation} \label{logks.e}
  0 \To \ocM_S^\g \To \ocM_X^\g \To \cM_{X/S}^\g\To 0
\end{equation}
(``log Kodaira--Spencer''), and
\begin{equation} \label{logch.e}
  0 \To \bz(1)  \To  \oh X \rTo{\exp}  \cM_{X/P}^\g \To  \cM_{X/S}^ \g \To 0,
\end{equation}
(``log Chern class''), where $\cM_{X/P} := \cM_X/P$,
(the quotient in the category of sheaves of monoids).
The sequence (\ref{logch.e}) is obtained by splicing together the two exact sequences:
  \begin{equation} \label{eq:6}
 0 \To \bz(1) \To \oh X \rTo{\exp} \oh X^* \To 0
  \end{equation}
and
\begin{equation} \label{xsmp.e}
0 \To \oh X^* \To \cM^\g_{X/P} \To \cM_{X/S}^\g \To 0. 
\end{equation}
If $\ell$ is a global section of $\cM_{X/S}^\g$, its inverse image in $\cM_{X/P}^\g$ is an $\oh X^*$-torsor, 
which defines an invertible sheaf $\cLL_\ell$ on $X$.  The Chern class $c_1(\cLL_\ell) \in H^2(X, \bz(1))$
is the image of $\ell$ under the morphism
$H^0(X, \cM_{X/S}) \to H^2(X,\bz(1))$ defined by (\ref{logch.e}).}

The spectral sequence of nearby cycles reads:
\[
  E_2^{p,q} = H^p(X_\tp, \Psi^q_{X/S}) \quad \Rightarrow \quad H^{p+q}(\tilde X_\lag, \bz),
\]
where $\Psi^q_{X/S}$ is the $q$-th cohomology sheaf of the nearby cycle complex $\Psi_{X/S}$.
By (a relative version of) a theorem of Kato and Nakayama~\cite[1.5]{ikn.qlrhc}, there are natural isomorphisms:
\begin{equation}\label{ikn5.e}
  \sigma^q_{X/S} \colon \bigwedge^q \cM^\g_{X/S}(-q)  \isomlong \Psi^q_{X/S}. 
\end{equation}
It follows that  the action of each $\gamma \in I_P$ on $\Psi^q_{X/S}$
is trivial, and hence it is also trivial on the graded groups
$E_\infty^{p,q}$ associated to the filtration $F$ of the abutment
$H^{p+q}(\tilde X_\lag , \bz)$. Then $\gamma - \id$ maps
$F^pH^{p+q}(\tX_\lag,\bz) $ to $F^{p+1}H^{p+q}(\tX_\lag,\bz)$ and
induces a map
\begin{equation}
  \label{Ngamma.e}
N_\gamma \colon E_\infty^{p,q}  \To E_\infty^{p+1,q-1}.
\end{equation}

We explain in  Theorem~\ref{monthm.t} that (a derived category version of)  this map is given by ``cup product'' with the extension class in $\Ext^1( \Psi^1_{X/S},\bz) \cong \Ext^1(\cM_{X/S}^\g,\bz(1))$ obtained  from the pushout of the  log Kodaira-Spencer extension (\ref{logks.e}) along ${\gamma
 \in \Hom(P^\g, \bz(1))}$.     We present two proofs: the first, 
which works only in the smooth case and with \mbox{$\bc$-coefficients}, is an easy argument  based on a logarithmic construction of the Steenbrink complex; %~\cite{stee.lhs};
the second   uses more complicated homological algebra techniques to prove the result with \mbox{$\bz$-coefficients}.

We also give a logarithmic formula for the $d_2$ differentials of the nearby cycle spectral sequence. Thanks to formula~(\ref{ikn5.e}), these differentials
can be interpreted as maps:
\[
  H^p(X_\tp, \bigwedge^q \cM_{X/S}^\g(-q)) \To H^{p+2}(X_\tp, \bigwedge^{q-1}\cM_{X/S}^\g(1-q)).
\]
Theorem~\ref{monthm.t} shows that these maps are obtained by cup-product with the derived  morphism $\cM_{X/S}^\g \to \bz(1)[2]$ obtained from the log Chern class sequence (\ref{logch.e}), up to a factor of $q!$.  We do not have a formula for the higher differentials, but recall from~\cite[2.4.4]{ill.atml} that, in the case of a projective semistable reduction with smooth irreducible components,  these higher differentials vanish.

To illustrate  these techniques, we study the case in
which $X/S$ is a  smooth log  curve  over the standard log point.  In this case it is very easy to interpret our formulas explicitly in terms of the combinatorial data included in the  ``dual graph'' which is typically attached to the nodal curve $\und X$ underlying $X$.  The log structure provides some  extra information when $X/S$ is  log smooth but non-semistable.  In particular, we recover the classical Picard--Lefschetz formula, and we show that the $d_2$ differential in the nearby-cycle spectral sequence coincides with the differential in the chain complex computing the homology of the dual graph.  

For clarity of exposition, we focus mainly on the complex analytic
setting.  However, one of the main strengths of  log geometry is the
bridge it provides between analysis and algebra and between Betti,
\'etale, and de~Rham cohomologies.  For the sake of arithmetic
applications, we therefore also provide a sketch of how to formulate
and prove analogs of our results in the context of the Kummer \'etale
topology.  The case of $p$-adic cohomology looks more challenging at
present.  

Both authors wish to express their gratitude to the I.H.E.S., which
provided outstanding hospitality and a rich scientific environment
during which much of this work was carried out.  Discussions with Luc
Illusie, Ofer Gabber, Dennis Gaitsgory, Vivek Shende, and Martin Olsson were especially valuable.

\section{Homological preliminaries}

In this section, after reviewing some standard material in \S\ref{ss:homalg-notation}, we provide a few results in homological algebra which will be important in our study of the nearby cycle complex $\Psi_{X/S}$ together with its multiplicative structure and the monodromy action of the group~$\li P$.

\subsection{Notation and conventions}
\label{ss:homalg-notation}

We follow the conventions of \cite{bbm.tdc}  with regard to homological algebra, particularly when it comes to signs. For simplicity, we shall work in the abelian category $\cA$ of sheaves of modules on a ringed topological space (or more generally a ringed topos) $(X, A_X)$. Readers will gather from our exposition that  keeping track of  signs
presented   a  considerable challenge.

{\bf Shifts, cones, and distinguished triangles.} If $A = (A^n, d^n:A^n\to A^{n+1})$ is a~complex in an abelian category $\cA$, the  \textit{shift} $A[k]$  of $A$ by an integer $k$ is the complex $(A^{n+k}, (-1)^k d^{n+k})$. We shall use the  canonical identification $\Hh^n(A[k]) = \Hh^{n+k}(A)$ induced by the identity on $A^{n+k}$. If $u:A\to B$ is a morphism of complexes, its shift $f[k]:A[k]\to B[k]$ is given by $f^{n+k}:A^{n+k}\to B^{n+k}$ in degree $n$. 

The \textit{mapping cone}, written
 $\Con(u)$  or $C(u)$,  is the complex with \[C(u)^n := B^n \oplus A^{n+1}\] with differential $d(b,a) = (db +u(a), -da)$,
which comes with a sequence of maps of complexes:
\begin{equation} \label{eqn:distinguished} 
  A \rTo u B \rTo i C(u) \rTo {-p} A[1]
\end{equation}
where $i(b) := (b,0)$ and $p(b,a) := a$. This sign convention is used
 in \cite{bbm.tdc} but  differs from  the convention used in \cite[Chapter I]{kash.sm}
and many other authors. 
A \textit{triangle} is a sequence of maps in $D(\cA)$ of the form $A\tto{u} B\tto{v}C\tto{w} A[1]$ (abbreviated as $(u,v,w)$).
A triangle $(u,v,w)$ is \textit{distinguished} if it is isomorphic in the derived category to a triangle of the form \eqref{eqn:distinguished}. Then $(u,v,w)$ is distinguished if and only if $(v,w,-u[1])$ is distinguished. More generally, if $(u,v,w)$ is distinguished, so is $((-1)^k u[k], (-1)^k v[k], (-1)^k w[k])\cong (u[k],v[k],(-1)^k w[k])$ for any $k\in \bz$.

%The \textit{cocone} $C'(u)$  of $u:A\to B$ is by definition $C(u)[-1]$.  Thus $C'(u)^n := B^{n-1} \oplus A^n$, with boundary $d(b,a) = (-db -u(a), da)$, and the following triangle is distinguished
% \[  C'(u)\rTo{} A\rTo{u} B\rTo C(u)  \]

%  and there are  commutative 
% diagrams of distinguished
% triangles:
% \[\xymatrix{ 
%  C'(u) \ar[r]^-{w'_u}\ar[d]_\id &A \ar[r]^-u\ar[d]_\id &B \ar[r]^-{v'_u}\ar[d]_\id &C'(u)[1]\ar[d]_\id\\
%  C(u)[-1] \ar[r]^-{-w_u[-1]} &A \ar[r]^-u &B \ar[r]^-{v_u} &C(u)[1]
% }\]
% \[\xymatrix{ 
%  A \ar[r]^-u\ar[d]_\id &B \ar[r]^-{v_u'}\ar[d]_\id &C'(u)[1]\ar[r]^-{-w_u'[1]}\ar[d]_\id&A[1]\ar[d]^\id\\
%  A \ar[r]^-u &B \ar[r]^-{v_u} &C(u)  \ar[r]^{w_u} &A[1],
% }\]
% where $w'_u(b,a) = a$ and $v'_u(b) = (b,0)$. 
% On the other hand, the cone of $u[-1]$ in degree $n$ is  $C(u[-1])^n := B^{n-1} \oplus A^n$,
% with $d(b,a) = (-db +u(a), da)$.  Thus the identity map
% defines an isomorphism:
% $C(u[-1]) \cong C'(-u)$
% and there is an isomorphism
% $\theta \colon C(u[-1]) \cong C'(u)$ defined by $(b,a) \mapsto (b, -a)$.
% (Check:
% $$d\theta(b,a) = d(b,-a) = (-db-u(-a), d(-a))$$
% $$\theta(d(b,a)) = \theta(-db + u(a) , da) = (-db +u(a) -da).$$
%  Thus we have a commutative diagram:
%  \begin{equation}\label{cshift.e}
%   \xymatrix{ 
%  A[-1] \ar[r]^-{u[-1]}\ar[d]_\id &B[-1] \ar[r]^-{v_{u[-1]}}\ar[d]_\id &C(u[-1])\ar[r]^-{w_{u[-1]}}\ar[d]_\theta&A\ar[d]^\id\\
%  A[-1] \ar[r]^-{u[-1]} &B[-1] \ar[r]^-{v_u[-1]} &C(u)[-1]  \ar[r]^-{-w_u[-1]} &A,}
%     \end{equation}
% This shows that the bottom row is a  distinguished triangle. 

{\bf Total complex and tensor product.} Given a double complex 
\[A= (A^{p,q}, d^{p,q}_h:A^{p,q}\to A^{p+1,q}, d^{p, q}_v:A^{p,q}\to A^{p, q+1})\]
 in $\cA$, its \emph{total complex} is the complex ${\rm Tot}(A) = (\bigoplus_{p+q=n} A^{p,q}, d^n)$ where $d^n$ is given by $d^{p,q}_h + (-1)^p d^{p,q}_v$ on $A^{p,q}$,
 {so that the differentials form commutative squares.} The tensor product $A\otimes B$ of two complexes is by definition the total complex of the double complex $(A^p\otimes B^q, d_A^p\otimes \id, \id\otimes d_B^q)$. Note that the shift functor $(-)[k]$ equals $A_X[k]\otimes (-)$, while $(-)\otimes A_X[k]$ is the `naive shift,' that is, shift without sign change. Moreover, the cone $C(u)$ of a map $u:A\to B$ is the total complex of the double complex $[A\tto{u} B]$ where $B$ is put in the zeroth column. 
%The \textit{cocone} $C'(u):=C(u)[-1]$ is the total complex of the double complex $[A\tto{u} B]$ with $A$ in the zeroth column.

{\bf Truncation functors.} We use the truncation functors $\tau_{\leq q}$ and $\tau_{\geq q}$ (c.f. 
\cite[Exemple 1.3.2 (i)]{bbd.fperv} or \cite[(1.3.12)--(1.3.13), p. 33]{kash.sm}  on the category of complexes of sheaves on $X$:
\begin{eqnarray*}
  \tau_{\leq q} K = \ [\cdots \to K^{q-1} \to &\Ker(d^q)& \to 0 \to \cdots]\\
  \tau_{\geq q} K\ \  = \ \ \  \ \ \ [\cdots \to 0 \to &\cok(d^{q-1}) &\to K^{q+1} \to \cdots].
\end{eqnarray*}
These functors descend to the derived category $D(X)$, although they 
do not preserve distinguished triangles. 
 For a pair of integers $a\leq b$, we write $\tau_{[a, b]} = \tau_{\geq a} \tau_{\leq b} = \tau_{\leq b}\tau_{\geq a}$ and $\tau_{[a, b)} = \tau_{[a, b-1]}$.
For example,  $\tau_{[q, q]} K = H^q (K)[-q]$. 
\begin{proposition}
\label{truncation-triangle.p}
\label{tabc.p} For each triple of integers $(a,b,c)$  with 
 $a< b<c$,  and each complex $K$,  there is a functorial   distinguished triangle:
\begin{equation} \label{truncation-triangle.e}
  \tau_{[a, b)}(K) \To \tau_{[a, c)}(K) \To \tau_{[b, c)}(K) \rTo\delta \tau_{[a, b)}[1].
\end{equation}
The  map $\delta$ %:\tau_{[b, c)}(K)       \to\tau_{[a, b)}(K)[1]$ 
above is the unique 
morphism  making the triangle distinguished.
\end{proposition}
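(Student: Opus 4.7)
The plan is to produce the triangle canonically from inclusions of subcomplexes via the octahedron axiom, and then to establish uniqueness of $\delta$ by combining TR3 with a vanishing of $\Hom$ in $D(\cA)$ supplied by the standard $t$-structure axioms.

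First I would apply the octahedron axiom to the composition of inclusions of subcomplexes
\[
  \tau_{\leq a-1} K \hookrightarrow \tau_{\leq b-1} K \hookrightarrow \tau_{\leq c-1} K.
\]
A short computation shows that for any $n < m$ the quotient complex $\tau_{\leq m-1} K / \tau_{\leq n-1} K$ is, in degrees $n-1,\ldots,m-1$,
\[
  [\,0 \to \im d^{n-1} \to K^n \to \cdots \to K^{m-2} \to \ker d^{m-1} \to 0\,],
\]
and the natural projection of this onto $\tau_{[n, m)} K$ (obtained by collapsing $\im d^{n-1}$ into the degree-$n$ term and replacing $K^n$ by $\cok d^{n-1}$) is a quasi-isomorphism. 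This identifies the three cofibers of the composition with $\tau_{[a, b)} K$, $\tau_{[a, c)} K$, and $\tau_{[b, c)} K$ respectively. The octahedron axiom then produces a distinguished triangle of the desired form, with the first two morphisms equal to the evident natural transformations of truncation functors and with $\delta$ arising as the octahedral connecting morphism; functoriality in $K$ is immediate from the functoriality of all the constituents.

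For uniqueness, suppose $\delta_1$ and $\delta_2$ both complete $(u, v)$ to a distinguished triangle. Applying TR3 to the identity morphisms on the first two terms yields $h \colon \tau_{[b, c)} K \to \tau_{[b, c)} K$ with $h \circ v = v$ and $\delta_2 \circ h = \delta_1$. The relation $(h - \id) \circ v = 0$, combined with the long exact sequence obtained by applying $\Hom_{D(\cA)}(-, \tau_{[b, c)} K)$ to the triangle, shows that $h - \id = \psi \circ \delta_1$ for some
\[
  \psi \in \Hom_{D(\cA)}(\tau_{[a, b)} K[1],\ \tau_{[b, c)} K).
\]
Since $\tau_{[a, b)} K[1]$ has cohomology sheaves concentrated in degrees $\leq b - 2$ while $\tau_{[b, c)} K$ has cohomology sheaves concentrated in degrees $\geq b$, the standard $t$-structure axiom forces this $\Hom$ group to vanish. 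Hence $\psi = 0$, so $h = \id$ and therefore $\delta_1 = \delta_2$.

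The main obstacle I anticipate is tracking sign conventions, in line with the paper's cautionary remarks: specifically, verifying that the octahedron axiom produces the displayed $\delta$ rather than $-\delta$. Once the signs are pinned down for the three auxiliary triangles arising from the short exact sequences of subcomplexes above, the octahedral connecting morphism inherits a compatible sign, and the uniqueness argument then characterizes $\delta$ completely.
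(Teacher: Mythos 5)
Your proof is correct, but it reaches the triangle by a different route than the paper. For existence, the paper simply observes that $\tau_{[a,b)}(K) \to \tau_{[a,c)}(K)$ is an injection of complexes, writes down its cokernel $C$ explicitly, and notes that the evident map $C \to \tau_{[b,c)}(K)$ is a quasi-isomorphism compatible with the map from $\tau_{[a,c)}(K)$; the triangle then comes directly from a short exact sequence of complexes, so functoriality is visible at the level of complexes. You instead feed the inclusions $\tau_{\leq a-1}K \subseteq \tau_{\leq b-1}K \subseteq \tau_{\leq c-1}K$ into the octahedron axiom. This works, but the octahedron axiom is a pure existence statement, so ``functoriality is immediate'' is too quick as written: either replace the octahedron by the (genuinely functorial) short exact sequence of quotient complexes $0 \to \tau_{\leq b-1}K/\tau_{\leq a-1}K \to \tau_{\leq c-1}K/\tau_{\leq a-1}K \to \tau_{\leq c-1}K/\tau_{\leq b-1}K \to 0$, or deduce functoriality of $\delta$ a posteriori from the same $\Hom$-vanishing you use for uniqueness (given $f \colon K \to K'$, any TR3-completion $g$ of the pair $(\tau_{[a,b)}(f), \tau_{[a,c)}(f))$ differs from $\tau_{[b,c)}(f)$ by something factoring through $\Hom(\tau_{[a,b)}K[1], \tau_{[b,c)}K') = 0$, so $g = \tau_{[b,c)}(f)$ and the connecting square commutes). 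For uniqueness, your TR3-plus-$\Hom$-vanishing argument is a standard and valid alternative to the paper's, which instead applies the functor $\tau_{[b,c)}$ to the middle square of the comparison of triangles to force the completing map to be the identity; both arguments ultimately rest on the same degree separation between $\tau_{[a,b)}K[1]$ (cohomology in degrees $\leq b-2$) and $\tau_{[b,c)}K$ (cohomology in degrees $\geq b$).
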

\begin{proof}
The natural map of complexes $\tau_{[a, b)}(K) \to \tau_{[a, c)}(K)$ is injective with cokernel 
\[ 
  C := [ \cdots \to 0\to K^{b-1}/\ker d^{b-1}\to K^b\to \ldots \to \ker d^{c-1} \to 0\to\cdots  ]
\]
and the evident map $C\to \tau_{[b, c)}(K)$ is a quasi-isomorphism commuting with the maps from $\tau_{[a,c)}(K)$. This way we obtain the distinguished triangle \eqref{truncation-triangle.e}. For the uniqueness, observe that given two such maps $\delta,\delta'$, there is  a map $\zeta:\tau_{[b, c)}(K)\to \tau_{[b, c)}(K)$ completing $(\id, \id)$ to a morphism of distinguished  triangles:
\[ 
  \xymatrix{
    \tau_{[a, b)}(K) \ar@{=}[d] \ar[r] & \tau_{[a, c)}(K) \ar@{=}[d] \ar[r] & \tau_{[b, c)}(K) \ar@{.>}[d]^\zeta \ar[r]^{\delta} & \tau_{[a, b)}[1]. \ar@{=}[d] \\
    \tau_{[a, b)}(K)  \ar[r] & \tau_{[a, c)}(K)  \ar[r] & \tau_{[b, c)}(K)  \ar[r]_{\delta'} &  \tau_{[a, b)}[1]. \\    
  }
\]
Applying the functor $\tau_{[b, c)}$ to the middle square of the above
diagram, we see that $\zeta=\id$, and hence
that $\delta=\delta'$. 
\end{proof}

{\bf First order attachment maps.} 
If $K$ is a complex and $q\in \bz$, the distinguished  triangle \eqref{truncation-triangle.e} for $(a, b, c)=(q-1, q, q+1)$ is:
\[ 
\Hh^{q-1}(K)[1-q] \To \tau_{[q-1,q+1)}( K) \To \Hh^q(K)[-q] \rTo{\delta_K^q[-q]} \Hh^{q-1}(K)[2-q],
 \]
which  yields a `first order attachment morphism'
\begin{equation} \label{d2der.e}
  \delta_K^q \colon \Hh^q(K)\To \Hh^{q-1}(K)[2],
\end{equation}
embodying  the $d_2$ differential of the spectral sequence 
\[ 
  E_2^{p,q}= H^p(X,\Hh^q(K)) \quad \Rightarrow \quad H^{p+q}(X,K).
\]
Note that $\delta_K^q[-q]$ is the unique morphism making the triangle above distinguished.

We shall need the following result, stating that the maps $\delta$ form a 
``derivation in the derived category.''

\begin{lemma} \label{d2mult.l}
  Let $A$ and $B$ be complexes in the abelian category $\cA$,  and let $i$ and $j$ be  integers such that $\Hh^i(A)$ and $\Hh^j(B)$ are flat $A_X$-modules. Then the following diagram commutes: 
  \[ \quad
    \xymatrix@C=4em{
      \Hh^i(A) \ot  \Hh^j(B) \ar[rr]^-{\delta_A^i \ot 1+ (-1)^i\ot\delta_B^j} \ar[d] & &  \Hh^{i-1}(A)[2] \ot \Hh^{j}(B) \oplus \Hh^{i}(A)\ot \Hh^{j-1}(B)[2]  \ar[d] \\
      \Hh^{i+j}(A\ot B)  \ar[rr]_-{\delta^{i+j}_{A\ot B}} & & \Hh^{i+j-1}(A\ot B)[2]
    }
  \]

 Here $A\ot B$ denotes the derived tensor product.
\end{lemma}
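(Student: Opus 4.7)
The plan is to derive the Leibniz rule from a direct computation with truncation distinguished triangles, exploiting flatness to ensure compatibility with the derived tensor product.

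First, using the flatness hypothesis, I would reduce to the case that $A$ and $B$ are themselves two-step truncations $A = \tau_{[i-1,i+1)}(A)$ and $B = \tau_{[j-1,j+1)}(B)$. This replacement leaves $\delta_A^i$ and $\delta_B^j$ unchanged by construction, and flatness of $\Hh^i(A)$ and $\Hh^j(B)$ ensures that the natural map $\tau_{[i-1,i+1)}(A) \ot \tau_{[j-1,j+1)}(B) \to A \ot B$ is an isomorphism on $\Hh^{i+j}$ and compatible on the relevant summands of $\Hh^{i+j-1}$. In particular, it suffices to verify the diagram after this reduction.

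Next, starting from the distinguished triangles
\[
  \Hh^{i-1}(A)[1-i] \To A \To \Hh^i(A)[-i] \xrightarrow{\delta_A^i[-i]} \Hh^{i-1}(A)[2-i]
\]
and its analogue for $B$, I would form their derived tensor product. Because $\Hh^i(A)$ and $\Hh^j(B)$ are flat, every vertex of the resulting $3 \times 3$ diagram is an ordinary tensor product of cohomology sheaves. Filtering the total complex $A \ot B$ by cohomological degree yields an isomorphism $\Hh^{i+j}(A \ot B) \cong \Hh^i(A) \ot \Hh^j(B)$ together with a short exact sequence exhibiting $\Hh^{i+j-1}(A \ot B)$ as an extension with graded pieces $\Hh^{i-1}(A) \ot \Hh^j(B)$ and $\Hh^i(A) \ot \Hh^{j-1}(B)$. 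The attachment morphism $\delta^{i+j}_{A \ot B}$, being the connecting map associated to the truncation triangle in the range $[i+j-1, i+j+1)$, then decomposes up to sign as the sum of the two connecting morphisms coming from the triangles for $A$ and for $B$ individually, namely $\delta_A^i \ot 1$ into the first summand and $1 \ot \delta_B^j$ into the second.

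Finally, the sign $(-1)^i$ multiplying $1 \ot \delta_B^j$ will come from the Koszul sign rule: applying a $B$-differential after an element of total degree $i$ on the $A$-factor contributes a sign $(-1)^i$, in accordance with the definition of the total complex differential $d_h + (-1)^p d_v$ fixed in \S\ref{ss:homalg-notation}. Reconciling this sign with the shift convention $A[k] = (A^{n+k},(-1)^k d^{n+k})$ and with the sign in the cone differential $d(b,a) = (db + u(a), -da)$ will be the main technical obstacle; once the signs agree, the uniqueness clause of Proposition~\ref{tabc.p} identifies the assembled map with $\delta^{i+j}_{A \ot B}$ and yields the commutativity of the diagram.
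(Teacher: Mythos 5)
Your overall strategy --- truncate both factors, split $\Hh^{i+j-1}(A\ot B)$ by a K\"unneth argument, and identify the two components of $\delta^{i+j}_{A\ot B}$ separately --- is close in spirit to the paper's proof, but the step that carries all the content is asserted rather than proved. You write that after forming the tensor product of the two truncation triangles, the attachment morphism ``then decomposes up to sign as the sum of the two connecting morphisms''; that decomposition \emph{is} the statement of the lemma, and nothing in the preceding construction forces it. To extract, say, the component of $\delta^{i+j}_{A\ot B}$ landing in $\Hh^{i}(A)\ot\Hh^{j-1}(B)[2]$ one needs a concrete mechanism: the paper maps $A$ to $A'':=\Hh^i(A)[-i]$, observes that this identifies the projection onto that summand with the corresponding corner of the diagram for $(A'',B)$, and then proves the lemma directly in the degenerate case where one factor sits in a single degree. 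That degenerate case is where the sign is actually computed: $A_X[-i]\ot(-)$ equals the true shift $(-)[-i]$ and multiplies differentials by $(-1)^i$, whence $\delta^{i+j}_{B[-i]}=(-1)^i\delta^j_B[-i]$, while $(-)\ot A_X[-j]$ is the naive shift and contributes no sign. Your proposal defers exactly this computation (``reconciling this sign \dots will be the main technical obstacle''), so the $(-1)^i$ is never established; since pinning down that sign is the whole point of the lemma, this is a genuine gap. The appeal to the uniqueness clause of Proposition~\ref{tabc.p} also cannot be invoked as stated: uniqueness there concerns a map completing the two \emph{fixed} truncation maps to a distinguished triangle, and you have not produced a distinguished triangle on $\tau_{[i+j-1,i+j+1)}(A\ot B)$ whose first two arrows are those maps and whose third arrow is your candidate sum.

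Two smaller inaccuracies. First, there is no natural map $\tau_{[i-1,i+1)}(A)\ot\tau_{[j-1,j+1)}(B)\to A\ot B$; the comparison is a zig-zag $\tau_{[i-1,i+1)}(A)\leftarrow\tau_{\le i}(A)\to A$ whose two legs point in opposite directions. The functoriality argument (if $D(A',B)$ commutes and $A'\to A$ induces isomorphisms on the top row, then $D(A,B)$ commutes) handles the leg $\tau_{\le i}(A)\to A$, but for the leg $\tau_{\le i}(A)\to\tau_{[i-1,i+1)}(A)$ the implication runs the wrong way, and an extra argument is needed (e.g.\ injectivity of the induced map on $\Hh^{i+j-1}$ of the tensor products, which holds here but must be checked). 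The paper sidesteps this by truncating only from above and then using the hypertor spectral sequence to show the vertical maps of the diagram are already isomorphisms. Second, the claim that every vertex of the $3\times 3$ diagram is an ordinary tensor product of cohomology sheaves is unjustified: only $\Hh^i(A)$ and $\Hh^j(B)$ are assumed flat, so the derived tensor product $\Hh^{i-1}(A)\ot\Hh^{j-1}(B)$ may carry higher Tor terms. This happens to be harmless in the two cohomological degrees you need, but the blanket statement should be replaced by the degree-by-degree computation.
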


\begin{proof}
Call the diagram in question $D(A, B)$ ($i$ and $j$ are fixed throughout).  We shall first prove that $D(A, B)$ commutes if $\Hh^q(A)=0$ for $q\neq i$. Recall that $\delta^j_B$ is the unique map such that the triangle
\[ 
  \Hh^{j-1}(B)[1-j]\To \tau_{[j-1,j]} B \To \Hh^j(B)[-j] \rTo{\delta^j_B[-j]} \Hh^{j-1}(B) [2-j]
\]
is distinguished. Applying $A_X[-i]\otimes (-) =(-)[-i]$, we see that $\delta^{i+j}_{B[-i]} = (-1)^i \delta^j_B[-i]$ under the identifications $\Hh^q(B) = \Hh^{q+i}(B[-i])$, $q=j-1, j$. This implies that $D(A_X[-i], B)$ commutes. Since $A\otimes(-) = \Hh^i(A)[-i]\otimes (-) = \Hh^i(A)\otimes (A_X[-i]\otimes (-))$, we see that $D(A, B)$ commutes as well.

Similarly if $\Hh^q(B)=0$ for $q\neq j$: $A\otimes A_X[-i]$ is the $(-i)$-th naive shift of $A$, preserving exactness, and we have $\delta^{i+j}_{A\ot A_X[-i]} = \delta^i_A[-j]$ (note that the effect of naive and usual shift on maps is `the same'), so $D(A, A_X[-j])$ commutes; again, so does $D(A, B)$. 

To treat the general case, note that $D(A, B)$, even if not commutative, is clearly a~functor of $A$ and $B$.  Let $A':= \tau_{\leq i} A$ and observe that the natural map $A' \to A$ induces isomorphisms on the objects in the top row of the diagrams $D(A', B)$ and $D(A, B)$.  Thus  $D(A, B)$ commutes if $D(A', B)$ does, and hence we may assume that $\Hh^q(A)=0$ for $q>i$. Analogously, we can assume that $\Hh^q(B)=0$ for $q>j$.  

Under these extra assumptions, the hypertor spectral sequence (see \cite[Proposition~6.3.2]{EGAIII2})
\[ 
  E_{pq}^2 = \bigoplus_{i'+j'=q}{\rm Tor}_{-p}(\Hh^{-i'}(A), \Hh^{-j'}(B)) 
  \quad \Rightarrow \quad
  \Hh^{-p-q}(A\ot B)
\]
shows that the vertical maps in $D(A, B)$ are isomorphisms. Let $u_{A, B}=({\rm right})^{-1}\circ ({\rm bottom})\circ ({\rm left})$ in $D(A, B)$, then $D(A, B)$ commutes if and only if $u_{A, B} = ({\rm top}): = \delta_A^i \ot 1+ (-1)^i\ot\delta_B^j$. The target of $u_{A, B}$ is a product of two terms $\Hh^{i-1}(A)[2] \ot \Hh^{j}(B)$ and $\Hh^{i}(A)\ot \Hh^{j-1}(B)[2]$; let us denote the two projections by $p_A$ and $p_B$.

Let us set $A''=\Hh^i(A)[-i]$; we know that $D(A'', B)$ commutes. This diagram reads:
\[ 
    \xymatrix@C=4em{
      \Hh^i(A) \ot  \Hh^j(B) \ar[rr]^-{ (-1)^i\ot\delta_B^j} \ar[d] & &  \Hh^{i}(A)\ot \Hh^{j-1}(B)[2]  \ar[d] \\
      \Hh^{i+j}(A\ot B)  \ar[rr]_-{\delta^{i+j}_{A'\ot B}} & & \Hh^{i+j-1}(A''\ot B)[2]
    }
  \]
and the vertical maps are isomorphisms. The map between the top-right corners of $D(A, B)$ and $D(A'', B)$ induced by the canonical map $A\to A''$ is the projection $p_B$. It follows that $p_B\circ u_{A, B} = (-1)^i\ot\delta^j_B$. 

Similarly, considering $B''=\Hh^j(B)[-j]$ and the canonical map $B\to B''$, and using the fact that $D(A, B'')$ commutes, we see  that $p_A\circ u_{A,B} = \delta^i_A\ot 1$. We conclude that $u_{A,B} = p_A\circ u_{A,B} + p_B\circ u_{A, B}
= \delta^i_A\ot 1 + (-1)^i\ot\delta^j_B$, as desired.
\end{proof}

{\bf Maps associated to short exact sequences.} Consider a short exact sequence of complexes
\begin{equation} \label{eqn:some-ses}
  0\To A\rTo{u} B \rTo{\pi} C \To 0. 
\end{equation}
The map $\tilde \pi \colon C(u) \to C$ sending $(b,a)$ to $\pi(b)$ is a quasi-isomorphism.

\begin{definition}\label{xiu.d}
In the above situation, $\xi_u \colon C \to A[1]$ is the morphism in the derived category $D(\cA)$ defined by
\[ \xi_u \colon C   \rTo{\tilde \pi^{-1}} C(u) \rTo {-p} A[1] .\]
We shall also refer to $\xi_u$ as the map corresponding to the short exact sequence \eqref{eqn:some-ses} (rather than the injection $u$).
\end{definition}

Thus the triangle 
\begin{equation}\label{xiudist.e}
 A \rTo u  B \rTo \pi C \rTo {\xi_u} A[1]
\end{equation}
is distinguished, and the map $\Hh^q(\xi)\colon \Hh^q(C) \to \Hh^q(A[1])=\Hh^{q+1}(A)$ agrees with the map defined by the standard diagram chase in the snake lemma. Moreover, $\xi_{-u} = -\xi_u$. 

In the special case when $A$ and $B$ are objects of $\cA$ concentrated in a single degree $q$, the map $\xi_u$ is the unique map making the triangle \eqref{xiudist.e} distinguished \cite[10.1.11]{kash.sm}. 

\subsection{Exterior powers and Koszul complexes}
\label{ss:koszul}
Let us first review some relevant facts about exterior and
symmetric powers.  Recall that if $E$ is a flat $A_X$-module and
$q\geq 0$, then the exterior power $\bigwedge^q E$,  the symmetric power $S^q E$,
and the divided power $\Gamma^q(E)$ modules are  again flat.
 For $i+j=q$, there are natural multiplication and comultiplication transformations:
\[ 
  {\mu}:\bigwedge^i E \ot \bigwedge^{j} E\To \bigwedge^q E
  \quad \text{and} \quad
  \eta: \bigwedge^q E \To \bigwedge^i E \ot \bigwedge^{j} E,
\]
\[ 
  {\mu}:S^i  E\ot S^{j} E\To S^q E
  \quad \text{and} \quad
  \eta: S^qE \To S^i E \ot S^{j} E,
\]
  \[{\mu}:\Gamma^i  E\ot \Gamma^{j} E\To \Gamma^q E
  \quad \text{and} \quad
  \eta: \Gamma^qE \To \Gamma^i E\ot \Gamma^{j} E.
\]
 We shall only use the maps $\eta$ with $i=1$. In this case they  are  given by the formulas
 \begin{eqnarray*}
    \eta(x_1\wedge \cdots \wedge x_q) &=& \sum_i (-1)^{i-1} x_i \ot x_1 \wedge\cdots \hat x_i \cdots \wedge x_q\\
   \eta(x_1 \cdots x_q) &=& \sum_i x_i \ot x_1 \cdots \hat x_i \cdots x_q\\
  \eta(x_1^{[q_1]} \cdots x_n^{[q_n]}) & = & \sum_i x_i\ot  x_1^{[q_1]} \cdots x_i^{[q_i-1]} \cdots x_n^{[q_n]} .
 \end{eqnarray*}
% \[
%   \eta(x_1\wedge \cdots \wedge x_q)= \sum_i (-1)^{i-1} x_i \ot x_1 \wedge\cdots \hat x_i \cdots \wedge x_q.
% \]
% \[
%   \eta(x_1 \cdots x_q)= \sum_i x_i \ot x_1 \cdots \hat x_i \cdots \wedge x_q.
% \]
It follows that each composition
\begin{align*} 
  \bigwedge^q E \rTo{\eta} E &\ot  \bigwedge^{q-1} E \rTo{\mu} \bigwedge^q E \\
  S^q  E\rTo{\eta} E &\ot S^{q-1} E  \rTo{\mu} S^q E \\
  \Gamma^q  E\rTo{\eta} E &\ot \Gamma^{q-1} E  \rTo{\mu} \Gamma^q E
\end{align*}
is multiplication by $q$. Furthermore,  $\eta$ is  a derivation,  by
which  we mean that the following diagram commutes:
\[\xymatrix@C+=3em
{\bigwedge^i E\ot \bigwedge^j E\ar[r]^-{\eta \ot \id,\id\ot \eta}\ar[d]_\mu
   &(E \ot \bigwedge^{i-1}E \ot \bigwedge^j E) \oplus (\bigwedge^{i}E
   \ot E\ot\bigwedge^{j-1} E) \ar[d]^{\id\oplus t_i\ot \id} \cr
 \bigwedge^{i+j} E\ar[d]_\eta &(E \ot \bigwedge^{i-1}E \ot \bigwedge^j
 E) \oplus (E \ot    \bigwedge^{i}E \ot \bigwedge^{j-1}
 E)\ar[ld]^{\id\ot \mu,\id\ot \mu} \cr
E\ot \bigwedge^{i+j-1} E,
}\]
where $t_i\colon \bigwedge^i E \ot E \to E \ot \bigwedge^i E$ is
$(-1)^i$ times the commutativity isomorphism for tensor products.
The diagram for the symmetric and divided power products is similar (without the sign).

In fact, $\{\eta_q \colon \bigwedge^q E \to E \ot \bigwedge^{q-1}E : q \ge 1\}$
 is the unique derivation  such that ${\eta_1} = \id$, 
because  the  multiplication map $\mu$ is an
epimorphism.    This argument  fails in the derived category, and we will need another argument
which gives a slightly weaker result. To understand the context, let
$\alpha \colon E \to F$ be a morphism in $D(X)$,
where $E$ is  flat  and concentrated in degree zero, and for each $q$,
define $\alpha_q$ as the composition
\begin{equation}   \label{alphaq.d}
  \alpha_q \colon \bigwedge^q E \rTo{\eta} E \ot \bigwedge^{q-1} E \rTo{\alpha \ot \id }  F \ot \bigwedge^{q-1} E.
\end{equation}
 Then   the family
$\{ \alpha_q \colon \bigwedge^q E \to F \ot \bigwedge^{q-1} E: q \ge 1
\}$ is  derivation in $D(X)$, in the sense that the diagram:
\begin{equation}  \label{derivedder.e}
\xymatrix@C+=4em
{\bigwedge^i E\ot \bigwedge^j E\ar[r]^-{\alpha_i \ot \id,\id\ot \alpha_j}\ar[d]_\mu
   &(F \ot \bigwedge^{i-1}E \ot \bigwedge^j E) \oplus (\bigwedge^{i}E
   \ot F\ot\bigwedge^{j-1} E) \ar[d]^{\id\oplus t_i\ot \id} \cr
 \bigwedge^{i+j} E\ar[d]_{\alpha_{i+j}} &(F \ot \bigwedge^{i-1}E \ot \bigwedge^j
 E) \oplus (F \ot    \bigwedge^{i}E \ot \bigwedge^{j-1}
 E)\ar[ld]^{\id\ot \mu,\id\ot \mu} \cr
F\ot \bigwedge^{i+j-1} E
}\end{equation}
commutes. 
We shall see that this property almost determines the maps
$\alpha_q$. 

\begin{proposition}\label{alunique.p}
  Let $E$ be a flat $A_X$-module, let $F$ be an object of $D(X)$, 
 and let 
\[\{ \alpha'_j  \colon \bigwedge^j E \to F \ot \bigwedge^{j-1} E : j \ge 1 \}\] 
be a family of morphisms in $D(X)$.  Let   $\alpha = \alpha'_1 \colon
E  \to F$, let $\alpha_q$ be as in (\ref{alphaq.d}) for $q \ge 1$,  and  assume that   $q  \in \bz^+$ is such that for  $1 \le j
<q$,   the diagrams
\[\xymatrix@C+=3em
{ E\ot \bigwedge^{j} E\ar[r]^-{\alpha \ot \id,\id\ot \alpha'_{j}}\ar[d]_\mu
   &(F  \ot \bigwedge^j E) \oplus (E\ot F\ot\bigwedge^{j-1} E) \ar[d]^{\id\oplus t\ot \id} \cr
 \bigwedge^{j+1} E\ar[d]_{\alpha'_{j+1}} &(F\ot  \bigwedge^{j}  E) \oplus (F \ot   E \ot \bigwedge^{j-1} E)\ar[ld]^{\id,\id\ot \mu} \cr
F\ot \bigwedge^{j} E,
}\]
commute, where $t \colon E\ot F \to F\ot E$ is the negative of
the standard isomorphism. %~(\ref{}).  
Then $q!\alpha'_q = q! \alpha_q$.
\end{proposition}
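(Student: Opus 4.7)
The plan is to argue by induction on $q$. The base case $q=1$ holds by definition, as $\alpha'_1 = \alpha = \alpha_1$. For the inductive step, I assume $(q-1)!\,\alpha'_{q-1} = (q-1)!\,\alpha_{q-1}$ in $D(X)$ and aim to deduce $q!\,\alpha'_q = q!\,\alpha_q$.

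The first step will be to verify that the family $\{\alpha_j\}$ from (\ref{alphaq.d}) itself satisfies the derivation diagram appearing in the hypothesis of the proposition, i.e., with $\alpha'_j$ replaced throughout by $\alpha_j$. This will follow from the derivation property of $\eta$ (the first diagram of \S\ref{ss:koszul}) specialized to $i=1$, combined with the factorization $\alpha_j = (\alpha \otimes \id) \circ \eta_j$ and the naturality identity $(\alpha \otimes \id_E) \circ t_1 = t \circ (\id_E \otimes \alpha)$ as maps $E \otimes E \to F \otimes E$ in $D(X)$; here $t_1$ and $t$ are both $-1$ times the respective commutativity isomorphism, so the signs match. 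Careful sign bookkeeping is the main technical point of the argument.

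Once this preparation is in hand, set $Y(\beta) := (\id_F \otimes \mu) \circ (t \otimes \id) \circ (\id_E \otimes \beta)$ for $\beta \colon \bigwedge^{q-1} E \to F \otimes \bigwedge^{q-2} E$. The hypothesis at level $j=q-1$ together with the preparatory step then yield two identities of morphisms $E \otimes \bigwedge^{q-1} E \to F \otimes \bigwedge^{q-1} E$:
\[ \alpha'_q \circ \mu = (\alpha \otimes \id) + Y(\alpha'_{q-1}), \qquad \alpha_q \circ \mu = (\alpha \otimes \id) + Y(\alpha_{q-1}). \]
Subtracting and precomposing with $\eta_q \colon \bigwedge^q E \to E \otimes \bigwedge^{q-1} E$, using the classical identity $\mu \circ \eta_q = q \cdot \id_{\bigwedge^q E}$ recalled at the start of \S\ref{ss:koszul}, produces
\[ q\,(\alpha'_q - \alpha_q) \;=\; Y(\alpha'_{q-1} - \alpha_{q-1}) \circ \eta_q. \]
Multiplying through by $(q-1)!$ and moving the scalar inside the additive functor $Y$ converts the right-hand side into $Y\bigl((q-1)!(\alpha'_{q-1} - \alpha_{q-1})\bigr) \circ \eta_q$, which vanishes by the inductive hypothesis. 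This delivers $q!\,\alpha'_q = q!\,\alpha_q$, closing the induction. The factor $q!$ in the conclusion is intrinsic to this approach: each inductive step introduces one factor of $q$ (coming from $\mu \circ \eta_q = q\cdot \id$) that must be absorbed before the previous hypothesis can be applied.
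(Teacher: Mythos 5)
Your proof is correct and follows essentially the same route as the paper's: both arguments induct on $q$, use the hypothesis diagram at level $j=q-1$ together with the derivation property of the canonical family $\{\alpha_j\}$ (which the paper asserts via diagram~(\ref{derivedder.e}) and you propose to verify from the derivation property of $\eta$), precompose with $\eta_q$ and invoke $\mu\circ\eta_q = q\cdot\id$, and then absorb the resulting factor of $q$ into $(q-1)!$ so the inductive hypothesis applies. Your subtraction-and-additivity phrasing via $Y(\beta)$ is just a cosmetic rearrangement of the paper's direct substitution into $q!\alpha'_q = \tau_F\circ((q-1)!\,\alpha\ot\id,\ \id\ot(q-1)!\,\alpha'_{q-1})\circ\eta$.
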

\begin{proof}
  The statement is vacuous for $q = 1$, and we proceed by induction on $q$.  
Let $\tau_F  := (\id, \id\ot \mu) \circ (\id \oplus t \ot\id)$ in  the diagram above.
 Then, setting $j = q-1$, 
we have the following commutative diagram:
\[\xymatrix@C+=3em{
% \bigwedge^q E \ar[r]^\eta\ar[d]_\id& E\ot \bigwedge^{j} E\ar[r]^-{\alpha \ot \id,\id\ot \eta_{j}}\ar[d]_\id
 %  &(F  \ot \bigwedge^j E) \oplus (E\ot F\ot\bigwedge^{j-1} E) \ar[d]^{\id\oplus t_1\ot
  %   \id}\ar[r]^-{\tau_E}& E \ot \bigwedge^j E\ar[d]^{\alpha\ot \id}\cr
 \bigwedge^q E\ar[r]^-\eta \ar[rd]_q &  E \ot \bigwedge^{q-1} E\ar[d]_{\mu}\ar[rr]^-{\alpha \ot \id,\id\ot \alpha'_{q-1}} & &(F\ot  \bigwedge^{q-1}  E)
 \oplus (E \ot  F \ot \bigwedge^{q-2} E)\ar[d]^-{\tau_F}  \cr
&  \bigwedge^{q} E \ar[rr]_{\alpha'_q}& &  F \ot \bigwedge^{q-1} E.
}\]
 In other words,
\[  q \alpha'_q = \tau_F \circ (\alpha\ot \id, \id \ot  \alpha'_{q-1})\circ \eta.\]
Similarly, 
\[  q \alpha_q = \tau_F \circ (\alpha\ot\id,\id \ot \alpha_{q-1})\circ \eta.\]
Then using the induction hypothesis, we can conclude:
\begin{align*}
  q! \alpha'_q &=  \tau_F \circ\left ( (q-1)!\alpha\ot \id, \id \ot (q-1)!\alpha'_{q-1}\right)\circ\eta\\
 &=  \tau_F \circ\left ( (q-1)!\alpha\ot \id, \id \ot (q-1)!\alpha_{q-1}\right) \circ \eta \\
 & =  q!\alpha_q \qedhere
\end{align*}
\end{proof}

Next we discuss connecting homomorphisms, exterior powers, and Koszul complexes. Consider a short exact sequence of flat $A_X$-modules
\[ 
  0\To A\rTo{u} B\rTo{\pi} C\To 0,
\]
and the associated morphism $\xi=\xi_u:C\to A[1]$ (cf.~Definition~\ref{xiu.d}).  
The \textit{Koszul filtration}  is  the decreasing filtration of $\bigwedge^q B$ defined by:
\[
  K^i \bigwedge^q B = \im\left( \bigwedge^i A \otimes \bigwedge^{q-i} B\rTo{\wedge^i u\otimes \id} \bigwedge^i B \otimes
    \bigwedge^{q-i} B\rTo{\mu} \bigwedge^q B   \right).
\] 
There are canonical isomorphisms  
\begin{equation}  \label{grkos.e}
\bigwedge^i A\ot \bigwedge^{q-i} C \cong \gr_K^i(\bigwedge^q B)  
\end{equation}
We can use this  construction to give a convenient expression for the composed morphism
$\xi_q \colon  \bigwedge^q C \to A \ot \bigwedge^{q-1} C[1]$ defined in Equation~\eqref{alphaq.d} 
above. 
% We begin by constructing it and relating the induced map 
% \begin{equation} \label{eqn:xi-q-def} 
%   \xi_q  = \xi_{u_q} : \bigwedge^q C\to A\otimes \bigwedge^{q-1} C [1]
% \end{equation} 
% to the `cup product' with $\xi$ (cf. Proposition~\ref{cupxi.p} below). 

\begin{proposition}\label{cupxi.p}
Let $0 \to A \tto u   B \tto \pi   C \to 0$ be an exact sequence
of flat \mbox{$A_X$-modules}, with corresponding morphism
$\xi := \xi_u \colon C \to A[1]$ in $D(X)$.   For each $q \in \bn$, let $K^\cx$ be
the Koszul filtration on $\bigwedge^q B$ defined by the inclusion $u:A
\to B$ and consider the exact sequence
\[    0 \rTo{}  A\ot \bigwedge^{q-1} C \rTo {u_q}   \bigwedge^q B / K^2 \bigwedge^q B\rTo{\pi_q}  \bigwedge^q C \rTo{}  0.\]
obtained from the filtration $K$ and the isomorphisms~(\ref{grkos.e}) above.
Then 
\[ \xi_{u_q} = \xi_q :=(\xi \ot \id) \circ \eta \colon \bigwedge^q C \rTo{} C \ot \bigwedge^{q-1} C\rTo{} A \ot \bigwedge^{q-1} C[1].\]  \end{proposition}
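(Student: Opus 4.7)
The plan is to identify the given short exact sequence with the pullback along the comultiplication $\eta$ of an extension whose associated morphism is easily computed. More precisely, I would display the sequence $0 \to A \otimes \bigwedge^{q-1} C \to \bigwedge^q B/K^2 \to \bigwedge^q C \to 0$ as the pullback along $\eta \colon \bigwedge^q C \to C \otimes \bigwedge^{q-1} C$ of the sequence
\[ 0 \To A \otimes \bigwedge^{q-1} C \To B \otimes \bigwedge^{q-1} C \To C \otimes \bigwedge^{q-1} C \To 0 \]
obtained by tensoring $0 \to A \to B \to C \to 0$ with the flat module $\bigwedge^{q-1} C$.  Since tensoring by a flat module commutes with the cone construction, the latter sequence has associated morphism $\xi_u \otimes \id$, so the naturality of the associated morphism under morphisms of short exact sequences with identity on the left will yield $\xi_{u_q} = (\xi_u \otimes \id) \circ \eta = \xi_q$.

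The key construction is a comparison morphism $\tilde\eta \colon \bigwedge^q B/K^2 \to B \otimes \bigwedge^{q-1} C$ defined as the composition
\[ \bigwedge^q B \rTo{\eta} B \otimes \bigwedge^{q-1} B \rTo{\id \otimes \pi} B \otimes \bigwedge^{q-1} C. \]
To see that $\tilde\eta$ factors through $\bigwedge^q B/K^2$, I observe that for a generator $u(a) \wedge u(a') \wedge \omega'$ of $K^2 \bigwedge^q B$, the formula recalled in \S\ref{ss:koszul} expresses $\eta(u(a) \wedge u(a') \wedge \omega')$ as a sum of terms each of whose second tensor factor lies in $K^1 \bigwedge^{q-1} B = \ker(\bigwedge^{q-1} B \to \bigwedge^{q-1} C)$, so every summand vanishes after applying $\id \otimes \pi$.

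Next I would verify that $\tilde\eta$ defines a morphism of short exact sequences
\[ \xymatrix@C=3em{
0 \ar[r] & A \otimes \bigwedge^{q-1} C \ar[r]^-{u_q} \ar@{=}[d] & \bigwedge^q B/K^2 \ar[r]^-{\pi_q} \ar[d]^-{\tilde \eta} & \bigwedge^q C \ar[r] \ar[d]^-{\eta} & 0 \\
0 \ar[r] & A \otimes \bigwedge^{q-1} C \ar[r]^-{u \otimes \id} & B \otimes \bigwedge^{q-1} C \ar[r]^-{\pi \otimes \id} & C \otimes \bigwedge^{q-1} C \ar[r] & 0.
}\]
The right square commutes by the naturality of $\eta$ with respect to the quotient $\pi$.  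For the left square, choose a lift $\tilde \omega = b_1 \wedge \cdots \wedge b_{q-1}$ of $\omega$, so that $u_q(a \otimes \omega) \equiv u(a) \wedge \tilde \omega \pmod{K^2}$; expanding $\eta(u(a) \wedge \tilde \omega)$, the leading term $u(a) \otimes \tilde \omega$ projects to $(u \otimes \id)(a \otimes \omega)$ under $\id \otimes \pi$, while every other summand has $u(a)$ in the second tensor factor and so is annihilated.  The five lemma then confirms that $\tilde\eta$ is an isomorphism of extensions, and the conclusion follows as indicated in the first paragraph.

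The main obstacle I anticipate is bookkeeping for signs, particularly verifying the identity $\xi_{u \otimes \id_M} = \xi_u \otimes \id_M$ for flat $M$ (which should follow formally from the natural identification $C(u) \otimes M = C(u \otimes \id_M)$ under the conventions of \S\ref{ss:homalg-notation}) and confirming that the morphism of distinguished triangles associated to the morphism of short exact sequences above yields the naturality relation $\xi_{u_q} = \xi_{u \otimes \id} \circ \eta$ with no spurious sign.
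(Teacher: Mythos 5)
Your proof is correct and follows essentially the same route as the paper: the comparison map $\tilde\eta$ induced by $(\id\otimes\pi)\circ\eta$, the observation that it annihilates $K^2\bigwedge^q B$, and the resulting morphism of short exact sequences with the identity on the left are exactly the paper's argument, after which naturality of $\xi$ gives $\xi_{u_q}=(\xi_u\otimes\id)\circ\eta$. (The aside that the five lemma makes $\tilde\eta$ an isomorphism is both false for $q\ge 2$ --- the right-hand vertical map $\eta$ is not an isomorphism there --- and unnecessary, since only the morphism of extensions is used.)
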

\begin{proof}
Observe that the composition
\[ \bigwedge^q B \rTo \eta B \ot \bigwedge^{q-1} B \rTo{\id \ot \pi}B \ot \bigwedge^{q-1} C\]
annihilates $K^2\bigwedge^q B$. Then we find 
the following commutative diagram, in
which the rows are exact.
\[\xymatrix{  
0 \ar[r] & A \ar[r] \ar@{=}[d] \ot \bigwedge^{q-1}C & \bigwedge^qB/K^2 \bigwedge^q B \ar[r] \ar[d]^\zeta  & \bigwedge^qC \ar[r]\ar[d]^\eta & 0\cr
0 \ar[r] & A \ot \bigwedge^{q-1} C\ar[r] & B \ot \bigwedge^{q-1} C\ar[r]& C \ot \bigwedge^{q-1} C\ar[r] & 0
}\]
We consequently get a commutative diagram in $D(X)$:
\[
\begin{gathered}[b] % tricks for placing the QED box correctly
\xymatrix{\bigwedge^q C \ar[r]^-{\xi_{u_q}}\ar[d]_\eta & A \ot \bigwedge^{q-1}  C[1]\ar@{=}[d] \cr
C\ot \bigwedge^{q-1} C \ar[r]_-{\xi \ot \id} & A \ot \bigwedge^{q-1} C[1].
}\\[-\dp\strutbox] 
\end{gathered}
\qedhere
\]
\end{proof}
Let us now recall the definition of the Koszul  complex of a homomorphism (see \cite[Chapitre I, 4.3.1.3]{ill.cctd} and \cite[1.2.4.2]{katosaito.cfb}).

\begin{definition} \label{kozdr.d}
  Let  $u\colon A\to B$ be
  homomorphism of $A_X$-modules, and let $q\geq 0$.
  Then  the $q$-th Koszul complex $\Kos^q(u)$ of $u$ 
 is the cochain complex whose $p$-th term is $\Gamma^{q-p}(A)\ot \bigwedge^p B$ and  with differential  
  \[
    d_{u,q}^{p} \colon
    \xymatrix{
      \Gamma^{q-p}(A) \ot   \bigwedge^p B \ar[r]^-{\eta\ot\id} &  \Gamma^{q-p-1}(A)\ot A\ot \bigwedge^p B  \ar[d]^{\id\ot u \ot \id} \\
      & \Gamma^{q-p-1}(A)\ot B\ot \bigwedge^p B  \ar[r]_-{\id\ot \mu} & \Gamma^{q-p-1}(A) \ot \bigwedge^{p+1} B.
    }
  \] 
\end{definition}

Observe that $\Kos^q(u)$ (treated as a chain complex) is
$\Lambda^q(u:A\to B)$ in the notation of \cite[1.2.4.2]{katosaito.cfb},
and is the total degree $q$ part of $\Kos^\cx(u)$ in the notation of
\cite[Chapitre I, 4.3.1.3]{ill.cctd}. If $A$ and $B$ are flat,
$\Kos^q(u)[-q]$ coincides with the derived exterior power of the
complex $[A\to B]$ (placed in degrees $-1$ and $0$),
cf. \cite[Corollary 1.2.7]{katosaito.cfb}.
Note that $\Kos^1(u)$ is the
complex  $[A \to B]$ in degrees $0$ and $1$, \ie, $\Kos^1(\theta) =
\Con(-\theta)[-1]$.  If $u= \id_A$, its Koszul
complex identifies with the divided power de Rham complex
of $\Gamma^\cx(A)$.  
In most of our applications,  $A_X$ will contain $\bq$, and we can and
shall identify $\Gamma^q(A)$ with $S^q(A)$, the $q$th symmetric power
of $A$.

We recall the following well-known result (cf. \cite[Lemma~1.4]{steen.levnc}):

\begin{proposition} \label{kozcoh.p}
Suppose that 
\[ 0 \rTo{} A \rTo u  B \rTo{} C \rTo{} 0\]
is an exact sequence of flat $A_X$-modules.
% With the notation of
% Definition~\ref{kozdr.d}, 
% there are natural homomorphisms 
%   \begin{equation} \label{eq0} 
%     \Gamma^{q-p} (\ker (u))  \ot   \bigwedge^p (\cok (u)) \to   \Hh^p(\Kos^q(u))),
%   \end{equation}
%   which are isomorphisms if $A$, $B$, and $\cok(u)$ are flat. In particular, if $A$, $B$, and $\cok(u)$ are flat and if $u$ is injective, the natural map:
%   \[
%   e_q \colon   \Kos^q(u) \to \bigwedge^q(B/A)[-q]
%   \]
%   is a quasi-isomorphism.
Then the  natural map:
   \[
   e_q\colon   \Kos^q(u)[q] \to \bigwedge^q C
   \]
   is a quasi-isomorphism.
\end{proposition}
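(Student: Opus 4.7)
The plan is to exhibit the Koszul complex $\Kos^q(u)[q]$ as quasi-isomorphic to $\bigwedge^q C$ by means of a natural filtration whose associated graded pieces reduce the problem to the acyclicity of the Koszul complex of the identity. The base cases are immediate: $q = 0$ gives $A_X \simeq A_X$, and $q = 1$ gives that $[A \to B]$ in degrees $-1, 0$ is quasi-isomorphic to $C$, which is precisely the exactness hypothesis.

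For $q \ge 2$, let $K^\bullet \bigwedge^p B$ denote the Koszul filtration from Proposition~\ref{cupxi.p}, and define a decreasing filtration $\tilde F^\bullet$ on $\Kos^q(u)$ by \emph{total Koszul weight}, where an element of $\Gamma^{q-p}(A) \otimes K^i \bigwedge^p B$ is assigned weight $(q-p)+i$:
\[
\tilde F^r \Kos^q(u)^p \;:=\; \Gamma^{q-p}(A) \otimes K^{\max(0,\,r-q+p)} \bigwedge^p B.
\]
First I would verify that this is a subcomplex: the differential of $\Kos^q(u)$ factors through $\eta\colon \Gamma^{q-p}(A) \to \Gamma^{q-p-1}(A) \otimes A$ followed by insertion of $u\colon A \to B$, whose image lies in $K^1$, so the Koszul degree grows by exactly $1$ while the $\Gamma$-degree drops by $1$, keeping the total weight invariant. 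Using the canonical isomorphisms $\gr_K^i \bigwedge^p B \cong \bigwedge^i A \otimes \bigwedge^{p-i} C$ of~\eqref{grkos.e}, a reindexing then gives
\[
\gr_{\tilde F}^r \bigl(\Kos^q(u)[q]\bigr) \;\cong\; \Kos^r(\id_A)[r] \otimes \bigwedge^{q-r} C, \qquad 0 \le r \le q.
\]

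The crucial remaining input is the acyclicity of $\Kos^r(\id_A)[r]$ for $r \ge 1$ (with $\Kos^0(\id_A)[0] = A_X$ in degree zero). This is the divided-power Poincar\'e lemma for the divided power de~Rham complex of $\Gamma^\cx(A)$, whose positive-weight pieces are acyclic; equivalently, by \cite[Corollary 1.2.7]{katosaito.cfb}, $\Kos^r(\id_A)[r]$ represents the derived $r$-th exterior power of the null-homotopic complex $[A \xrightarrow{\id} A]$ and hence vanishes in $D(X)$ for $r \ge 1$. Since $C$ is automatically flat, as the cokernel of an injection of flat modules, tensoring by $\bigwedge^{q-r} C$ preserves acyclicity; thus every graded piece with $r \ge 1$ is acyclic, while the $r = 0$ piece is $\bigwedge^q C$ concentrated in degree $0$.

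The spectral sequence of $\tilde F^\bullet$ will then degenerate at $E_1$ with a single surviving term $\bigwedge^q C$ in bidegree $(0,0)$, producing the isomorphism $\Kos^q(u)[q] \isomlong \bigwedge^q C$ in $D(X)$. A short diagram chase identifies this isomorphism with $e_q$, since both are determined in degree zero by the projection $\bigwedge^q B \to \bigwedge^q C$. The principal technical step is the combinatorial bookkeeping needed to assemble the filtration and compute its graded pieces; the acyclicity of $\Kos^r(\id_A)[r]$ is the one nontrivial homological input, but it is well known.
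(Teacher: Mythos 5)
Your argument is essentially the paper's: both filter $\Kos^q(u)$ by the Koszul filtration induced by $u$, identify the resulting graded complexes with $\Kos^r(\id_A)\ot\bigwedge^{q-r}C$ up to shift and sign, and conclude from the acyclicity of $\Kos^r(\id_A)$ for $r\ge 1$ — your ``total weight'' reindexing merely moves the paper's $E_1$-page computation to the $E_0$ page of the same spectral sequence, and your degree-zero identification of the edge map with $e_q$ matches the paper's. One small correction: the flatness of $C$ is \emph{not} automatic from the flatness of $A$ and $B$ (consider $\bz \tto{2} \bz$ with cokernel $\bz/2$); it is instead part of the hypothesis, so nothing in your argument is actually lost.
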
 %\qed
\begin{proof}
We include a proof
for the convenience of the reader. The last  nozero term
of the complex $\Kos^{q}(u)[q] $ is $\bigwedge^q B$ in degree $0$,
and the natural map $\bigwedge^q B \to \bigwedge^q C$ induces
the morphism $e_q$. 
 The Koszul filtration $K$  on
$\bigwedge^\cx B$ makes $\Kos^q(u)$ a filtered complex, with
\[K^i \Kos^q(u)^n := \Gamma^{q-n}(A) \ot K^i \bigwedge^n B.\]
Note that the differential $d$ of $\Kos^q(u)$ sends $K^i\Kos^q(u)$ to
$K^{i+1}\Kos^q(u)$.  Then the spectral sequence of  the filtered complex
$(\Kos^{q}(u), K)$ has
\[ E_1^{i,j} = H^{i+j}(\gr_K^i\Kos^q(u))= \gr_K^i \Kos^q(u)^{i+j} =\Gamma^{q-i-j}(A)\ot\bigwedge^i A \ot \bigwedge^j C, \]
and the complex
$(E_1^{\cx,j},d_1^{\cx,j})$ identifies with the complex
$\Kos(\id_A)^{q-j} \ot \bigwedge^j C[-j]$, up
to the sign of the differential.
This complex is acyclic unless $j =q$, in which  case it reduces to
the single term complex $\bigwedge^qC[-q]$.  It follows that the
map $e_q[-q] \colon \Kos^q(u) \to \bigwedge^q C[-q]$ is a quasi-isomorphism.
\end{proof}

The following technical result compares the various Koszul complexes
associated to~$u$.
\begin{proposition} \label{vwdr.p}
  Let $0 \to A \tto u   B \tto \pi   C \to 0$ be an exact sequence of flat \mbox{$A_X$-modules}. For each $q \ge 0$, let $K$ be the  Koszul filtration of
  $\bigwedge^q B$ induced by $u$,  and let 
\[u_q\colon A\ot \bigwedge^{q-1} C \To \bigwedge^q B/K^2\bigwedge^q
  B\]
 be as in Proposition~\ref{cupxi.p}.
%   \[
%     u_q  \colon A\ot \bigwedge^{q-1}(B/A) \to \bigwedge^q B/K^2\bigwedge^q B, \]
%     \[ 
% a \ot \pi( b_1)\wedge \ldots \wedge \pi (b_{q-1}) \mapsto
% u(a)\wedge b_1 \wedge \ldots \wedge b_{q-1} \,{\rm mod}\, K^2\bigwedge^q B.
%   \]
%Recall that $\Kos^q(u)$ denotes the $q$th Koszul complex~(\ref{kozdr.d}) of $u$.
%and that $C'(-u_q) = C(-u_q)[-1]$. 
  % \[
  %   C^\cx_q(u) :=S^q V \to S^{q-1}V \ot W \to S^{q-2}V \ot \bigwedge^2 W \to \cdots
  % \]
  \begin{enumerate}
    \item There is a  natural commutative diagram of quasi-isomorphisms:
    \[ 
      \xymatrix{
        \Kos^q(u) \ar[r]^-{a_q}[q] \ar[dr]_{e_q} & \Con((-1)^q u_q) \ar[d]^{b_q} \\
        &  \bigwedge^q C.
      }
    \]
  \item There exist morphisms of complexes $c_q$ and $f_q$ as indicated below.
Each of these is a quasi-isomorphism of complexes, and the resulting
diagram is commutative.
    Hence there is a unique morphism $g_q$  in $D(X)$ making
the following diagram commute:
    \[
      \xymatrix{
        \Kos^q(u)[q] \ar[r]^-{a_q} \ar[d]_{c_q}\ar@/^2em/[rr]^{e_q}
        &\Con((-1)^qu_q) \ar[dr]^{f_q} 
         \ar[r]^{b_q} & \bigwedge^q C\ar@{.>}[d]^{g_q} \\
        A\ot  \Kos^{q-1}(u)[q] \ar[rr]_-{\id\ot e_{q-1}} & & A \ot  \bigwedge^{q-1} C[1],
      }
    \]
    \item  In the derived category $D(X)$, 
$g_q = (-1)^{q-1} \xi_{u_q}$, where 
$\xi_{u_q}$ is the morphism defined by $u_q$
as in Definition~\ref{xiu.d}.  Consequently,
$g_q$ is $(-1)^{q-1}$
times cup-product (on
      the left) with the morphism $\xi_u$ defined by $u$.
% diagram commute.
%     \[\xymatrix{ C^\cx_q(\theta) \ar[r]^{c_q}\ar[d]_{e_q} & 
%                 V\ot C^\cx_{q-1}(\theta) \ar[d]_{\id \ot e_{q-1}} \\
%                 \bigwedge^q(B/A)[-q] \ar[r]^-{g_q }  & V[-q]\ot \bigwedge^{q-1}(B/A)[1-q]
%     }\]

 %    \begin{equation}
 %      \label{logksq.e}
 %      0 \to V\ot\bigwedge^{q-1}(B/A)[-q] \to
 % \bigwedge^q W/K^{q-2} \bigwedge^q W[-q] \to \bigwedge^q(B/A)[-q] \to 0
%    \end{equation}
  \end{enumerate}
\end{proposition}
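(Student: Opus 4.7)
The plan is to build everything at the chain level, then pass to $D(X)$ by inverting the quasi-isomorphism $b_q$, and finally identify $g_q$ with $\xi_{u_q}$ by unpacking Definition~\ref{xiu.d} and invoking Proposition~\ref{cupxi.p}.

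For part (1), I define $a_q$ degree by degree: in degree $0$ it is the natural projection $\bigwedge^q B \twoheadrightarrow \bigwedge^q B/K^2\bigwedge^q B$; in degree $-1$ it is $\id_A\ot \wedge^{q-1}\pi\colon A\ot\bigwedge^{q-1} B \to A\ot\bigwedge^{q-1}C$; and it is zero in lower degrees. The only non-trivial chain-map identity to check lies between degrees $-2$ and $-1$, where it holds because the Koszul differential $\Gamma^2(A)\ot\bigwedge^{q-2}B\to A\ot\bigwedge^{q-1}B$ factors through the wedge action of $u(A)$, and $\pi\circ u = 0$. I define $b_q$ as the natural quotient $\bigwedge^q B/K^2\twoheadrightarrow \bigwedge^q C$ in degree $0$ (and zero elsewhere). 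Since $u_q$ is injective (it is the canonical inclusion $\gr^1_K\bigwedge^q B\hookrightarrow \bigwedge^q B/K^2$), $b_q$ is a quasi-isomorphism. The relation $e_q = b_q\circ a_q$ holds by inspection, so combined with Proposition~\ref{kozcoh.p} this forces $a_q$ to be a quasi-isomorphism as well.

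For part (2), I define $c_q$ degree by degree as $\eta\ot \id$, sending $\Gamma^{q-p}(A)\ot\bigwedge^p B$ to $A\ot \Gamma^{q-1-p}(A)\ot\bigwedge^p B = A\ot\Kos^{q-1}(u)^p$; compatibility with the Koszul differentials reduces to the coassociativity and cocommutativity of iterated comultiplication on $\Gamma^\cx(A)$. I define $f_q\colon \Con((-1)^q u_q)\to A\ot\bigwedge^{q-1}C[1]$ as the natural cone projection, namely the identity on degree $-1$ and zero on degree $0$. A direct degree-by-degree check (in degrees $0$, $-1$, and $\leq -2$ separately) shows that $(\id\ot e_{q-1})\circ c_q = f_q\circ a_q$ on the nose; the identifications $\Gamma^1(A)\cong A\cong A\ot\Gamma^0(A)$ make the two sides visibly agree in degree~$-1$, and both compositions are zero elsewhere for dimension reasons. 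Since $b_q$ is a quasi-isomorphism, inverting it in $D(X)$ gives a unique morphism $g_q := f_q\circ b_q^{-1}$ completing the diagram.

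For part (3), observe that $f_q$ is precisely the canonical cone projection~$p$ appearing in Definition~\ref{xiu.d}, and $b_q$ is the quasi-isomorphism $\tilde\pi$ attached to the exact sequence with injection $(-1)^q u_q$. Hence $g_q = p\circ \tilde\pi^{-1} = -\xi_{(-1)^q u_q}$ by definition of $\xi$. Using $\xi_{-v} = -\xi_v$, we have $\xi_{(-1)^q u_q} = (-1)^q \xi_{u_q}$, so $g_q = (-1)^{q+1}\xi_{u_q} = (-1)^{q-1}\xi_{u_q}$, proving the first assertion of~(3). The ``consequently'' is then immediate from Proposition~\ref{cupxi.p}, which identifies $\xi_{u_q}$ with $(\xi_u\ot\id)\circ\eta$, that is, with cup product on the left with $\xi_u$. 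The main technical difficulty, flagged by the authors at the start of Section~2, is sign bookkeeping: the sign $(-1)^q$ attached to $u_q$ inside the cone is precisely the device that makes the trapezoid of part~(2) commute on the nose, and it is also the source of the clean factor $(-1)^{q-1}$ in part~(3); the only other non-formal verification is the compatibility of $c_q$ with the Koszul differentials, which is a standard calculation using the cocommutativity of the divided power coalgebra structure on~$\Gamma^\cx(A)$.
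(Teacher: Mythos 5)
Your proposal is correct and takes essentially the same route as the paper's proof: the same chain-level definitions of $a_q$, $b_q$, $c_q$, $f_q$ as the obvious projections and $\eta\ot\id$, the same degree-by-degree verifications, and the same sign bookkeeping $g_q=f_q\circ b_q^{-1}=-\xi_{(-1)^qu_q}=(-1)^{q-1}\xi_{u_q}$ combined with Proposition~\ref{cupxi.p} in part~(3).
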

\begin{proof}
The vertical arrows in the following commutative diagram of complexes
are the obvious projections.  The first set  of these  defines the morphism
of complexes $a_q$ and  the second defines the morphism $b_q$. 
\[ 
  \xymatrix{
\Gamma^q (A)\ar[r] & \ldots \Gamma^{2} (A) \ot  \bigwedge^{q-2} B  \ar[r] \ar[d]^{a_q} & A\ot \bigwedge^{q-1} B \ar[d]^{a_q}\ar[r] & \bigwedge^q B \ar[d]^{a_q} \\
    & 0 \ar[r] & A \ot \bigwedge^{q-1} C  \ar[r]^{(-1)^q u_q} & \bigwedge^q B/K^2 \bigwedge^q B \ar[d]^{b_q} \\
    & & & \bigwedge^q C.
  }\]
Here the top row is placed in degrees $-q$ through $0$, and its 
differential is  the Koszul 
differerential multiplied by $(-1)^q$, and thus is the complex
$\Kos^q(u)[q]$.  The middle row is placed in degrees $-1$
and $0$, and hence is the mapping cone of $(-1)^qu_q$. 
Since the sequence
\[ 
     0 \to   A \ot \bigwedge^{q-1} C  \to  \bigwedge^q B/K^2\bigwedge^q B\to \bigwedge^q C \to 0
\]
is  exact, the map $b_q$ is a quasi-isomorphism.  We already observed in 
Proposition~\ref{kozcoh.p}
that $e_q$ is a quasi-isomorphism, and it follows that $a_q$ is also
a quasi-isomorphism. 
This proves statement (1) of the proposition. 

The morphism $f_q$ is defined by the usual projection
\[
  \Con((-1)^qu_q) = \left[  A\ot \bigwedge^{q-1} C\rTo{(-1)^q u_q} \bigwedge^q B / K^2 \bigwedge^q B \right] \rTo{p=(\id, 0)} A\ot \bigwedge^{q-1} C.
\]
The diagram  below commutes, with the sign shown,
because of the conventions in (\ref{eqn:distinguished}) and (\ref{xiu.d}).
\[\xymatrix{ 
\Con((-1)^qu_q) \ar[d]_p\ar[r]^-\sim & \bigwedge^q C \ar[ld]^{-\xi_{(-1)^q{u_q}}}\\
A \ot \bigwedge^{q-1} C
}\]

% the corresponding morphism $\Con((-1)^qu_q) \to A\ot \bigwedge^{q-1} C$
% is $- \xi_{(-1)^q{u_q}} = (-1)^{q-1}\xi_{u_q}$, as shown.

One checks easily that the  square below commutes, so that the vertical map
defines a morphism of complexes $ \Kos^q(u) \to
A\ot \Kos^{q-1}(u)$ whose  shift  is $c_q$.  
\[ 
  \xymatrix{
    \Gamma^{q-n}(A)\ot \bigwedge^nB \ar[r]^{d_u} \ar[d]_{\eta \ot \id} & \Gamma^{q-n-1}(A)  \ot \bigwedge^{n+1}B \ar[d]^{d_\id \ot \id } \\
    A \ot   \Gamma^{q-n-1}(A) \ot \bigwedge^nB \ar[r]_{\id \ot d_u} & A \ot  \Gamma^{q-n-2}(A)  \ot \bigwedge^{n+1}B
  }
\]
The diagram of  statement (2)  in degree $q-1$ is given  by the following obvious set of maps:
\[ 
  \xymatrix{
    A \ot \bigwedge^{q-1} B \ar[r]\ar[d]_\id  & A\ot \bigwedge^{q-1} C \ar[r] \ar[dr]^\id &  0 \\
    A \ot  \bigwedge^{q-1} B \ar[rr] & & A\ot \bigwedge^{q-1}C,
  }
\]
and in degree $q$  by:
\[ 
  \xymatrix{
    \bigwedge^q B \ar[r]\ar[d]  & \bigwedge^q B/K^2\bigwedge^q B \ar[r]\ar[dr] &  \bigwedge^q  C \\
    0 \ar[rr] & & 0.
  }
\]
This proves statement (2).  
% Furthemore, statement (2) implies that 
% the following diagram commutes:
%     \[\xymatrix{ C'(-u_q)[1-q] \ar[r]^{f_q}\ar[d]_{e_q} & 
%                 V\ot \bigwedge^{q-1}(W/V) \ar[d]^{\id }[1-q] \\
%                 \bigwedge^q(W/V)[-q] \ar[r]^-{g_q }  & V\ot \bigwedge^{q-1}(W/V)[1-q].
%     }\]
%
% The triangle on the right shows that the triangle
% \[ V\ot \bigwedge^{q-1} W/V \rTo {u_q} \bigwedge^{q}W/K^2 \bigwedge^q \rTo 
%   \bigwedge^q (W/V) \rTo {(-1)^{q-1}} V\ot bigwedge^{q-1}[1] \]
% is distinushed, so $g_q = (-1)^{q-1} \xi_{u_q}$
It follows that
$g_q = -\xi_{(-1)^{q} u_q} = (-1)^{q-1} \xi_{u_q}$ and the rest of statement (3) then
follows from Proposition~\ref{cupxi.p}.
\end{proof}

\subsection{\texorpdfstring{$\tau$-unipotent maps in the derived category}{tau-unipotent maps in the derived category}}
\label{ss:zero-on-cohomology}

%In the geometric situation of interest in this paper, we study the nearby cycle complex $\Psi$ together with the monodromy action of the fundamental group $I_P$. Since $I_P$ acts trivially on the cohomology sheaves of $\Psi$, every $\gamma\in I_P$ induces a map $\gamma-\id:\Psi\to \Psi$ inducing the zero map on cohomology sheaves. The following proposition will be used in our study of these maps.

One frequently encounters unipotent automorphisms of objects, or more precisely, automorphisms  $\gamma$ of filtered objects $(C, F)$ which induce the identity on the associated graded object $\gr^\cx_F(C)$.  Then $\gamma- \id$ induces a map $\gr^\cx_F(C)  \to \gr_F^{\cx-1} C$ which serves as an approximation to $\gamma$.  For example, if $\gamma$ is an automorphism of a complex $C$ which acts as the identity on its cohomology, 
this construction can be applied to the
 canonical filtration $\tau_\le$ of $C$ and  
carries over to the derived category.

\begin{lemma} \label{lemma:def-L}
Let $\lambda:A\to B$ be a map in $D(\cA)$, and let $q$ be an integer such that the maps $\Hh^i(\lambda):\Hh^i(A)\to \Hh^i(B)$ are zero for $i=q-1, q$. Then there exists a unique morphism $L_\lambda^q : \Hh^q(A)[-q]\to \Hh^{q-1}(B)[1-q]$ making the following diagram commute
\[ 
    \xymatrix{
      \tau_{[q-1, q]} (A) \ar[d]_{\tau_{[q-1, q]}(\lambda)} \ar[r] & \Hh^q(A)[-q] \ar[d]^{L^q_\lambda} \\
      \tau_{[q-1, q]} (B) & \Hh^{q-1}(B)[1-q]. \ar[l]
    }
  \]
The same map $L_\lambda^q$ fits into the following commutative diagram
\[ 
    \xymatrix{
      \tau_{\leq q} A\ar[d]_{\tau_{\leq q}(\lambda)} \ar[r] \ar@{.>}[dr] & \Hh^q(A)[-q] \ar@{.>}[dr]^{L^q_\gamma} \\
      \tau_{\leq q} B & \tau_{\leq q-1} B \ar[l]\ar[r] & \Hh^{q-1}(B)[1-q].
    }
  \]
\end{lemma}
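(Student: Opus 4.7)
The plan is to obtain $L^q_\lambda$ as a two-step factorization through the distinguished triangles supplied by Proposition~\ref{tabc.p}. Writing $\iota_A,\pi_A$ for the canonical maps fitting into
\[
\Hh^{q-1}(A)[1-q] \xrightarrow{\iota_A} \tau_{[q-1,q]}(A) \xrightarrow{\pi_A} \Hh^q(A)[-q] \xrightarrow{\delta_A} \Hh^{q-1}(A)[2-q],
\]
and similarly $\iota_B, \pi_B, \delta_B$ for $B$, the aim is to produce a morphism $\Hh^q(A)[-q]\to \Hh^{q-1}(B)[1-q]$ such that applying $\iota_B\circ(-)\circ\pi_A$ recovers $\tau_{[q-1,q]}(\lambda)$, and then to show that composing with the natural map $\tau_{\le q}\to\tau_{[q-1,q]}$ yields the second diagram.

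For existence, first observe that $\pi_B\circ\tau_{[q-1,q]}(\lambda) = \Hh^q(\lambda)[-q]\circ\pi_A = 0$ by hypothesis, so the distinguished triangle for $B$ produces a lift $\mu\colon\tau_{[q-1,q]}(A)\to\Hh^{q-1}(B)[1-q]$ with $\iota_B\circ\mu = \tau_{[q-1,q]}(\lambda)$. To factor $\mu$ further through $\pi_A$, I must check $\mu\circ\iota_A=0$. Composing with $\iota_B$ gives $\iota_B\circ\mu\circ\iota_A = \tau_{[q-1,q]}(\lambda)\circ\iota_A = \iota_B\circ\Hh^{q-1}(\lambda)[1-q] = 0$, and the obstruction to removing $\iota_B$ from this equation lies in $\Hom(\Hh^{q-1}(A)[1-q],\Hh^q(B)[-q-1]) = \Ext^{-2}(\Hh^{q-1}(A),\Hh^q(B)) = 0$. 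Hence $\mu\circ\iota_A=0$, and $\mu$ factors as $L^q_\lambda\circ\pi_A$ for some morphism $L^q_\lambda$, which by construction makes the first diagram commute.

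For uniqueness, applying $\Hom(-,\Hh^{q-1}(B)[1-q])$ to the distinguished triangle for $A$ and invoking the vanishing $\Ext^{-1}(\Hh^{q-1}(A),\Hh^{q-1}(B))=0$ shows that $\pi_A^*$ is injective, and similarly $\Hom(\tau_{[q-1,q]}(A),\Hh^q(B)[-q-1])=0$ shows that $\iota_B\circ(-)$ is injective on maps out of $\tau_{[q-1,q]}(A)$ into $\Hh^{q-1}(B)[1-q]$. Any competing $L'$ satisfies $\iota_B\circ(L-L')\circ\pi_A = 0$, and the two injectivities force $L=L'$. For the second diagram, the composite $\tau_{\le q}(A)\to\tau_{\le q}(B)\to\Hh^q(B)[-q]$ equals $\Hh^q(\lambda)[-q]$ composed with the projection, hence vanishes, so $\tau_{\le q}(\lambda)$ lifts to the dashed map $\tau_{\le q}(A)\to\tau_{\le q-1}(B)$; commutativity of the remaining triangle then reduces, via the natural transformation $\tau_{\le q}\to\tau_{[q-1,q]}$ and $\tau_{\le q-1}\to\Hh^{q-1}(-)[1-q]$, to the diagram already handled. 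The main obstacle is bookkeeping the two successive factorizations and verifying that each attendant obstruction lies in a vanishing negative-$\Ext$ group; once this is set up cleanly, the two parts of the lemma follow in parallel.
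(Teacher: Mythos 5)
Your argument is correct and is essentially the paper's proof: the two successive factorizations through the distinguished triangles for $\tau_{[q-1,q]}(A)$ and $\tau_{[q-1,q]}(B)$, with obstructions and ambiguities landing in vanishing groups of the form $\Hom(X,Y)$ with $\tau_{\geq n}X=0$ and $\tau_{\leq n-1}Y=0$, are exactly the diagram chase the paper performs in its grid of $\Hom$-groups, and your reduction of the second diagram to the first matches the paper's truncation argument. No gaps.
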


\begin{proof}
Consider the following commutative diagram with exact rows and columns.
  \[ 
    \xymatrix@C-0.5pc{
        \Hom(\tau_{[q]} A, \tau_{[q]} B[-1]) \ar[r]\ar[d] & \Hom(\tau_{[q-1, q]} A, \tau_{[q]} B[-1]) \ar[r]\ar[d] & \Hom(\tau_{[q-1]} A, \tau_{[q]} B[-1]) \ar[d] \\
         \Hom(\tau_{[q]} A, \tau_{[q-1]} B) \ar[r]\ar[d] & \Hom(\tau_{[q-1, q]} A, \tau_{[q-1]} B)\ar[r]\ar[d] & \Hom(\tau_{[q-1]} A, \tau_{[q-1]} B) \ar[d]\\
        \Hom(\tau_{[q]} A, \tau_{[q-1,q]} B) \ar[r]\ar[d] & \Hom(\tau_{[q-1, q]} A, \tau_{[q-1,q]} B) \ar[r]\ar[d] & \Hom(\tau_{[q-1]} A, \tau_{[q-1,q]} B) \ar[d]\\
        \Hom(\tau_{[q]} A, \tau_{[q]} B) \ar[r] & \Hom(\tau_{[q-1, q]} A, \tau_{[q]} B) \ar[r] & \Hom(\tau_{[q-1]} A, \tau_{[q]} B).\\
    }
  \]
  Note that the groups in the top row and the group in the bottom right corner are zero, as $\Hom(X, Y)=0$ if there exists an $n\in \bz$ such that $\tau_{\geq n} X = 0$ and $\tau_{\leq n-1} Y = 0$. Similarly, the left horizontal maps are injective. The first claim follows then by diagram chasing.  

  For the second assertion, we can first assume that $A=\tau_{\leq q}A$ and $B=\tau_{\leq q}B$. We can then reduce further to the case  $A=\tau_{[q-1,q]}(A)$ and $B=\tau_{[q-1,q]}(B)$, whereupon the claim becomes identical to the first assertion.    
\end{proof}

\begin{proposition} \label{hom.p}
  Let $C \tto{i} A \tto{\lambda} B \tto{\rho} C[1]$ be a distinguished triangle in the derived category $D(\cA)$, and consider the corresponding exact sequence:
  \[
    \cdots \To \Hh^{q-1}(A)  \rTo{\lambda} \Hh^{q-1}(B) \rTo{\rho} \Hh^q(C) \rTo{i} \Hh^q(A)  \rTo{\lambda} \Hh^q(B) \To \cdots
  \]
  Assume that $\Hh^q(\lambda) = \Hh^{q-1}(\lambda) = 0$, so that  we have a short exact sequence:
  \[
    0 \To \Hh^{q-1}(B) \rTo{\rho} \Hh^q(C) \rTo{i} \Hh^q(A) \To 0.
  \]
  Let $\kappa^q:  \Hh^q(A) \to \Hh^{q-1}(B)[1]$ be the corresponding 
derived map, as in Definition~\ref{xiu.d}.
  Then $\kappa^q = (-1)^{q-1}L^q_\lambda[q]$, where $L^q_\lambda$ is the map defined in Lemma~\ref{lemma:def-L}.
\end{proposition}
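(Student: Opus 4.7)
The goal is to identify two morphisms in the derived category: the connecting morphism $\kappa^q = \xi_u$, defined from the short exact sequence $0 \to \Hh^{q-1}(B) \to \Hh^q(C) \to \Hh^q(A) \to 0$ via Definition~\ref{xiu.d}, and $L^q_\lambda$, defined from $\lambda$ and the vanishing hypothesis via Lemma~\ref{lemma:def-L}. My plan is to show that both maps, up to the sign $(-1)^{q-1}$, are pinned down by the same piece of the distinguished triangle --- namely its image under $\tau_{[q-1,q]}$ --- by invoking the uniqueness asserted in Lemma~\ref{lemma:def-L} and the uniqueness (cited from \cite[10.1.11]{kash.sm}) of $\xi_u$ for triangles of single-degree objects.

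The first step is to fix a concrete model: after replacing $A, B$ by quasi-isomorphic complexes I may assume $\lambda$ is a genuine morphism of complexes and identify $C$ with the shifted cone $C(\lambda)[-1]$, so that under this identification $\rho$ corresponds to the canonical $i_\lambda \colon B \to C(\lambda) = C[1]$ and $i$ to $-p_\lambda[-1]$, consistent with \eqref{eqn:distinguished}. In this model the LES of cohomology, and in particular the short exact sequence $0 \to \Hh^{q-1}(B) \to \Hh^q(C) \to \Hh^q(A) \to 0$, is given by an explicit snake-lemma chase inside the cone.

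Next I apply $\tau_{[q-1,q]}$ to the triangle. Since $\Hh^{q-1}(\lambda) = \Hh^q(\lambda) = 0$, Lemma~\ref{lemma:def-L} asserts that $\tau_{[q-1,q]}(\lambda)$ factors uniquely as the composition
\[
  \tau_{[q-1,q]}(A) \to \Hh^q(A)[-q] \xrightarrow{L^q_\lambda} \Hh^{q-1}(B)[1-q] \to \tau_{[q-1,q]}(B),
\]
where the outer arrows come from the truncation triangles of Proposition~\ref{truncation-triangle.p}. On the other hand, $\xi_u$ can be rewritten, via the very same truncation triangles and the mapping-cone definition in Definition~\ref{xiu.d}, as the unique map of single-degree objects completing the extracted SES to a distinguished triangle. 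Both characterizations refer to the same truncated portion of the original triangle, so one can match them and obtain the equality of the two maps up to a scalar.

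The remaining step, and the main obstacle, is the sign bookkeeping. Three sources of signs enter: the convention $\Con(u) \to A[1]$ has a $-p$ (used both in \eqref{eqn:distinguished} and in Definition~\ref{xiu.d}); the identification $C \cong C(\lambda)[-1]$ carries the $-p_\lambda[-1]$ sign over to $i$; and shifting a distinguished triangle by $[q]$ multiplies the connecting arrow by $(-1)^q$, per the conventions in \S\ref{ss:homalg-notation}. Once these are correctly assembled their product is $(-1)^{q-1}$, matching the claim $\kappa^q = (-1)^{q-1} L^q_\lambda[q]$. The uniqueness statements cited above then force the equality, because they each determine the relevant derived-category map up to at most a scalar once the surrounding diagram is fixed.
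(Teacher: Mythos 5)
Your overall strategy --- reduce to the window $[q-1,q]$, get everything concentrated in single degrees, and then invoke the uniqueness of the third map of a distinguished triangle of single-degree objects together with the uniqueness in Lemma~\ref{lemma:def-L} --- is the same as the paper's. But the step you describe as ``matching'' the two characterizations is exactly where the content of the proof lives, and your sketch does not supply it. Note first that $\tau_{[q-1,q]}$ does not preserve distinguished triangles, so you cannot literally ``apply $\tau_{[q-1,q]}$ to the triangle''; the paper instead first proves a functoriality reduction (the claim is invariant under maps inducing isomorphisms on $\Hh^{q-1}$ and $\Hh^q$), uses it to assume $A=\tau_{[q-1,q]}A$ and $B=\tau_{[q-1,q]}B$, then shows $\lambda$ factors as $A \to \Hh^q(A)[-q] \to \Hh^{q-1}(B)[1-q] \to B$ so that $A$, $B$, and hence $C$ may be replaced by single-degree complexes, and only then exhibits a concrete distinguished triangle $\Hh^{q-1}(B) \tto{\rho} \Hh^q(C) \tto{i} \Hh^q(A) \tto{(-1)^{q+1}\lambda[q]} \Hh^{q-1}(B)[1]$ by rotating the original triangle backwards and shifting by $[q]$. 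That explicit triangle is what lets the uniqueness statement bite; without producing it, ``one can match them'' is an assertion, not an argument.

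The sign is the other genuine problem. The three sources you name --- the $-p$ in \eqref{eqn:distinguished}, the sign on $i$ coming from the cone model, and the $(-1)^q$ from shifting --- multiply to $(-1)^{q}$, not $(-1)^{q-1}$, so the bookkeeping as you describe it does not close. In the paper's computation the remaining $-1$ appears when the rotated-and-shifted triangle $(-(-1)^q\rho,\ (-1)^q i,\ (-1)^q\lambda[q])$ is renormalized by an isomorphism of triangles to one whose first two maps are literally $\rho$ and $i$: that isomorphism pushes one more sign onto the third map, yielding $(-1)^{q+1}\lambda[q]=(-1)^{q-1}L^q_\lambda[q]$. In a statement whose entire content is a sign, asserting that the signs ``correctly assemble'' is not enough; you need to carry out the rotation, shift, and renormalization explicitly.
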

\begin{proof}
% We may assume that $q = 0$.   % careful!
First note that if $\lambda' \colon A' \to B'$ satisfies $\Hh^{q-1}(\lambda') = \Hh^q(\lambda') = 0$ and there is a~commutative diagram of the form
\[ 
  \xymatrix{
    A' \ar[r]^{\lambda'} \ar[d]_a & B' \ar[d]^b \\
    A \ar[r]_\lambda & B
  }
\]
with the property that $\Hh^i(a)$ and  $\Hh^i(b)$ are isomorphisms for $i = q-1,q$, then the proposition holds for $\lambda'$ iff it holds for $\lambda$. Indeed, any distinguished triangle containing $\lambda'$ fits into a commutative diagram:
\[ 
  \xymatrix{
    C' \ar[d]_c \ar[r]^{i'} & A' \ar[r]^{\lambda'} \ar[d]_a & B' \ar[d]^b \ar[r]^{\rho'} & C'[1] \ar[d]^{c[1]}   \\
    C \ar[r]_i & A \ar[r]_\lambda & B \ar[r]_\rho & C[1]
  }
\]

Applying the functor $\tau_{\leq q}$ leaves $\Hh^i$ unchanged for $i \le q$, and applying $\tau_{\geq q-1}$ leaves $\Hh^i$ unchanged for $i \ge q-1$.  Thus we may without loss of generality assume that $A=\tau_{[q-1,q]}(A)$ and $B=\tau_{[q-1,q]}(B)$.  We have a morphism of distinguished triangles:
\[ 
  \xymatrix{
    \Hh^{q-1}(A)[1-q] \ar[r]^-a \ar[d]_{\lambda=0} & A \ar[r]^-b\ar[d]^\lambda & \Hh^q(A)[-q] \ar[r]\ar[d]^{\lambda =0} & \Hh^{q-1}(A)[2-q] \ar[d]^{\lambda=0} \\
    \Hh^{q-1}(B)[1-q] \ar[r]^-a  & B \ar[r]^-b & \Hh^q(B)[-q] \ar[r] & \Hh^{q-1}(B)[2-q] 
  }
\]
The left map being zero by hypothesis, we have $\lambda \circ a = 0$, and hence $\lambda$ factors through $b\colon A \to \Hh^q(A)[-q]$. It thus suffices to prove the assertion with the morphism $\Hh^q(A)[-q] \to B$ in place of $\lambda$.  Similarly, since $\Hh^q(\lambda) = 0$, we may replace $B$ by $\tau_{\leq q-1}(B)$. Thus we are reduced to the case in which $A=\Hh^q(A)[-q]$ and $B=\Hh^{q-1}(B)[1-q]$.  It follows that $C=\Hh^q(C)[-q]$. Note that $\lambda = L^q_\lambda$ in this situation.
 Therefore we have a commutative diagram whose vertical maps are isomorphisms.
\[ 
  \xymatrix{
    B[-1] \ar[d]\ar[r]^{-\rho} & C \ar[r]^i \ar[d] & A \ar[d] \ar[r]^\lambda & B \ar[d] \\
    \Hh^{q-1}(B)[-q] \ar[r]_{-\rho} & \Hh^{q}(C)[-q] \ar[r]_i & \Hh^q(A)[-q] \ar[r]_-\lambda & \Hh^{q-1}(B)[1-q]
  }
\]
Since the top row is distinguished, it follows that the bottom row is distinguished as well. Applying $[q]$ shows that 
\[ 
    \Hh^{q-1}(B) \rTo{-(-1)^q \rho}  \Hh^{q}(C) \rTo{(-1)^q i}  \Hh^q(A) \rTo{(-1)^q \lambda[q]}  \Hh^{q-1}(B)[1]  
\]
is distinguished. This is isomorphic to 
\[
  \Hh^{q-1}(B) \rTo{\rho}  \Hh^{q}(C) \rTo{i}  \Hh^q(A) \rTo{(-1)^{q+1} \lambda[q]}  \Hh^{q-1}(B)[1].
\] 
As we observed after Definition~\ref{xiu.d}, the  fact that these complexes are concentrated
in a single degree implies that the last map  is the unique one making the triangle
distinguished.  Thus 
$\kappa = (-1)^{q+1} \lambda[q] = (-1)^{q+1} L_\lambda^q[q]$, as desired.  
\end{proof}

\section{Logarithmic preliminaries}
\label{prelim.s}

\subsection{Notation and conventions}
\label{ss:log-notation}
For the basic facts about log schemes,
 especially the definitions of log differentials and log smoothness,  we refer to Kato's seminal paper~\cite{kato.lsfi} and the forthcoming book~\cite{o.llogg}.  Here we recall a few essential notions and constructions for the convenience of the reader. 

{\bf Monoids and monoid algebras.} If $(P, +, 0)$ is a commutative monoid, we denote by $P^*$ the subgroup of units of $P$, by $P^+$ the complement of $P^*$, and by $\ov P$ the quotient of $P$ by $P^*$.  A monoid $P$ is said to be \textit{sharp} if $P^* = 0$.  If $R$ is a fixed base ring, we write $R[P]$ for the monoid algebra on $P$  over $R$.  This is the free $R$-module with basis  
\[
  e \colon P \to R[P] \quad :\quad p \mapsto \mex p
\]
and with multiplication defined so that $\mex p \mex q = \mex{p+q}$.  The corresponding scheme $\ms P := \spec (R[P])$ has a natural structure of a monoid scheme.  There are two natural augmentations $R[P] \to R$. The first of these, corresponding to the identity section of $\ms P$,  is  given by the homomorphism $P \to R$ sending every element to the identity  element 
$1$  of $R$.  The second, which we call the \textit{vertex} of $\ms P$, is defined by the homomorphism sending $P^*$ to $1 \in R$ and $P^+$ to $0 \in R$.  The two augmentations coincide if $P $ is a group.

A commutative monoid $P$ is said to be \textit{integral} if the universal  map $P  \to P^\g$ from $P$ to a group is injective. An integral monoid $P$ is said to be \textit{saturated} if for every  $x \in P^\g$  such that  $nx \in P$ for some positive integer $n$, in fact $x \in P$.   A monoid is \textit{fine} if it is  integral and finitely generated and is \textit{toric} if
it is  fine and  saturated and   $P^\g$ is torsion free.
An \textit{ideal in a monoid} $P$ is a subset of $P$ which is stable
under addition by elements of $P$. 
If $J$ is an ideal in  $P$, then $R[P,J]$ denotes the quotient
of the monoid algebra $R[P]$  by the ideal generated by $J$.
The complement of $J$ in $P$ is a~basis for the underlying
$R$-module of $R[P,J]$.

{\bf Log structures.} A \textit{prelog structure} on a ringed space $(X, \oh X)$ is a homomorphism $\alpha$ from a sheaf of commutative monoids $\cM$ to the multiplicative
monoid underlying $\oh X$.  A \textit{log structure} is a prelog structure such that the induced map $\alpha^{-1}(\oh X^*) \to \oh X^*$ is an isomorphism.  The
\textit{trivial} log structure is the inclusion $\oh X^* \to \oh X$. A ringed space{$X$} endowed with a log structure {$\alpha_X$} is referred to as a \textit{log space.}  
An \textit{idealized log space} is a log space $(X, \alpha_X)$ together with a sheaf of ideals $\cK_X$ in $\cM_X$ such that $\alpha_X(\cK_X) = 0$~\cite[1.1]{o.lrhc},\cite[\S III,1.3]{o.llogg}.
   A prelog structure $\alpha \colon P \to \oh X$  on a ringed space factors through a universal associated log structure $\alpha^a \colon P^a \to  \oh X$. A log structure
   $\alpha$ on $X$ is said to be \textit{fine} (resp. fine and saturated) if locally on $X$ there exists a fine (resp. fine and saturated) {constant sheaf of monoids} $P$ and a prelog structure $P  \to  \oh X$ whose associated log structure is $\alpha$.  There is an evident way to form a category of log schemes, and the category
of fine (resp. fine saturated) log schemes admits fiber products, although their construction is subtle. 
Grothendieck's deformation theory provides a geometric way to define smoothness for morphisms
of log schemes, and many standard ``degenerate'' families become logarithmically smooth
when endowed with a suitable log structure.
A morphism  of integral  log spaces $f \colon X \to Y$ is {\em vertical}
if the quotient $\cM_{X/Y}$ of the map $f^*_\lag (\cM_Y) \to \cM_X$, computed
in the category of sheaves of monoids, is in fact a group. 
  We shall use
the notions of exactness, integrality, and saturatedness for morphisms of log schemes,
for which we refer to the above references and also the papers~\cite{ts.smls} and~\cite{ikn.qlrhc}.

% \before{If $P$ is a commutative monoid and $P\to A$ is a homomorphism into the multiplicative monoid underlying a commutative ring $A$, we denote by $\spec(P\to A)$ the scheme $\spec A$ endowed with the log structure assocated to the prelog structure induced by $P\to A$. We denote by $\ls P$ the log scheme $\spec ( P \to R[P])$ where $R$ is a fixed base ring. If $P$ is  fine and $R = \bc$,  we write $\lsan P$ for the log analytic space associated to $\ls P$. (If the analytic
% context is clear, we may just write $\ls P$ for this space.) We call the log scheme $\spec(P\to R)$ where $P\to R$ is the vertex of $\ls P$ (i.e. $1$ on $P^*$, $0$ on its complement) the \emph{split log point} associated to $P$. In the case $P=\bn$, we call $\spec(\bn\to R)$ the \emph{standard log point}.
% % If $P$ is saturated (so that the underlying scheme of $\ls P$ is normal),  then in fact this log structure is just the sheaf of functions which vanish on the open subset $\lss P = \ls {P^\g}$, where $P^\g$ is the group envelope of $P$~\cite{kato.ts}. 
% If $J$ is an ideal in the monoid $P$,  we let $\ls {P,J}$ denote the closed 
% idealized log  subschemeof $\ls P$ defined by the ideal $J$ of $P$.  The
% underlying scheme of $\ls {P,J}$ is
% the spectrum of the algebra $R[P,J]$, and the 
%  points of $\lsan {P,J}$ are the homomorphisms $P  \to \bc$ sending $J$ to zero.}

If $P$ is a commutative monoid and $\beta \colon P\to A$ is a homomorphism into the multiplicative monoid underlying a commutative ring $A$, we denote by $\spec(\beta)$ the scheme $\spec A$ endowed with the log structure associated to the prelog structure induced by $\beta$.   In particular, if $R$ is a fixed base ring
and $ P \to R[P]$ is the canonical homomorphism from $P$ to the monoid $R$-algebra  of $P$, then
 $\ls P$  denotes the log scheme $\spec (P \to R[P])$, and if  $P$ is  fine and $R = \bc$,  we write $\lsan P$ for the log analytic space associated to $\ls P$. (If the analytic
context is clear, we may just write $\ls P$ for this space.)   If $v  \colon P \to R$ is the vertex of $\ms P$
(the homomorphism sending $P^+$ to zero and $P^*$ to $1$), the log scheme $\spec (v)$ is called
the   \emph{split log point} associated to $P$; it is called the \emph{standard log point} when $P = \bn$. 
% If $P$ is saturated (so that the underlying scheme of $\ls P$ is normal),  then in fact this log structure is just the sheaf of functions which vanish on the open subset $\lss P = \ls {P^\g}$, where $P^\g$ is the group envelope of $P$~\cite{kato.ts}. 
If $J$ is an ideal in the monoid $P$,  we let $\ls {P,J}$ denote the closed 
idealized log  subscheme of $\ls P$ defined by the ideal $J$ of $P$.  The
underlying scheme of $\ls {P,J}$ is
the spectrum of the algebra $R[P,J]$, and the 
 points of $\lsan {P,J}$ are the homomorphisms $P  \to \bc$ sending $J$ to zero.

If $P$ is a toric monoid, the log analytic space $\lsan P$ is a partial compactification of  its dense open subset $\mss P:= \lsan {P^\g}$, and the  (logarithmic) geometry of $\lsan P$  expresses the geometry of this compactified set, a manifold with boundary. The underlying topological space of $\mss P$ is $\Hom(P, \bc^*)$, and its fundamental group $\li P$ (the ``log inertia group'') will play a fundamental role in  what follows.

\subsection{Some groups and extensions associated to a monoid}\label{geam.ss}

Let us gather here the key facts and notations we shall be using. If $P$ 
is a toric monoid (\ie, if $P$ is fine and saturated and $P^\g$ is torsion free)  we define:
\begin{align*}
  \lt P &:= \Hom(P, \sone),   \text{ where } \sone := \{ z \in \bc : |z| =  1\},\\
  \lr P &:= \Hom (P, \br_\ge),   \text{ where } \br_\ge := \{ r \in \br : r   \ge 0, 
   \text{ with its multiplicative monoid law}\} \\
  \li P &:= \Hom(P, \bz(1)),   \text{ where } \bz(1) := \{ 2\pi  i n : n \in \bz\} \subseteq \bc, \\
  \lv P &:= \Hom(P, \br(1)),  \text{ where } \br(1) := \{ ir  : r  \in \br \} \subseteq \bc, \\ 
  \lL P &:= \{ \text{affine mappings  }\li P \to \bz(1) \}, \\
  \chi &: P^\g \isommap \Hom( \lt  P, \sone ) \,:\, p \mapsto \chi_p,   \text{ where }  \chi_p(\sigma) := \sigma(p), \\ 
  \tilde\chi &: P^\g \isommap \Hom(\li P,\bz(1))  \subseteq \lL P : p \mapsto \tilde\chi_p,   \text{ where } \tilde\chi_p(\gamma) := \gamma(p). 
\end{align*}

An affine mapping $f \colon \li P \to \bz(1)$ can be written uniquely as a sum $f = f(0) + h$, where $h$ is a homomorphism $\li P \to \bz(1)$.  Since  $P$ is toric,
 the map $\tilde \chi$ is an isomorphism, so $h = \tilde \chi_p$ for  a unique $p \in P^\g$.  Thus the group $\lL P$ is a direct sum $\bz(1) \oplus P^\g$, which we write as an exact sequence:
\begin{equation} \label{univextp.e}
  0 \To \bz(1) \To {\lL P} \rTo{\xi} P^\g \To 0,
\end{equation}
for reasons which will become apparent shortly. 

The inclusion $\sone \to \bc^*$ is a homotopy equivalence, and hence so is the induced map $\lt P \to \mss P$, for any  $P$.  Thus the fundamental groups of $\mss P$ and $\lt P$ can be canonically identified.  The exponential mapping $\theta \mapsto e^\theta$ defines  the universal covering space $\br(1) \to \sone$, and there is an induced covering space $ \lv P \to \lt P$.    The subgroup $\li P = \Hom (P, \bz(1))$ of $\lv P$ acts naturally on  $\lv P$ by translation: $(v,\gamma) \mapsto v + \gamma $ if $v \in \lv  P$ and $\gamma \in \li P$.  The  induced action  on $\lt P$ is trivial, and  in fact $\li P$  can be identified with the covering group of the covering $\lv P \to \lt P$, \ie, the fundamental group of $\lt P$.  (Since the group is abelian we do not need to worry about base points.) We view  $\li P$  as  acting on the right on the
geometric object $\lv P$ and on the left on the set of functions on $\lv P$: if  $f$ is a function on $\lv P$, we have  
\[
  (\gamma f)(v) =f (v+\gamma).
\]
In particular, if $f $ is constant, then $\gamma f = f$, and  if $p \in
P^\g$, 
\begin{equation} \label{gamact.e}
  \gamma \tilde \chi_p = \tilde \chi_p +\gamma(p) %\colon \lv P \to \br(1).
\end{equation}
It follows that the set $\lL P$ of affine mappings $\li P \to \bz(1)$  is stable under the \mbox{$\li P$-action~(\ref{gamact.e})}. The homomorphism $\tilde \chi \colon P^\g \to \lL P$ is a  canonical splitting of the  exact sequence~(\ref{univextp.e}),  but the splitting is not stable under the action of $\li P$,  as the formula~(\ref{gamact.e}) shows.   The formula also shows that the exact sequence~(\ref{univextp.e}) can be viewed as an extension of trivial $\li  P$-modules.  Any $f \in \lL P$ extends naturally to an affine transformation $\lv P \to \br(1)$, and  in fact $\lL P$ is the smallest $\li P$-stable subset of the set of functions $\lv P \to \br(1)$ containing  $\tilde \chi_p$ for all $p \in P^\g$. 

The dual of the extension (\ref{univextp.e}) has an important geometric interpretation.  Consider the group algebra $\bz[\li P]$ with basis $e \colon \li P \to \bz[\li P]$. It is equipped with a right action of $\li P$ defined by 
$e^\delta \gamma  = e^{\delta+ \gamma}$.  Its augmentation ideal $J$ is generated by elements of the form $e^\delta - 1$ for $\delta \in \li P$ and is stable under the action of $\li P$.  The induced action on $J/J^2$ is trivial, and there is an isomorphism of abelian groups:
\begin{equation} \label{1-gamma.e}
  \lambda \colon \li P \To J/J^2, \quad  \gamma \mapsto [ \mex\gamma - \mex 0].
\end{equation}
Identifying $\li P$ with $J/J^2$, we have a split exact sequence of $\li P$-modules:
\begin{equation} \label{groupal.e}
  0 \To \li P \To \bz[\li P]/J^2 \To \bz \To 0,
\end{equation}
where the action of $\li P$ on $\li P$ and on $\bz$ is trivial. 

\begin{proposition} \label{extens.p}
  There is a natural isomorphism
  \[
    \lL P \isommap \Hom(\bz[\li P]/J^2, \bz(1)),
  \]
  compatible with the structures of extensions (\ref{univextp.e}) and (\ref{groupal.e}) and the  (left) actions of $\li P$.  The boundary map $\partial$ arising from the extension~(\ref{univextp.e})
  \[ 
    \partial \colon P^\g \To H^1(\li P ,\bz(1)) \cong \Hom(\li P,\bz(1)) \cong P^\g
  \]
  is the identity.  
\end{proposition}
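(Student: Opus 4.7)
The plan is to construct the isomorphism from a natural bilinear pairing
\[
  \lL P \times \bz[\li P]/J^2 \To \bz(1), \qquad (f, \mex\gamma) \mapsto f(\gamma),
\]
extended $\bz$-linearly in the second argument; this yields a map $\phi \colon \lL P \to \Hom(\bz[\li P]/J^2, \bz(1))$. The one genuine verification is that $\phi(f)$ annihilates $J^2$: since $J^2$ is generated by the elements $(\mex\alpha - \mex 0)(\mex\beta - \mex 0) = \mex{\alpha+\beta} - \mex\alpha - \mex\beta + \mex 0$, the requirement reduces to $f(\alpha + \beta) - f(\alpha) - f(\beta) + f(0) = 0$, which is precisely the affineness of $f$. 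The inverse assignment $\psi \mapsto (\gamma \mapsto \psi(\mex\gamma))$ is controlled by the same identity, so $\phi$ is bijective.

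Next I would verify compatibility with the extension structures. A constant $c \in \bz(1) \subseteq \lL P$ maps under $\phi$ to the composite $\bz[\li P]/J^2 \twoheadrightarrow \bz \rTo{c} \bz(1)$, which is the image of $c$ under $\bz(1) = \Hom(\bz, \bz(1)) \hookrightarrow \Hom(\bz[\li P]/J^2, \bz(1))$. Conversely, restricting $\phi(f)$ to $J/J^2$ (identified with $\li P$ via~(\ref{1-gamma.e})) gives the linear part $\gamma \mapsto f(\gamma) - f(0)$, which under $\tilde\chi^{-1}$ recovers the image of $f$ under $\xi$. For $\li P$-equivariance, the right action of $\gamma$ on $\bz[\li P]/J^2$ sends $\mex\delta$ to $\mex{\delta + \gamma}$, so the induced left action produces $(\gamma \cdot \phi(f))(\mex\delta) = f(\delta + \gamma) = (\gamma f)(\delta)$, exactly matching the left action on $\lL P$ defined before~(\ref{gamact.e}).

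For the boundary map, I would use the set-theoretic section $\tilde\chi \colon P^\g \to \lL P$, $p \mapsto \tilde\chi_p$, which fails to be $\li P$-equivariant precisely by~(\ref{gamact.e}). Since $\li P$ acts trivially on $P^\g$, every $p$ lies in $(P^\g)^{\li P}$, and the standard recipe gives $\partial(p) = [\gamma \mapsto \gamma \cdot \tilde\chi_p - \tilde\chi_p]$; by~(\ref{gamact.e}) this cocycle is the constant function $\gamma(p) \in \bz(1) \subseteq \lL P$. Viewed as a homomorphism $\li P \to \bz(1)$, it is the map $\gamma \mapsto \gamma(p)$, which is $\tilde\chi_p$ itself. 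Under the chain $H^1(\li P, \bz(1)) \cong \Hom(\li P, \bz(1)) \cong P^\g$ (the last via $\tilde\chi^{-1}$), the class $\partial(p)$ therefore returns to $p$, so $\partial = \id$.

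The only genuine content lies in identifying affine maps on $\li P$ with linear forms on $\bz[\li P]/J^2$, which is the symmetric cross-difference identity. The bulk of the work is bookkeeping with the splittings $\tilde\chi$ and the augmentation, and the main hurdle, such as it is, is keeping the $\li P$-equivariance of $\phi$ compatible with the necessary non-equivariance of these splittings.
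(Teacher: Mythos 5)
Your proposal is correct and follows essentially the same route as the paper: both identify $\lL P$ with the homomorphisms on $\bz[\li P]$ killing $J^2$ via the cross-difference identity $f(\alpha+\beta)-f(\alpha)-f(\beta)+f(0)=0$, and both compute $\partial(p)$ by measuring the failure of equivariance of a lift of $p$ (your cocycle $\gamma\mapsto\gamma\cdot\tilde\chi_p-\tilde\chi_p=\gamma(p)$ is exactly the paper's torsor-difference $\gamma(f)-f$). The only cosmetic difference is that the paper first sets up the bijection between all functions on $\li P$ and all homomorphisms on $\bz[\li P]$ and then cuts down by $J$ and $J^2$, whereas you define the pairing directly; the content is identical.
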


\begin{proof}
Since $\bz[\li P]$ is the free abelian group with basis $\li P$, the map $f \to h_f$  from the set of functions $f \colon \li P \to \bz(1)$ to the set of homomorphisms $\bz[\li P] \to \bz(1)$  is an isomorphism, compatible with the natural left actions of $\li P$. If $f : \li P \to \bz(1)$, then $h_f$ annihilates $J$ if and only if $f(\gamma) = f(0)$ for every $\gamma$, \ie, iff $f \in \bz(1) \subseteq L_P$.  Furthermore, $h_f$ annihilates $J^2$ if and  only if  for every  pair  of elements $\gamma, \delta$ of $\li P$,  
\[ 
  h_f((\mex \delta - 1)(\mex \gamma -1))= 0, \text{  i.e.,  iff }
  f(\delta +\gamma) - f(\delta) -f(\gamma) + f(0) = 0.
\]
But this holds if and only if $f(\delta +\gamma) - f(0) = f(\delta) - f(0) + f(\gamma) - f(0)$, \ie, if and only if $f - f(0) $ belongs to $\tilde \chi(P^\g)$,
\ie, if and only if $f \in \lL P$. 

To check the claim about the boundary map $\partial$, and in particular its sign, we must clarify our conventions. If $\Gamma$ is a group and $E$ is a $\Gamma$-module, then we view  $H^1(\Gamma, E)$ as the set of isomorphism classes of $E$-torsors in the category of $\Gamma$-sets.  If the action of $\Gamma$ on $E$ is trivial and $L$ is  such a torsor, then for any $\ell \in L$ and any $ \gamma \in \Gamma$, the element $\phi_{L,\gamma} :=\gamma(\ell) -\ell$ is independent of the choice of $\ell$, the mapping $\gamma \to \phi_{L,\gamma}$ is a~homomorphism $\phi_L \colon \Gamma \to E$, and the correspondence $L \mapsto \phi_L$ is the  isomorphism:
\begin{equation} \label{h1hom.e}
  \phi \colon H^1(\Gamma,E) \To \Hom(\Gamma,E).
\end{equation}

To verify the claim, let $p$ be an element of $P^\g$. Then  $\partial(p) \in H^1(\li P,\bz(1))$ is the $\bz(1)$-torsor of all $f \in L_P$ whose image under $\xi \colon \lL P \to P^\g$ is $p$.  Choose any such $f$, and write $f =f(0) + \tilde \chi_p$.  Then if $\gamma \in \li P$,  we have $\gamma(f) = f + \gamma(p)$, and thus 
\[ 
  \partial (p) \mapsto\phi_L(\gamma) = \gamma(f) - f = \gamma(p) = \tilde \chi_p(\gamma).
\]
This equality  verifies our claim.
\end{proof}

\subsection{Betti realizations of  log schemes}

Since the Betti realization $X_\lag$ of an fs-log analytic space $X$
plays a crucial role here, we briefly review its construction.
As a set, $X_\lag$  consists of pairs $(x,\sigma)$,  where $x$ is a point of $X$ and $\sigma$ is a homomorphism  of monoids making the following diagram commute:
\[ 
  \xymatrix{
    \oh {X,x}^* \ar[r] \ar[d]_{f\mapsto f(x)} & \cM_{X,x}  \ar@{.>}[d]^\sigma \\
    \bc^* \ar[r]_\arg & \sone.
  }
\]
The map $\tau_X \colon X_\lag \to X$ sends $(x,\sigma)$  to $x$.  A  (local) section $m$ of $\cM_X$ gives rise to a (local) function $\arg(m) \colon X_\lag \to\sone$, and the topology on $X_\lag$ is  the weak topology coming from the map $\tau_X$ and these functions. The map $\tau_X$ is proper, and for $x \in X$, the fiber $\tau_X^{-1}(x)$ is naturally a torsor under the group $\lt{X,x} :=\Hom(\ov  \cM_{X,x}^\g, \sone)$.    Thus the fiber is connected if and only if $\ov \cM_{X,x}^\g$ is torsion free, and if this is the case, the fundamental group $\li {X,x}$ of the fiber is canonically isomorphic to $\Hom(\ocM_{X,x}, \bz(1))$. 
The  map $\tau_X \colon X_\lag\to X_\tp$ is characterized by the property that for every topological space $T$,  the set of morphisms 
$T  \to  X_\lag$ identifies with the  set of pairs $(p,c)$, where
 $p \colon T \to X_\tp$ 
is a continuous map and $c \colon p^{-1}(\cM_X^\g) \to \sone_T$
 is a homomorphism from  $p^{-1}(\cM_X^\g)$ to the sheaf  $\sone_T$ of continuous $\sone$-valued functions on $T$ such that 
$ c(f)=\arg(f)\text{ for all }f\in p^{-1}(\oh X^*)$.

When $X= \ls {P,J}$, the construction of $S_\lag$ can be understood easily as the introduction of ``polar coordinates.''  The multiplication map ${\br_\ge \times \sone }\to \bc$ maps polar coordinates to  the  standard complex coordinate.
Let $\lr {P,J} := \{ \rho \in \lr P : \rho(J) = 0 \}$.  Multiplication
induces a  natural surjection:
\begin{equation}\label{taudef.e}
  \tau \colon \lr {P,J} \times \lt P \To \lsan {P,J}: (\rho,\sigma) \mapsto \rho \sigma.
  \end{equation}
Then $\lslag{P,J} \cong \lr {P,J}\times \lt P$, and $\tau$ corresponds to the canonical map  $\tau_{\lslag {P,J}}$.  The exponential map induces a universal covering:
\begin{equation} \label{etacov.e}
  \eta \colon \tlslag {P,J} := \lr {P,J} \times \lv P  \To \lslag {P,J},
\end{equation}
whose covering group identifies naturally with $\li P$.  Thus the group $\li P$ is also the fundamental group of $\lslag{P,J}$.

\begin{remark}\label{xlogbund.r}
If $X$ is a smooth curve endowed with the compactifying log structure induced by the complement of a point $x$, then $\tau_X\colon X_\lag\to X_\tp$ is the ``real oriented blow-up''of $X$
 at $x$, and there is a natural bijection between $\tau_X^{-1}(x)$ and the set of ``real tangent directions'' $(T_x \und X\setminus \{0\})/\br_>$ at $x$. Below we provide a more general and robust identification of this kind.  

If $X$ is any log analytic space and $q$ is a global section of $\ocM_{X}^\g$, 
let $\cLL_q^*$ denote the  sheaf of sections of $\cM_X^\g$ which
map to $q$.  This sheaf has a natural structure of an \mbox{$\oh
X^*$-torsor}, and we let $\cLL_q$ denote the corresponding invertible
sheaf of $\oh X$-modules
and $\cLL_q^\vee $ its dual.    A local section $m$ of $\cLL^*_q$
defines a local generator for the invertible sheaf $\cLL_q$.
% If $(x,\sigma) \in X_\lag$,
% the map $\sigma$ defines  a unique  map  of $\bc^*$-sets
% $\cLL^*_{q}(x)\to \sone \subseteq \bc$ taking $m(x)$ to $\sigma(m)$.
If $(x,\sigma)$ is a point of $X_\lag$ and $m $ is a local
section of $\cLL^*_q$,  let $m(x)$ be the value of $m$
in the one-dimensional $\bc$-vector space $\cLL_q(x)$
 and let   $ \phi_m  \in \cLL^\vee_q(x)$ 
be the unique linear map $\cLL(x) \to \bc$ taking $m(x)$ to
$\sigma(m)$. 
If $m'$ is another local section of $\cLL^*_q$, there is a unique local
section $u$ of $\oh X^*$ such that $m' = um$, and then
${\phi_{m'}} = |u(x)|^{-1} \phi_m$.  Indeed:
\begin{align*}
  \phi_{m'}( m(x)) &= u(x)^{-1}\phi_{m'}(m'(x)) = u(x)^{-1} \sigma(m')\\
  &= u(x)^{-1}\arg(u(x)) \sigma (m) = |u(x)|^{-1} \phi_m(m(x)).
\end{align*}
Thus $\phi_m' $ and $\phi_m$ have the same image in the quotient of 
$\cLL^\vee_{q}(x)$ by the action of $\br_>$. This quotient corresponds
to the set of directions in the one-dimesional  complex vector space
$\cLL_q^\vee(x)$.   If $L$ is any one-dimensioonal complex vector space,
it seems reasonable to denote the quotient $L/\br_>$
by $\sone(L)$.
Thus we see that there is a natural map:
$\beta \colon \tau_X^{-1}(x)\to \sone({\cLL_q^\vee(x)})$. 
The source of this continuous map is
a torsor under  $\lt{X,x} = \Hom(\ocM_{X,x}^\g.\sone)$
and its target is naturally a torsor under $\sone$.  One verifies
immediately that
if  $ \zeta \in \Hom(\ocM_X^\g, \sone)$ and $\sigma \in \tau_X^{-1}(x)$, 
then $\beta(\zeta \sigma)  = \zeta(q) \beta(\sigma)$.

When $\alpha_X$ is the  the log structure
coming from a divisor $D$ on $X$, the divisor $D$ gives
rise to a global section $q$  of $\ocM_X$, the invertible sheaf $\cLL_q$ 
is the ideal sheaf defining $D$, and $\cLL_q^\vee(x)$ is the 
normal bundle to $D$ at $x$.
 In particular, if $X$ is a curve, then $\cLL_q^\vee(x)\cong T_x (\und X)$ and $\sone({\cLL_q^\vee(x)})$ is the aforementioned space $(T_x (\und X) \setminus\{0\})/\br_>$ of real tangent directions at $x$.
\end{remark}

On the space $X_\lag$ one can make sense of logarithms of sections of $\cM_X$. There is an exact sequence of abelian sheaves:
\begin{equation} \label{logexp.e}
  0 \To \bz(1) \To \mathcal{L}_X \rTo{\pi} \tau_X^{-1}(\cM^\g_X) \To 0
\end{equation}
where $\mathcal{L}_X$ is defined via the Cartesian diagram
\[ 
  \xymatrix{
    \mathcal{L}_X \ar[r] \ar[d]_\pi  & \br(1)_{X_\lag} \ar[d]^\exp  \\
    \tau_X^{-1}(\cM^\g_X) \ar[r]_\arg  & \sone_{X_\lag}.
  }
\]
There is also a homomorphism:
\begin{equation} \label{otol.e}
  \ep \colon \tau_X^{-1}(\oh X) \To \mathcal{L}_X \quad :\quad f \mapsto (\exp f, \im(f)),
\end{equation}
and the sequence
\begin{equation} \label{otolex.e}
  0 \To  \tau_{X}^{-1}(\oh X) \To \mathcal{L}_X \To \tau_X^{-1}(\ocM_X^\g) \To 0
\end{equation}
is exact.
When the log structure on $X$ is trivial the map $\ep$  is an isomorphism, and the exact sequence~(\ref{logexp.e}), called the ``logarithmic exponential sequence''
reduces to  the usual exponential sequence on $X$.

We can make this construction explicit in a special ``charted'' case.

\begin{proposition}\label{lslogp.p}
Let $X := \lsan {P,J}$, where $J$ is an ideal in a sharp toric monoid $P$,
let $\eta \colon \tilde X_\lag  \to X_\lag$ be the covering~(\ref{etacov.e}),
and let $\tilde \tau_X := \tau_X \circ  \eta$.
Then on $\tilde X$, the pullback 
\[
  0 \To \bz(1) \To \mathcal{L}_P \To P^\g \To 0,
\]
of the extension~(\ref{logexp.e}) along the natural map
$P^\g \to  \tilde \tau_X^{-1}(\cM_X^\g)$ 
identifies with  the sheafification 
of the extension~(\ref{univextp.e}).
This identification is compatible with the actions of $\li P$. 
\end{proposition}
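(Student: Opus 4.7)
The plan is to unravel the pullback construction of $\mathcal{L}_P$ explicitly on the contractible covering $\tlslag{P,J} = \lr{P,J} \times \lv P$ and identify its local sections with elements of $\lL P$ point by point.

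First, I would pull back the Cartesian square defining $\mathcal{L}_X$ along $\eta$ and along the chart map $P^\g \to \tilde\tau_X^{-1}(\cM_X^\g)$. By the universal property of pullbacks, a local section of $\mathcal{L}_P$ consists of a pair $(p,\theta)$, where $p \in P^\g$ (a constant section via the chart) and $\theta$ is a continuous local section of $\br(1)_{\tilde X_\lag}$ satisfying $\exp(\theta) = \arg(p)$. Now $\arg(p)$ evaluated at a point $(r,v) \in \tlslag{P,J}$ is tautologically $\exp(v(p))$, where $v \in \lv P$ is extended to $P^\g$ using torsion-freeness. Hence $\theta(r,v) - v(p)$ is a continuous $\bz(1)$-valued function, and so locally constant. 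Therefore a local section of $\mathcal{L}_P$ is uniquely described by a pair $(p,c)$, with $p \in P^\g$ and $c$ a locally constant $\bz(1)$-valued function, corresponding to $\theta(r,v) = v(p) + c$.

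Next I would define the comparison map $\mathcal{L}_P \to \lL P$ on local sections by sending $(p,c)$ to the affine map $f_{p,c} \colon \li P \to \bz(1)$, $\gamma \mapsto c + \gamma(p) = c + \tilde\chi_p(\gamma)$. This affine map has constant term $c$ and linear part $\tilde\chi_p$, so the assignment is a sheaf-level bijection fitting into a morphism of short exact sequences
\[
  \xymatrix{
    0 \ar[r] & \bz(1) \ar@{=}[d] \ar[r] & \mathcal{L}_P \ar[d] \ar[r] & P^\g \ar@{=}[d] \ar[r] & 0 \\
    0 \ar[r] & \bz(1) \ar[r] & \lL P \ar[r]_\xi & P^\g \ar[r] & 0,
  }
\]
inducing the identity on the outer terms; the five lemma then gives an isomorphism of extensions.

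Finally, for $\li P$-equivariance, the right translation by $\gamma_0 \in \li P$ sends $(r,v)$ to $(r,v+\gamma_0)$, so the corresponding left action on the section $\theta = v(p) + c$ yields $(\gamma_0\theta)(r,v) = (v+\gamma_0)(p) + c = v(p) + c + \gamma_0(p)$; that is, $\gamma_0\cdot(p,c) = (p, c+\gamma_0(p))$. Under the comparison map this is $c + \tilde\chi_p + \gamma_0(p) = \gamma_0(c + \tilde\chi_p)$ by formula~(\ref{gamact.e}), matching the $\li P$-action on $\lL P$. The only mild subtlety lies in tracking the conversion between the geometric right action on $\tlslag{P,J}$ and the induced left action on sheaf sections; once the conventions of \S\ref{geam.ss} are respected, no real obstacle remains and the verification reduces to the direct comparison above.
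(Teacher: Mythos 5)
Your proof is correct and follows essentially the same route as the paper: the paper constructs the inverse of your comparison map, sending $f = f(0) + \tilde\chi_p \in \lL P$ to the section whose value at $(\rho,\theta)$ is the pair $(f(\theta),p)$, which rests on the same computation $\exp(\theta(p)) = \sigma(p) = \arg(p)$ that you use to show $\theta - v(p)$ is a locally constant $\bz(1)$-valued function. Your explicit verification of the $\li P$-equivariance via formula~(\ref{gamact.e}) is a welcome addition, as the paper asserts this compatibility without spelling it out.
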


\begin{proof}
It is enough to find a commutative diagram
\[ 
  \xymatrix{
    0 \ar[r] & \bz(1) \ar[r] \ar@{=}[d] & \lL P \ar[r] \ar[d] & P^\g \ar[d] \ar[r] & 0 \\
    0 \ar[r] & \bz(1) \ar[r] & \eta_S^* (\mathcal{L}_S ) \ar[r] &  \tilde \tau_S^{-1}(\cM_S^\g) \ar[r] & 0.  
  }
\]
We define a map $\lL P \to \eta_S^*( \mathcal{L}_S)$ as follows.  Every $f  \in \lL P$ can be written uniquely as $f = f(0) + \tilde \chi_p$,  where $p \in P^\g$ and $f(0)  \in \bz(1)$.  Let $(\rho,\theta)$ be a point of $\tilde S_\lag$, with image $(\rho, \sigma) \in S_\lag$.  Then  the pair  $(f(\theta), p)  \in \br(1)\times P^\g$ defines an element of $\mathcal{L}_{S,(\rho,\sigma)}$, because  
\[
  \sigma(p) = \exp (\theta(p)) = \exp(\tilde \chi_p(\theta) )
  = \exp(f(\theta)-f(0)) = \exp(f(\theta)),
\]
since $f(0) \in \bz(1)$. 
\end{proof}

\section{Logarithmic degeneration}
\label{logdeg.s}

\subsection{Log  germs and log fibers}
\label{logfib.s}
We begin with an illustration of the philosophy that the local geometry of a suitable morphism can  be computed from its log fibers. We use the following notation and terminology. If  $\tau\colon X' \to X$ is a continuous map of topological spaces, then $\Cyl(\tau)$ is the \textit{(open) mapping cylinder} of $\tau$, defined as the pushout in the diagram:
\[ 
  \xymatrix{
    X' \ar[d]_\tau \ar[r] & X'\times [0, \infty)\ar[d]^{\pi} \\
    X \ar[r] & \Cyl(\tau), 
  }
\]
where the top horizontal arrow is the embedding sending $x' \in X'$ to $(x',0)$.  In $\Cyl(\tau)$, the point $(x',0)$ becomes identified with the point $\tau(x)$.   A commutative diagram:
\[ 
  \xymatrix{
    X' \ar[d]_{\tau_X} \ar[r]^{f'} & Y'\ar[d]^{\tau_Y} \\
    X \ar[r]_f & Y 
  }
\]
induces a mapping $\Cyl_f \colon \Cyl(\tau_X)  \to \Cyl(\tau_Y)$.

\begin{theorem} \label{discr.t}
  Let $ f\colon X \to Y$ be a morphism of fine saturated log analytic
  spaces, where $Y$ is
  an open neighborhood of the origin $v$  of the standard log disc
  $\ls \bn$. 
 Assume that $f$ is proper, smooth, and vertical.
 Then after $Y$  is replaced by a possibly smaller
  neighborhood of $v$,  there is a commutative diagram:
   \[ \xymatrix{
      \Cyl(\tau_{X_v}) \ar[r]^\sim \ar[d]_{\Cyl_{f_v}} & X_\tp \ar[d]^{f_\tp} \\
      \Cyl(\tau_v) \ar[r]^\sim & Y_\tp,
    }
  \]
  in which the horizontal arrows are isomorphisms.
  {(These arrows are neither unique nor canonical, and
  depend on a choice of a trivialization of a fibration
  (see Lemma~\ref{trivfib.l}).)}
\end{theorem}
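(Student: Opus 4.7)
The plan is to exploit the polar decomposition of $Y_\lag$ furnished by the map \eqref{taudef.e} (with $P = \bn$ and $J = \emptyset$) in order to reduce the statement to a radial trivialization of $f_\lag$. Under this decomposition, $Y_\lag$ is identified with $\br_\ge \times \sone$, so that $\tau_Y$ becomes the polar-to-Cartesian map $(r,\sigma)\mapsto r\sigma$, the fiber $v_\lag$ sits as $\{0\}\times \sone$, and $Y_\tp \cong \bc$ is visibly the open mapping cylinder of the constant map $\tau_v\colon \sone \to \{v\}$. This identification $\Cyl(\tau_v)\isommap Y_\tp$ serves as the bottom arrow of the desired square.

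For $X$, the first input is the fact that a proper, log smooth, vertical morphism of fs log analytic spaces induces a proper topological submersion, and in fact a locally trivial fiber bundle, on Betti realizations (results of Nakayama and Nakayama--Ogus). After shrinking $Y$ around $v$, we may therefore assume that $f_\lag\colon X_\lag \to Y_\lag$ is such a bundle with compact fibers. Outside the special fiber, the log structure on $X$ is trivial, so $\tau_X$ restricts to a homeomorphism $X_\lag|_{Y^*_\lag}\isommap X_\tp\setminus X_{v,\tp}$.

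The central step is to produce, via Lemma~\ref{trivfib.l}, a radial trivialization
\[
\Phi \colon X_{v,\lag} \times \br_\ge \isommap X_\lag
\]
over $Y_\lag = \sone \times \br_\ge$ that restricts to the identity on the slice $r = 0$. Composing with $\tau_X$ then yields a proper continuous map $X_{v,\lag}\times \br_\ge \to X_\tp$ that agrees with $\tau_{X_v}$ on the slice $r = 0$ and is a homeomorphism onto $X_\tp \setminus X_{v,\tp}$ for $r > 0$. The universal property of the open mapping cylinder produces a continuous bijection $\Cyl(\tau_{X_v}) \to X_\tp$; properness of $f$ together with local compactness of both spaces upgrade this to a homeomorphism, and compatibility with the bottom arrow is built into the construction.

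The principal obstacle is the construction of $\Phi$. Although the radial coordinate $\br_\ge$ is contractible, the base $Y_\lag = \sone \times \br_\ge$ is not, so a trivialization cannot be read off from a single contractibility argument: one must glue local radial trivializations of $f_\lag$ across $\sone$, typically via an Ehresmann-type flow along a horizontal vector field lifting $\partial_r$. This is precisely the content of Lemma~\ref{trivfib.l}, and the freedom in choosing such a flow is responsible for the non-canonicity of the horizontal isomorphisms noted in the statement.
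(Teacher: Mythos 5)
Your proof follows essentially the same route as the paper's: the polar identification $Y_\lag\cong\sone\times[0,\ep)$ for the bottom arrow, the radial trivialization of $f_\lag$ supplied by Lemma~\ref{trivfib.l}, and the observation that verticality concentrates the nontrivial log structure on $X_v$, so that $\tau_X$ collapses exactly the special fiber and the mapping cylinder appears. The only cosmetic differences are that the paper packages the final identification as a cocartesian square (Lemma~\ref{taupush.l}) rather than as a proper continuous bijection between locally compact spaces, and that Lemma~\ref{trivfib.l} is in fact proved by homotopy invariance of fibration pullbacks (triviality of a principal $G$-bundle over $ir\simeq\id$) rather than by an Ehresmann-type flow --- but since you invoke that lemma as a black box, this does not affect the validity of your argument.
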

\begin{proof}
  Note that since the stalks of $\ov \cM_{Y}$ are
  either $0$ or $\bn$,
  the morphism $f$ is automatically exact. 
We may  assume that $Y = \{ z \in \bc : |z| < \ep \}$ for some $\ep > 0$.  Then  $Y_\lag  \cong v_\lag \times [0,\ep)  \cong \sone \times [0, \infty) $. 
With this identification, the map $\tau_Y$ is just the collapsing map
shrinking $\sone \times 0$ to a point, and hence induces a
homeomorphism $\Cyl(\tau_v) \to Y_\tp$.
The following lemma generalizes this construction.

\begin{lemma}\label{taupush.l}
  Let $X$ be a fine log analytic space and let $X^+$ be
the closed subspace of $X$ on which the log structure is nontrivial,
endowed with the induced log structure.  Then the
diagram
\[\xymatrix{
X^+_{\lag} \ar[r]\ar[d]_{\tau_{X'}} & X_\lag \ar[d]^{\tau_X} \cr
X^+_{\tp} \ar[r] & X_\tp  \cr
}\]
is cocartesian as well as cartesian.
\end{lemma}
\begin{proof}
  The diagram is cartesian because formation of $X_\lag$ is compatible
with strict base change.  To see that it is cocartesian, observe that since
$\tau_X$ is surjective and proper, $X_\tp$ has the quotient topology
induced from $X_\lag$.  Since $\tau_X$ is an isomorphism over $X_\tp
\setminus X^+_{\tp}$, the equivalence relation defining $\tau_X$ is generated by the equivalence relation defining $\tau_{X^+}$.  It follows that the square is a pushout, \ie, is cocartesian.
\end{proof}
Let $Y$ be an open disc as above and fix an identification
$Y_\lag \cong \sone \times [0,{\infty})$. 
\begin{lemma}\label{trivfib.l}
Let $f \colon X \to Y$ be a smooth and  proper morphism of
fine saturated log analytic spaces, where $Y$ is an open log disc as
above.  
Then there exist a homeomorphism $ X_{v,\lag}\times [0, {\infty})  \to X_\lag$ 
and a commutative diagram:
\[\xymatrix{
%&X_{v,\lag} \ar[dl]\ar[dr]\cr
X_{v,\lag}\times [0,{\infty})\ar[d]_{f_{v,\lag}\times \id} \ar[r] &X_\lag \ar[d]^{f_\lag} & \cr
v_\lag \times [0, {\infty}) \ar[r]^-\sim  &Y_\lag ,
}\]
where the restrictions of the  horizontal arrows to
 $X_{v,\lag}\times 0$ and  $v_\lag\times 0$
are the inclusions. 
\end{lemma}
\begin{proof}
  Since $Y$ is  a log disc, the morphism $f$
  is automatically exact.  Thne by \cite[5.1]{no.rr}, the map $f_\lag$ is a topological fiber bundle,
and since $Y_\lag$ is connected, all fibers are homeomorphic.
Let $r \colon Y_\lag = \sone \times [0, \ep) \to v_\lag$
be the obvious projection and let $i \colon v_\lag \to 
Y_\lag$ be the embedding at $0$.  
Then $f_{v,\lag}\times \id$  {identifies with} the pullback of $f_\lag$ along
$ir$.  The space of isomorphisms of fibrations
$f_{v,\lag}\times \id \to f_{\lag}$ is a principal $G$-bundle, where
$G$ is the group of automorphisms of the fiber, endowed with the
compact open topology.  Since $ir $ is homotopic to the identity,
it follows from \cite[IV,9.8]{hus.fb} that this principal $G$-bundle is trivial,
proving the lemma. 
\end{proof}

The diagram of Lemma~\ref{trivfib.l} forms the rear square of the
  following   diagram:

  \[ 
   \xymatrix{
     &X_{v,\lag} \times [0,\infty)
     \ar[rr]\ar[dl]_\pi\ar[dd]_>>>>>>>>>{f_{v\lag}\times \id} &&X_\lag\ar[dl]_{\tau_X} \ar[dd]^{f_\lag}\cr
      \Cyl(\tau_{X_v}) \ar@{.>}[rr] \ar[dd]_{\Cyl_{f_v}} && X_\tp \ar[dd]^>>>>>>>>>>>>>>>>>{f_\tp}\\
     &{v_\lag}\times[0,\infty) \ar[rr]\ar[dl]_\pi&& Y_\lag\ar[dl]^{\tau_Y} \cr 
      \Cyl(\tau_v) \ar@{.>}[rr] && Y_\tp,}
  \]
The map $\pi$, from the definition of the mapping cylinder, is part of
the pushout diagram which identifies a point $(x_\lag,0)$ with
$\tau_{X_v}(x) \in X_{v,\tp} \subseteq X_\tp$, and the existence
of the dotted  arrows follows. Because the morphism $f$ is
vertical, the subset $X^+$ of $X$ where the log structure is
nontrivial is just $X_v$, and Lemma~\ref{taupush.l} tells us that
the morphism $\tau_X$ is also a pushout making the same
identifications.  Thus the horizontal arrows are
homeomorphisms, and the proposition follows.
\end{proof}

\begin{remark}
  Although we shall not go into details here, let us mention that the same result, with the same proof,
holds if $X\to Y$ is only {relatively smooth}, as defined in \cite{no.rr}.
\end{remark}

More generally, suppose that
$P$ is a sharp toric monoid and   that
$Y$ is a neighborhood of the vertex $v$ of $\ls P$.
Note that $v$ has  a neighborhood basis
of sets of the form $V_P := \{ y \in \ls P : |y| \in V \}$,
where $V$ ranges over the open neighborhoods
of the vertex of $\lr P$.  If $f \colon X \to V_P$
is a morphism of log spaces,
let $g \colon X \to V := |f|$,
 and note that $X_{v,\lag} = g^{-1}(0) = (\tau_Y\circ
 f_\lag)^{-1}(v)$.  For each $x \in X_v$, the fiber $\tau^{-1}_X(x)$
 is a torsor under the action of $\lt {X,x} := \Hom(\ov \cM_{X,x},
 \sone)$.  For $\rho \in V \subseteq \lr P$, let $F(\rho) :=
 \rho^{-1}(\br_>)$, a face of $P$,  and let $G(\rho) $ be  the face of $\ocM_{X,x}$ generated by the image of $F(\rho)$ in $\ocM_{X,x}$ via the homomorphism $f_x^\flat \colon P \to \ocM_{X,x}$.  Then we set:
 \begin{eqnarray*}
\lt {Y,\rho} &:= &\Hom(P/F(\rho),\sone) \subseteq \lt P \\
  \lt {X_{x,\rho}} &:=& \Hom(\ov \cM_{X,x}/G(\rho), \sone )\subseteq \lt {X,x}.
 \end{eqnarray*}
There is a natural map $\lt{X_{x,\rho}} \to \lt{Y,\rho}$ induced by $f_x^\flat$. 

\begin{conjecture} \label{noglob.c}
  With the notation of the previous paragraph,
  let    $f \colon X \to Y = V_P$ be a smooth proper and exact
  morphism of fine saturated  log analytic spaces.  Then, after
  possibly shrinking $V$, there is a commutative diagram:
  \[ 
    \xymatrix{
      X_{v,\lag}\times V \ar[d]_{f_{v,\lag}\times\id} \ar[r] & X_\tp \ar[d]^{f_\tp} \\
      v_\lag \times V \cong Y_\lag \ar[r] & Y_\tp,
    }
  \]
   where the bottom arrow is (the restriction of) the map $\tau_Y$~(\ref{taudef.e}),  and the top arrow is the quotient map which identifies $(x_1,\rho_1)$ and $(x_2,\rho_2)$ 
if and only if:
\begin{enumerate}
\item  $\rho_1 = \rho_2$,
\item  $\tau_X(x_1) = \tau_X(x_2)$,
\item  $x_1$ and $x_2$ are in the same orbit under the action of $\lt {X_{\tau(x_i)}}(\rho)$ on $\tau^{-1}_X(\tau(x_i))$.
\end{enumerate}
In particular, the log fiber $f_v:X_v\to v$ determines $f$ topologically in a neighborhood of $v$.
\end{conjecture}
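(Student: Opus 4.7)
The plan is to follow the two-step strategy of Theorem~\ref{discr.t}, generalizing Lemmas~\ref{trivfib.l} and~\ref{taupush.l} to the higher-rank base $Y = V_P$.

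\textbf{Trivialization.} Via the polar coordinate map~(\ref{taudef.e}) identify $Y_\lag \cong V \times \lt P$. Since $f$ is smooth, proper, and exact, $f_\lag \colon X_\lag \to Y_\lag$ is a topological fibre bundle by~\cite[5.1]{no.rr}. After shrinking $V$, assume it is star-shaped with respect to the vertex of $\lr P$; the scalar homotopy $(t,\rho)\mapsto t\rho$ then contracts $V$ onto $\{v\}$, and together with $\id_{\lt P}$ gives a deformation retraction of $Y_\lag$ onto $v_\lag$. Repeating the covering-homotopy argument of Lemma~\ref{trivfib.l} (using~\cite[IV.9.8]{hus.fb}) yields a homeomorphism
\[
  \varphi \colon X_{v,\lag} \times V \isommap X_\lag
\]
over $Y_\lag$, restricting to the identity on $X_{v,\lag}\times\{v\}$ and satisfying $f_\lag\circ\varphi = f_{v,\lag}\times\id_V$.

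\textbf{Fibre structure of $\tau_X$.} By the natural generalization of Lemma~\ref{taupush.l}, the map $\tau_X \colon X_\lag\to X_\tp$ is proper and surjective, and for each $x\in X_\tp$ the fibre $\tau_X^{-1}(x)$ is a torsor under $\lt{X,x} = \Hom(\ocM_{X,x},\sone)$, so $X_\tp$ is the quotient of $X_\lag$ by these fibrewise torsor actions. For $x\in X_\tp$ with $\rho := |f(x)|\in V$, the map $f_\lag$ sends $\tau_X^{-1}(x)$ into $\tau_Y^{-1}(f(x))\subseteq \{\rho\}\times\lt P$, so $\varphi^{-1}(\tau_X^{-1}(x)) \subseteq X_{v,\lag}\times\{\rho\}$, yielding condition~(1). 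The trivialization $\varphi$ associates to $x$ a specialization $x_v\in X_v$ (the image under $\tau_{X_v}$ of any preimage in the $\{v\}$-slice). Applying the local structure theorem for log smooth exact morphisms to a small neighbourhood of $x_v$, one obtains a canonical isomorphism $\ocM_{X,x} \cong \ocM_{X,x_v}/G(\rho)$, and hence $\lt{X,x} \cong \lt{X_{x_v,\rho}}$; the fibrewise torsor action inside the slice $X_{v,\lag}\times\{\rho\}$ is therefore precisely the action appearing in condition~(3).

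\textbf{Assembly.} Combining the above, the composition $X_{v,\lag}\times V \xrightarrow{\varphi} X_\lag \xrightarrow{\tau_X} X_\tp$ is a proper surjection whose fibres are exactly the equivalence classes stipulated in the conjecture. It therefore factors through the claimed quotient as a continuous proper bijection, which is a homeomorphism. Commutativity of the square is immediate from $f_\lag\circ\varphi = f_{v,\lag}\times\id_V$ together with the polar coordinate formula for $\tau_Y$.

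\textbf{Main obstacle.} The delicate step is the identification $\ocM_{X,x} \cong \ocM_{X,x_v}/G(\rho)$, which encodes the way specializing along a face of $P$ trims the log structure at nearby points. This should reduce to a chart-level calculation, since locally $X\to Y$ is étale over a toric morphism $\spec Q \to \spec P$ induced by an injective integral saturated homomorphism $P \to Q$, but careful bookkeeping of how the stalks and the faces $G(\rho)\subseteq \ocM_{X,x_v}$ vary along a path in $V$ from $v$ to $\rho$ will be required. A secondary subtlety is that $\varphi$ is non-canonical, which is why the conjecture asserts only the existence of a homeomorphism of the stated form; a canonical version would presumably need additional data, such as a choice of logarithmic splitting.
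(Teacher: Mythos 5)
First, a point of framing: the paper does not prove this statement --- it is stated as Conjecture~\ref{noglob.c}, and the authors only remark that Remark~2.6 of \cite{no.rr} yields such a structure theorem \emph{locally on $X$}. So there is no proof in the paper to compare against; what you have written is an attempt at an open problem, and it should be judged on whether it actually closes the gap the authors left open. It does not.

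Your outline faithfully transplants the proof of Theorem~\ref{discr.t}, but the step that makes that proof work in rank one is exactly the step that breaks here. In Theorem~\ref{discr.t} the base is the standard log disc and $f$ is vertical, so $\tau_X$ is injective away from the special fibre; Lemma~\ref{taupush.l} then says $X_\tp$ is obtained from $X_\lag$ by collapsing \emph{only} over the slice $\rho=0$, where the trivialization $\varphi$ has been normalized to be the identity. Hence the non-canonicity of $\varphi$ over $\rho>0$ is harmless. For a higher-rank $P$, $\tau_X$ has nontrivial fibres over every boundary stratum of $V$, and the fibres collapse by different subtori $\lt{X_{\xi,\rho}}$ depending on the face $F(\rho)$. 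The homeomorphism $\varphi$ supplied by \cite[IV,9.8]{hus.fb} is merely a trivialization of $f_\lag$ over $Y_\lag$; nothing in its construction relates it to $\tau_X$ away from the $v$-slice. Concretely, your definition of the ``specialization $x_v$ associated to $x$'' requires $\varphi^{-1}(\tau_X^{-1}(x))$ to lie in a single set $\tau_{X_v}^{-1}(\xi)\times\{\rho\}$, which is condition (2) of the equivalence relation --- i.e., you assume the very compatibility you need to prove, and a generic $\varphi$ will simply fail it (its restriction to a slice $X_{v,\lag}\times\{\rho\}$ can shear the fibres of $\tau_{X_v}$ across one another while still commuting with $f_\lag$). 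Likewise condition (3) asks that $\varphi$ intertwine the $\lt{X_{\xi,\rho}}$-orbit decomposition of each $\tau_{X_v}$-fibre with the fibres of $\tau_X$, stratum by stratum. The identification $\ocM_{X,x}\cong\ocM_{X,x_v}/G(\rho)$ that you flag as the ``main obstacle'' is in fact the easy, local part (it is what \cite[Remark~2.6]{no.rr} provides via charts); the genuinely missing idea is a trivialization of $f_\lag$ adapted to the whole stratification of $V$ --- or, equivalently, a gluing argument for the local models --- and that is precisely why the statement is a conjecture rather than a theorem. A correct proof would have to either construct such an adapted trivialization (e.g., by a controlled/stratified version of the covering homotopy argument) or avoid global trivializations altogether.
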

This conjecture is suggested by Remark~2.6 of \cite{no.rr}, which
implies that such a~structure theorem holds locally on $X$.

Motivated by the above philosophy, we now turn to a more 
careful study of log schemes which are smooth over a
log point $S$.   
We shall see that the normalization
of such a scheme provides a canonical way of cutting it into pieces,
each of whose Betti realizations 
 is a manifold with boundary and is canonically trivialized 
over $S_\lag$.  In fact this cutting process works more
generally, for ideally smooth log schemes.

\begin{theorem}\label{idealcut.t}
  Let $X$ be a fine, smooth, and saturated idealized log scheme over a field $k$
such that  $\cK_X \subseteq \cM_X$ is a 
sheaf of radical ideals.   Let
$\ep  \colon \und X' \to \und X$ be the normalization of the underlying
scheme  $\und X$.
\begin{enumerate}
\item The set $U$ of points $x$ such that $ \cK_{X,\xx} = \cM_{X,\xx}^+ $
for some (equivalently every) geometric point $\xx$ over $x$
is an open and dense subset of $ X$.  Its underlying scheme $\und U$
is smooth over $k$, 
and its complement
$Y$ is defined by a coherent sheaf of ideals $\cJ$  in $\cM_X$.
\item The log scheme $X'$ obtained by endowing $\und X'$ with
the compactifying  log structure associated to the open subset 
$\ep^{-1}(\und U)$ is fine, saturated, and smooth over $k$.  
\item Let $X''$ be the log scheme obtained by endowing $\und X'$
with the log structure induced from $X$.  There exists
a unique morphism $h \colon X'' \to X'$ such that $\und h$
is the identity.  The homomorphism
$h^\flat\colon \cM_{X'} \to \cM_{X''}$ is injective and 
identifies $\cM_{X'} $ with  a sheaf of faces in $\cM_{X''}$, and the 
quotient $\cM_{X''/X'}$ is a locally constant sheaf of 
fine sharp monoids.
\end{enumerate}
\end{theorem}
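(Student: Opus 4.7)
The approach is to reduce everything to a combinatorial analysis via étale charts. By the chart criterion for idealized smooth fine saturated log schemes, every point of $X$ has an étale neighborhood admitting an idealized chart $X \to \ls{P,K}$ with $P$ a toric sharp monoid and $K \subseteq P$ a radical ideal. Write $\mathfrak{q}_1, \ldots, \mathfrak{q}_r$ for the minimal primes of $P$ containing $K$ and $F_i := P \setminus \mathfrak{q}_i$ for the corresponding maximal faces disjoint from $K$, so that $P \setminus K = \bigcup_i F_i$. For a geometric point $\xx$ of $X$, let $F_{\xx} \subseteq P$ be the face with $\ocM_{X,\xx} = P/F_{\xx}$.

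For part (1): the condition $\cK_{X,\xx} = \cM_{X,\xx}^+$ translates to $K$ surjecting onto $(P/F_{\xx})^+$ under the projection $P \to P/F_{\xx}$; combined with $F_{\xx} \cap K = \emptyset$, this forces $P = F_{\xx} \sqcup K$, and hence $F_{\xx} = F_i$ for a unique $i$. A point with $F_{\xx} = F_i$ must lie on the open part of the toric component $\spec k[F_i]$, since further specialization along the intersection with another component would shrink the face to a strict subface of $F_i$; hence $U$ is the disjoint union of these open strata, both open in $X$ and dense (each stratum contains the generic point of the $i$-th irreducible component). On the $F_i$-stratum, $\und X$ is étale over the torus $\spec k[F_i^\g]$, where $F_i^\g$ is a free abelian group by toricity of $P$, so $\und U$ is smooth over $k$. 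The complement $Y$ is cut out by the coherent ideal $\cJ \subseteq \cM_X$ locally generated by (images of) those elements of $P \setminus K$ that lie outside some $F_i$.

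For part (2): the underlying scheme of $\ls{P,K}$ decomposes as $\bigcup_i \spec k[F_i]$, glued along intersections of faces, and since each $F_i$ is saturated its normalization is simply $\coprod_i \spec k[F_i]$. On each piece, $\ep^{-1}(\und U)$ restricts to the open torus $\spec k[F_i^\g]$, and the compactifying log structure associated to this open immersion coincides with the canonical toric log structure given by the chart $F_i \to k[F_i]$ (as for any normal affine toric variety along its dense torus). Hence $X'$ is fine, saturated, and log smooth over $k$.

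For part (3): in the chart, $\cM_{X''}$ has chart $P$ (inverse image from $X$), while $\cM_{X'}$ has chart $F_i$ on the $i$-th component. The inclusion $F_i \hookrightarrow P$ of monoids supplies a morphism of prelog structures into $\cM_{X''}$ whose associated log structure is identified, via the universal property of the compactifying log structure, with $\cM_{X'}$, giving the desired $h \colon X'' \to X'$ with $\und h = \id$. At a geometric point $\xx'$ over the $F_i$-component, letting $G \subseteq F_i$ be the face of units at $\xx'$ in the chart $F_i$, one computes $\ocM_{X',\xx'} = F_i/G$ and $\ocM_{X'',\xx'} = P/G$ (elements of $\mathfrak{q}_i$ vanish identically on $\spec k[F_i]$ and hence remain non-units). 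The induced map $F_i/G \to P/G$ is injective and realizes $F_i/G$ as a face of $P/G$, with quotient the fine sharp monoid $P/F_i$; since $P/F_i$ depends only on the irreducible component and not on $G$, the quotient sheaf $\cM_{X''/X'}$ is locally constant with fine sharp stalks. The main obstacle is the coincidence asserted in part (2) between the compactifying log structure and the canonical toric log structure, and then tracking the three log structures (idealized on $X$, pullback on $X''$, compactifying on $X'$) under the normalization map in part (3); both reduce, once the combinatorial dictionary between faces and strata is in place, to standard facts about toric geometry.
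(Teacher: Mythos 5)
Your overall strategy --- reduce by an \'etale chart to $\ls{(P,K)}$, identify $U$ with the union of the open orbits of the maximal faces $F_i$ disjoint from $K$, realize the normalization as $\coprod_i\spec k[F_i]$ with its toric (= compactifying) log structure, and compare the two log structures componentwise --- is the same as the paper's. But two of your steps are genuinely broken. In part (1), the inference that $\cK_{X,\xx}=\cM_{X,\xx}^+$ ``forces $P=F_{\xx}\sqcup K$'' is false: take $P=\bn^2$ and $K=\{(a,b):a\ge 1,\ b\ge 1\}=\mathfrak{q}_1\cap\mathfrak{q}_2$ with $\mathfrak{q}_1=\{a\ge1\}$, $\mathfrak{q}_2=\{b\ge1\}$; the point with face $F_1=0\times\bn$ satisfies the condition, yet $(1,0)$ lies in neither $F_1$ nor $K$. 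The correct statement lives in the localization $P_{F_{\xx}}$ (the ideal generated by $K$ there is the maximal ideal iff $F_{\xx}$ is maximal among faces disjoint from $K$), so your conclusion $F_{\xx}=F_i$ survives, but your proposed ideal cutting out $Y$ does not. For $P=\bn^3$ and $K=\{a\ge1,\ b\ge 1\}$, the set of elements of $P\setminus K$ lying outside some $F_i$ generates $\mathfrak{q}_1\cup\mathfrak{q}_2$, whose vanishing locus is only the $c$-axis; but $Y$ also contains the points with face $\langle a\rangle$ or $\langle b\rangle$, where $\und X=\spec k[a,b,c]/(ab)$ is smooth and yet $\cK_{X,\xx}$ is a proper subideal of $\cM_{X,\xx}^+$. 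The correct ideal is the intersection of the \emph{non-minimal} primes of $P$ containing $K$, as in the paper.

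The more serious omission is in part (3): the theorem asserts \emph{uniqueness} of $h$, which you do not address, and which is also what makes your chart-local construction globalize (a priori the identification of the log structure generated by $F_i$ with the compactifying one could depend on the chart). Uniqueness is not formal, because $\alpha_{X''}$ is far from injective (it kills $\cK$). The paper's device is the intrinsic subsheaf $\cM':=\alpha_{X''}^{-1}(\mbox{nonzero divisors of }\oh{X'})$, a sheaf of faces of $\cM_{X''}$ on which $\alpha_{X''}$ is injective and which is canonically identified with $\cM_{X'}$ (whose own structure map is injective because $U'$ is schematically dense in the normal scheme $\und X'$). Any $h'$ with $\und h'=\id$ satisfies $\alpha_{X''}(h'^{\flat}(m))=\alpha_{X'}(m)$, a nonzero divisor, hence $h'^{\flat}$ factors through $\cM'$ and is then determined by the injectivity of $\alpha'$. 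Some argument of this kind is needed for both the uniqueness claim and the global existence of $h$; the remaining points of your part (3) (stalk computation of $\cM_{X''/X'}$ and its local constancy) are fine.
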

\begin{proof}
All these statements can be checked \'etale locally on $X$.
Thus we may assume that there exists a
chart $\beta \colon (Q, K) \to (\cM_X, \cK_X)$ for $X$, 
where $(Q,K)$ is a fine saturated  idealized monoid, where 
the order of $Q^\g$ is invertible in  $k$,  and where 
the corresponding morphism
 $b \colon \und X \to \ms {(Q,K)}$ is \'etale \cite[IV,
 3.3.5]{o.llogg}.  
 Thanks to the existence of the chart, we can work
with ordinary points instead of geometric points.
Furthermore we may, after a further localization,
assume that the chart is local at some point $x$ of $X$,
so that the map $\ov Q \to \ocM_{X,x}$ is an isomorphism.
Since $\cK_{X,x}$ is a radical ideal,  it follow that the 
same is true of $K$.   Statements (1)--(3) are stable
under \'etale localisation, so we are reduced to proving
them when $X = \ls {(Q,K)}$. 

% Let $\cJ$ be the subsheaf of $\cM_X$ consisting of 
% those sections  whose stalk at every geometric point $\yy$ of  $Y$
% lies in $\cM^+_{Y,\yy}$.  We will see that $\cJ$ is a coherent
% sheaf of ideals and  that $ Z(\cJ) := \{ x  \in X : \cJ_\xx  \subseteq
% \cM_{\xx}^+\}$ is exactly the set $Y$.
%  These statements, as well as the rest of statement
% (1), can be checked  \'etale locally on $X$. 
% Thus we may assume that there exists a
% chart $\beta \colon (Q, K) \to (\cM_X, \cK_X)$ for $X$, 
% where $(Q,K)$ is a fine saturated  idealized monoid, where 
% the order of $Q^\g$ is invertible in  $k$,  and where 
% the corresponding morphism
%  $b \colon \und X \to \ms {(Q,K)}$ is \'etale \cite[IV,
%  3.3.5]{o.llogg}.  
%  Thanks to the existence of the chart, we can work
% with ordinary points instead of geometric points.
% Shrinking $X$ and localizing $Q$ as  necessary, we 
% may arrange for $K$ to be a radical ideal, which
%we henceforth assume. 

Since $K$ is a radical ideal  of $Q$, it   is the intersection of a finite number of primes $\pee_1,
\ldots, \pee_r$,  and we may assume that  each $\pee_i$ minimal among those ideals
containing $K$~\cite[I, 2.1.13]{o.llogg}. 
 Let $\qu_1, \ldots, \qu_s$ be the remaining
prime ideals of $Q$ which contain $K$ and let $J := \qu_1  \cap \cdots 
\cap \qu_s$.  If $x  \in X$, 
let $\beta_x \colon Q \to \cM_{X,x}$ be the homomorphism induced by
$\beta$ and  let $\qu_x := \beta_x^{-1}(\cM_{X,x}^+)$. 
 Then $x \in Y$ if and only if
$K_{\qu_x} \subsetneq Q^+_{\qu_x}$, which is the case if and only if
$\qu_x = \qu_i$ for some $i$, or equivalently, if and only if $J
\subseteq \qu_x$.  Thus $Y$  is  the closed subscheme of $X$
defined by the coherent sheaf of ideals $\tilde J$  associated to $J$.
% We claim that $\tilde J = \cJ$, \ie, that 
% for every  point $x$ of $X$, $\beta_x^{-1}(\cJ_x)$ is the localization
% $J_{\qu_x}$ of $J$ at the prime ideal $\qu_x$. 
%  Since the constructions of $J$ 
% and $\cJ$ are  compatible with localization, we may assume that
% $\qu_x = Q^+$.
% It is clear from the definitions that $J \subseteq \cJ$,
% so it will suffice to prove that 
% $\beta_x^{-1}(\cJ_x) \subseteq J$.  
%  Since $X \to \ms {(Q,K)}$ is flat it is locally
% surjective, hence $\spec (\oh {X,x}) \to \spec(Q,K)$ is surjective, and hence every
% $\qu_i$ is in the image of 
% $\spec (\oh {X,x})$, hence in the image of $\spec(\oh {Y,y})$.  
%  Hence every element of  $\beta_x^{-1}(\cJ)$ belongs to every $\qu_i$ and
% hence to $J$. 

To see that $U$  is dense, observe that the irreducible
components of $X$ are defined by the prime ideals $\pee_i$ of $Q$ above.
Let $\zeta_i$ be the generic point of the irreducible component
corresponding to $\pee_i$.  Then $K_{\pee_i} = Q^+_{\pee_i}$, so $\zeta_i  \in U$. 
It follows that $U$ is dense in $X$.   To see that $\und U$ is smooth, let
$x$ be a point of $U$ and replace $\beta$ by its localization at $x$. 
Then it follows from the definition of $U$ that $K = Q^+$ and hence
that  $\ms{Q, K}  \cong  \ms {Q^*}$
which is indeed smooth over $k$. 

To prove statement (2) we continue to assume that
$X = \ls {(Q,K)}$.  For each minimal $\pee_i$ over $K$,
let $F_i$ be the corresponding face.  Then $\ms {Q, \pee_i} \cong \ms {F_i}$.
Since $Q$ is saturated, so is each $F_i$, and hence each 
scheme $X_{F_i} :=\ms {F_i}$ is normal.   Thus the disjoint union
 $\sqcup \{\ms F_i\}$ is the normalization
of $\ms{Q, K}$.  A point $x'$ of $\und X_{F_i}$ lies in $\ep^{-1}(U)$
if and only if its image in $\ms {F_i}$ lies in $\ms {F_i^\g}$. 
 It follows from \cite[III,1.9.5]{o.llogg}  
that the compactifying log structure on $\und X_{F_i}$  is coherent, charted by
$F$, and hence from \cite[IV, 3.1.7]{o.llogg} that the resulting log
scheme $X'_{F_i}$ is smooth over $k$.  Thus $X'/k$ is smooth.  This
completes the proof of statements (1) and (2).

To define the morphism $h$, it will be convenient to first introduce
an auxiliary log structure. 
 Let $\oh {X'}' \subseteq \oh {X'}= \oh {X''}$ be the sheaf of nonzero divisors
 in $\oh {X'}$ and let $ \cM'$ be its inverse image in $\cM_{X''}$ via
the map $\alpha_{X''} \colon \cM_{X''} \to \oh {X'}$.   
   % \[
%      \cM'_{X''}:=\alpha_{X''}^{-1}(\oh {X'}' )    \subseteq \cM_{X''}.
%    \]  
% % T
Then $\cM'$ is a sheaf of faces in $\cM_{X''}$,
 and  the induced map $\alpha' \colon \cM' \to \oh {X'}$ is a log
 structure on $\und X'$.  If $\xx'$ is a geometric
 point of $U'$, then 
$\cK_{X',\xx'} = \cM^+_{X',\xx'}$,  so the map
$\cM_{X'',\xx'}^+ \to \oh{X'_{\xx'}}$ is zero.  Hence  $\cM'_{\xx'} = \oh {X',\xx'}^*$,
and thus  $\alpha'$ is trivial on $U'$.  
 It follows that there is a natural morphism from $\alpha'$ to the
 compactifying log structure  $\alpha_{U'/X'}$. We check that this
 morphism is an isomorphism  at each  point $x' $ of $X'$.
Since both log structures are trivial if $x' \in U'$, we may
assume that $x' \in Y':= X'\setminus U'$ and that
$X$ admits a chart $\beta$ as above, local at $\ep(x')$.
 Let $F$ be the face
of $Q$ such that $x' \in   X_F$.   If $q \in\pee := Q \setminus F$,
then $\alpha_{X''}(\beta(q))$ vanishes in $\oh
{X_F}$.  If on the other hand $q\in F$, then $
\beta(q)$ is a nonzero divisor
 on $X_F$   
  Thus $\beta^{-1}(\cM'_{x'}) = F$.   Since $F$ is a chart for
  $\cM_{X'}$ and $\cM'_{x'}$ is a face of $\cM_{X'',x'}$,   it follows that the map
$\cM'_{x'} \to \cM_{X',x'}$ is  an isomorphism.  
 The  inverse of this isomorphism followed by the inclusion
$\cM' \to \cM_{X''}$ defines a morphism of log structures
$\alpha' \to \alpha_{X''}$ and hence a morphism  of log schemes
$h \colon X'' \to X'$ with $\und h = \id$.  

To prove that $h$ is unique, note that
since $\cM' \to \cM_{X'}$ is an isomorphism and $\alpha_{X'}$ is
injective, the homomorphism $\alpha'\colon \cM
\to \oh {X'}$ is also injective.
Let  $h' \colon X'' \to X'$ be any morphism of log schemes
with $\und h' = \id$ and let  $m$ be a local section
of $\cM_{X'}$.  Then $\alpha_{X''}(h'^\flat(m)) = \alpha_{X'}(m)$
is a nonzero divisor in $\oh {X'}$, so the
homomorphism $h'^\flat \colon \cM_{X'} \to
\cM_{X''}$ necessarily factors through $\cM'$.  Since $\alpha'\circ
h'^\flat = \alpha_{X'}$ and $\alpha'$ is injective, necessarily
$h'^\flat = h^\flat$.  

We have already observed that the
image $\cM'$ of $h^\flat$ is a sheaf of faces of $\cM_{X''}$,
and it follows that the quotient monoid $\cM_{X''/X'}$ is sharp.
To check that it is locally constant, we may assume 
that $X$ admits a chart as above  and work on  the subscheme $X''_F$
of of $X''$ defined by a face $F$ as above.   %= Q\setminus \pee$. 
 Then $\beta'' \colon  Q \to \cM_{X''}$
is a chart for $\cM_{X''}$.  Assume that $\beta''$ is local at a point
$x''$ of $X''$ and that $\xi''$ is a generization of $x''$.  
Then   $\cM_{X''/X',x''} = Q/F$ and  $\cM_{X''/X',\xi''} = Q_G/F_G$,
where $G := \beta_{\xi''}^{-1}(\oh {X'',\xi''}^*)$.  Since $G \subseteq
F$, the cospecialization map $Q/F \to Q_G/ F_G$ is an isomorphism.
It follows that $\cM_{X''/X'}$ is (locally) constant.
\end{proof}

Let us now return to the case of smooth log schemes over  a
log point. 

\begin{corollary}\label{vertons.c}
Let $f \colon X \to S$ be a smooth and  saturated
%\before{vertical}
 morphism
from a fine saturated log scheme to the log point $\spec (P \to k)$,
where $P$ is a fine saturated and sharp monoid.
  Let     $\und  \ep \colon \und X' \to \und X$
    be the normalization of the underlying scheme  $\und X$. 
    \begin{enumerate}
\item The set  $U := \{ x \in X: \cM_{X/S,\xx} = 0\}$ 
is a dense open  subset of $X$.  Its underlying scheme $\und U$
is smooth over    $\bc$, and $\ep$ induces an
  isomorphism  $\und U' := \und \ep^{-1}(\und U) \to \und U$.  
    \item The log scheme $X'$ obtained by endowing $\und X'$
    with the compactifying log structure associated to the open
    subset $\und U'$  is fine, saturated and smooth over      $\bc$.  
   \item  Let $X''$ be the log scheme
   obtained by endowing $\und X'$ with the log structure induced from
   $X$. There exist a unique morphism $h \colon X'' \to X'$ 
    such that $\und h$ is the identity.
\item     
The homomorphism $f^\flat$ induces an isomorphism
$P \to \cM_{X''/X'}$.   Thus there is a 
    unique homomorphism $\rho \colon  \cM_{X''} \to P_{X'}$ 
    such that $\rho\circ f''^\flat = \id$.
The homomorphism $h^\flat \colon \cM_{X'} \to \cM_{X''}$
     induces isomorphisms  $\cM_{X'} \cong \rho^{-1}(0)$ and
  $\ocM_{X'}^\g \to \ep^*(\cM_{X/S}^\g)$.  
     \end{enumerate}
\end{corollary}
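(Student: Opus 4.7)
The plan is to deduce the corollary from Theorem~\ref{idealcut.t}, applied to $X$ equipped with the idealized log structure $\cK_X \subseteq \cM_X$ generated by $f^\flat(P^+)$. First one checks the hypotheses of that theorem. Since the split log point $S$, endowed with the idealized structure $\cK_S = P^+$, is idealized smooth over $\bc$ (its underlying scheme being $\spec\bc$), the log smoothness of $f$ implies that $(X, \cK_X)$ is idealized smooth over $\bc$. The ideal $P^+$ is radical in $P$ (if $np \in P^+$ then $p \notin P^*$, as otherwise $np$ would be invertible, whence $p \in P^+$), and the saturatedness of $f$---whose geometric consequence is that the underlying scheme of every fiber of $f$ is reduced---promotes this to radicality of $\cK_X$ in $\cM_X$.

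Applying Theorem~\ref{idealcut.t} produces an open dense set $V \subseteq X$ cut out by $\cK_{X,\xx} = \cM_{X,\xx}^+$, which I claim coincides with $U$. Reducing modulo $\oh{X,\xx}^*$, the condition $\cK_{X,\xx} = \cM_{X,\xx}^+$ says that the image of $P^+$ generates the maximal ideal of $\ocM_{X,\xx}$; in the fine sharp monoid $\ocM_{X,\xx}$ this is equivalent to the image of $P$ generating $\ocM_{X,\xx}$ as a monoid, \ie, to $\cM_{X/S,\xx} = 0$. With this identification, parts (1)--(3) of the corollary follow from the corresponding parts of Theorem~\ref{idealcut.t}. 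That $\ep$ restricts to an isomorphism $\und U' \to \und U$ follows because $\und U$ is smooth, hence normal, and the normalization morphism is an isomorphism over the normal locus.

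For part (4) the statements are local on $X$, so I choose a saturated chart for $f$: a saturated injection $\theta \colon P \to Q$ of fine saturated sharp monoids inducing an \'etale morphism $\und X \to \spec \bc[Q]/\theta(P^+)\bc[Q]$. By the proof of Theorem~\ref{idealcut.t}, the minimal primes $\pee_1, \ldots, \pee_r$ of $Q$ over $K := \theta(P^+) Q$ correspond to the irreducible components $X_{F_i}$ of $X'$ through $F_i := Q \setminus \pee_i$, and on each component one has $\cM_{X''/X'} \cong Q/F_i$. The crux is then the assertion that for each $i$ the composition $P \to Q \to Q/F_i$ is an isomorphism of sharp monoids. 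Injectivity is immediate from $F_i \cap \theta(P) = 0$; surjectivity is the structural consequence of saturatedness for a morphism whose base is a sharp log point. This local structure statement for saturated morphisms is, I expect, the main technical step of the argument.

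Granted this, the remaining assertions of (4) are formal. The surjection $\rho \colon \cM_{X''} \to \cM_{X''/X'} \cong P$ satisfies $\rho \circ f''^\flat = \id$ by construction and is the unique retraction because $f''^\flat(P)$ generates $\cM_{X''/X'}$. The equality $\cM_{X'} = \rho^{-1}(0)$ records that $h^\flat(\cM_{X'})$ is the face of $\cM_{X''}$ obtained as the kernel of the projection to its sharp quotient, a statement already contained in Theorem~\ref{idealcut.t}(3). Finally, $\rho$ splits the exact sequence $0 \to \cM_{X'}^\g \to \cM_{X''}^\g \to P^\g \to 0$; quotienting by $\oh{X'}^* = \oh{X''}^*$ yields $\ocM_{X''}^\g \cong \ocM_{X'}^\g \oplus P^\g$, and since $\ep^*(\cM_{X/S}^\g) = \ep^{-1}(\ocM_X^\g)/f^\flat(P^\g) = \ocM_{X''}^\g/f''^\flat(P^\g)$, we obtain the desired isomorphism $\ocM_{X'}^\g \cong \ep^*(\cM_{X/S}^\g)$.
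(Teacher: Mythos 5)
Your overall route is the same as the paper's: endow $X$ with the idealized structure generated by $f^\flat(P^+)$, check that it is idealized smooth over $\bc$ with radical ideal, identify the locus $\{\cK_{X,\xx}=\cM^+_{X,\xx}\}$ with $U$, and quote Theorem~\ref{idealcut.t} for (1)--(3); then prove (4) on a chart by showing $P\to Q/F_i$ is an isomorphism and splitting $\ocM_{X''}^\g$. The reduction and the formal deductions at the end of (4) are fine, and your explicit remark that $\ep$ is an isomorphism over the normal locus $\und U$ is a point the paper leaves implicit.

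The gap is exactly where you flag it. Everything in your argument that carries real content is delegated to unstated ``structural consequences of saturatedness'': (i) that the ideal generated by $f^\flat(P^+)$ is radical (your gloss via reduced fibers restates the claim rather than proving it); (ii) that $\cM_{X/S,\xx}=0$ forces the image of $P$ to generate $\ocM_{X,\xx}$ (the nontrivial direction needs exactness of $f$, which you do not invoke); and above all (iii) the surjectivity of $P\to Q/F_i$, which you explicitly leave as something you ``expect.'' The paper fills (iii) by a concrete two-step argument: first, $Q/F$ and $P$ have the same dimension --- equivalently, the minimal primes $\pee_i$ over $K=\theta(P^+)Q$ have the same height as $P^+$, a consequence of the integrality/exactness built into saturatedness --- and then the height equality feeds into statement (2) of the structure theorem for saturated morphisms \cite[I, 4.8.14]{o.llogg}, which yields that $P\to Q/F$ is an isomorphism. (The same reference handles (i).) So the step would not fail, but as written your proof does not contain it; to close the argument you need either to cite that structure theorem or to reprove the height computation and the resulting isomorphism directly, e.g.\ by the unique-decomposition property $q=\theta(p)+f$, $p\in P$, $f\in F_i$, that saturatedness provides.
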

\begin{proof}
Let $\cK_X \subseteq \cM_X$ be the sheaf of ideals
generated by $f^\flat(\cM_S^+)$.  Since $X/S$ is saturated,
this is a radical  sheaf of ideals of $\cM_X$~\cite[I, 4.8.14]{o.llogg}.
Since $X \to S$ is smooth, so is the base changed map 
$(X, \cK_X) \to (S,\cM_S)$, and  since
$(S, \cM_S^+) \to \und S$ is
smooth,  it follows that $(X, \cK_X) \to \und S$ is  smooth. 
Note that if $x \in U$, then $P^\g \to \cM_{X,\xx}^\g$
is an isomorphism, and since $f$ is exact, it follows
that $P = \cM_{X,\xx}$ and hence that $\cK_{X,\xx} = \cM^+_{X,\xx}$.
Conversely, if $\cK_{X,\xx} = \cM^+_{X,\xx}$,  then $P^+$
and $\cM_{X,\xx}^+$ both have height zero, and since $f$
is saturated it follows from statement (2) of \cite[I, 4.8.14]{o.llogg} that
$P \to \cM_{X,\xx} $ is an isomorphism
and hence that $\cM_{X/S} = 0$.  Thus the open set $U$
defined here is the same as the set $U$  defined in 
Theorem~\ref{idealcut.t}.  Hence statements (1), (2), and (3)
follow from that result. 

We check that the map $P  \to \cM_{X''/X'}$ is an isomorphism
locally on $X$, with the aid of a chart as in the proof
of Theorem~\ref{idealcut.t}.  Then $\cM_{X''/X'} =
Q/F$, where $F$ is the face corresponding to a minimal prime $\pee$
of the ideal $K$ of $Q$ generated by $P^+$.  Then  $Q/F$ and $P$
have the same dimension, so $ \pee  \subseteq Q$  and $ P^+ \subseteq
P$  have the same height.  Then
it follows from  (2) of \cite[4.18.4]{o.llogg} that
the homomorphism $P \to Q/F$ is an isomorphism.
It remains only to prove that the map $\ocM_{X'}^\g \to
\ep^*(\cM_{X/S}^\g)$
is an isomorphism.  We have a commutative diagram:
\begin{equation}\label{mxsg.d}
\xymatrix{
& \ep^{*}(\cM^\g_{X/S}) \cr
\ocM_{X'} ^\g\ar[r]\ar[ru] & \ocM_{X''}^\g\ar[u] \ar[r]& \cM^\g_{X''/X'} \cr
& f'^{-1}(\ocM_S^\g)\ar[u]\ar[ur]_\sim
}
\end{equation}
The rows and columns of this diagram are short exact sequences, and
the diagonal map on the bottom right is an isomorphism.  It follows
that the diagonal map on the top left is also an isomorphism.
\end{proof}

\begin{proposition}\label{pastelog.p}
With the hypotheses of Corollary~\ref{vertons.c}, let 
$g \colon X'' \to   X'\times S$ be the morphism
induced by $f\circ \ep$ and $h$.  The morphism
of underlying schemes $\und g$ is an isomorphism,
and $g^\flat$ induces an isomorphism of abelian sheaves:
$g^{\flat\g}\colon \cM_{X'\times S'}^\g \to \cM_{X''}^\g$. 
 The horizontal arrows in the
commutative diagram
\[\xymatrix{
X''_\lag \ar[r]^{g_\lag} \ar[d]_{\tau_{X''}} & X'_\lag \times S_\lag \ar[d]^{pr\circ\tau_{X'}}\cr
X''_\tp \ar[r]^{g_\tp} & X'_\tp
}\]
are isomorphisms.
  \end{proposition}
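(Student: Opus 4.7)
The plan is to verify the three assertions in the order stated; each essentially packages information already contained in Corollary~\ref{vertons.c}.

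For the first assertion, since $S$ is a log point its underlying scheme is $\spec(k)$, so the projection $\und{X' \times S} \to \und{X'}$ is the identity. Combined with Corollary~\ref{vertons.c}(3), which says $\und h = \id$, this shows that $\und g$ is the identity of $\und{X'}$, hence in particular an isomorphism.

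Next I would compute $\cM_{X' \times S}^\g$ explicitly. The log structure on $\und{X'} \cong \und{X' \times S}$ pulled back from $S$ is the one associated to the constant prelog structure $P \to \oh{X'}$, whose groupification is $\oh{X'}^* \oplus P^\g$. Taking the pushout with $\cM_{X'}$ over $\oh{X'}^*$ in the fs category then yields an identification $\cM_{X' \times S}^\g \cong \cM_{X'}^\g \oplus P^\g$; no fs-saturation is required, because $\ov \cM_{X'} \oplus P$ is already fine, sharp, and saturated. Under this identification, the map $g^{\flat\g}$ decomposes as $h^{\flat\g}$ on the first summand and $f''^{\flat\g}$ on the second, where $f'' \colon X'' \to S$ is the natural log morphism $f \circ \ep$ promoted via the pulled-back log structure. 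That the combined map $\cM_{X'}^\g \oplus P^\g \to \cM_{X''}^\g$ is an isomorphism is exactly the content of Corollary~\ref{vertons.c}(4): $h^{\flat\g}$ identifies $\cM_{X'}^\g$ with $\ker \rho^\g$, while $f''^{\flat\g}$ provides a splitting via the isomorphism $P \isommap \cM_{X''/X'}$.

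For the Betti realization diagram, $g_\tp$ is an isomorphism by the first step. For $g_\lag$, I would appeal to the universal property of the Kato--Nakayama realization recalled in Section~3.3, which is compatible with fs fiber products; this gives a natural identification $(X' \times S)_\lag \cong X'_\lag \times S_\lag$. Functorially, $g_\lag$ sends $(x'', \sigma'')$ to $(\und g(x''), \sigma'' \circ g^\flat)$, and since any monoid map to the group $\sone$ factors through groupification, bijectivity on points is immediate from the first two steps. Continuity of $g_\lag$ and of its inverse then follows from the weak-topology description of the Betti realization as being generated by $\tau$ together with the functions $\arg(m)$ for local sections $m$ of $\cM^\g$, since $g^{\flat\g}$ intertwines the argument maps. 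The only real obstacle is the bookkeeping required to identify both $\cM_{X' \times S}^\g$ and $\cM_{X''}^\g$ in a way compatible with $g^\flat$; the substantive content has already been packaged into Corollary~\ref{vertons.c}.
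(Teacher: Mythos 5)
Your proof is correct and follows essentially the same route as the paper: $\und g=\id$ because $\und h=\id$ and $\und S$ is a point, the splitting $\ocM_{X'\times S}=\ocM_{X'}\oplus P_{X'}$ together with statement (4) of Corollary~\ref{vertons.c} (equivalently, the horizontal exact sequence in diagram~(\ref{mxsg.d})) gives the isomorphism on $\cM^\g$, and hence $g_\lag$ is bijective. The only cosmetic difference is at the end, where the paper concludes that the bijection $g_\lag$ is a homeomorphism from properness, while you verify continuity of the inverse directly from the weak-topology description; both are fine.
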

  \begin{proof}
    Since $\und h$ is an isomorphism and $\und S$ is a point,
the morphism $\und g$ is also an isomorphism.  Since 
$\ocM_{X'\times S} = \ocM_{X'} \oplus P_{X'}$ and
$P_{X'} \cong \cM_{X''/X'}$, it follows from the
horizontal exact sequence in diagram~(\ref{mxsg.d}) that 
the homomorphism $\ocM_{X'\times S}^\g \to \ocM_{X''}^\g$ is an isomorphism,
and hence the same is true of  $g^{\flat\g}$.  
% If $x'' \in X''_\tp$
% and $x''' := g_\tp(x'')$, then $\tau_{X''}^{-1}(x''')$ is a torsor
% under $\Hom(\ocM_{X'''}^\g,\sone) $ and
%  $\tau_{X'}^{-1}(x'')$ is a torsor
% under $\Hom(\ocM_{X''}^\g,\sone)$.
It follows that $g_\lag$ is bijective, and since it is proper, it is a homeomorphism.
  \end{proof}

The corollary below shows that
the fibration $X_\lag \to S_\lag \cong \lt P$ can be cut into pieces
(the connected components of $X'_\lag$),  each of which
is  a trivial fibration
whose fiber is   a manifold with boundary, in a canonical way.  We shall make the
gluing data  needed to undo the cuts more explicit in the case
of curves; see \S\ref{lcn.ss}.

  \begin{corollary}\label{glue.c}
    With the hypotheses of Corollary~\ref{vertons.c}, there is a natural
    commutative  diagram:
\[\xymatrix{
X'_\lag \times  \lt P \ar[rd]_{pr} \ar[r]^-p & X_\lag \ar[d]^{f_\lag} \cr
&\lt P
}\]
where $X'_\lag$ is a topological manifold with boundary and  where $p$ is a proper surjective morphism with finite fibers and is an
isomorphism over  $U_\lag$.  
  \end{corollary}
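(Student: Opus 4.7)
The plan is to build $p$ by combining the strict morphism $\ep \colon X'' \to X$ of Corollary~\ref{vertons.c} with the homeomorphism $g_\lag \colon X''_\lag \isommap X'_\lag \times S_\lag$ of Proposition~\ref{pastelog.p}. Under the identification $S_\lag = \lt P$ (valid because $S$ is the split log point on the sharp toric monoid $P$), I would set $p := \ep_\lag \circ g_\lag^{-1}$. Commutativity of the triangle is formal: $g$ covers $f\circ\ep$ on the second factor, so $pr_{S_\lag} \circ g_\lag = f_\lag \circ \ep_\lag$, and therefore $f_\lag \circ p$ is precisely the projection onto $\lt P$.

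Because $g_\lag$ is a homeomorphism, all the asserted properties of $p$ reduce to the corresponding properties of $\ep_\lag$. The essential observation is that $\ep$ is strict by construction of $X''$, so $\ocM_{X''} = \und\ep^{-1}\ocM_X$; it follows that a point of $X''_\lag$ is the datum of a point of $X''_\tp$ together with a compatible point of $X_\lag$ above its image, i.e., the square
\[
\xymatrix{
X''_\lag \ar[r]^{\ep_\lag} \ar[d]_{\tau_{X''}} & X_\lag \ar[d]^{\tau_X} \\
X''_\tp \ar[r]^{\und\ep_\tp} & X_\tp
}
\]
is Cartesian. Since $\und\ep$ is the normalization of a reduced Noetherian scheme, $\und\ep_\tp$ is proper, surjective, has finite fibers, and by Corollary~\ref{vertons.c}(1) restricts to a homeomorphism over $\und U$; all of these properties pass to $\ep_\lag$ by base change, giving in particular the isomorphism of $p$ over $U_\lag$.

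The remaining point --- that $X'_\lag$ is a topological manifold with boundary --- is the only substantive input required. By Corollary~\ref{vertons.c}(2), $X'$ is log smooth over $\bc$ with the compactifying log structure attached to $\und U' \hookrightarrow \und X'$, so locally the pair $(\und X', \und X'\setminus \und U')$ is modeled on $(\bc^N, \{z_1\cdots z_a = 0\})$. Its Kato--Nakayama space is then locally homeomorphic to the iterated real oriented blow-up $\br^a_{\ge 0} \times (\sone)^a \times \bc^{N-a}$ (compare Remark~\ref{xlogbund.r}), which is a topological manifold with boundary. This local structure theorem for Betti realizations of log smooth analytic spaces is really the only non-formal ingredient; I expect no further obstacle beyond it, with the rest of the corollary assembling directly from Proposition~\ref{pastelog.p} and the strictness of $\ep$.
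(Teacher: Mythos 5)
Your construction is the same as the paper's: the proof there consists precisely of setting $p := \ep_\lag \circ g_\lag^{-1}$, noting that $\ep_\lag$ inherits properness, surjectivity, finiteness of fibers, and the isomorphism over $U_\lag$ from $\und\ep$ (via strictness of $X''\to X$ and base-change compatibility of the Kato--Nakayama construction), and citing \cite[2.14]{no.rr} for the manifold-with-boundary statement. Your commutativity check and the Cartesian-square argument are exactly what is implicit in the paper, so that part is fine.

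The one place where your justification does not go through as written is the local model for $X'$. Log smoothness of $X'$ over $\bc$ does \emph{not} imply that the pair $(\und X', \und X'\setminus \und U')$ is locally $(\bc^N,\{z_1\cdots z_a=0\})$: locally $\und X'$ is only \'etale over $\ms F$ for a face $F$ of the chart monoid $Q$ (see the proof of Theorem~\ref{idealcut.t}), and $\ms F$ is a normal toric variety which may well be singular, with boundary the toric boundary rather than a normal crossings divisor in a smooth ambient space. So the iterated real-oriented-blow-up picture is only valid when $F$ is free. The correct general input --- and the one the paper actually invokes --- is the theorem of Nakayama--Ogus \cite[2.14]{no.rr} that the Betti realization of any fs log analytic space that is log smooth over $\bc$ is a topological manifold with boundary (the point being that $\Hom(F,\br_\ge)$ is homeomorphic to a half-space times a Euclidean space even when $F$ is not free, so that $\lr F\times\lt F$ is still a manifold with boundary). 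Since you explicitly isolated this as the one non-formal ingredient, the gap is easy to repair: replace your normal-crossings local model by an appeal to that theorem.
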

  \begin{proof}
Let $p :=  \ep_\lag \circ g_\lag^{-1} $, which is proper and surjective
and has  finite fibers.     Recall  from \cite[2.14]{no.rr} that $X'_\lag$ is a topological manifold with boundary,
and that its boundary is $Y'_\lag$. 
  \end{proof}

\subsection{Log nearby cycles}
\label{lognearby.s}

% \subsection{\texorpdfstring{The sheaves $\Psi^q_{X/S}$}{The sheaves Psi q}}

Let $f \colon X  \to S$ be a morphism of fine saturated log schemes, where $S$ is the split log point associated to a sharp monoid $P$. We assume that for every $x \in X$, the map $P^\g \to \ocM^\g_{X,x}$ is injective, and that the quotient group $\ocM_{X/S.x}^\g$ is torsion free. These assumptions hold if, for example, $f$ is smooth  and saturated. We form the following commutative diagram: 
% \begin{equation}\label{taudiag.e}
%   \xymatrix@R=3em@C=3em{
%     \tilde X_\lag \ar@/_2pc/[dd]_{\tilde f}\ar[d]^{\tilde \tau_{ X/S}} \ar[r]^{\eta_X} & X_\lag \ar[d]_{\tau_{X/S}} \ar[dr]^{\tau_X}  \\
%     X_{\tilde S,\lag} \ar[r]^\eta \ar[d]^{\tilde f_S^\lag}  & X_{S,\lag} \ar[r]^{\tau_{X_S}} \ar[d]_{f_S^\lag} & X_\tp \ar[d]^{f_\tp} \\
%     \tilde S_\lag \ar@/_1.5pc/[rr]_{\tilde \tau_S}\ar[r]^{\eta_S} & S_\lag \ar[r]^{\tau_S} & S_\tp
%   }
%\end{equation}  
\begin{equation}\label{taudiag.e}
  \xymatrix@R=3em@C=3em{
    \tilde X_\lag \ar@/_2pc/[dd]_{\tilde f}\ar[d]^{\tilde \tau_{ X/S}}
    \ar[r]^{\eta_X}\ar@/^3.5pc/[rrd]^{\tilde \tau_X}
& X_\lag \ar[d]_{\tau_{X/S}} \ar[dr]^{\tau_X}  \\
    X_{\tilde S,\lag} \ar[r]^\eta \ar[d]^{\tilde f_S^\lag}  & X_{S,\lag} \ar[r]^{\tau_{X_S}} \ar[d]_{f_S^\lag} & X_\tp \ar[d]_{f_\tp} \\
    \tilde S_\lag \ar@/_1.5pc/[rr]_{\tilde \tau_S}\ar[r]^{\eta_S} & S_\lag \ar[r]^{\tau_S} & S_\tp
  }
\end{equation}
where the squares are Cartesian. 
% and $\tilde X^\lag_S \cong X_\tp\times \br(1)$.   
Thus $S_\lag \cong \lt P$, $\tilde S_\lag \cong \lv P$, $  X_{\tilde S,\lag} = X_\tp\times \tilde S_\lag$, and $\tX_\lag = X_\lag\times_{S_\lag} \tilde S_\lag$.  
% $$  X_{\tilde S^\lag} = X_\tp\times S_\lag \quad \mbox{and} \quad \tX_\lag  \cong X_\lag\times_{S_\lag} \tilde S_\lag.$$
We let  $\tilde \tau_{X}:= \tau_{X} \circ \eta_X $, $\tilde \tau_{X_S} := \tau_{X_S} \circ \eta$, and $\tilde \tau_S := \tau_X  \circ \eta_S$, so that we have  the diagram:
\begin{equation}\label{taudiag2.e}
  \xymatrix{
    \tilde X_\lag \ar[r]^-{\tilde \tau_{ X/S}} \ar[dr]_{\tilde\tau_X} & X_\tp \times \lv P \ar[d]^{\pi} \ar[r]^\cong & X_{\tilde S,\lag}\ar[ld]^{\tilde \tau_{X_S}} \\
    & X_\tp. 
  }
\end{equation}

The logarithmic inertia group $\li P$ acts on $\tilde S_\lag$ over $S_\tp$ and hence also on  $\tX_\lag$ over $X_\tp$. Our goal is to describe the cohomology of $\tilde X_\lag$, together with its  $\li P$-action, using this diagram and the log structures on $X$ and $S$.  We set 
\[
  \Psi^q_{X/S} := R^q\tilde \tau_{X*} \bz, \quad (\text{resp.} \quad \Psi_{X/S} := R\tilde \tau_{X*} \bz),
\]
viewed as a sheaf (resp. object in the derived category of sheaves) of
$\bz[\li P]$-modules  on $X_\tp$. When $S$  is the standard log point and $f$ is obtained by base change from a smooth proper morphism over the standard log disk, 
the complex $\Psi_{X/S}$ can be identified with the usual complex of nearby cycles, as was proved in \cite[8.3]{ikn.qlrhc}. Then $H^*(\tilde X_\lag, \bz) \cong H^*(X_\tp, \Psi_{X/S})$, and there is the   (Leray) spectral sequence
\[
  E_2^{p,q} = H^p(X_\tp, \Psi^q_{X/S} ) \quad \Rightarrow \quad H^{p+q}(\tX_\lag, \bz).
\]
Our first ingredient, due  to Kato and Nakayama~\cite[Lemma 1.4]{kkcn.lblelsc}, is the  following computation of the cohomology sheaves $\Psi_{X/S}^q$.

\begin{theorem}[Kato and Nakayama] \label{logsymb.t}
  Let $f \colon X \to S$ be a saturated morphism of 
log schemes, where $X$ is fine and saturated and $S$ is the split log point over
$\bc$ associated to a fine sharp monoid $P$. Then on the 
topological space $X_\tp$ associated to $ X$,   there are canonical isomorphisms 
  \begin{equation} \label{sigma.e}
    \sigma^q \colon \bigwedge^q \cM_{X/S}^\g(-q) \isomlong \Psi^q_{X/S}
  \end{equation}
  for all $q$.  In particular, the logarithmic inertia group  $\li P$ acts trivially on $\Psi^q_{X/S}$.  
\end{theorem}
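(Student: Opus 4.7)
My plan is to construct $\sigma^1$ from the logarithmic exponential sequence, extend to higher $q$ via the multiplicative structure on $R\tilde\tau_{X*}\bz$, and verify the isomorphism stalkwise by identifying the fibers of $\tilde\tau_X$ with compact real tori.

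For the construction of $\sigma^1$, I would pull back the log exponential sequence (\ref{logexp.e}) along $\eta_X\colon \tilde X_\lag \to X_\lag$ to obtain
\[ 0 \To \bz(1) \To \cLL_{\tilde X} \To \tilde\tau_X^{-1}(\cM_X^\g) \To 0. \]
The key observation, essentially a relative version of Proposition~\ref{lslogp.p}, is that the pullback of the image of $f^{-1}\cM_S^\g$ admits a canonical lift to $\cLL_{\tilde X}$: at a point $(x,\tilde s)\in \tilde X_\lag$ the tautological homomorphism $\tilde s\colon P \to \br(1)$ supplies canonical real logarithms for elements of $P^\g$. Consequently, applying $R\tilde\tau_{X*}$ and taking the associated long exact sequence, the connecting morphism vanishes on the image of $P^\g$, hence descends to a morphism $\cM_{X/S}^\g \to R^1\tilde\tau_{X*}\bz(1)$; after a Tate twist this is $\sigma^1$. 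For general $q$, the cup product on $R\tilde\tau_{X*}\bz$ is graded-commutative on degree-one classes and, by the stalkwise calculation below, realizes the exterior algebra; it therefore factors through $\bigwedge^q \Psi^1_{X/S}$, and composing with $\bigwedge^q\sigma^1$ defines $\sigma^q$.

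The substance of the proof is then the stalkwise verification. Fix $x\in X_\tp$. The saturation hypothesis together with torsion-freeness of $\ocM_{X/S,x}^\g$ yields a short exact sequence of free abelian groups
\[ 0 \To P^\g \To \ocM_{X,x}^\g \To \ocM_{X/S,x}^\g \To 0, \]
and applying $\Hom(-,\sone)$ gives a short exact sequence of compact tori
\[ 0 \To T_{X/S,x} \To \lt{X,x} \To \lt P \To 0, \qquad T_{X/S,x}:=\Hom(\ocM_{X/S,x}^\g,\sone). \]
The fiber $\tau_X^{-1}(x)$ is an $\lt{X,x}$-torsor mapping equivariantly to $\lt P \cong S_\lag$, so $\tilde\tau_X^{-1}(x) \cong \tau_X^{-1}(x)\times_{\lt P}\lv P$ is a $T_{X/S,x}$-torsor over the contractible space $\lv P$, and is therefore homotopy equivalent to $T_{X/S,x}$. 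The standard computation of the cohomology of a compact torus then yields $\Psi^q_{X/S,x} \cong H^q(T_{X/S,x},\bz) \cong \bigwedge^q \ocM_{X/S,x}^\g(-q)$. Triviality of the $\li P$-action on $\Psi^q_{X/S}$ is an immediate corollary, since $\sigma^q$ is $\li P$-equivariant for the trivial action on the source.

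The main technical obstacle I foresee is checking that the $\sigma^1$ built through the boundary map of the log exponential sequence agrees on stalks with the intrinsic character-group identification $H^1(T_{X/S,x},\bz(1)) \cong \ocM_{X/S,x}^\g$. This requires careful sign bookkeeping relating the covering group $\li P$, the boundary morphism of the exponential extension, and the canonical character pairing between $T_{X/S,x}$ and $\ocM_{X/S,x}^\g$, together with a verification that the canonical lift of $P^\g$ on $\tilde X_\lag$ is precisely what kills the contribution from the $\lt P$ factor so that $\sigma^1$ genuinely factors through the quotient $\ocM_{X/S,x}^\g$.
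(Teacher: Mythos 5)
Your proposal follows essentially the same route as the paper: the boundary map of the logarithmic exponential sequence, killed on the image of $P^\g$ by the canonical lift $\tilde\chi$ available on $\tilde S_\lag$, produces $\sigma^1$; cup product gives $\sigma^q$; and the isomorphism is checked on stalks by identifying the relevant fiber with the compact torus $\Hom(\cM_{X/S,x}^\g,\sone)$. The one step you pass over too quickly is the identification $\Psi^q_{X/S,x}\cong H^q(\tilde\tau_X^{-1}(x),\bz)$: since $\lv P$ is not compact, $\tilde\tau_X$ is not proper, so the stalk of $R^q\tilde\tau_{X*}\bz$ is not automatically the cohomology of the fiber. The paper closes this by factoring $\tilde\tau_X=\tilde\tau_{X_S}\circ\tilde\tau_{X/S}$, applying proper base change to the proper map $\tilde\tau_{X/S}$ (whose fibers are exactly your compact tori), and then noting that $R^q\tilde\tau_{X/S*}\bz(1)$ is locally constant along the fibers of the projection $X_\tp\times\lv P\to X_\tp$ over the contractible factor $\lv P$, so pushing forward along that projection changes nothing. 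Your torsor-over-contractible-base observation is precisely the input needed for this last step, so the gap is easily repaired.
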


\begin{proof}
The construction of these isomorphisms depends on the  logarithmic exponential sequence~(\ref{logexp.e}) on  $X_\lag$.  In the absolute case it is shown in \cite{kkcn.lblelsc} that the boundary map associated to (\ref{logexp.e}) induces a homomorphism $\ocM_X^\g \to R^1\tau_{X*}(\bz(1))$, and then  one finds by cup-product the homomorphisms $\sigma^q$ for all $q \ge 0$. These can be seen to be isomorphisms by using the proper base change theorem to reduce to the case in which $X$ is a log point.

The argument in our relative setting is similar.  Let $\cM_{X/P}$ be the quotient of the sheaf of monoids $\cM_X$ by $P$.  Since $P^\g \to \ocM_X^\g$ is
injective, the sequence
\begin{equation} \label{mptompb.e}
  0 \To \oh X^* \To \cM_{X/P}^\g \To \cM_{X/S}^\g \To 0
\end{equation}
is exact.  The homomorphism $P^\g \to f^{-1} (\cM^\g_S)\to \cM_X^\g$
does not lift to $\mathcal{L}_X$ on $X_\lag$, but the map $\tilde \chi
\colon P^\g \to \mathcal{L}_S$ (defined at the beginning of \S\ref{geam.ss}) defines such a lifting on $\tilde S_\lag$ and hence also on $\tilde X_\lag$.  Letting $ \mathcal{L}_{X/P}$ be the quotient of $\mathcal{L}_X$ by $\tilde \chi(P^\g)$, we find an exact sequence:
\begin{equation} \label{tildel.e}
  0 \To \bz(1) \To \mathcal{L}_{X/P} \To \tilde \tau_X^{-1}(\cM^\g_{X/P}) \To 0
\end{equation}
The boundary map associated with this sequence produces a map 
$\cM^\g_{X/P} \to R^1\tilde\tau_{X*}(\bz(1))$ which factors through 
$\cM^\g_{X/S}$ because, locally on $X$, the inclusion 
$\oh X^* \to \cM_{X/P}$ factors through $\tilde \tau_{X*} (\mathcal{L}_{X/P})$.  Then cup product induces maps $\bigwedge^q \cM_{X/S}^\g \to R^q\tilde \tau_{X*}(\bz(1))$ for all $q$, which we can
check are isomorphisms on the stalks.   The map $\ttau_{X/S}$ is proper, and its fiber
over a point $(x,v)$ of $X_\tp\times \lv P$ is a torsor under $\Hom(\cM_{X/S,x},\sone)$.  
It follows that the maps
$\bigwedge^q \cM_{X/S,x}^\g \to (R^q\tilde \tau_{X/S*}(\bz(1))_{(x,v)}$  are isomorphisms.
In particular, the sheaves $R^q\ttau_{X/S*} (\bz(1))$ are locally constant along the fibers
of $\pi\colon  X_\tp \times \lv P
 \to \lv P$.  Then it follows from \cite[2.7.8]{kash.sm} that the map $\pi^*R\pi_*(R\ttau_{X/S*}(\bz(1))
\To R\ttau_{X/S*}(\bz(1))$ is an isomorphism.  Thus the maps 
$(R\ttau_{X*}(\bz(1))_x \to (R\ttau_{X/S*}(\bz(1))_{(x,v)} $ are  isomorphisms, and the result follows.
\end{proof}

%\subsection{The main result}

Our goal is to use the Leray spectral sequence for the morphism $\ttau_X$ to describe the cohomology of $\tilde X_\lag$ together with its monodromy action.  In fact it is convenient to work on the level of complexes, in the derived category. The `first order attachment maps' defined in \S\ref{ss:homalg-notation} are maps
\[
  \delta^q : \Psi^q_{X/S}\To \Psi^{q-1}_{X/S}[2].
\]

On the other hand, the ``log  Chern class'' sequence~(\ref{logch.e}) defines a morphism
\[
  {\rm ch}_{X/S} \colon \cM_{X/S}^\g \To \bz(1)[2] 
\]
and hence for all $q \ge 0$, maps:
\[ 
  {\rm ch}^q_{X/S} \colon \bigwedge^q\cM^\g_{X/S} \To \bigwedge^{q-1}\cM_{X/S}^\g(1)[2],
\]
defined as the composition
\[ 
  \bigwedge^q\cM_{X/S}^\g \rTo{\eta} \cM^\g_{X/S}\ot \bigwedge^{q-1}\cM_{X/S}^\g
\rTo{{\rm ch}_{X/S}\ot \id}  \bz(1)[2]\ot \bigwedge^{q-1}\cM^\g_{X/S}\cong
\bigwedge^{q-1}\cM^\g_{X/S}(1)[2],
\]
where $\eta$ is the comultiplication map as defined in \S\ref{ss:koszul}. We show below that the maps $\delta^q$ and ${\rm ch}^q_{X/S}$ agree, at least after multiplication by $q!$. 

To describe the  monodromy action $\rho$ of $\li P$  on $\Psi_{X/S}$, observe that, since  each $\gamma \in \li P$ acts trivially on $\Psi^q_{X/S}$, the
endomorphism  $\lambda_\gamma:= \rho_\gamma- \id$ of $\Psi_{X/S}$ annihilates $\Psi^q_{X/S}$ and hence induces maps (see \S\ref{ss:zero-on-cohomology})
\[ 
  \lambda^q_\gamma: \Psi^q_{X/S} \To \Psi_{X/S}^{q-1}[1].
\]
On the other hand,  the pushout of the ``log Kodaira--Spencer'' sequence~(\ref{logks.e}) along $\gamma  \colon P^\g \to \bz(1)$
is  a sequence: 
\[ 
  0 \To \bz(1) \To \ov \cM_{X,\gamma}^\g \To  \cM_{X/S}^\g \To 0.
\]
The stalk of this sequence at each point of $X$ is a splittable
sequence of finitely generated free abelian groups, so the exterior power construction of \S\ref{ss:koszul} provides a sequence
\[
  0 \To  \bigwedge^{q-1}\cM^\g_{X/S} (1)\To \bigwedge^q\ocM_X^\g/K^2\bigwedge^q\ocM_X^\g \To \bigwedge^q \cM_{X/S}^\g \To 0,
\]
which  gives rise to a morphism in the derived category
\begin{equation} \label{logksq.e}
  \kappa_{\gamma}^q \colon  \bigwedge^q \cM_{X/S}^\g \To \bigwedge^{q-1}\cM^\g_{X/S} (1)[1]
\end{equation}
Recall from Proposition~\ref{cupxi.p}
 that $\kappa_\gamma$  is ``cup product with $\kappa$,''  that is, that
$\kappa_\gamma  = (\id\ot \kappa)\circ(\id\ot \gamma)\circ \eta$.
 We show below that this morphism agrees with the monodromy morphism $\lambda_\gamma^q$ up to sign.  We shall provide a version of this result for the \'etale topology in Theorem~\ref{monthmetale.t}.
A similar formula, in the context of a semistable reduction and \'etale cohomology, is at least implicit in statement (4) of a result \cite[2.5]{sai.wsil} of T.~Saito.

\begin{theorem} \label{monthm.t}
  Let $S$ be the  split log point associated to a fine sharp and
  saturated monoid $P$ and let $f \colon X \to S$ be a saturated
  morphism of fine saturated log analytic spaces.
  \begin{enumerate}
    \item  For each $q \ge 0$, the following diagram commutes:
      \[ 
        \xymatrix{
          \bigwedge^q \cM_{X/S}^\g(-q)  \ar[r]^-{{\rm ch}_{X/S}^q} \ar[d]_{q!\sigma^q} & \bigwedge^{q-1}\cM_{X/S}^\g{(1-q)}[2] \ar[d]^{q!\sigma^{q-1} } \\
          \Psi^q_{X/S} \ar[r]_{\delta^q} &  \Psi^{q-1}_{X/S}[2].
        }
      \]
    \item  For each $q\ge 0$  and each $\gamma \in \li P$, the following diagram commutes:
      \[ 
        \xymatrix{
          \bigwedge^q \cM_{X/S}^\g(-q) \ar[r]^-{\kappa^q_\gamma} \ar[d]_{\sigma^q} & \bigwedge^{q-1} \cM_{X/S}^\g(1-q) [1] \ar[d]^{\sigma^{q-1}} \\
          \Psi^q_{X/S}  \ar[r]_-{(-1)^{q-1}\lambda^q_\gamma}  & \Psi^{q-1}_{X/S} [1] .
        }
      \]
  \end{enumerate}
\end{theorem}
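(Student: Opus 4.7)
The overarching plan is to prove both parts by reducing to the case $q=1$, exploiting two key facts: first, the isomorphisms $\sigma^q$ are multiplicative, being obtained as $q$-fold cup products of $\sigma^1$ in the Leray spectral sequence for $\ttau_X$; and second, both $\delta^q$ and $\lambda^q_\gamma$ satisfy derivation properties in the derived category. This reduction is effected by Proposition~\ref{alunique.p}, which guarantees that a family $\{\alpha'_q\}$ of morphisms satisfying the derivation identity is determined up to $q!$-torsion by its value $\alpha'_1$.

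For part (1), define $\alpha'_q \colon \bigwedge^q \cM_{X/S}^\g(-q) \to \bigwedge^{q-1}\cM_{X/S}^\g(1-q)[2]$ as the morphism obtained by conjugating $\delta^q$ through $\sigma^q$ and $\sigma^{q-1}$. The base case $\alpha'_1 = {\rm ch}_{X/S}$ is a direct computation: $\sigma^1$ is defined as the boundary map of the short exact sequence \eqref{tildel.e}, while $\delta^1$ is the first-order attachment map of $\Psi_{X/S}$; splicing these together yields the boundary of the composite extension \eqref{logch.e}, which is the definition of ${\rm ch}_{X/S}$. For the derivation property, Lemma~\ref{d2mult.l} applies (since $\cM_{X/S}^\g$ is torsion-free) to give the derivation identity for $\delta^q$ with respect to cup product on $\Psi_{X/S}$. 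Combined with the multiplicativity of the $\sigma^q$, this transfers to the derivation identity for $\alpha'_q$. Applying Proposition~\ref{alunique.p} yields $q!\,\alpha'_q = q!\,{\rm ch}^q_{X/S}$, which is the desired commutativity.

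For part (2), the plan is parallel. Set $\beta'_q := (-1)^{q-1}(\sigma^{q-1})^{-1}\circ \lambda^q_\gamma \circ \sigma^q$; we wish to identify $\beta'_q$ with $\kappa^q_\gamma$. The crucial base case $\beta'_1 = \kappa^1_\gamma$ rests on the following observation: by Proposition~\ref{lslogp.p}, the $\gamma$-action on the extension \eqref{tildel.e} is trivial on $\bz(1)$ and on $\ttau_X^{-1}(\cM^\g_{X/P})$, and the endomorphism $\lambda_\gamma$ of $\mathcal{L}_{X/P}$ factors through the composite $\mathcal{L}_{X/P} \to \cM_{X/S}^\g \xrightarrow{\gamma} \bz(1) \hookrightarrow \mathcal{L}_{X/P}$. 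This identifies the extension which yields $\sigma^1$, viewed as a $\gamma$-equivariant object, with the pushout of the Kodaira--Spencer sequence \eqref{logks.e} along $\gamma$; Proposition~\ref{hom.p} then identifies $\lambda^1_\gamma\circ \sigma^1$ with $\kappa^1_\gamma$, with trivial sign $(-1)^0=1$. The derivation property for $\lambda^q_\gamma$ in the derived category follows from the fact that $\gamma$ acts as a ring automorphism of $R\ttau_{X*}\bz$ with respect to cup product, so that $\lambda_\gamma = \gamma - \id$ becomes a genuine derivation on the associated graded of the truncation filtration; the Koszul sign rule then produces the $(-1)^{q-1}$ factor that is built into $\beta'_q$, so that $\beta'_q$ itself satisfies the unsigned derivation identity. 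Proposition~\ref{alunique.p} completes the reduction.

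The main obstacle is the derivation property for $\lambda^q_\gamma$ in the derived category, including the sign $(-1)^{q-1}$. Unlike $\delta^q$, for which Lemma~\ref{d2mult.l} provides a clean derivation identity, the monodromy is not defined at the level of cocycles on $\tilde X_\lag$ in a way that makes the derivation property manifest; one must work systematically with truncation functors and combine Proposition~\ref{hom.p} with the Koszul-complex analysis of Proposition~\ref{vwdr.p} to pin down the $(-1)^{q-1}$ sign. Tracking the interplay between the sign conventions of \cite{bbm.tdc} for cones and distinguished triangles and the Koszul sign rule on exterior algebras is the most delicate part of the proof.
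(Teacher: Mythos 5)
Your treatment of part (1) is essentially the paper's own argument: establish the case $q=1$ from the logarithmic exponential/Kummer data (the paper packages this as the quasi-isomorphism \eqref{steenfor.e}), then invoke Lemma~\ref{d2mult.l} for the derivation property of the $\delta^q$ and Proposition~\ref{alunique.p} to conclude $q!\,{\rm ch}^q_{X/S}=q!\,\delta^q$. That part is fine.

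For part (2) there is a genuine gap, in two places. First, the step you yourself flag as "the main obstacle" --- that the $\lambda^q_\gamma$ form a derivation in the derived category with the sign $(-1)^{q-1}$ --- is not established by the observation that $\rho_\gamma$ is a ring automorphism, so that $\rho_\gamma-\id$ is a derivation "on the associated graded of the truncation filtration." The maps $\lambda^q_\gamma=L^q_{\rho_\gamma-\id}$ are defined purely formally via Lemma~\ref{lemma:def-L} from truncations of $\Psi_{X/S}$, and there is no analogue of Lemma~\ref{d2mult.l} for this construction; producing one (a Leibniz rule for $L^q_\lambda$ compatible with the derived tensor/cup product, with the correct sign) is precisely the hard content, not a corollary of the heuristic. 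Second, and independently: even granting the derivation property, Proposition~\ref{alunique.p} only yields $q!\,\beta'_q=q!\,\kappa^q_\gamma$, i.e.\ commutativity of the diagram after multiplication by $q!$. Statement (2) of the theorem, unlike statement (1), carries no $q!$ factors, and the relevant group $\Ext^1\bigl(\bigwedge^q\cM^\g_{X/S},\bigwedge^{q-1}\cM^\g_{X/S}(1)\bigr)$ may have torsion, so your route proves a strictly weaker assertion. The paper avoids both problems by a different mechanism: using Proposition~\ref{rgami.p} it realizes $(-1)^{q-1}\lambda^q_\gamma$ (via Proposition~\ref{hom.p}) as the class of the honest short exact sequence of sheaves $0\to\Psi^{q-1}_{X/S}\to R^q\Gamma_\I(\Psi_{X/S})\to\Psi^q_{X/S}\to 0$ coming from group cohomology of the cyclic group generated by $\gamma$, and then identifies that sequence with the $q$-th exterior-power extension (in the sense of \S\ref{ss:koszul}) of the pushed-out Kodaira--Spencer sequence by checking a diagram of genuine short exact sequences, using only that restriction maps in group cohomology commute with cup products. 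No derived-category Leibniz rule and no $q!$ enter. If you want to salvage your approach, you would need either to prove the multiplicativity of $L^q_\lambda$ directly (essentially redoing cells (3)--(4) of \S\ref{ss:monpf}) or to accept the weaker $q!$-version of the statement.
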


\begin{proof}
The main ingredient in the proof of statement (1) is the quasi-isomorphism:
\begin{equation} \label{steenfor.e}
  \left [ \oh X \rTo\exp \cM^\g_{X/P} \right ] \rTo\sim \tau_{\leq 1} \Psi_{X/S}(1),
\end{equation}
which is obtained as follows.
The exact sequence~(\ref{tildel.e}) defines an isomorphism in $D^+(\tilde X,\bz)$ 
\[
  \bz(1) \rTo\sim \left[ \mathcal{L}_{X/P}\to  \ttau_X^{-1}(\cM_{X/P}^\g) \right],
\]
and there is an evident  morphism of complexes:
\[ 
  \left[ \ttau_X^{-1}(\oh X) \to \ttau_X^{-1}(\cM_{X/P}^\g )\right]  \To
  \left[ \mathcal{L}_{X/P}\to  \ttau_X^{-1}(\cM_{X/P}^\g) \right],
\]
defined by the homomorphism $\ep \colon \tau_{X}^{-1}(\oh X) \to \mathcal{L}_X $~(\ref{otol.e}). Using these two morphisms and adjunction, we find a morphism
\[
  \left [ \oh X \rTo\exp \cM^\g_{X/P} \right ] \To R\ttau_X(\bz(1)) :=\Psi_{X/S}(1).
\]
Since this morphism induces an isomorphism on cohomology sheaves in degrees $0$ and $1$, it induces a quasi-isomorphism after the application of the truncation functor $\tau_{\leq 1}$. This is the quasi-isomorphism~(\ref{steenfor.e}). Since the map $\delta^1$ of the complex $(\oh X \rTo\exp \cM^\g_{X/P})$ is  precisely the map $\ch_{X/S}$,  we see that the diagram in statement (1) commutes when $q = 1$.  

To deduce the general case, we use induction and the multiplicative structure on cohomology.  Let $E:= \cM_{X/S}^\g(-1)$ and let $F := \bz[2]$.  Using the isomorphisms $\sigma^q$,
we can view $\delta^q$ as a morphism $\bigwedge^q E \to \bigwedge^{q-1} E[2] = F \ot \bigwedge^{q-1} E$. 
Lemma~\ref{d2mult.l}  asserts that the family of maps $\delta^q$
form a derivation in  the sense that  diagram~(\ref{derivedder.e}) commutes.   Then by 
 the definition of  $\ch^q_{X/S}$, it follows from Proposition~\ref{alunique.p}
that $q!\ch_{X/S}^q = q!\delta^q$ for all $q$. 

We defer the proof of the monodromy formula described in statement (2) to Section~\ref{steenbrink.s} (with complex coefficients) and Section~\ref{monpf.s} (the general case).
\end{proof}

\section{Monodromy and the  Steenbrink complex}
\label{steenbrink.s}

Our goal in this section is to extend Steenbrink's formula~(\ref{steenfor.e}) for $\tau_{\leq 1} \Psi_{X/S}$ to all of $\Psi_{X/S}$.
We shall see that
there is a very natural logarithmic generalization of the classical
 Steenbrink complex ~\cite[\S2.6]{stee.lhs} which computes
the logarithmic nearby cycle complex
 $\bc\ot \Psi_{X/S}$.
  The advantage of this complex is that  it is a canonical
  {differential graded algebra} with an explicit action of $\li P$, from which it is straightforward to
prove the monodromy formula of Theorem~\ref{monthm.t}
(tensored with $\bc$). 
Since the construction is based on logarithmic de Rham cohomology, 
we require that $X/S$  be (ideally) smooth.
 Note that once we have tensored with $\bc$,
there is no point in keeping track of the Tate twist, since
there is a~canonical isomorphism $\bc(1)  \isommap \bc$.

\subsection{Logarithmic construction of the Steenbrink complex}
 Steenbrink's  original construction,
 which took place in the context of a  semistable
family of analytic varieties over a complex disc with parameter $z$,  was obtained by formally adjoining 
the powers of $\log z$ to the   complex of differential forms with log poles. Our construction
is based on the logarithmic de Rham complex on $X_\lag$ constructed in~\cite[\S3.5]{kkcn.lblelsc}.

Let us begin by recalling Kato's construction of the logarithmic de Rham complex on $X$~\cite{kato.lsfi,ikn.qlrhc}.
If $f \colon X \to Y$ is a morphism of log analytic spaces,
the sheaf of logarithmic differentials $\Omega^1_{X/Y}$  is  
the universal   target of a pair   of maps 
\[
  d \colon  \oh X \to \Omega^1_{X/Y} , \quad  d\lag \colon \cM_X^\g \to \Omega^1_{X/Y},
\]
where $d$ is a derivation relative to $Y$,
 where $d\lag$ is a homomorphism of abelian sheaves
annihilating the image of $\cM^\g_Y$, and where
$d\alpha_X(m) = \alpha_X(m) d\lag (m)$ for every local section $m$ of $\cM_X$.
The sheaf $\Omega^1_{X/Y}$ is locally free if $f$ is a smooth morphism
 of (possibly idealized) log spaces.
Then $\Omega^i_{X/S} := \bigwedge^i \Omega^1_{X/Y}$, and  there is a natural way
to make $\bigoplus \Omega^i_{X/Y}$ into a complex satisfying the usual
derivation rules and such that $d \circ d\lag = 0$.  In particular the map
$d\lag\colon \cM_X^\g \to \Omega^1_{X/Y}$ factors through the sheaf of closed one-forms,
and  one finds maps:
\begin{equation}
  \label{sigmadr.e}
\sigma_{DR}\colon \bc \ot \bigwedge^i  \cM_{X/Y}^\g \to \mathcal{H}^i(\Omega^\cx_{X/Y}).
\end{equation}
When  $S = \bc$ (with trivial log structure) and 
$X/\bc$ is ideally log smooth, these maps fit into a~commutative 
diagram of  isomorphisms~(c.f. \cite[4.6]{kkcn.lblelsc} and its proof):
\[\xymatrix{
\bc \ot \bigwedge^i  \ocM_{X}^\g \ar[r]^-{\sigma_{DR}}\ar[rd]_\sigma& \mathcal{H}^i(\Omega^\cx_{X/\bc}) \cr
&R^i\tau_{X*}(\bc)\ar[u]
}\]

As explained in \cite[\S3.2]{kkcn.lblelsc},
to obtain  the de Rham complex on $X_\lag$, one  begins with 
  the  construction of the universal sheaf of $\tau_X^{-1}(\oh X)$-algebras $\oh X^\lag$
which fits into a~commutative diagram:
\[\xymatrix{
\mathcal{L}_X \ar[r] & \oh X^\lag \cr 
& \tau_X^{-1}(\oh X). \ar[lu]^\ep\ar[u]
}\]
This sheaf  of $\tau_X^{-1}(\oh X)$ modules 
admits a unique integrable connection
\[
  d \colon \oh X^\lag \to  \oh X^\lag \ot_{\tau_X^{-1}(\oh X)} \tau_X^{-1}(\Omega^1_{X/\bc})
\]
such that $d (\ell) = d\pi(\ell) $ (see (\ref{logexp.e}))  for each section $\ell $ of $\mathcal{L}_X$ and
which is compatible with  the multiplicative structure of $\oh X^\lag$. 
 The de~Rham complex of this connection 
 is a complex whose terms are sheaves of  $\oh X^\lag$-modules 
 on $X_\lag$, denoted by $\Omega^{\cx,\lag}_{X/\bc}$.  In particular,
$\Omega^{i,\lag}_{X/\bc} := \oh X^\lag \ot_{\tau^{-1}_X(\oh X)} \tau_X^{-1}(\Omega^i_{X/\bc})$

When  $S$ is the split log point associated to a fine sharp  saturated
monoid $P$, the sheaf  $\oh S^\lag$
on  the torus $S_\lag \cong \lt P$ 
 is locally constant, and hence is determined by
$\Gamma(\tilde S_\lag ,\eta^*(\oh S^\lag))$ together with its natural action of $\li P$.   These
data are easy to describe explicitly.
The  structure sheaf $\oh S$ is $\bc$
and $\Omega^1_{S/\bc}$ is $\bc \ot P^\g$.
% First we give a discrete version.  
% Let $\lE P := \lL P (-1)$, so there is an exact sequence
Twisting the exact sequence~(\ref{univextp.e}) yields the sequence:
\[
  0 \To \bz \To \lL P(-1) \To P^\g(-1) \To 0.
\]
For each $n$, the map $\bz \to \lL P(-1)$ induces a map 
 $S^{n-1}( \lL P(-1)) \to S^n (\lL P(-1))$, and we let 
\[ \toh P^\lag := \dirlim S^n (\lL P(-1)).\]
  The action of $\li P$ on $\lL P$ induces an action on
 $\toh P^\lag$, compatible with its  ring structure.   Let $N_n \toh  P$ denote 
the  image of the map $S^n (\lL P(-1)) \to \dirlim S^n (\lL P(-1)) = \toh P^\lag$. 
Then $N_\cx$ defines an $\li P$-invariant filtration on $\toh P^\lag$.  The action of $ \li P$ on
$\gr^N_n \toh P^\lag \cong S^n (P^\g(-1))$
is trivial and  thus the action on $\toh P^\lag$ is unipotent.

    The splitting $\tilde \chi$ defines a splitting $P^\g(-1) \to \lL P(-1)$ and thus an isomorphism
\[ \toh P^\lag \cong \oplus_n\gr^N_n \toh P^\lag \cong \oplus_n S^n (P^\g(-1));\] 
this isomorphism is ``canonical'' but not $\li P$-equivariant.

For $\gamma \in \li P$, denote by $\rho_\gamma$ the corresponding automorphism of $\toh P^\lag$, and let 
\begin{equation}\label{lambdagamma.e}
  \lambda_\gamma := \log (\rho_\gamma) := \sum_i(-1)^{i+1} (\rho_\gamma- \id )^i/i.  
\end{equation}
The above formula defines, a priori, an endomorphism of $\bq\ot \toh P^\lag$, 
 but, as we shall soon see,  in fact this endomorphism preserves $\toh P^\lag$.

\begin{claim} \label{lambda.c} 
  For $\gamma \in \li P = \Hom(P^\g(-1), \bz)$, the endomorphism $\lambda_\gamma$ of 
$\bq\ot\toh P^\lag$   defined above
is given by interior multiplication with $\gamma$:
\[\toh P^\lag \cong S^\cx(P^\g(-1)) \rTo\eta P^\g(-1)\ot S^\cx (P^\g(-1)) \rTo{\gamma \ot \id}
S^\cx(P^\g(-1)) \cong \toh P^\lag, \]
where $\eta$ is the map defined in \S\ref{ss:koszul}. 
  The subspace $N_n \toh P^\lag$ of $\toh P^\lag$ is the annihilator of  the ideal $J^{n+1}$ of the group algebra $\bz[\li P]$. 
\end{claim}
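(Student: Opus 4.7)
First, I will describe $\toh P^\lag$ and its filtration $N_\cx$ explicitly. Using the splitting $\tilde\chi$, identify $\lL P(-1) = \bz e \oplus P^\g(-1)$, where $e$ is the image of $1$. Then $S^n(\lL P(-1)) = \bigoplus_{k+l = n} \bz\, e^k \ot S^l(P^\g(-1))$, and the transition map $S^n \to S^{n+1}$ (multiplication by $e$) injects each summand $\bz e^k \ot S^l$ into $\bz e^{k+1} \ot S^l$. Passing to the colimit collapses the $e$-grading, yielding a canonical identification of abelian groups $\toh P^\lag \cong \bigoplus_{l \ge 0} S^l(P^\g(-1))$, with $N_n \toh P^\lag = \bigoplus_{l \le n} S^l(P^\g(-1))$.

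For the first assertion, I plan to identify $\lambda_\gamma$ with a manifestly $\bz$-integral derivation. A direct computation on $\lL P(-1)$ gives $\rho_\gamma(e) = e$ and $\rho_\gamma(p) = p + \gamma(p)\, e$; writing $\rho_\gamma = \id + N_\gamma$, we have $N_\gamma^2 = 0$, so $\log \rho_\gamma = N_\gamma$ on $\lL P(-1)$. Let $\tilde N_\gamma$ denote the unique derivation of the symmetric algebra $S^\cx(\lL P(-1))$ extending $N_\gamma$; it is defined over $\bz$. The Leibniz rule forces $\exp(\tilde N_\gamma)$ to be a ring automorphism of $\bq \ot S^\cx(\lL P(-1))$ (the exponential converges by nilpotency on each $S^n$), and since it agrees with $\rho_\gamma$ on degree one generators, the two ring automorphisms agree on every $S^n$. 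Hence $\lambda_\gamma = \log\rho_\gamma = \tilde N_\gamma$ on $S^n(\lL P(-1))$, and in particular $\lambda_\gamma$ preserves the integral lattice $\toh P^\lag$. Under the colimit identification above, $\tilde N_\gamma$ sends $p_1 \cdots p_l$ to $\sum_i \gamma(p_i)\, p_1 \cdots \hat p_i \cdots p_l$, which is exactly $(\gamma \ot \id)\circ \eta$ by the comultiplication formula of \S\ref{ss:koszul}.

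For the second assertion, both inclusions will be handled via the same grading analysis. For $N_n \toh P^\lag \subseteq \mathrm{Ann}(J^{n+1})$: direct expansion of $\rho_\gamma(p_1 \cdots p_l) = \prod_i (p_i + \gamma(p_i)\, e)$ in $S^l(\lL P(-1))$ shows that $\rho_\gamma - \id$ strictly lowers the $l$-grading on $\toh P^\lag$, so $\prod_{i=1}^{n+1}(\rho_{\gamma_i} - \id)$ maps $S^l(P^\g(-1))$ into $\bigoplus_{l' \le l - n - 1} S^{l'}(P^\g(-1))$, which vanishes when $l \le n$. For the reverse inclusion, take $x \in \toh P^\lag$ with $x \notin N_n$, and let $L > n$ be the largest index with $x_L \neq 0$; the $S^{L-n-1}$-component of $\prod_{i=1}^{n+1}(\rho_{\gamma_i} - \id)(x)$ equals $D_{\gamma_1}\cdots D_{\gamma_{n+1}}(x_L)$, where $D_{\gamma_i} = (\gamma_i \ot \id)\circ \eta$. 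Fixing a $\bz$-basis $p_1, \ldots, p_r$ of $P^\g(-1)$ with dual basis of $\li P$, the $D_{\gamma_i}$ act as partial derivatives. For any nonzero monomial $p^\alpha$ in $x_L$, there exists a multi-index $\beta \le \alpha$ with $|\beta| = n+1$ (as $|\alpha| = L \ge n+1$), and the corresponding iterated partial derivative produces a nonzero coefficient of $p^{\alpha - \beta}$ which cannot be cancelled by contributions from the other monomials of $x_L$ (distinct multi-indices $\alpha$ produce distinct $\alpha - \beta$). Hence $J^{n+1}$ does not annihilate $x$.

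The main technical delicacy is integrality: one must show that $\lambda_\gamma$ preserves the $\bz$-lattice $\toh P^\lag$ rather than merely $\bq \ot \toh P^\lag$, which a naive term-by-term expansion of the logarithm series would not achieve because of denominators. Routing the first part through the derivation $\tilde N_\gamma$, which is $\bz$-integral by construction, makes this integrality manifest.
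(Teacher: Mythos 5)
Your proposal is correct and follows essentially the same route as the paper's proof: both arguments rest on the identity $\rho_\gamma=\exp(\lambda_\gamma)$ for the unique derivation extending the degree-one action (verified via the Leibniz rule), the observation that $\rho_\gamma-\id$ strictly lowers the filtration $N_\cx$, and the nondegeneracy of the pairing between symmetric powers of the dual lattices $\li P$ and $P^\g(-1)$ for the annihilator statement (which you render in coordinates as iterated partial derivatives, while the paper phrases it via the isomorphism $S^n\li P\cong J^n/J^{n+1}$). Your device of extending the nilpotent endomorphism to a derivation of $S^\cx(\lL P(-1))$ before passing to the colimit is a clean way to make the integrality of $\lambda_\gamma$ manifest, where the paper instead invokes torsion-freeness of $\gr^N$.
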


\begin{proof}
 Let $V:=\bq \ot  P^\g (-1) $  and  let $\phi$ be an element of $\Hom( V, \bq)$.   Interior
multiplication by $\phi$ is the unique derivation $\lambda$ of the algebra $S^\cx V$ such that
$\lambda(v)  = \phi(v)$ for all $v \in V$.   There is also a unique 
automorphism $\rho $ of $S^\cx V$  
such that $\rho(v) = v + \phi(v)$ for all  $v \in V$.   We claim 
that $\lambda = \log \rho$, or, equivalently, that $\rho = \exp \lambda$.
(These are well-defined because $\rho - \id$ and $\lambda$ are locally nilpotent.)
  Since $\lambda$ is a derivation of $S^\cx V$, we have 
\[ \lambda^k (ab)/k! = \sum_{i+j= k} (\lambda^i a /i!) (\lambda^j b/j!),\]
hence 
\[
  \exp(\lambda)(ab) =  \sum_k \lambda^k(ab)/k! = \left (\sum_i \lambda^i (a)/i! \right)
  \left (\sum_j \lambda^j (b )/j! \right) = \exp(\lambda(a))\cdot \exp(\lambda(b)).
\]
Thus $\exp \lambda$ is an automorphism  of the algebra $S^\cx V$.  Since it sends
$v$ to $v + \phi(v)$, it agrees with $\rho$, as claimed. 

If $v_1, v_2, \ldots, v_n$ is a sequence of elements of $V$, then
\begin{align*}
\rho(v_1 v_2 \cdots v_n) &= (v_1 + \phi(v_1)) (v_2+ \phi(v_2)) \cdots (v_n + \phi(v_n))   \\
 & =  v_1 v_2 \cdots v_n + \sum_i \phi(v_i) v_1 \cdots \hat v_i \cdots v_n  + R,
\end{align*}
where the symbol $\hat v_i$ means that the $i$th element is omitted
and where $R \in N_{n-2}S^nV$.  In particular, $\rho -\id$ maps $N_n S^\cx V$ to
$N_{n-1} S^\cx V$ and acts on $\gr_n S^\cx V \cong S^n V$ as interior multiplication by $\phi$. 
Since $\gr^N S^\cx(P^\g(-1))$ is torsion free, the analogous results hold for $ S^\cx(P^\g(-1))$.
The augmentation ideal $J$ of the group algebra $\bz [I_P]$ is generated by
elements of the form $\gamma - 1$, and it follows that $J$ takes $N_n \toh P^\lag$ 
to $N_{n-1} \toh P^\lag$ and hence that $J^{n+1}$ annihilates $N_n\toh P^\lag$.  Moreover, the natural map
$S^n \li P \to J^n/J^{n+1}$ is an isomorphism and identifies the pairing
$J^n/J^{n+1} \times \gr_n^N \toh P^\lag \to \bz$ with the  standard pairing
$S^n\li P \times S^n P^\g(1) \to \bz$.  Since this pairing is nondegenerate  
over $\bq$, it follows that $N_n \toh P^\lag$ is the annihilator of $J^{n+1}$. 
\end{proof}

The map $\tilde \chi  \colon P^\g \to \lL P$ defines a homomorphism $P^\g \to \eta_S^*( \mathcal{L}_S)$ and hence  also  a~homomorphism $P^\g \to \tilde \tau_{S*}\eta_S^*(\oh S^\lag)$.  In fact, one checks easily that the induced map
\begin{equation} \label{ohslag.e}
   \bc\ot \toh P^\lag \to \Gamma(\tilde S,\eta_{S}^*(\oh S^\lag))
\end{equation}
is an isomorphism, compatible with the action of $\li P$.  The map 
$d \colon  \toh {S}^\lag  \to  \toh S^\lag \ot \Omega^1_{S/\bc} $ identifies with the 
map 
\begin{equation}\label{dlogs.e}
\eta \colon \bc \ot S^\cx P^\g \to \bc \ot S^\cx P^\g \ot P^\g:
p_1 \cdots p_n \mapsto  \sum_i p_1 \cdots \hat p_i \cdots p_n \ot p_i ,
\end{equation}
 and  the action of $\gamma \in \li P$ on  $\toh P^\lag$ is given by
the unique ring homomorphism  taking 
$p \ot 1$ to $ p\ot 1 + \gamma(p)$.

More generally, suppose that $x$ is
 a point of a  fine saturated log analytic space $X$.
Let $Q := \ocM_{X,x}$ and choose a splitting
of the map $\cM_{X,x} \to Q$.   This splitting
induces an isomorphism $\tau_X^{-1}(x)  \cong \lt Q$,
which admits a universal cover $\lv Q \to \lt Q$.
An element $q  $ of $Q^\g$ defines a function
$\lv Q \to \br(1)$ which in fact is a global section
of the pullback of $\cLL_X \subseteq \oh X^\lag$ to $\lv Q$.  
Since  $\oh X^\lag$ is a sheaf of rings, there is an induced
ring homomorphism:
$S^\cx (Q^\g) \to \Gamma(\lv Q, \oh X^\lag)$.  
These constructions result in the  Proposition~\ref{pttilde.p} below.
For more details, we refer again to \cite[3.3]{kkcn.lblelsc}, \cite[{3.3.4}] {o.lrhc}, 
and~\cite[{V.\S3.3}]{o.llogg}.

\begin{proposition}\label{pttilde.p}
Let $x$ be a point of  fine saturated log analytic space $X$.
Then  a~choice of a splitting $\cM_{X,x} \to Q:= \ocM_{X,x}$
yields:
\begin{enumerate}
\item an isomorphism: $\tau_X^{-1}(x) \isomlong  \lt Q:=\Hom(Q, \sone)$,
\item a universal cover:  $ \lv Q := \Hom(Q, \br(1)) \to \tau_X^{-1}(x)$,
% \before{\item and an isomorphism $\oh {X,x} \ot S^\cx Q^\g\isomlong
% \Gamma(\lv Q,  \oh X^\lag)$}
\item for each $i$, an isomorphism
$\Omega^i_ {X,x} \ot S^\cx Q^\g\isomlong
\Gamma(\lv Q,  \tilde \eta_x^{-1}(\Omega^{i,\lag}_X)),$ 
where 
\[ 
  \tilde \eta_x \colon V_Q \to T_Q  \to \tau_X^{-1}(x) \to X_\lag
\]
is the natural map.
\end{enumerate}
If $\gamma \in \li P := \Hom(P^\g, \bz(1))$ then the action of 
$\rho_\gamma$  on  $ \Gamma(\lv X, \oh X^\lag)$ is 
given by $\exp(\lambda_\gamma)$, where $\lambda_\gamma$
is interior multiplication by $\gamma$. \qed
\end{proposition}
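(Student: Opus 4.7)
The plan is to deduce parts (1) and (2) from the chosen splitting together with basic facts about tori, construct the map in part (3) by combining Proposition~\ref{lslogp.p} with the canonical map $\mathcal{L}_X\to \oh X^\lag$, and derive the final assertion from formula~(\ref{gamact.e}) and Claim~\ref{lambda.c}.

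For part (1), a point of $\tau_X^{-1}(x)$ is a homomorphism $\sigma\colon \cM_{X,x}\to \sone$ whose restriction to $\oh{X,x}^*$ is $\arg\circ\alpha_X$. The splitting identifies $\cM_{X,x}\cong \oh{X,x}^*\oplus Q$, so such $\sigma$ is determined uniquely by its restriction $\sigma|_Q\in \lt Q$. Since $Q$ is fine, sharp, and saturated, $Q^\g$ is a finitely generated free abelian group, so $\lt Q\cong \Hom(Q^\g,\sone)$ is a compact torus; applying $\Hom(Q^\g,-)$ to the universal cover $\br(1)\to \sone$ furnishes the universal cover $\lv Q\to \lt Q$ of part~(2), with deck group $\li Q = \Hom(Q^\g,\bz(1))$.

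For part (3), I argue as in the proof of Proposition~\ref{lslogp.p}. The splitting supplies a homomorphism $Q^\g\to \cM_{X,x}^\g$; pulling back the logarithmic exponential extension~(\ref{logexp.e}) along this map over $\tau_X^{-1}(x)\cong \lt Q$ yields an extension of the constant sheaf $Q^\g$ by $\bz(1)$, which on the universal cover $\lv Q$ is canonically identified with the sheafification of $0\to \bz(1)\to \lL Q\to Q^\g\to 0$ and is split by $\tilde\chi\colon Q^\g\to \lL Q$. Composing $\tilde\chi$ with the canonical map $\mathcal{L}_X\to \oh X^\lag$ gives an additive homomorphism $Q^\g\to \Gamma(\lv Q,\tilde\eta_x^{-1}(\oh X^\lag))$; extending it multiplicatively and combining with the $\oh{X,x}$-algebra structure produces
\[
  \oh{X,x}\otimes_\bz S^\cx Q^\g \To \Gamma(\lv Q,\tilde\eta_x^{-1}(\oh X^\lag)).
\]
Tensoring over $\oh{X,x}$ with $\Omega^i_{X,x}$, and using that $\Omega^{i,\lag}_X = \oh X^\lag\otimes_{\tau_X^{-1}(\oh X)}\tau_X^{-1}(\Omega^i_X)$ by construction, yields the desired map in~(3). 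The main obstacle is to verify that this map is an isomorphism; I would do so by reducing (via an étale chart $X\to \ls Q$ near $x$ provided by the fine and saturated hypothesis) to the case $X=\ls Q$, where the claim follows from the sheafified statement of Proposition~\ref{lslogp.p} together with the explicit description of $\oh X^\lag$ in~\cite[\S3.2]{kkcn.lblelsc}.

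For the final assertion, the deck group $\li Q$ acts on $\lv Q$ by translation, so on functions by $(\gamma f)(v)=f(v+\gamma)$. Formula~(\ref{gamact.e}) gives $\gamma\cdot \tilde\chi_q = \tilde\chi_q + \gamma(q)$ for $q\in Q^\g$, so under the isomorphism of~(3) with $i=0$, the automorphism $\rho_\gamma$ of $\oh{X,x}\otimes S^\cx Q^\g$ is the unique $\oh{X,x}$-algebra map sending $q\in Q^\g$ to $q+\gamma(q)$, where we use the canonical identification $\bc(1)\cong\bc$ to view $\gamma(q)$ as a scalar. Claim~\ref{lambda.c}, applied with $V = \bc\otimes Q^\g$ and $\phi = \gamma$, then identifies $\rho_\gamma$ with $\exp(\lambda_\gamma)$, where $\lambda_\gamma$ is interior multiplication by $\gamma$.
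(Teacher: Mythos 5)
Your proof is correct and follows essentially the same route as the paper, which does not give a formal proof but instead sketches exactly this construction in the preceding paragraph (the splitting identifies $\tau_X^{-1}(x)$ with $\lt Q$, an element $q\in Q^\g$ yields via $\tilde\chi$ a section of the pullback of $\mathcal{L}_X\subseteq \oh X^\lag$ over $\lv Q$, and the multiplicative extension gives the isomorphism in (3)) and defers details to the cited references. Your derivation of the monodromy formula from the translation action and Claim~\ref{lambda.c} is likewise the intended argument.
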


% Then as we have seen, $\tilde \tau_{S*} \eta_S^*(\oh S^\lag)$ admits a connection on the log space $S$, and hence so does its pullback to $X$.
Since $\bc\ot\toh P^\lag$ is a module with connection on the log point $S$,
its pull-back $f^*(\toh P^\lag)$ to $X$  has an induced connection
$f^*(\bc\ot\toh P^\lag) \to \toh P^\lag \ot \Omega^1_{X/\bc}$.

In the following definition and theorem
 we use the notation of diagrams~(\ref{taudiag.e}) and~(\ref{taudiag2.e}),
and  if $\cF$ is a sheaf on $X_\lag$ (resp. $S_\lag$), 
we write $\tilde \cF$ for its pullback to $\tilde X_\lag$ (resp. $\tilde S_\lag$).

\begin{definition} \label{steen.d}
Let $f \colon X \to S$ be a smooth morphism of fine saturated log analytic spaces
over the split log point $S$
 associated to a fine sharp monoid $P$.  
% \[
%   \toh{S}^\lag :=  \tilde \tau_{S*} \eta_S^*(\oh S^\lag),
% \]
The \textit{Steenbrink complex} of $X/S$ is the de Rham complex
\[
  K^{\cx}_{X/S} :=\toh{P}^{\lag} \ot_\bz \Omega^\cx_{X/\bc} 
=  \tilde \tau_{X_S*}\left (\tilde f_S^{\lag*}(\toh S ^\lag)\ot\tilde \tau_{X_{ S}}^* (\Omega^\cx_{X/\bc})\right)
% f^*(\toh{S})\ot_\bc\Omega^\cx_{X/\bc}
\]
 of the $\oh X$-module with connection $f^*\toh S^{\lag}$, given by
\[  
 S^\cx P^\g  \ot \oh X \to  S^\cx P^\g \ot \Omega^1_{X/\bc}, 
 \quad\quad
 p \mapsto p \ot d\lag \,p
\]
endowed with its natural $\li P$-action.
\end{definition}

\begin{theorem} \label{steenb.t}
  Let $S$ be the split log point associated to a fine sharp and
  saturated  monoid $P$ and  let $f \colon X \to S$ be a smooth
  saturated morphism of fine saturated log analytic spaces.  Let
  $\tilde \Omega^{\cx,\lag}_{X/\bc}: = \eta_X^*(\Omega^{\cx,\lag}_{X/\bc})$ on $ \tilde X_{\lag} := X_\lag\times_{S_\lag} \tilde S_\lag$. Then in the derived category $D^+(X_\tp,\bc[\li P])$ of complexes of sheaves of $\bc[\li P]$-modules on $X_\tp$, there are natural isomorphisms: 
  \[ 
    R\tilde \tau_{X*} (\bc) \isomlong R\tilde\tau_{X*} (\tilde{\Omega}^{\cx,\lag}_{X/\bc})\isomlongleft K^\cx_{X/S} %:=f^*(\toh P^\lag) \ot \Omega^\cx_{X/\bc}.
  \]
\end{theorem}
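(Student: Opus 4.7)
The first quasi-isomorphism should follow from the logarithmic Poincar\'e lemma. Since $f\colon X\to S$ is smooth and the log point $S$, when equipped with the idealized structure coming from $\cM_S^+$, is (idealized) smooth over $\spec\bc$, the composition endows $X$ with an idealized smooth structure over $\bc$, as remarked in the introduction and in the proof of Corollary~\ref{vertons.c}. The Kato--Nakayama logarithmic Poincar\'e lemma (\cite[4.6]{kkcn.lblelsc}), in its idealized formulation, then gives a quasi-isomorphism $\bc_{X_\lag}\isomlong \Omega^{\cx,\lag}_{X/\bc}$; pulling back via $\eta_X$ and applying $R\tilde\tau_{X*}$ yields the first isomorphism. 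This construction is evidently $\li P$-equivariant since $\eta_X$ is equivariant for deck transformations.

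\textbf{Construction of the second map.} The identification~(\ref{ohslag.e}) gives $\bc\ot\toh{P}^{\lag}\isomlong \Gamma(\tilde S,\eta_S^*\oh S^\lag)$ compatibly with the $\li P$-action and with the logarithmic connection. Pulling back along $f$ and using the natural inclusion $\tilde\tau_X^{-1}(\Omega^\cx_{X/\bc})\hookrightarrow \tilde\Omega^{\cx,\lag}_{X/\bc}$ together with the differential graded algebra structure of $\tilde\Omega^{\cx,\lag}_{X/\bc}$, one obtains a morphism of complexes
\[
  K^\cx_{X/S}=\toh{P}^{\lag}\ot_{\bz}\Omega^\cx_{X/\bc} \To \tilde\tau_{X*}\tilde\Omega^{\cx,\lag}_{X/\bc},
\]
that respects the differential and, by construction together with Proposition~\ref{pttilde.p}, is $\li P$-equivariant, the action on $\toh{P}^{\lag}$ corresponding to the monodromy action on the right.

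\textbf{Quasi-isomorphism check (the main obstacle).} To see that the composition $K^\cx_{X/S}\to R\tilde\tau_{X*}\tilde\Omega^{\cx,\lag}_{X/\bc}$ is a quasi-isomorphism, we argue stalkwise at a point $x\in X_\tp$. Choose a local chart for $\cM_X$ and a splitting of the composition $\cM_{X,x}\to\ov\cM_{X,x}\to \cM_{X/S,x}$, producing a decomposition $\ov\cM_{X,x}^\g\cong P^\g\oplus \cM_{X/S,x}^\g$ and, via the split log cotangent sequence, $\Omega^1_{X,x}\cong\Omega^1_{X/S,x}\oplus (P^\g\ot\oh{X,x})$. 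By Proposition~\ref{pttilde.p}, sections of $\tilde\Omega^{\cx,\lag}_{X/\bc}$ over the contractible universal cover of $\tilde\tau_X^{-1}(x)$ are given by $\Omega^\cx_{X,x}\ot_{\bc}S^\cx P^\g\ot_{\bc}S^\cx\cM_{X/S,x}^\g$, with total differential $d+d\lag$ whose $P^\g$-component reproduces the Steenbrink differential and whose $\cM_{X/S}^\g$-component is Koszul-like. The stalk thus becomes the tensor product of the Steenbrink stalk $(K^\cx_{X/S})_x=\toh{P}^{\lag}\ot\Omega^\cx_{X,x}$ with a Koszul-type complex built from $S^\cx\cM_{X/S,x}^\g$ and $\bigwedge^\cx\cM_{X/S,x}^\g$; the latter is acyclic in positive degree, by a variant of Proposition~\ref{kozcoh.p} applied to the isomorphism $\cM_{X/S,x}^\g\isomlong \Omega^1_{X/S,x}/\oh{X,x}$-generators. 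Assembling this identifies the stalk of the right-hand side with $(K^\cx_{X/S})_x$ compatibly with the $\li P$-action.

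\textbf{Main difficulty.} The principal technical obstacle lies in Step~3: one must carefully manage the cross-terms in the total differential that mix the $P^\g$ and $\cM_{X/S}^\g$ factors, and verify that these do not disturb the Koszul acyclicity that isolates the Steenbrink factor $\toh{P}^{\lag}\ot\Omega^\cx_{X/\bc}$. Equivalently, this is a relative logarithmic Poincar\'e lemma in the $X/S$-directions, made precise via the splitting of $\ov\cM_{X,x}^\g$ and the symmetric-power formalism of \S\ref{ss:koszul}; the result is independent of the splitting, but the chosen splitting is essential for writing out the local computation.
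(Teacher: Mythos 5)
Your outline follows the paper's own route: the first arrow comes from the logarithmic Poincar\'e lemma of Kato--Nakayama (the paper cites \cite[3.8]{kkcn.lblelsc} rather than 4.6) applied to the pullback along $\eta_X$, and the second arrow is obtained by adjunction from the map $f_\lag^{-1}(\oh S^\lag)\to \oh X^\lag$ together with the multiplicative structure. The problem is in your Step 3, in two places. First, $\tilde\tau_X$ is not proper --- its fibers contain a factor $\lv P$ --- so you cannot identify the stalk of $R\tilde\tau_{X*}$ at $x\in X_\tp$ with the cohomology of the fiber $\tilde\tau_X^{-1}(x)$ without further argument. The paper factors $\tilde\tau_X=\tilde\tau_{X_S}\circ\tilde\tau_{X/S}$, applies proper base change to the proper map $\tilde\tau_{X/S}$ (whose fibers are the compact tori, torsors under $\lt{X/S,x}$), and then disposes of the projection $X_\tp\times\lv P\to X_\tp$ by observing that the resulting sheaves are constant along $\lv P$ and invoking \cite[2.7.8]{kash.sm}. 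Second, and more seriously, taking sections of $\tilde\Omega^{q,\lag}_{X/\bc}$ over the \emph{universal cover} of the fiber does not compute the cohomology of the fiber: the fiber is a torus, a $K(\pi,1)$ but not contractible, and the essential content is the computation of the derived $\li{X/S,x}$-invariants of $\Gamma(\lv Q,\cdot)\cong S^\cx\ocM^\g_{X,x}\ot\Omega^q_{X/\bc,x}$. This is exactly where the symmetric-algebra/Koszul acyclicity enters, via the identification of the group cohomology of the unipotent representation $S^\cx\cM^\g_{X/S,x}$ with the de Rham (Higgs) complex of the symmetric algebra, which is concentrated in degree $0$ where it gives $\bc$. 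Your write-up never takes these invariants, so as stated it only produces the degree-zero part.

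Relatedly, the "main difficulty" you flag --- controlling cross-terms in the total differential $d+d\lag$ mixing the $P^\g$- and $\cM^\g_{X/S}$-directions --- does not actually arise. The quasi-isomorphism is established by computing $R\tilde\tau_{X*}$ of each term $\tilde\Omega^{q,\lag}_{X/\bc}$ separately (showing acyclicity in positive degrees and identifying $\tilde\tau_{X*}\tilde\Omega^{q,\lag}_{X/\bc}$ with $K^q_{X/S}$), so the de Rham differential plays no role in the fiberwise step. The only differential one must control is that of the group-cohomology complex, which lives purely in the $\cM^\g_{X/S}$-directions once the splitting $S^\cx\ocM^\g_{X,x}\cong S^\cx P^\g\ot S^\cx\cM^\g_{X/S,x}$ is chosen compatibly with the $\li{X/S,x}$-action.
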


\begin{proof}
It is proved in \cite[3.8]{kkcn.lblelsc} that, on the space $X_\lag$,  the natural map
\[
  \bc \To  \Omega^{\cx,\lag}_{X/\bc}
\]
is a quasi-isomorphism.  Its pullback via $\eta_X$ is  a quasi-isomorphism:
\[
  \bc \To  \eta_X^*(\Omega^{\cx,\lag}_{X/\bc}) =\tilde{\Omega}^{\cx,\lag}_{X/\bc}
\]
on $\tilde X_\lag$, invariant under  the action  of $\li P$. 
Applying the derived functor $R\tilde\tau_{X*}$, we obtain the  isomorphism 
\[
  R\tilde \tau_{X*} (\bc) \isomlong R\tilde\tau_{X*}(\tilde \Omega^{\cx,\lag}_{X/\bc})
\]
in the theorem.

The natural map $f_\lag^{-1}(\oh S^\lag )\to \oh X^\lag$ induces a map 
\[
 \tilde f^*(\toh S^\lag) \ot \tilde \tau_X^*(\Omega^\cx_{X/\bc}) \to \tilde \Omega^{\cx,\lag}_{X/\bc},
\]
and hence by adjunction a map
\[ 
 \toh{P}^\lag \ot \Omega^\cx_{X/\bc} \to R\tilde\tau_{X*}(\tilde{\Omega}^{\cx,\lag}_{X/\bc}).
\]
The lemma below shows that this map is an isomorphism and completes the proof of the theorem.
\end{proof}

\begin{lemma}
  The terms of the complex $\tilde \Omega^{\cx,\lag}_{\tilde X}$ 
 are acyclic for $\tilde \tau_{X*}$, and for each $q$ the natural map
  \[
    K^q_{X/S}  \to \tilde\tau_{X*}(\tilde \Omega^{q,\lag}_{X/\bc}) 
  \]
  is an isomorphism. 
\end{lemma}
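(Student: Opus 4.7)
The plan is to prove both assertions by computing stalks at each point $x \in X_\tp$.  Let $Q := \ocM_{X,x}$, and fix a splitting of $\cM_{X,x} \to Q$ as in Proposition~\ref{pttilde.p}; this yields a map $\tilde\eta_x \colon \lv Q \to X_\lag$ factoring through $\tau_X^{-1}(x)$, together with the identification
\[
  \Gamma\bigl(\lv Q,\, \tilde\eta_x^{-1}(\Omega^{q,\lag}_{X/\bc})\bigr) \,=\, \Omega^q_{X,x}\ot S^\cx Q^\g.
\]
Since $f$ is saturated, $f^\flat \colon P^\g \to Q^\g$ is injective with torsion-free cokernel $L := \cM^\g_{X/S,x}$, and we fix a splitting $Q^\g \cong P^\g \oplus L$.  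The fiber $\tilde\tau_X^{-1}(x) = \tau_X^{-1}(x) \times_{\lt P} \lv P$ is homeomorphic to $\lv P \times \lt L$, and (after a choice of base point) a lift of $\tilde\eta_x$ presents it as the quotient of $\lv Q = \lv P \times \lv L$ by the deck group $L^\vee := \Hom(L, \bz(1))$, identified with $\ker(\li Q \to \li P)$ and acting on $\lv Q$ by translations in the $\lv L$-direction.

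Because $\lv Q$ is contractible, base change together with the group-cohomology computation for a free discrete action on a contractible space gives
\[
  (R^j\tilde\tau_{X*}\tilde\Omega^{q,\lag}_{X/\bc})_x \,\cong\, H^j\bigl(L^\vee,\, \Omega^q_{X,x}\ot S^\cx Q^\g\bigr).
\]
By Claim~\ref{lambda.c}, $\gamma \in L^\vee$ acts as $\exp(\lambda_\gamma)$, where $\lambda_\gamma$ is interior multiplication by $\gamma$.  Under the decomposition $S^\cx Q^\g \cong S^\cx P^\g \ot S^\cx L$, this action is trivial on the first factor and acts on $S^\cx L$ by translation on $L \subset S^\cx L$ extended as an algebra automorphism.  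So it suffices to prove that $H^0(L^\vee, S^\cx L) = \bc$ and $H^j(L^\vee, S^\cx L) = 0$ for $j > 0$: we then recover $(R^j\tilde\tau_{X*}\tilde\Omega^{q,\lag}_{X/\bc})_x = \Omega^q_{X,x}\ot\toh P^\lag = K^q_{X/S,x}$ in degree $0$ and vanishing in higher degrees.

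This last cohomology calculation is the main obstacle.  Using a Hochschild--Serre spectral sequence for a filtration of $L^\vee$ by sublattices, it suffices to treat the case $L^\vee = \bz$ acting on $\bc[w]$ by $w \mapsto w + c$ for some nonzero $c \in \bc$.  The invariants are plainly the constants $\bc$.  For the coinvariants, $(1-\rho)(w^k) = w^k - (w+c)^k$ has leading $w$-term $-ck \cdot w^{k-1}$, so descending induction on $w$-degree shows that $1 - \rho$ is surjective on $\bc[w]$, whence the coinvariants vanish.

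Finally, the stalk-wise identifications are compatible with the canonical globally defined map $K^q_{X/S} \to \tilde\tau_{X*}\tilde\Omega^{q,\lag}_{X/\bc}$ arising from the adjunction used in the proof of Theorem~\ref{steenb.t}, so the stalk calculation upgrades to the claimed sheaf-level isomorphism.
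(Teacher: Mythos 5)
Your overall strategy is the paper's: reduce to stalks, use Proposition~\ref{pttilde.p} to identify the relevant sections on a universal cover with $\Omega^q_{X,x}\ot S^\cx Q^\g$, split $Q^\g\cong P^\g\oplus L$, and compute the group cohomology of the relative lattice acting on $S^\cx L$. The one step that does not hold as written is the assertion that ``base change'' identifies $(R^j\tilde\tau_{X*}\tilde\Omega^{q,\lag}_{X/\bc})_x$ with the cohomology of the fiber $\tilde\tau_X^{-1}(x)\cong \lv P\times\lt L$. The map $\tilde\tau_X$ is \emph{not} proper (its fibers contain the noncompact factor $\lv P$), so proper base change does not apply to it directly, and for a general non-proper map the stalk of $R^jg_*$ is the colimit of $H^j(g^{-1}(U),-)$ over neighborhoods $U$ of $x$, which need not agree with the cohomology of the fiber. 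The paper avoids this by factoring $\tilde\tau_X=\tilde\tau_{X_S}\circ\tilde\tau_{X/S}$ as in diagram~(\ref{taudiag2.e}): proper base change is applied only to the proper map $\tilde\tau_{X/S}$ (whose fibers are the compact tori $\lt{X/S,x}$), and the remaining projection $\tilde\tau_{X_S}\colon X_\tp\times\lv P\to X_\tp$ is handled by \cite[2.7.8]{kash.sm}, using that the sheaves $R^i\tilde\tau_{X/S*}\tilde\Omega^{q,\lag}_{X/\bc}$ are pulled back from $X_\tp$. You need this (or an equivalent argument) to legitimize your stalk formula; once inserted, the rest of your computation goes through, since the group cohomology of $\lt L=\lv L/L^\vee$ with locally constant coefficients is exactly what proper base change on $\tilde\tau_{X/S}$ produces.

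Two smaller remarks. First, your endgame for $H^*(L^\vee,\bc\ot S^\cx L)$ --- Hochschild--Serre down to $\bz$ acting on $\bc[w]$ by $w\mapsto w+c$, with invariants $\bc$ and vanishing coinvariants by the leading-term argument --- is a correct and more self-contained alternative to the paper's appeal to Higgs cohomology via \cite[1.44]{o.lrhc}; nothing is lost there. Second, your closing paragraph rightly notes that the splitting-dependent stalk identifications must be checked against the canonical adjunction map $K^q_{X/S}\to\tilde\tau_{X*}(\tilde\Omega^{q,\lag}_{X/\bc})$; that is indeed how the statement upgrades from stalks to sheaves, and it is worth making explicit that the degree-zero identification $H^0(L^\vee,\Omega^q_{X,x}\ot S^\cx Q^\g)\cong \Omega^q_{X,x}\ot S^\cx P^\g$ is independent of the chosen splitting for exactly this reason.
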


\begin{proof}
The morphism $\tau_{X/S}$  in diagram (\ref{taudiag.e}) is proper and the left upper square is Cartesian, and hence $\tilde \tau_{X/S}$ is also proper.  Let $\tilde x = (x,\theta)$ be a point in $\tilde X_S^\lag \cong X_\tp \times \lv P$.  By the proper base change theorem, the natural map
\[
  (R^i\tilde \tau_{X/S*}\tilde\Omega^{q,\lag}_{X/\bc})_{\tilde x}  \to H^i(\tilde \tau^{-1}_{X/S}(\tilde x), \tilde \Omega^{q,\lag}_{X/\bc})
\]
is an isomorphism. 
(Here the term on the right
means the $i$th cohomology of the sheaf-theoretic restriction of
$\tilde \Omega^{q,\lag}_{X/\bc}$ to the fiber.) 
The fiber $\tilde \tau_{X/S}^{-1}(\tilde x)$ is a torsor under  the group 
\[
  \lt{X/S,x} :=\Hom(\ov \cM_{X/S,x} \sone) \subseteq \lt{X,x} := \Hom(\ov \cM_{X,x},\sone).
\]
 Hence the fiber is homeomorphic to this torus, and 
 $\tilde \Omega^{q,\lag}_{X/\bc}$ is locally  constant on the fiber, as follows from Proposition~\ref{pttilde.p}.
Since the fiber
is a $K(\pi, 1)$,  its cohomology can be calculated as group cohomology. More precisely, view $x$ as a log point (with its log structure inherited from $X$), so that we have a morphism of log points $x \to S$ and hence a  morphism: $x_\lag  \to S_\lag$.  Then 
a choice of a point $\ov  x$ of  $\tau_X^{-1}(x)$ allows us to make identifications:
\[\tau_X^{-1}(x) \cong x_\lag \cong \lt{X,x} \mbox{ and  }\tilde \tau_{X/S}^{-1}(\ov x) \cong \lt {X/S,x}.\]
 The second  torus has a universal cover $\lv{X/S,x} := \Hom( \ov \cM_{X/S,x}, \br(1))$, and  every locally constant sheaf  $\cF$ on $\lt {X/S,x}$ is constant when pulled back to this cover, so the natural map 
$\Gamma(\lv{X/S,x}, \cF) \to \cF_{\ov x}$ is an isomorphism.
These groups have a natural action
of the covering group $\li {X/S,x} = \Hom(  \cM_{X/S,x}^\g, \bz(1))$.   Then 
\[
  H^i(\tilde \tau_{X/S}^{-1}(\tilde x), \cF) \cong H^i(\li {X/S,x}, \cF_{\ov x}),
\]
 In our case, we have 
\[
  \tilde \Omega^{q,\lag}_{X/\bc,\ov x} = \oh {X,\ov x}^\lag \ot
\Omega^q_{X/\bc,x} \cong S^\cx(\ov \cM_{X,x}^\g) \ot \Omega^q_{X/\bc,x}. 
\]
Choosing  a splitting of $P^\g \to \ov \cM_{X,x}^\g$, we can write
\[
  S^\cx\ov \cM_{X,x}^\g  \cong S^\cx P^\g \ot S^\cx \cM_{X/S,x}^\g,
\]
compatibly with the action of $\li {X/S,x}$.  
Let $V:= \bc \ot \cM_{X/S.x}^\g$, and for $\gamma \in I_{X/S,x}
\subseteq \Hom (V,\bc)$, let $\lambda_\gamma$ denote
interior multiplication by $\gamma$ on $S^\cx V$.  An
 analog of Claim~\ref{lambda.c} shows that $\rho_\gamma = \exp \lambda_\theta$.
Then a standard calculation  shows that
\[
  H^i(\li {X/S,x},  \bc \ot S^\cx  \cM^\g_{X/S,x}) \cong
    \begin{cases} 
      \bc & \text{if }i = 0 \\ 
      0 & \text{if }i > 0.
    \end{cases}
\]
% \before{For example, this follows from the fact~\cite[1.44]{o.lrhc} that one can use Higgs cohomology
% to calculate the group cohomology of unipotent representations, and
% the Higgs complex of $\bc\ot S^\cx \cM^\g_{X/S,x}$ identifies with
% the de Rham complex}

Here is one way to carry out this calculation.   
As we have seen, the representation $(S^\cx V,\rho)$ of $I_{X/S,x}$
is the exponentional of the
locally nilpotent  Higgs field $\lambda \colon S^\cx V \to S^\cx V \ot V$
given by the exterior derivative.
It follows from \cite[1.44]{o.lrhc} that one can use Higgs cohomology
to calculate the group cohomology of such locally unipotent  representations.
In our case the Higgs complex of $\lambda$ identifies with
the de Rham complex of the symmetric algebra $S^\cx V$,
and the result follows. 

We conclude that  $H^i(\tilde \tau^{-1}_{X/S}(\tilde x), \tilde \Omega^{q,\lag}_{  X/\bc})$  vanishes if $i > 0$, and that the natural map
\[
  \toh P^\lag  \ot \Omega^{q,\lag}_{ X/\bc,x} \To  H^0(\tilde \tau^{-1}_{X/S}(\tilde x), \tilde \Omega^{q,\lag}_{  X/\bc})
\]
is an isomorphism. 
Then the proper base change theorem implies that
$R^i\tilde \tau_{X/S*}\Omega^{q,\lag}_{\tilde X/\bc} $ vanishes for $i > 0$ and
that   the natural map
\[ 
  \tilde{\tau}_{X_S}^*(\toh P^\lag  \ot\Omega^q_{X/\bc}) \To \tilde{\tau}_{X/S*} (\Omega^{q,\lag}_{\tilde X/\bc}).
\]
is an isomorphism.   But the map $\tilde \tau_{X_S}$ is just the projection $X_\tp \times \lv P \to X$, so for any abelian sheaf  $\cF$ on $X_\tp$, $R^i\tilde \tau_{X_S*}\tilde \tau_{X_S}^*\cF = 0$ and  $\cF \cong \tilde \tau_{X_S*}\tilde \tau_{X_S}^*\cF$, by \cite[2.7.8]{kash.sm}.  Since  $\tilde \tau_X = \tilde \tau_{X_S} \circ \tilde \tau_{X/S}$,  we conclude that $R^i\tilde \tau_{X*}(\tilde \Omega^{q,\lag}_{X/\bc})$ vanishes if $i > 0$ and that the natural map $\toh{S}^\lag  \ot \Omega^q_{X/\bc} \to \tilde{\tau_{X*}}(\Omega^{q,\lag}_{X/\bc})$ is an isomorphism.   The lemma follows.
\end{proof}

\begin{corollary}\label{cohsteen.c}
  In the situation of Theorem~\ref{steenb.t}, the maps $\sigma_{DR}$ 
\eqref{sigmadr.e} factor through
 isomorphisms:
\[  \bc \ot \bigwedge^q \cM_{X/S}^\g\isomlong \mathcal{H}^q (K^\cx_{X/S}).   \]
\end{corollary}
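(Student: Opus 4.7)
The plan is to deduce Corollary~\ref{cohsteen.c} by combining Theorem~\ref{steenb.t} with Theorem~\ref{logsymb.t} and checking that the resulting isomorphism coincides with the one induced by $\sigma_{DR}$. Taking cohomology sheaves in the quasi-isomorphism of Theorem~\ref{steenb.t} gives $\mathcal{H}^q(K^\cx_{X/S})\cong \Psi^q_{X/S}\otimes_\bz\bc$, and composing with the inverse of the Kato--Nakayama isomorphism $\sigma^q$ from Theorem~\ref{logsymb.t} (using the canonical trivialisation $\bc(-q)\cong \bc$) produces an isomorphism
\[
\psi^q\colon \bc\otimes\bigwedge^q\cM^\g_{X/S}\isomlong \mathcal{H}^q(K^\cx_{X/S}).
\]

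To relate $\psi^q$ to $\sigma_{DR}$, I would use the inclusion of complexes $\iota \colon \Omega^\cx_{X/\bc}\hookrightarrow K^\cx_{X/S}$ sending $\omega\mapsto 1\otimes\omega$. The composite $\mathcal{H}^q(\iota)\circ\sigma_{DR}\colon \bc\otimes\bigwedge^q\ocM^\g_X\to \mathcal{H}^q(K^\cx_{X/S})$ factors through $\bc\otimes\bigwedge^q\cM^\g_{X/S}$: for $p\in P^\g$ the class $1\otimes d\lag p\in K^1_{X/S}$ is exact, equal to $d(p\otimes 1)$, where $p$ is regarded as an element of $S^1 P^\g\subset\toh P^\lag$ sitting inside $K^0_{X/S}$ (so $\nabla(p)=1\otimes d\lag p$). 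This kills the ideal generated by $P^\g$ in $\bigwedge^\cx \ocM_X^\g$ and yields a well-defined factored map $\bar\sigma^q_{DR}\colon \bc\otimes\bigwedge^q\cM^\g_{X/S}\to\mathcal{H}^q(K^\cx_{X/S})$.

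It remains to show $\bar\sigma^q_{DR}=\psi^q$, which will imply that $\bar\sigma^q_{DR}$ is an isomorphism. Both maps respect multiplicative structures: the source is the exterior algebra on $\cM^\g_{X/S}$, while the target inherits a graded-algebra structure from $K^\cx_{X/S}$ (coming from the wedge product on $\Omega^\cx_{X/\bc}$ and the ring structure on $\toh P^\lag$), which matches the cup product on $R\tilde\tau_{X*}(\bc)$ under the quasi-isomorphism of Theorem~\ref{steenb.t}. It therefore suffices to check agreement in degree $q=1$. For $\bar\sigma^1_{DR}$ this is immediate from the definition: a~lift $\tilde m\in\cM^\g_X$ of $m\in\cM^\g_{X/S}$ is sent to $[d\lag\tilde m]\in\mathcal{H}^1(K^\cx_{X/S})$.

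The main obstacle is the corresponding assertion for $\psi^1$. This requires unwinding the construction of $\sigma^1$ as the boundary map of the logarithmic exponential sequence~(\ref{tildel.e}) on $\tilde X_\lag$, and tracing it through the adjunction and through the map $\ep\colon \tau_X^{-1}(\oh X)\to\mathcal{L}_X$ used in the proof of Theorem~\ref{steenb.t}. Concretely, one must verify that under the morphism $[\oh X\rTo{\exp}\cM^\g_{X/P}]\to\tau_{\le 1}\Psi_{X/S}(1)$ used there, the class of $m\in\cM^\g_{X/S}$ corresponds to $[d\lag\tilde m]$ in $K^\cx_{X/S}$. Once this diagram chase is completed, the corollary follows.
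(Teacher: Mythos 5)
Your proposal follows essentially the same route as the paper's proof: one factors $\sigma_{DR}$ through $\bc\ot\bigwedge^q\cM^\g_{X/S}$ by observing that for $p\in P^\g$ the class $1\ot d\lag p=d(p\ot 1)$ is exact in $K^\cx_{X/S}$ (the paper invokes formula~(\ref{dlogs.e}) for exactly this), and then identifies the factored map with the composite of the Steenbrink quasi-isomorphism of Theorem~\ref{steenb.t} and the Kato--Nakayama isomorphism of Theorem~\ref{logsymb.t}. The degree-one compatibility that you flag as ``the main obstacle'' and defer is precisely the point at which the paper simply asserts ``we now have a commutative diagram''; the input needed to close it is the absolute-case compatibility of $\sigma_{DR}$ with $\sigma$ recorded at the start of \S\ref{steenbrink.s} (the triangle following \eqref{sigmadr.e}, citing \cite[4.6]{kkcn.lblelsc}), applied on $\tilde X_\lag$ and combined with the explicit degree-$\le 1$ description \eqref{steenfor.e}. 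Your reduction to $q=1$ via multiplicativity of both symbol maps is sound and matches the paper's remark that the higher maps are obtained by cup product, so the argument is correct once that one diagram chase is supplied.
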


\begin{proof}
There is an evident inclusion
$\Omega^\cx_{X/\bc} \to K^\cx_{X/S}$, and hence we find natural maps
\[ \bc\ot \ocM_X^\g  \rTo {}\Hh^1(\Omega^\cx_{X/ \bc}) \rTo{} \Hh^1(K_{X/S}^\cx).\]
It follows from the formula~(\ref{dlogs.e}) that the image of
each element of $P^\g$ becomes exact in $K^1_{X/S}$, and hence
this composed map factors through  $\bc\ot\cM_{X/S}^\g$.  The maps
in the statement of the corollary are then obtained by cup product.   We now
have a~commutative diagram
\[\xymatrix{
\bc \ot\bigwedge^q \cM_{X/S}^\g \ar[r]\ar[rd]_{\tilde\sigma} & \Hh^q(K^\cx_{X/S})\ar[d]^\cong  \cr
& R^q\tilde\tau_{X*}(\bc),
}\]
where the vertical arrow is the isomorphism coming from Theorem~\ref{steenb.t}.  
Since $\tilde \sigma$ is an isomorphism by Theorem~\ref{logsymb.t},
the horizontal arrow is also an isomorphism.
\end{proof}

\subsection{Monodromy and the canonical filtration}
The filtration $N_\cx$ of $\toh{S}^\lag $ is stable under $\li P$ and the connection and hence induces a~filtration of the complex $K^\cx_{X/S}$.  Claim~\ref{lambda.c} 
shows that $N_n$ corresponds to the $n$th level of the ``kernel'' filtration defined by the monodromy action on the complex $K^\cx_{X/S}$. We shall see that this filtration coincides up to quasi-isomorphism with the canonical filtration $\tau_\le$.   Since we prefer to work with decreasing filtrations, we set
\[
  N^{k} K^q_{X/S} :=( N_{-k} \toh{S}^\lag ) \ot \Omega^q_{X/\bc},
\]
and
\[ 
  T^{k}K_{X/S}^q := 
    \begin{cases} 
      K_{X/S}^q & \text{if }k \le 0 \\
      0 & \text{otherwise.}
    \end{cases}
\]
In particular, $N^0K^\cx_{X/S} = \Omega^\cx_{X/S}$ and $N^1 K^\cx_{X/S} = 0$, so 
$N^iK^\cx_{X/S} \subseteq T^iK^\cx_{X/S}$ for all $i$,
 that is, 
 the filtration $N^\cx$ is finer than the filtration $T^\cx$.

Recall from \cite[1.3.3]{de.thII} that if $F$ is a filtration of a complex $K^\cx$,
 the  ``filtration d\'ecal\'ee''  $\tilde F$ is the filtration of $K^\cx$ defined by
\[
  \tilde F^iK^n:= \{ x \in F^{i+n}K^n : dx \in F^{i+n+1}K^n\}.
\]
Then there are natural maps
\[
  E_0^{i,n-i}(K^\cx,\tilde F) =\gr^i_{\tilde F} K^n \to \Hh^n(\gr^{i+n}_F K^\cx)= E^{i+n,-i}_1(K^\cx,F)
\]
inducing   quasi-isomorphisms:
\[  (E_0^{i,\cx}(K^\cx,\tilde F),d^{i,\cx}_0) \to (E_1^{\cx+i,-i}(K^\cx,F), d_1^{\cx+i,-i})\]
This equation says that the natural maps (induced by the
identity map of $K^\cx$),  are
quasi-isomorphisms
\begin{equation} \label{decqis.e}
   (E_0^{-q,\cx}(K^\cx,\tilde F),d^{-q,\cx}_0) \to (E_1^{\cx,q}(K^\cx,F), d_1^{\cx,q})[q]'
\end{equation}
 where the symbol $[q]'$ means the {\em naive shift} of the complex
(which does not change the sign of the differential).  
More generally, there are isomorphisms of spectral sequences, after a suitable 
renumbering~\cite[1.3.4]{de.thII}:
\[  (E_r^{\cx,\cx}(K^\cx,\tilde F),d^{\cx,\cx}_r) \to (E_{r+1}^{\cx,\cx}(K^\cx,F), d_{r+1}^{\cx,\cx}). \]

Let $\tilde N^\cx$ denote the filtration d\'ecal\'ee of $N^\cx$, and similarly for $T^\cx$; note that  $\tilde T^i = \tau_{\le -i}$,  the ``filtration canonique.''  
  Since the filtration  $N^\cx$ is finer than $T^\cx$, the filtration $\tilde N$ is finer than the filtration $\tilde T^\cx$, and we find a morphism of filtered complexes:
\begin{equation} \label{tntott.e}
  (K^\cx_{X/S},  \tilde N^\cx  ) \To (K^\cx_{X/S}, \tilde T^\cx)
\end{equation}

\begin{theorem} \label{tntott.t}
  Let $f \colon X \to S$ be a smooth and  saturated morphism of fine
  saturated log analytic spaces, where $S$ is the split log point
  associated to a sharp toric monoid.
  % \piotr{we should try to unify the assumptions put on monoids and maps, e.g. Theorem 5.7 below uses assumptions of previous Theorem 5.3, where the monoid is only sharp fs}.  
  Then there are natural filtered quasi-isomorphisms:
  \[
    (K^\cx_{X/S},  \tilde N^\cx  ) \rTo\sim  (K^\cx_{X/S}, \tilde T^\cx) \xleftarrow{\sim} (\Psi_{X/S} , \tilde T^\cx)
  \]
\end{theorem}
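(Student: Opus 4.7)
The proof splits into two parts. The right-hand quasi-isomorphism $(\Psi_{X/S}, \tilde T) \xleftarrow{\sim} (K^\cx_{X/S}, \tilde T)$ is essentially formal: Theorem~\ref{steenb.t} supplies a zigzag of quasi-isomorphisms of complexes of sheaves of $\bc$-modules between $\Psi_{X/S}\otimes\bc$ and $K^\cx_{X/S}$, and since $\tilde T^\cx$ is the d\'ecal\'ee of the canonical filtration $\tau_\le$, it is a derived-category invariant, so any quasi-isomorphism is automatically a filtered one. The main work concerns the left-hand morphism. Since $N^\cx$ refines $T^\cx$, so does $\tilde N^\cx$ refine $\tilde T^\cx$, and one has a well-defined morphism of filtered complexes~\eqref{tntott.e}. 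It suffices to check that the induced map $\gr^i_{\tilde N}K^\cx_{X/S} \to \gr^i_{\tilde T} K^\cx_{X/S}$ is a quasi-isomorphism for each $i$.

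\textbf{Computing the target.} By definition of the d\'ecal\'ee of the canonical filtration, $\gr^i_{\tilde T} K^\cx_{X/S}$ is concentrated in cohomological degree $-i$, where it is $\Hh^{-i}(K^\cx_{X/S})$. By Corollary~\ref{cohsteen.c}, this equals $\bc\ot \bigwedge^{-i}\cM^\g_{X/S}$.

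\textbf{Computing the source.} Applying \eqref{decqis.e} with $F=N$, I identify $\gr^{-q}_{\tilde N}K^\cx_{X/S}$ with the naive shift $(E_1^{\cx,q}(K^\cx_{X/S},N), d_1)[q]'$, and so reduce to analyzing the $E_1$-term of the $N$-spectral sequence. Here I split the differential on $K^\cx_{X/S} = \toh P^\lag \ot \Omega^\cx_{X/\bc}$ as $d = d_{X,\bc} + \eta_S$, where $\eta_S$ strictly increases the $N$-filtration index by $1$ (by Claim~\ref{lambda.c} and \eqref{dlogs.e}). Thus the differential on $\gr^p_N K^\cx_{X/S} = S^{-p}P^\g(-1)\ot\Omega^\cx_{X/\bc}$ is just $d_{X,\bc}$. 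The cohomology $\Hh^n(\Omega^\cx_{X/\bc}) \cong \bc\ot \bigwedge^n\ocM^\g_X$ can be computed via the Hodge-type filtration $F^i = f^*\Omega^i_{S/\bc} \wedge \Omega^{\cx-i}_{X/\bc}$, using the logarithmic Poincar\'e lemma for $X/S$ (compare~\cite[4.6]{kkcn.lblelsc}) and the Koszul identification of Proposition~\ref{kozcoh.p} applied to $0 \to P^\g \to \ocM^\g_X \to \cM^\g_{X/S}\to 0$. Therefore $E_1^{p,q}(K^\cx_{X/S},N) = S^{-p}P^\g(-1)\ot\bigwedge^{p+q}\ocM^\g_X$, and a direct inspection of the Bockstein-type map $d_1$ (induced by the $N$-boundary of $\eta_S$, applying $\eta$ to $S^\cx P^\g$ and then the inclusion $P^\g\to\ocM^\g_X$) matches it, term by term, with the Koszul differential of Definition~\ref{kozdr.d}. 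Thus $(E_1^{\cx,q},d_1)$ is isomorphic up to reindexing to $\Kos^q(u)[q]$ for $u\colon P^\g\to\ocM^\g_X$. By Proposition~\ref{kozcoh.p}, this is quasi-isomorphic to $\bigwedge^q\cM^\g_{X/S}$ concentrated in the single degree matching Step~2, and the spectral sequence degenerates at $E_2$.

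\textbf{Compatibility and obstacle.} Finally, one verifies that under the two identifications above, the comparison map $\gr^{-q}_{\tilde N}\to \gr^{-q}_{\tilde T}$ induces the identity on the common cohomology $\bc\ot\bigwedge^q\cM^\g_{X/S}$; this is a naturality check, since both identifications ultimately factor through the fundamental connecting map $\cM^\g_{X/S}\to \Hh^1(K^\cx_{X/S})$ of Theorem~\ref{logsymb.t} followed by the exterior product. The main obstacle is carrying out the identification of the $d_1$ differential in Step~5 with the Koszul differential precisely enough to invoke Proposition~\ref{kozcoh.p}: this requires keeping track both of the comultiplication on $S^\cx P^\g$ appearing in $\eta_S$ and of the identification of $[d^\lag p]\in \Hh^1(\Omega^\cx_{X/\bc})$ with the class of $p\in\ocM^\g_X$. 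A secondary technicality is that $X/\bc$ is not log smooth, only \emph{ideally} log smooth (as observed in the introduction); the Hodge-filtration argument above sidesteps this by computing $\Hh^\cx(\Omega^\cx_{X/\bc})$ through the relative log Poincar\'e lemma for $X/S$, which is applicable since $X/S$ is log smooth.
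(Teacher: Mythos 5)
Your proposal is correct and follows essentially the same route as the paper, which proves the left-hand arrow via the more precise Theorem~\ref{ntot.t}: compare graded pieces through Deligne's d\'ecal\'ee quasi-isomorphism~(\ref{decqis.e}), identify the $E_1$-page of the $N$-spectral sequence (whose $d_0$ is the de~Rham differential because $S^\cx P^\g$ is horizontal) with the Koszul complex of $P^\g\to\ocM^\g_X$ via~(\ref{sigmadr.e}) and~(\ref{dlogs.e}), and conclude with Proposition~\ref{kozcoh.p} plus the degeneration and naturality arguments you describe. The only (minor) divergence is that you rederive the isomorphism $\Hh^n(\Omega^\cx_{X/\bc})\cong\bc\ot\bigwedge^n\ocM^\g_X$ from the relative log Poincar\'e lemma and a Hodge-type filtration, whereas the paper simply invokes the absolute statement~(\ref{sigmadr.e}) for the ideally log smooth $X/\bc$; both are legitimate.
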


The existence of the second filtered quasi-isomorphism of the theorem follows from the canonicity of the filtration $\tilde T$ and Theorem~\ref{steenb.t}. The proof that the first  arrow is  a filtered quasi-isomorphism is a consequence
of the following  more precise result.

Recall from Definition~\ref{kozdr.d} that associated to the homomorphism
$$\theta \colon  \bc \ot \ocM_{S^\g} \to \bc\ot \ocM_X^\g$$
we have for each $q$
a complex $\Kos^{q,\cx}(\theta)$ and whose
$n$th term  is  given by
\[Kos^{n,q}(\theta) = \bc \ot S^{q-n} \ocM_{{S^\g}} \ot \Lambda^n\ocM_X^\g.\] 
\begin{theorem} \label{ntot.t}
  Let $f \colon X  \to S$ be as in Theorem~\ref{steenb.t},  let $K^\cx_{X/S}$ be the Steenbrink complex on $X_\tp$, and let 
  \[
    0 \rTo \  \bc \ot\ocM^\g_S \rTo \theta  \bc\ot \ocM^\g_X \rTo  \pi \bc \ot \ocM^\g_{X/S} \rTo \  0
  \]
  be the exact sequence of sheaves of $\bc$-vectors spaces on $X$ obtained by tensoring the log Kodaira-Spencer sequence~(\ref{logks.e}) with $\bc$.  
  \begin{enumerate}
\item For each $q \ge 0$, there are natural morphisms of complexes:
\[
\gr^{-q}_{\tilde N} K^\cx_{X/S} \rTo \sim E^{\cx,q}_1(K^\cx_{X/S},N)[-q]'
\rTo \cong \Kos^\cx_q(\theta) \rTo \sim \bc \ot \bigwedge^q \ocM^\g_{X/S} [-q],
\]
where the first and last maps are quasi-isomorphisms
and the second map is an isomorphism. (The notation $[-q]'$ means the naive shift of the complex, and $\Kos^\cx_q$ is the complex
defined in Definition~\ref{kozdr.d}.)
    \item The morphism of spectral sequences induced by the map of filtered complexes $(K_{X/S}^\cx, N^\cx) \to (K_{X/S}^\cx, T^\cx)$ is an isomorphism at the $E_2$-level and beyond.
    \item The map of filtered complexes $(K_{X/S}^\cx, \tilde N^\cx)  \to (K_{X/S}^\cx,\tilde T^\cx)$ is a filtered quasi-isomorphism.
  \end{enumerate}
\end{theorem}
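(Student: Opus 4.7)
The plan is to prove part (1) directly, and then deduce parts (2) and (3) formally.

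The first step is to compute $\gr^p_N K^\cx_{X/S}$ as a complex: by Claim~\ref{lambda.c} and formula~(\ref{dlogs.e}), the connection on $\toh P^\lag$ strictly lowers the $N$-degree, so on the associated graded the differential reduces to $\id \ot d$, giving
\[
\gr^p_N K^\cx_{X/S} \;\cong\; S^{-p}(\bc \ot P^\g(-1)) \ot_\bc \Omega^\cx_{X/\bc}.
\]
Since $X$ is idealized log smooth over $\bc$ via the idealization induced by the maximal ideal of $P$ (cf.~\S\ref{logfib.s}), the map~(\ref{sigmadr.e}) is an isomorphism $\bc \ot \bigwedge^n \ocM_X^\g \isomlong \Hh^n(\Omega^\cx_{X/\bc})$, and hence
\[
E_1^{p, q}(K^\cx_{X/S}, N) \;\cong\; S^{-p}(\bc \ot P^\g) \ot \bc \ot \bigwedge^{p+q} \ocM_X^\g.
\]

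The main obstacle will be to identify the $d_1$ differential with the Koszul differential of $\theta$. Using the canonical splitting $\toh P^\lag \cong \bigoplus_n S^n(P^\g(-1))$, a class in $E_1^{p, q}$ can be represented by $s \ot [\omega]$ with $s = p_1 \cdots p_{-p}$ and $\omega \in \Omega^{p+q}_{X/\bc}$ closed. The snake-lemma computation yields
\[
d_1[s \ot \omega] = [\nabla(s) \wedge \omega] = \sum_i \bigl(p_1 \cdots \hat p_i \cdots p_{-p}\bigr) \ot [d\lag p_i \wedge \omega],
\]
and under~(\ref{sigmadr.e}) the class $[d\lag p_i \wedge \omega]$ corresponds to $\theta(p_i) \wedge [\omega]$ in $\bigwedge^{p+q+1} \ocM_X^\g$. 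Thus $d_1$ is (up to conventional sign) the Koszul differential of Definition~\ref{kozdr.d}, which supplies the canonical isomorphism of complexes $E_1^{\cx, q}(K, N)[-q]' \cong \Kos^\cx_q(\theta)$. The final quasi-isomorphism to $\bc \ot \bigwedge^q \cM_{X/S}^\g[-q]$ is Proposition~\ref{kozcoh.p}, and the first map of part~(1) is the d\'ecalage quasi-isomorphism~(\ref{decqis.e}).

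Parts (2) and (3) then follow formally. For part~(2): $T$ is the stupid filtration, so $E_1^{p,q}(K, T)$ equals $\Hh^q(K^\cx_{X/S})$ if $p = 0$ and vanishes otherwise; by Corollary~\ref{cohsteen.c} this is $\bc \ot \bigwedge^q \cM_{X/S}^\g$. Part~(1) shows $E_2^{\cx, q}(K, N)$ is concentrated in $p = 0$ with value $\bc \ot \bigwedge^q \cM_{X/S}^\g$, and the natural map of spectral sequences respects these identifications; thus it is an isomorphism at $E_2$ and trivially at all subsequent pages, both sequences degenerating at $E_2$. For part~(3), the standard relation between the spectral sequences of a filtered complex and its d\'ecal\'ee~\cite[1.3.4]{de.thII} converts the $E_2$-isomorphism of~(2) into an $E_1$-isomorphism for $(K, \tilde N) \to (K, \tilde T)$; since both of these spectral sequences degenerate at $E_1$, the map $(K^\cx_{X/S}, \tilde N^\cx) \to (K^\cx_{X/S}, \tilde T^\cx)$ is a filtered quasi-isomorphism.
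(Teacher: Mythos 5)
Your proposal follows essentially the same route as the paper: the d\'ecalage quasi-isomorphism~(\ref{decqis.e}) for the first arrow, the identification of $E_0(K,N)$ with $S^\cx P^\g\ot\Omega^\cx_{X/\bc}$ and of $E_1(K,N)$ with the Koszul complex via~(\ref{sigmadr.e}) and~(\ref{dlogs.e}), Proposition~\ref{kozcoh.p} for the last arrow, degeneration of the $T$-spectral sequence at $E_1$ together with Corollary~\ref{cohsteen.c} for part~(2), and Deligne's comparison of the spectral sequences of a filtration and its d\'ecal\'ee for part~(3). The argument is correct and matches the paper's proof in all essentials.
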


\begin{proof}
The first arrow in (1) is the general construction of Deligne as expressed in
equation~(\ref{decqis.e}).  
It follows from the definitions that
\[
  E_0^{-p,q}(K^\cx_{X/S},N)  = \gr^{-p}_N K_{X/S}^{q-p} \cong S^p \ocM^\g_S\ot \Omega^{q-p}_{X/\bc}.
\]
Since the elements of $S^p\ocM^\g_S$ are horizontal sections of $\gr^{-p}_N \toh{S}^\lag $, the differential $d_0^{p,\cx}$ of the complex $E_0^{p,\cx}(K^\cx_{X/S},N)$ can be identified with the identity map of $S^p \ocM^\g_S$ tensored with the  differential of $\Omega^\cx_{X/\bc}$.  Then  the isomorphism~(\ref{sigmadr.e}) allows us to write:
\[ E_1^{-p,q}(K^\cx_{X/S}, N) 
  = \begin{cases}
      S^p\ocM^\g_S \ot \Hh^{q-p}(\Omega^\cx_{X/\bc}) \cong \bc\ot S^p\ocM^\g_S \ot  \bigwedge^{q-p} \ocM^\g_{X} & \text{if }0 \le p\le q \\  
      0  & \text{otherwise.} 
    \end{cases} 
\]
The isomorphism appearing above is the identity of $S^p \ocM^\g_S$
tensored with the isomorphism~(\ref{sigmadr.e}).
% \before{The differential $d_1^{p,q}$
% becomes identified with a map
% \[\bc\ot S^{n-q} \ov \cM^\g_S \ot \bigwedge^n \ocM^\g_X \To
%   S^{n-q-1} \ocM^\g_S \ot \bigwedge^{n+1} \ocM^\g_X 
%  \]}
The differential $d_1^{-p,q}$
becomes identified with a map
\[\xymatrix{\bc\ot S^p \ov \cM^\g_S \ot \bigwedge^{q-p} \ocM^\g_X \ar[r]\ar[d]&
 \bc\ot S^{p-1} \ocM^\g_S \ot \bigwedge^{q-p+1} \ocM^\g_X \ar[d]\\
\Kos^{q-p,q}(\theta) \ar[r]& \Kos^{q-p+1,q}(\theta).
 }\]
It follows from  formula~(\ref{dlogs.e}) that  
this differential is indeed the Koszul   differential.  
Thus we have found  the isomorphism
\[E^{\cx,q}_1(K^\cx_{X/S}, N)[-q]'\rTo \cong \Kos^{\cx,q}(\theta).\]
The quasi-isomorphism
$\Kos^{\cx,q}(\theta) \rTo \sim \bigwedge^q\ocM^\g_{X/S} [-q]$ comes 
from   Proposition~\ref{kozcoh.p}. This completes the proof
of statement (1) of the theorem.

We have  natural maps of filtered complexes
\begin{eqnarray*}
  (K_{X/S}^\cx,N^\cx) & \to &  (K_{X/S}^\cx,T^\cx) \quad \mbox{hence also}\\
  (K_{X/S}^\cx, \tilde N^\cx) & \to &  (K_{X/S}^\cx,\tilde T^\cx).
\end{eqnarray*}
These maps produce the map of spectral sequences in statement (2).
Consider the spectral sequence associated to the filtered complex $(K_{X/S}^\cx, T^\cx)$, in the category of abelian sheaves.   We have 
\[
  E_0^{-p, q}(K^\cx_{X/S}, T) = \gr^{-p}_TK_{X/S}^{q-p}  = 
  \begin{cases} 
    K_{X/S}^{q-p}  & \text{if }p = 0 \\
    0 & \text{otherwise,} 
  \end{cases}
\]
hence an isomorphism of complexes:
\[(E^{0,\cx}_0,d) \cong K^\cx_{X/S}\]
 and of cohomology groups:
\[
  E_1^{-p,q}(K_{X/S}, T) = 
  \begin{cases}
    \Hh^{q}(K_{X/S}^\cx) & \text{if }p = 0 \\
    0 & \text{otherwise.} 
  \end{cases}
\]
Thus the complex $E_1^{\cx,q}(K,T)$ is isomorphic to the sheaf $\Hh^q(K_{X/S})$,
viewed as a complex
 in degree zero, and  the spectral sequence degenerates at $E_1$.  Then
\[
  E_\infty^{0,q}(K,T) =  E_1^{0,q}(K,T)  = \Hh^q(K_{X/S}^\cx) \cong R^q\tilde \tau_X(\bc) \cong
 \bigwedge^q  \bc  \ot \ocM^\g _{X/S},
% \bigwedge^q  \ov V_{X/S},
\]
by Corollary~\ref{cohsteen.c}.
Since the maps are all natural, statement (2) 
of Theorem~\ref{ntot.t} follows.

%We can now easily prove statement (2) of Theorem~\ref{monthm.t} if we are willing to tensor with $\bc$.  
Using the naturality of the maps in (\ref{decqis.e}), we find for every $i$ a commutative diagram of complexes. 
\[
  \xymatrix{
    \gr^{-i}_{\tilde N}K_{X/S}^\cx  \ar[r] \ar[d] & \gr^{-i}_{\tilde T} K_{X/S}^\cx \ar[d] \\
    E_1^{\cx,-i}(K_{X/S}^\cx,N)\ar[r] &  E_1^{\cx,-i}(K_{X/S}^\cx,T).
  }
\]

Since the bottom horizontal arrow is a quasi-isomorphism and the  vertical arrows are quasi-isomorphisms,  it follows that the top horizontal arrow is also a quasi-isomorphism. Since $\tilde T^i$ and $\tilde N^i$ both vanish for $i > 0$, it follows by induction that  for every $i$, the map $\tilde N^iK^\cx_{X/S} \to \tilde T^iK^\cx_{X/S}$ is a quasi-isomorphism.
\end{proof}

Combining the above results with our study of Koszul complexes in \S\ref{ss:koszul}, we can now give our first proof of the monodromy formula in Theorem~\ref{monthm.t} after tensoring with~$\bc$.

Any  $\gamma  \in \li P$ induces a homomorphism $\ocM_S^\g   \to \bc$, which we denote also by $\gamma$.  By  Proposition~\ref{pttilde.p}
the action of $\tilde \lambda_\gamma := \log (\rho_\gamma)$ on $\toh S^\lag \cong S^\cx \ocM^\g_S$
 corresponds to interior multiplication by $\gamma$. Thus for every $i$,  $\lambda_\gamma$ maps $N^{-i}K_{X/S}^\cx $ to $N^{1-i}K_{X/S}^\cx$ and hence $\tilde N^{-i}K_{X/S}^\cx $ to $\tilde N^{1-i}K_{X/S}^\cx$.
We need to compute the induced map
\[\tilde \lambda_\gamma^i \colon \gr^{-i}_{\tilde N} K^\cx_{X/S} \to \gr^{1-i}_{\tilde N} K^\cx_{X/S}.\]
Using the quasi-isomorphism of statement (1) of Theorem~\ref{ntot.t}, we can 
identify this as the map
$\gamma_i \colon \Kos^\cx_i(\theta) \to  \Kos^\cx_{i-1}(\theta)$ which
in degree $n$ is the composition
\[\xymatrix{
\bc\ot S^{i-n} \ocM^\g_S \ot \bigwedge^{n} \ocM^\g_X \ar[r]^-{\eta\ot\id}\ar[dr]
&\bc\ot  \ocM^\g_S \ot S^{i-n-1}\ocM^\g_S \ot \bigwedge^{n} \ocM^\g_X  \ar[d]^{\gamma \ot \id} \cr
&\bc\ot S^{i-n-1}\ocM^\g_S  \ot \bigwedge^{n} \ocM^\g_X ,
}\]
where $\eta$ is the map defined at begining of \S\ref{ss:koszul}.
In other words, our map is 
the composition of the morphism
\[
  c_q \colon \Kos^{\cx,q}(\theta) \to \ov \cM^\g_S \ot \Kos^{\cx,{q-1}}(\theta),
\]
constructed in Proposition~\ref{vwdr.p} with $\gamma \ot \id$. 
% By (2) of Proposition~\ref{vwdr.p}, this  corresponds to the morphism
% \[
%   \kappa_\gamma \colon \bigwedge^q \ov V_X/S \to \bigwedge^{q-1}\ov V_{X/S}
% \]
We thus find  a  commutative diagram in the derived category:
\[ 
  \xymatrix{
    \Psi^q_{X/S} \ar[d]^{\lambda_\gamma^q} & \ar[l]_-\sim  \gr^{-q}_{\tilde N} K^\cx_{X/S}[q] \ar[r]^-\sim \ar[d]^{\tilde \lambda_\gamma^q}& \Kos^\cx_q(\theta)[q]\ar[d]^{(\gamma\ot\id)\circ c_q} \ar[r]^-\sim & \bigwedge^q \ocM^\g_{X/S} \ar[d]^{g_q} \\
    \Psi^{q-1}_{X/S}[1] & \ar[l]_-\sim \gr^{1-q}_{\tilde N} K^\cx_{X/S} [q]\ar[r]^-\sim &  \Kos^\cx_{q-1}(\theta)[q] \ar[r]^-\sim  & \bigwedge^{q-1} \ocM^\g_{X/S}[1]
} 
\]
The  horizontal arrows in the leftmost square come from
Theorem~\ref{tntott.t}  and those in the remaining squares
come from statement (1) of Theorem~\ref{ntot.t}.
Statement (3) of Proposition~\ref{vwdr.p} shows that $g_q = (-1)^{q-1}\kappa_\gamma$, and 
statement (2) of Theorem~\ref{monthm.t}, tensored with $\bc$, follows. 

\section{Proof of the integral monodromy formula}\label{monpf.s}

We present a proof of the monodromy formula Theorem~\ref{monthm.t}(2) with integral coefficients. In contrast with the proof with complex coefficients presented in the previous section, this one uses more abstract homological algebra; not only does this method work with $\bz$-coefficients in the complex analytic context, it can be adapted to the algebraic category, using the Kummer \'etale topology, as we shall see in Section~\ref{etale.s}. 

\subsection{Group cohomology}

Our proof of  the monodromy formula with integral coefficients is hampered by the fact that we have no  convenient explicit complex of sheaves of  $\li P$-modules representing $\Psi_{X/S}$.  Instead we will need some abstract arguments in homological algebra, which require some preparation. Recall %\piotr{recall? seems we removed this from \S 2.1} 
that the \textit{cocone} $\Con'(u)$ of a morphism $u$ is the shift by $-1$ of the cone  $\Con(u)$ of $u$, so that there is a distinguished triangle:
\[\Con'(u) \To A \rTo{u} B \To \Con'(u)[1]. 
\]
In other words, $\Con'(u)$ is the total complex of the double complex $[A\tto{-u} B]$  where $A$ is put in the $0$-th column (that is, ${\rm Fibre}(-u)$ in the notation of \cite{sai.wsil}). Explicitly, $\Con'(u)^n = A^n \oplus B^{n-1}$, $d(a, b) = (da, -u(a)-db)$, $\Con'(u)\to A$ maps $(a, b)$ to $a$ and $B\to \Con'(u)[1]$ maps $b$ to $(0, b)$. %\piotr{BBM put a minus sign in the inclusion inside the cone, which seems to turn to a plus sign when we rotate the triangle to the left...} 

Let $X$ be a topological space and $\I$ a group.  We identify the (abelian) category of sheaves of $\I$-modules on $X$ with the category of sheaves of $R$-modules on $X$, where $R$ is the group ring $\bz[\I]$.  The functor $\und \Gamma_\I$ which takes an object  to its sheaf of $\I$-invariants identifies with the functor $\Hhom(\bz, - )$, where  $\bz \cong R/J$ and $J$ is the augmentation ideal of $R$.

Now suppose that $\I$ is free of rank one, with a chosen generator
$\gamma$.  Then $\lambda := e^\gamma -1$ (see\   \S 3.1) is a generator of the ideal $J$, and we have an exact sequence of sheaves of $R$-modules:
\[
  0 \To  R \rTo{\lambda}  R  \To \bz \To 0,
\]
which defines a quasi-isomorphism $C_\cx \isommap \bz$, where $C_\cx$ is the complex $[R \tto{\lambda} R]$ in degrees $-1$ and $0$.  The functor $\Hhom(R, -)$ is exact, and hence the functor $\und\Gamma_\I$ can be identified with the functor $\Hhom(C_\cx, -)$.   The $R$-linear dual of $C_\cx$  is the complex
\[
  C^\cx := \left[ R \rTo{-\lambda} R \right], 
\]
(cf. \cite[0.3.3.2]{bbm.tdc} for the sign change) in degrees $0$ and $1$, and for any complex $K^\cx$ of sheaves of $\I$-modules, 
\begin{equation} \label{cxdef.e}
  C(K^\cx):=  \Hhom_R(C_\cx, K^\cx) \cong C^\cx \ot_R K^\cx
\end{equation}
is a representative for $ \R\und \Gamma_\I (K^\cx)$.   Note that
\[
  C^q(K^\cx) = K^q \oplus K^{q-1}, \quad d(x,y) = (dx, -\lambda x -dy),
\]
and thus $C(K^\cx)$ is the cocone of the morphism $\lambda \colon K^\cx \to K^\cx$.  

In particular, $C^\cx = C(R^\cx)$, where, $R^\cx$ is the complex consisting of $R$ placed in degree zero, and we have a quasi-isomorphism
\[
  \ep \colon C^\cx \rTo\sim \bz[-1] \quad\mbox{ given by the
  augmentation $R  \to \bz$ in degree one}.
\]

\begin{proposition}[{Compare with \cite[\S 1]{rz.lzs}}] \label{rgami.p}
  Let $\I$ be a free abelian group of rank one, with generator $\gamma$, let $R:= \bz[\I]$, and  let $C^\cx$ be the complex~(\ref{cxdef.e}) above. For an object $K^\cx$  of the derived category $D_\I(X)$ of sheaves of $\I$-modules on a topological space $X$, let $C(K^\cx) :=  C^\cx \ot_R K^\cx$.
  \begin{enumerate}
    \item There   are natural isomorphisms:
      \[
        C(K^\cx)  \cong R\Hhom_\I(\bz, K^\cx)  \cong \R\und \Gamma_\I(K^\cx)
      \]
      and a distinguished triangle:
      \begin{equation} \label{cocone.e}
        C(K^\cx) \rTo{a} K^\cx \rTo{\lambda}  K^\cx \rTo{b} C(K^\cx)[1] .
      \end{equation}
    \item Let $\partial \colon \bz \to \bz[1]$ denote the morphism defined by the exact sequence~(\ref{groupal.e}) :
      \[
        0 \To \bz \To R/J^2 \To \bz \To 0
      \]
      (the first map sends $1$ to the class of $\lambda$). 
      Then $b \circ a = \partial  \ot \id \colon C(K^\cx) \to C(K^\cx)[1] $.  
    \item There are natural exact sequences:
      \[
        \cdots \To  R^q\und\Gamma_\I (K^\cx) \rTo{a} \Hh^q(K^\cx) \rTo{\lambda} \Hh^q(K^\cx) \rTo{b} R^{q+1}\und \Gamma_\I(K^\cx) \To \cdots
      \]
      and
      \[  
        0 \To R^1\und\Gamma_\I(\Hh^{q-1}(K^\cx))\rTo{b} R^q\und\Gamma_\I(K^\cx) \rTo{a} \und\Gamma_\I(\Hh^q(K^\cx)) \To 0. 
      \]
    \item If the action of $\I$ on $\Hh^q(K^\cx)$ is trivial, $a$ and $b$ induce canonical isomorphisms:
      \begin{align*}
        \Gamma_\I(\Hh^q(K^\cx))& \cong  \Hh^q(K^\cx) \\ 
        \Hh^q(K^\cx) &\cong R^1\Gamma_\I(\Hh^q(K^\cx))
      \end{align*}
    %\item The map $b:R^1\und\Gamma_\I(\Hh^{q-1}(K^\cx))\to R^q\und\Gamma_\I(K^\cx)$ equals the map $R^1\und\Gamma_\I \Hh^{q-1}(K^\cx) \to R^q\und\Gamma_\I K^\cx$ deduced from the spectral sequence 
    %\[ 
    %  E_2^{pq} = R^p \und\Gamma_\I \Hh^q(K^\cx) \quad \Rightarrow \quad R^{p+q}\und\Gamma_\I K^\cx
    %\]
    %(which exists because $E_2^{pq} = 0$ for $p>1$).   \piotr{This is the only statement here which fails if we replace $1-\gamma$ by $\gamma-1$!}
  \end{enumerate}
\end{proposition}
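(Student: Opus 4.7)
Part (1) is essentially a repackaging of the construction. The two-term complex $C_\bullet = [R \rTo{\lambda} R]$ in degrees $-1, 0$ is a resolution of $\bz$ by free $R$-modules, so $R\Hhom_R(\bz, K^\cx)$ may be computed as $\Hhom_R(C_\bullet, K^\cx)$; direct inspection shows this agrees with $C^\cx \ot_R K^\cx$ (using that $C_\bullet$ is self-dual up to shift) and with the cocone $C(K^\cx)$ of $\lambda$, and the distinguished triangle \eqref{cocone.e} is then the tautological cocone triangle. For (3), I apply the long exact cohomology sequence to \eqref{cocone.e}; since $\I \cong \bz$ has cohomological dimension $1$, we have $R^q\und\Gamma_\I = 0$ for $q \ge 2$, and the long exact sequence splits into the claimed short exact sequences. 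Part (4) is the specialization in which $\lambda$ acts as zero on cohomology, using that for trivial action one has $\und\Gamma_\I(M) = M$ and $R^1\und\Gamma_\I(M) \cong M$ canonically.

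\textbf{Strategy for part (2).} The identity $b\circ a = \partial \ot \id$ compares two natural transformations from the functor $C = R\Hhom_R(\bz, -)$ to its shift $C[1]$. My plan is to interpret both sides, via Yoneda-type functoriality of the cocone construction and of \eqref{groupal.e}, as elements of $\Ext^1_R(\bz,\bz) \cong H^1(\I,\bz) \cong \bz$, and to check the equality on the universal case $K^\cx = R$ viewed as a complex concentrated in degree $0$. For this $K^\cx$, the complex $C(R) = C^\cx$ is quasi-isomorphic to $\bz[-1]$ via the augmentation $\ep$, so both sides become morphisms $\bz[-1] \to \bz$ in $D(R)$.

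\textbf{Computation and main obstacle for part (2).} Using the explicit formulas for the cocone, $b\circ a:C(R)\to C(R)[1]$ is, in degree $0$, the identity map $C(R)^0 = R \to C(R)^1 = R$; post-composition with $\ep[1]$ yields the augmentation $R \to \bz$ viewed as a chain map $C(R)\to\bz[0]$, which represents a specific generator of $\Ext^1_R(\bz,\bz)\cong\bz$. To match this with $\partial$, I compute $\partial$ from \eqref{groupal.e} via the morphism of short exact sequences whose left vertical is $\ep \colon R \to \bz$ and whose middle vertical is the projection $R \to R/J^2$, $1 \mapsto 1 \bmod J^2$; the induced comparison on connecting morphisms identifies both classes with the same generator of $\Ext^1_R(\bz,\bz)$, finishing the proof. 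I expect the main obstacle to be not the strategy but the careful book-keeping of signs and shifts: the differential $d(a,b) = (da,-u(a)-db)$ of the cocone involves a crucial sign that propagates through the identification of $b\circ a$, and this must be matched against the sign conventions for $\partial$ and for $\lambda = \mex\gamma - \mex 0$ from \eqref{1-gamma.e}, as well as the contravariant shift conventions for $R\Hhom$.
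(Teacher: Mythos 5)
Your proposal is correct and follows essentially the same route as the paper: part (1) by unwinding the free resolution $[R\xrightarrow{\lambda}R]$ of $\bz$ and the cocone triangle, part (2) by reducing to the universal case $K^\cx=R$ (the paper justifies this by tensoring the triangle for $R$ with $K^\cx$), computing $b\circ a$ explicitly in degree $0$ and identifying it, after composing with the augmentation quasi-isomorphism $\ep[1]$, with the connecting map of the pushout of $0\to R\to R\to\bz\to 0$ along the augmentation, which is exactly (\ref{groupal.e}); parts (3) and (4) by the long exact sequence plus $\Gamma_\I\cong\Ker\lambda$, $R^1\Gamma_\I\cong\Cok\lambda$. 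The only point to make sure of when writing it up is the sign/shift bookkeeping you already flag, which the paper resolves by exhibiting $\ep[1]\circ b\circ a$ as an explicit chain map rather than only as a class in $\Ext^1_R(\bz,\bz)$.
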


%\piotr{Should we mention Rapoport--Zink?}

\begin{proof}
We have already explained statement (1) in the preceding   paragraphs (the distinguished triangle expresses the fact that $C(K^\cx)$ is the cocone of $\lambda\colon K^\cx\to K^\cx$).  Since  $C(K^\cx) \cong C(R) \ot K^\cx$ and the distinguished triangle in (1) for $K^\cx$ is obtained by tensoring the triangle for $R$ with $K^\cx$, it will suffice to prove (2) when $K^\cx = R$.  In  this case, $a \colon C^\cx \to R$ is  given by the identity map in degree $0$, and $b \colon R \to C^\cx[1]$  is given by the identity map in degree $0$. Thus $b \circ a \colon C^\cx  \to C^\cx[1]$ is the map
\[
  \xymatrix{
    0\ar[r] \ar[d] & R\ar[r]^{-\lambda} \ar[d]^{\id} & R \ar[d] \\
    R \ar[r]^\lambda & R\ar[r] & 0
  }
\]
Composing with the quasi-isomorphism $\ep[1]$, we  find that $\ep[1]\circ  b \circ a$ is given by
\[ 
  \xymatrix{
    R \ar[r]^{-\lambda} \ar[d]_{{\rm aug}} & R \\
    \bz
  }
\]
The pushout of the exact sequence $ 0 \to R \to R \to \bz \to 0$ along $R  \to \bz$ is the sequence~(\ref{groupal.e}).  It follows that the morphism $b\circ a \colon C^\cx \to C^\cx[1]$ is the same as the morphism $\partial \colon \bz \to \bz[1]$ defined by that sequence.   This  proves statement (2).

Since $C(K^\cx)  = \R\und \Gamma_\I(K^\cx)$, the first sequence of  statement (3) is just the cohomology sequence associated with the distinguished triangle in (1); the second sequence follows from the first and the fact that for any $\I$ module $E$, $\Gamma_\I(E) \cong \Ker (\lambda)$ and $\R^1\Gamma_\I(E) \cong  \Cok(\lambda)$. %\piotr{Need to check that the maps in the second short exact seq have correct signs} 
Statement (4) follows, since in this case $\lambda = 0$.  
\end{proof}

\subsection{Proof of the monodromy formula}
\label{ss:monpf}

We now turn to the proof of the integral version of  statement  (2)  of Theorem~\ref{monthm.t}. Recall that $\Psi_{X/S} = R\ttau_{X*} \bz$ (cf.\ \eqref{taudiag.e}); let us also set $\Psi_X = R\tau_{X*} \bz$ and $\Psi_S = R\tau_{S*} \bz$.  We begin with the following observation, which is a consequence of the functoriality of the maps $\sigma$ as defined in Theorem~\ref{logsymb.t}. 

\begin{lemma} \label{lemma:ses-comp}
  The following diagram with exact rows commutes:
  \begin{equation} \label{eqn:twoext}
    \xymatrix{
        0 \ar[r] & f^* \ocM^\g_S(-1) \ar[r] \ar[d]^{f^* \sigma_{S}} & \ocM^\g_X(-1)  \ar[r] \ar[d]^{\sigma_X} & \cM^\g_{X/S}(-1) \ar[r] \ar[d]^{\sigma_{X/S}} & 0 \\
        %0 \ar[r] & f^* R^1\tau_{S*} \bz \ar[r] & R^1\tau_* \bz \ar[r]  &  R^1\ttau_* \bz \ar[r] & 0.
        0 \ar[r] & f^* \Psi^1_S \ar[r] & \Psi^1_X \ar[r]  & \Psi^1_{X/S} \ar[r] & 0.
    }
  \end{equation}
  Consequently one has a commutative diagram in the derived category
  \[ 
    \xymatrix{
      \cM^\g_{X/S}(-1) \ar[r]^-E\ar[d] & f^* \ocM^\g_S(-1)[1]=P^\g(-1)[1]\ar[d]\\
      %R^1\ttau_*(\bz) \ar[r]^-F & f^* R^1\tau_{S*} \bz[1]. 
      \Psi_{X/S} \ar[r]_-F & f^* \Psi^1_S [1]. 
    } 
  \]
\end{lemma}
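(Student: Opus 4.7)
The strategy is to exhibit the commutative diagram~\ref{eqn:twoext} as a snapshot of a compatible family of short exact sequences whose boundary homomorphisms produce the three $\sigma$ maps. First, I would recall that each of $\sigma_S$, $\sigma_X$, and $\sigma_{X/S}$ arises as the connecting homomorphism of some form of the logarithmic exponential sequence: $\sigma_S$ from~\ref{logexp.e} on $S_\lag$ via $R\tau_{S*}$, $\sigma_X$ from~\ref{logexp.e} on $X_\lag$ via $R\tau_{X*}$, and $\sigma_{X/S}$ from the quotient sequence~\ref{tildel.e} on $\tilde X_\lag$ via $R\ttau_{X*}$. In the last two cases, the boundary factors through $\ocM^\g_X$ and $\cM_{X/S}^\g$ respectively, as already exploited in the proof of Theorem~\ref{logsymb.t}.

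Next, I would construct on $\tilde X_\lag$ a commutative diagram of short exact sequences: one obtained by pulling back~\ref{logexp.e} from $S_\lag$ along $f_\lag\circ\eta_X$, one by pulling back~\ref{logexp.e} from $X_\lag$ along $\eta_X$, and~\ref{tildel.e} itself. The morphisms between these sequences are the identity of $\bz(1)$ on the left, the natural maps $f_\lag^{-1}\cLL_S\to\eta_X^*\cLL_X\to\eta_X^*\cLL_X/\tilde\chi(P^\g)$ in the middle, and $f^{-1}\cM_S^\g\to\cM_X^\g\to\cM_{X/P}^\g$ on the right. Applying $R^1\ttau_{X*}$ and using the naturality of boundary maps in morphisms of short exact sequences, together with the general fact that the boundary of $\eta_X^{-1}\mathcal{E}$ computed via $R\ttau_{X*}=R\tau_{X*}R\eta_{X*}$ equals the boundary of $\mathcal{E}$ via $R\tau_{X*}$ composed with the canonical map $R\tau_{X*}\to R\ttau_{X*}$ (and the analogous compatibility between $R\tau_{S*}$ and $R\tau_{X*}$ furnished by the base change morphism $f_\tp^{-1}R\tau_{S*}\to R\tau_{X*}$), I obtain the commutativity of both squares in~\ref{eqn:twoext}.

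Having established commutativity, the exactness of the bottom row of~\ref{eqn:twoext} then follows from the exactness of the top row (which is~\ref{logks.e} tensored with $\bz(-1)$; left exactness of the top is the injectivity of $P^\g\to\ocM^\g_X$, automatic under the saturation hypothesis on $f$) together with Theorem~\ref{logsymb.t}, which guarantees that the three vertical maps $\sigma_S$, $\sigma_X$, $\sigma_{X/S}$ are isomorphisms.

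The consequent derived-category diagram is then a formal consequence. The Yoneda extension class of the top short exact sequence is $E\colon \cM^\g_{X/S}(-1)\to P^\g(-1)[1]$; the bottom short exact sequence analogously gives a morphism $\Psi^1_{X/S}\to f^*\Psi^1_S[1]$ in the derived category, which, combined with the isomorphism $\sigma_{X/S}$ and the canonical morphism relating $\Psi_{X/S}$ to its first cohomology sheaf, yields $F$ and the commutativity of the lower square. The principal obstacle will be the careful bookkeeping of boundary maps across the three sites $S_\lag$, $X_\lag$, $\tilde X_\lag$ and the different pushforward functors, in particular checking that the base change comparison is compatible with the boundary of the log exponential sequence, since base change maps for $R\tau_*$ are notoriously delicate and must be verified to intertwine with the connecting homomorphism correctly.
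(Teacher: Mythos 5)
Your proposal is correct and follows essentially the same route as the paper, which in fact omits the proof entirely and simply asserts the lemma to be ``a consequence of the functoriality of the maps $\sigma$ as defined in Theorem~\ref{logsymb.t}.'' Your write-up is a faithful elaboration of exactly that functoriality: realizing each $\sigma$ as a boundary map of a (pulled-back) logarithmic exponential sequence on $\tilde X_\lag$, invoking naturality of connecting homomorphisms and the base-change comparisons among $R\tau_{S*}$, $R\tau_{X*}$, and $R\ttau_{X*}$, and then deducing exactness of the bottom row and the derived-category square formally.
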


\vspace{-0.9cm} \hfill $\qed$ \vspace{0.3cm} % ugly hack for qed box

We will achieve our goal by establishing the commutativity of the
following diagram: 
% \[ 
%   \xymatrix@R=3em@C=1.5em{
%     {\bigwedge^q} \cM \ar[r]^-\eta \ar[d]_{\wedge^q
%       \sigma} \ar@{}[dr]|{(1)} & {\displaystyle
%       \cM\ot\bigwedge^{q-1}}\cM  \ar[r]^-{E\otimes 1} 
% \ar[d]^{\sigma \ot\wedge^{q-1}\sigma} \ar@{}[dr]|{(2)} &
%  P^\g(-1)[1] \ot{\displaystyle \bigwedge^{q-1}} \cM 
% \ar[r]^-{\gamma \otimes 1} \ar[d] 
% \ar@{}[dr]|{(3)}  & {\displaystyle \bigwedge^{q-1}} \cM[1] \ar[d]^{\bigwedge^{q-1}\sigma} \\
%     {\displaystyle \bigwedge^q} R^1\ttau_*\bz \ar[r]^-\eta \ar[d]_{\rm  mult.} \ar@{}[drrr]_{(4)} &  R^1\ttau_*\bz \ot {\displaystyle
%       \bigwedge^{q-1}}R^1\ttau_*\bz \ar[r]^-{F\otimes 1}
%     \ar@/_2pc/[rr]^{(5)}_{ L^1_\lambda\ot 1} & {\displaystyle
%       f^* R^1 \tau_{S*}\bz[1]\ot      \bigwedge^{q-1}} R^1\ttau_*\bz 
%     \ar[r]^-{ \gamma\otimes 1} & {\displaystyle \bigwedge^{q-1}} R^1\ttau_*\bz[1] \ar[d]^{\rm mult.} \\
%     R^q \ttau_* \bz \ar[rrr]_{L^q_\lambda} & & & R^{q-1}\ttau_* \bz [1].
%   }
% \]
\[ 
  \xymatrix@R=3em@C=1.5em{
    {\bigwedge^q} \cM \ar[r]^-{E_q} \ar[d]_{\wedge^q
      \sigma} \ar@{}[dr]|{(1)} & 
 P^\g(-1)[1] \ot{\displaystyle \bigwedge^{q-1}} \cM 
\ar[r]^-{\gamma \otimes 1} \ar[d] 
\ar@{}[dr]|{(2)}  & {\displaystyle \bigwedge^{q-1}} \cM[1] \ar[d]^{\wedge^{q-1}\sigma} \\
    {\displaystyle \bigwedge^q} \Psi^1_{X/S} \ar[r]_-{F_q}  \ar[dd]_{\rm  mult.} \ar@{}[ddrr]_{(3)} \ar@{}[drr]|{(4)}  \ar@/_2.7pc/[rr]_-{G_q(\gamma)} & {\displaystyle
      f^* \Psi_S[1]\ot      \bigwedge^{q-1}} \Psi^1_{X/S} 
    \ar[r]_-{ \gamma\otimes 1} & {\displaystyle \bigwedge^{q-1}} \Psi^1_{X/S}[1] \ar[dd]^{\rm mult.} \\
    & & \\
    \Psi^q_{X/S} \ar[rr]_{(-1)^{q-1}L^q_\lambda} & &  \Psi^{q-1}_{X/S}  [1].
  }
\]

Here we have written $\cM$ as a shorthand for $\cM^\g_{X/S}(-1)$ and
$\gamma:{ f^*\Psi^1_S  \to\bz}$ for  the pullback by $f$ of
$\Psi^1_S=P^\g(-1){\tto{\gamma}}\bz$. The maps $E_q$, $F_q$, and
$G_q(\gamma)$ are defined by applying the $q$-th exterior power
construction $\xi\mapsto\xi_q$ of \S\ref{ss:koszul} to the extensions
$E$, $F$, and $G(\gamma)$, respectively. Here the extension
$G(\gamma):\Psi^1_{X/S}\to \bz[1]$ is defined {by the exact sequence} \eqref{eqn:def-G} below.  Thus the commutativity of the larger outer rectangle in this diagram is the desired formula (2) of Theorem~\ref{monthm.t}. We prove this commutativity by checking the interior cells (1) through (4). 

{\bf (1):} This square commutes by functoriality of the maps $\xi_q$ defined in \S\ref{ss:koszul} and Lemma~\ref{lemma:ses-comp}.
% (i.e., that $\eta$ is a natural transformation between the functors $\bigwedge^q(-)$ and $\bigwedge^{q-1}(-)\otimes (-)$).

%{\bf (2):} It suffices to check the commutativity when $q= 1$, in which case it follows  directly from Lemma~\ref{lemma:ses-comp}.

{\bf (2):}  It suffices to check the commutativity when $q=1$, in which case it follows from the the definition of the map $\gamma:\Psi^1_S\to \bz$

{\bf (3):} We let the $\I$ be the subgroup of $\li P$ generated by $\gamma$ and work in the category of $\I$-modules.  Applying (1) of Proposition~\ref{rgami.p}, we find a distinguished triangle:
\begin{equation} \label{eqn:group-coh-triangle}
  R\Gamma_\I(\Psi_{X/S})\rTo{a} \Psi_{X/S}\rTo{\gamma-1} \Psi_{X/S} \rTo{b} R\Gamma_\I(\Psi_{X/S})[1].
\end{equation}
%and a canonical identification $C = R\Gamma_\bz(R\ttau_*\bz)$,
Since $\gamma$ acts trivially on the $\Psi^q_{X/S}$, the long cohomology exact sequence of the above triangle yields a short exact sequence
\begin{equation} \label{rqtauseq.e}
  0\To \Psi^{q-1}_{X/S} \rTo{b^q}  R^q\Gamma_\I(\Psi_{X/S}) \rTo{a^q} \Psi^q_{X/S}  \To 0.
\end{equation}

When $q = 1$, the exact sequence~(\ref{rqtauseq.e}) reduces to 
\begin{equation} \label{eqn:def-G}
  0 \To \bz  \rTo{\beta := b^1}   R^1\Gamma_\I(\Psi_{X/S}) \rTo{\alpha := a^1} \Psi^1_{X/S} \To 0,
\end{equation}
where $\beta(1)$ is the image of the class $\theta\in R^1\Gamma_\I(\bz)$ in $R^1\Gamma_\I(\Psi_{X/S})$. Applying the exterior power construction of \S\ref{ss:koszul}, one obtains for each  $q \ge 1$ an exact sequence:
\[
  0 \To  \bigwedge^{q-1}\Psi^1_{X/S} \rTo{\beta^q}  \bigwedge^{q} R^1\Gamma_\I(\Psi_{X/S}) \rTo{\alpha^q} \bigwedge^q \Psi^1_{X/S} \To 0,
\]
where $\beta^q$ is deduced from cup product with $\theta$ on the left. We assemble the arrows $\alpha^{q-1}$ and $\beta^q$ to form the top row of the following diagram, and the arrows $a^{q-1}$ and $b^q$ to form the bottom row.
\[ 
  \xymatrix{
    \bigwedge^{q-1}
    R^1\Gamma_\I(\Psi_{X/S})\ar[r]^-{\alpha^{q-1}}\ar[d]_{\rm mult.} & 
\bigwedge^{q-1} \Psi^1_{X/S} \ar[r]^{\beta^q}\ar[d]_{\rm mult.}&  \bigwedge^{q} R^1\Gamma_\I(\Psi_{X/S}) \ar[d]_{\rm mult.} \\
    R^{q-1} \Gamma_\I(\Psi_{X/S}) \ar[r]_{a^{q-1}} & \Psi^{q-1}_{X/S}\ar[r]_{b^q} & R^q \Gamma_\I(\Psi_{X/S}). 
  }
\]

The maps $a$ and $\alpha$  are the restriction maps on group cohomology from $\I$ to the zero group, and hence commute with cup product, so that the left square commutes.  By (2) of Proposition~\ref{rgami.p}, the composition  $b^q \circ a^{q-1}$ is given by cup product on the left with the morphism $\theta \colon \bz \to \bz[1]$ defined by the fundamental extension~(\ref{groupal.e}). By the above discussion, the same is true for $\beta^q\circ \alpha^{q-1}$. Since the vertical maps are also defined by cup product, we see that the outer rectangle commutes. As the map $\alpha^{q-1}$ is surjective, we deduce that the right square also commutes.

Putting these squares alongside each other in the opposite order, we get a commutative diagram with exact rows:
\[ 
  \xymatrix{
      0 \ar[r] & \bigwedge^{q-1} \Psi^1_{X/S} \ar[r]^-{\beta^q}
      \ar[d]_{\rm mult.} &  \bigwedge^q R^1 \Gamma_\I(\Psi_{X/S})
      \ar[r]^-{\alpha^q} \ar[d]_{\rm mult.} 
   & \bigwedge^q \Psi^1_{X/S} \ar[r] \ar[d]_{\rm mult.} & 0 \\
      0 \ar[r] & \Psi^{q-1}_{X/S} \ar[r]^-{b^q} & R^q \Gamma_\I(\Psi_{X/S}) \ar[r]^-{a^q} & \Psi^q_{X/S} \ar[r] & 0.
  }  
\]
Taking the maps in the derived category corresponding to these extensions gives a~commutative square:
\[ \xymatrix{
\bigwedge^q \Psi^1_{X/S} \ar[r]\ar[d]_{\rm mult.}& \ar[d]^{\rm mult.} \bigwedge^{q-1} \Psi^1_{X/S} [1]\cr
 \Psi^q_{X/S} \ar[r]\ar[r] &  \Psi^{q-1}_{X/S} [1]
}
\]
Proposition~\ref{hom.p} applied to the triangle \eqref{eqn:group-coh-triangle} implies that the bottom arrow is $\kappa^q = (-1)^{q-1}L^q_\lambda[q]$, while the top arrow is $G_q(\gamma)$ by definition. It follows that cell (3) commutes. 

{\bf (4):} Once again we can reduce to the case  $q=1$ by functoriality of the construction of \S\ref{ss:koszul}. Consider the action of $\I$ on $\Psi_{X/S}$ via $\gamma$.  It is enough to establish the commutativity of the diagram
\begin{equation} \label{eqn:two-extensions}
  \xymatrix{
    0 \ar[r] & f^* \Psi^1_S \ar[d]_\gamma \ar[r] & \Psi^1_X \ar[d]_\phi \ar[r] & \Psi^1_{X/S} \ar[r]\ar@{=}[d] & 0 \\
    0 \ar[r] & \bz \ar[r] & R^1\Gamma_\I(\Psi_{X/S}) \ar[r] & \Psi^1_{X/S} \ar[r] & 0.
  }
\end{equation}
Here $\phi$ is the restriction map $\Psi^1_X = R^1\Gamma_{\li P} (\Psi_{X/S}) \to R^1\Gamma_{\I} (\Psi_{X/S})$ along $\gamma:\bz\to I$. Indeed, the top extension being $F:\Psi^1_{X/S}\to f^* \Psi^1_S [1]$, the bottom extension (which is the pushout of the top extension by $\gamma$) is $\gamma\circ F : \Psi^1_{X/S} \to \bz[1]$. On the other hand,  as we  saw in Proposition~\ref{hom.p},  the bottom extension corresponds to $L^1_\lambda \colon \Psi^1_{X/S}\to \bz[1]$.

The right square of \eqref{eqn:two-extensions} commutes by functoriality of restriction maps $R\Gamma_I(-)\to R\Gamma_{\bz}(-) \to R\Gamma_{0}(-) = (-)$. The left square is isomorphic to
\[ 
  \xymatrix{
    H^1(\li P, \Hh^0(\Psi_{X/S})) \ar[r] \ar[d] & H^1(\li P, \Psi_{X/S}) \ar[d] \\
    H^1(\I, \Hh^0(\Psi_{X/S})) \ar[r]  & H^1(\I, \Psi_{X/S}), \\
  }
\]
which commutes by functoriality of the maps $H^1(G, \Hh^0(-))\to H^1(G, (-))$ with respect to $G$.

\subsection{\'Etale cohomology}
\label{etale.s}

The results of Sections~\ref{lognearby.s} and \ref{monpf.s} have natural algebraic analogues, due to Fujiwara, Kato, and Nakayama \cite{cnak.lec}, obtained by replacing the space $X_\lag$ with the Kummer-\'etale topos $X_\ket$, and the (logarithmic) exponential sequence \eqref{logexp.e} with the (logarithmic) Kummer sequence(s). We refer the reader to \cite{ill.ofknlec} for a survey of the Kummer \'etale cohomology.

The algebraic version of our setup is as follows: we fix an algebraically closed field $k$ and work in the category of fine and saturated log schemes locally of finite type over $k$. We fix an integer $N>1$ invertible in $k$ and use $\Lambda=\bz/N\bz$ as a coefficient ring. We define $\Lambda(1) = \mu_N(k)$, $\Lambda(n) = \Lambda(1)^{\otimes n}$ for $n\geq 0$, $\Lambda(n) = \Lambda(-n)^\vee$ for $n\leq 0$; for a~$\Lambda$-module $M$, $M(n)$ denotes $M\otimes \Lambda(n)$.

We start by considering a single fs log scheme $X$. We denote by $\varepsilon: X_\ket \to X_\et$ the projection morphism (here $X_\et$ is the \'etale site of the underlying scheme). The sheaf of monoids $\cM_X$ on $X_\et$ extends naturally to a sheaf $\cM^\ket_X$ on $X_\ket$ associating $\Gamma(Y_\et, \cM_Y)$ to a Kummer \'etale $Y\to X$; we have a natural homomorphism $\varepsilon^* \cM_X\to \cM^\ket_X$. The \emph{logarithmic Kummer sequence} is the exact sequence of sheaves on $X_\ket$
\begin{equation} \label{logkummer.e}
  0 \To \Lambda(1) \To \cM^{\ket, \g}_X\rTo{N} \cM^{\ket, \g}_X\To 0.
\end{equation} 
Applying the projection $\varepsilon_*$ yields a homomorphism
\[ 
    \sigma_0:\cM_X^\g \To \varepsilon_* \varepsilon^* \cM_X^\g \To \varepsilon_* \cM_X^{\ket, \g} \To R^1 \varepsilon_* \Lambda(1).
\]

\begin{theorem}[{\cite[Theorem 2.4]{kkcn.lblelsc}, \cite[Theorem 5.2]{ikn.qlrhc}}]\label{rqeps.t}
  The map $\sigma_0$ factors through $\ov\cM_X^\g$, inducing an isomorphism $\sigma:\ov\cM_X^\g\otimes\Lambda(-1)\to R^1\varepsilon_* \Lambda$ and, by cup product, isomorphisms
  \[ 
    \sigma^q : \bigwedge^q \ov\cM_X^\g\otimes\Lambda(-q) \isomlong R^q\varepsilon_* \Lambda.
  \]
\end{theorem}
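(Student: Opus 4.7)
The plan is to adapt the proof of the analytic analog Theorem~\ref{logsymb.t} to the Kummer \'etale setting. First I would show that $\sigma_0$ annihilates $\oh X^*$. The inclusion $\oh X^*\hookrightarrow\cM_X^{\ket,\g}$ is $N$-equivariant and sits in a morphism of short exact sequences from the ordinary \'etale Kummer sequence $0\to\Lambda(1)\to\oh X^*\tto{N}\oh X^*\to 0$ (exact already on $X_\et$ since $N$ is invertible in $k$) to the logarithmic Kummer sequence~(\ref{logkummer.e}). Applying $\varepsilon_*$ and comparing boundary maps, the composition $\oh X^*\to\varepsilon_*\cM_X^{\ket,\g}\to R^1\varepsilon_*\Lambda(1)$ factors through $R^1\varepsilon_*\varepsilon^*\oh X^*$, which vanishes because the \'etale Kummer sequence is already exact on $X_\et$. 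Hence $\sigma_0$ descends to $\sigma\colon\ocM_X^\g\otimes\Lambda(-1)\to R^1\varepsilon_*\Lambda$, and cup product on $R^\cx\varepsilon_*\Lambda$ produces the maps $\sigma^q$.

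To prove each $\sigma^q$ is an isomorphism, it suffices to check stalks at geometric points $\bar x\to X$. Using local charts and the strict base change property of Kummer \'etale cohomology, I would reduce to the case where $X$ is the split log point $S=\spec(P\to k)$ with $P=\ocM_{X,\bar x}$ (automatically fine, sharp, and saturated). For such an $S$, the Kummer \'etale cohomology of $\Lambda$ is computed by continuous group cohomology of the profinite group
\[
  G_P:=\Hom(P^\g,\hat\bz'(1)),\qquad \hat\bz':=\varprojlim_{(n,\mathrm{char}\,k)=1}\bz/n\bz,
\]
because the pro-Kummer tower obtained by extracting all compatible systems of $n$-th roots (with $n$ invertible in $k$) realizes $S_\ket$ as the classifying site of $G_P$. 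This is the Kummer \'etale substitute for the torus covering $\lv P\to\lt P$ used in Theorem~\ref{logsymb.t}; compare \cite{cnak.lec}.

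Since $P^\g$ is a finitely generated free abelian group, $G_P$ is a product of finitely many copies of $\hat\bz'(1)$, and the K\"unneth formula for continuous group cohomology gives
\[
  H^q_{\rm cont}(G_P,\Lambda)\cong\bigwedge^q P^\g\otimes\Lambda(-q).
\]
Unwinding the construction of $\sigma$ via the boundary of the logarithmic Kummer sequence identifies this isomorphism with $\sigma^q$: for $q=1$ it amounts to the tautological Kummer duality $P^\g\otimes\Lambda=\Hom(G_P,\Lambda(1))$, and the higher cases follow from compatibility of cup product with the K\"unneth decomposition. The main difficulty in executing the plan is establishing the strict base change statement for $R\varepsilon_*\Lambda$ together with the identification of $S_\ket$ with the classifying site of $G_P$; both are the algebraic substitutes for the ``proper base change and torus fiber'' ingredients of the analytic proof, and the requisite framework is developed in \cite{cnak.lec} and surveyed in \cite{ill.ofknlec}.
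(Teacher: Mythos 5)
The paper gives no proof of this statement---it is imported verbatim from \cite[Theorem 2.4]{kkcn.lblelsc} and \cite[Theorem 5.2]{ikn.qlrhc}---and your sketch is essentially the standard argument of those references, which is also the exact Kummer-\'etale analogue of the paper's own proof of the analytic Theorem~\ref{logsymb.t}: boundary map of the (logarithmic) Kummer sequence in place of the exponential sequence, reduction to stalks, and computation of the cohomology of the ``fiber,'' here the classifying topos of $G_P=\Hom(P^\g,\hat\bz'(1))$ in place of the torus $\lt P$. The only wobble is in the vanishing of $\sigma_0$ on $\oh X^*$: the phrase ``factors through $R^1\varepsilon_*\varepsilon^*\oh X^*$'' does not parse as written, but the intended mechanism is right and is cleanest stated as: the $N$-th power map is already surjective on $\oh X^*$ in $X_\et$, so every unit is \'etale-locally an $N$-th power and therefore dies under the boundary map of the logarithmic Kummer sequence.
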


%The usual Kummer sequence on $X_\et$ yields a map $\oh X^\g\to \Lambda(1)[1]$, which composed with the map $\bar\cM_X^\g\to \oh X^*[1]$ coming from REF yields a map $\ch_{X} : \bar\cM_X^\g \to \Lambda(1)[2]$.

We now turn to the relative situation. The base $S$ is a fine and saturated split log point associated to a fine sharp monoid $P$ (that is, $S=\spec(P\to k)$). Consider the inductive system $\tilde P$ of all injective maps $\phi:P\to Q$ into a sharp fs monoid $Q$ such that the cokernel of $\phi^\g$ is torsion of order invertible in $k$, and let $\tilde S = \spec(\tilde P\to k)$. Let $\li P$ be the automorphism group of $\tilde S$ over $S$ (the logarithmic inertia group of $S$); we have a natural identification $\li P\cong \Hom(P^\g, \hat\bz'(1))$ where $\hat\bz'(1) = \varprojlim_N \mu_N(k)$ \cite[4.7(a)]{ill.ofknlec}. We can identify the topos $S_\ket$ with the classifying topos of $\li P$. 

We consider an fs log scheme $X$ locally of finite type over $k$ and a saturated morphism $f:X\to S$. We define $\tilde X = X\times_S \tilde S$ (fiber product in the category of systems of fs log schemes). We denote the projections $\varepsilon: X_\ket \to X_\et$ and $\tilde\varepsilon: \tilde X_\ket \to \tilde X_\et = X_\et$. 

\begin{lemma} \label{exactmodn.l} 
The sequence of \'etale sheaves on $X$
\[ 0\To \ocM_S^\g\ot\Lambda \To \ocM_X^\g\ot\Lambda \To \ocM_{\tilde X}^\g\ot\Lambda \To 0  \]
is exact, yielding an identification $\ocM_{\tilde X}^\g \otimes \Lambda \cong \cM_{X/S}^\g \otimes \Lambda$. 
\end{lemma}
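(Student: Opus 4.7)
The approach is to combine the log Kodaira--Spencer sequence~(\ref{logks.e}) for $f$ with the analogous sequence for its saturated base change $\tilde f\colon \tilde X\to\tilde S$, and then tensor with $\Lambda$.

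First I would record two exact sequences of sheaves on $X_\et = \tilde X_\et$. Saturatedness of $f$ makes~(\ref{logks.e}) exact,
\[ 0 \To P^\g \To \ocM_X^\g \To \cM_{X/S}^\g \To 0, \]
with torsion-free cokernel. The morphism $\tilde f$ is also saturated, and the characteristic monoid transforms by a pushout under saturated base change, giving $\ocM_{\tilde X,\xx}^\g \cong \ocM_{X,\xx}^\g \oplus_{P^\g} \tilde P^\g$ at each geometric point $\xx$ of $X$. Hence $\cM^\g_{\tilde X/\tilde S} \cong \cM_{X/S}^\g$, and the sequence
\[ 0 \To \tilde P^\g \To \ocM_{\tilde X}^\g \To \cM_{X/S}^\g \To 0 \]
is exact and fits into a commutative ladder with the first via $P^\g\hookrightarrow \tilde P^\g$, the natural map $\ocM_X^\g\to\ocM_{\tilde X}^\g$, and the identity on $\cM_{X/S}^\g$.

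The arithmetic input is that $\tilde P^\g\otimes\Lambda = 0$. Every $\phi\colon P\to Q$ in the system $\tilde P$ has $Q^\g/P^\g$ torsion of order invertible in $k$; in particular $\tfrac1N P^\g \subset \tilde P^\g = \dirlim Q^\g$, so $\tilde P^\g$ is $N$-divisible. Moreover $P^\g$ is torsion-free (since $P$ is fine, sharp, and saturated, an $n$-torsion element of $P^\g$ would lie in $P$ and be a unit, hence $0$), so $\tilde P^\g \subset P^\g\otimes_\bz\bq$ is torsion-free as well. Combining, $\tilde P^\g\otimes\Lambda = \tilde P^\g/N\tilde P^\g = 0$.

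Tensoring the second Kodaira--Spencer sequence with $\Lambda$, the vanishing $\tilde P^\g\otimes\Lambda = 0$ and $\mathrm{Tor}_1(\cM_{X/S}^\g,\Lambda) = \cM_{X/S}^\g[N] = 0$ (torsion-freeness of $\cM_{X/S}^\g$) yield the asserted identification $\ocM_{\tilde X}^\g\otimes\Lambda\isomlong\cM_{X/S}^\g\otimes\Lambda$. Tensoring the first sequence with $\Lambda$, torsion-freeness of $\cM_{X/S}^\g$ preserves exactness:
\[ 0 \To P^\g\otimes\Lambda \To \ocM_X^\g\otimes\Lambda \To \cM_{X/S}^\g\otimes\Lambda \To 0, \]
and substituting the identification produces the claimed exact sequence, whose middle arrow is induced by $\ocM_X^\g\to\ocM_{\tilde X}^\g$. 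The principal obstacle is justifying the pushout description of $\ocM_{\tilde X}^\g$, and hence the second Kodaira--Spencer sequence: this expresses how the characteristic sheaf transforms under saturated fs base change by the pro-Kummer-\'etale cover $\tilde S\to S$, and reduces via charts to a pointwise monoid computation exploiting that pushouts of saturated maps remain saturated, with care taken in passing to the limit over the system $\tilde P$.
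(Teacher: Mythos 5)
Your proof is correct and follows essentially the same route as the paper's: both rest on the pushout description $\ocM_{\tilde X,\xx}^\g \cong \ocM_{X,\xx}^\g\oplus_{P^\g}\tilde P^\g$ supplied by saturatedness of $f$, the $N$-divisibility of $\tilde P^\g$ (so $\tilde P^\g\ot\Lambda=0$), and torsion-freeness of $\cM_{X/S}^\g$ for exactness on the left. The only cosmetic difference is that you split the data into two Kodaira--Spencer sequences before tensoring, whereas the paper tensors the pushout square with $\Lambda$ directly and reads off the exactness.
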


\begin{proof}
Note first that since $X\to S$ is saturated, the square
\[ 
  \xymatrix{
    \tilde X \ar[r] \ar[d] & X \ar[d] \\
    \tilde S \ar[r] & S
  }
\]
is cartesian in the category of (systems of) log schemes, and in particular the corresponding diagram of underlying schemes is cartesian, i.e.\ $\tilde X\cong X$ as schemes. Let $\bar x$ be a geometric point of $X$. We have pushout squares
\[ 
  \xymatrix{
    P\ar[r] \ar[d] & \ocM_{X, \bar x} \ar[d]  \ar@{}[rd]|-{\textstyle \text{and}} & P^\g \ar[r]\ar[d] & \ocM^\g_{X, \bar x} \ar[d] \\ 
    \tilde P\ar[r]  & \ocM_{\tilde X, \bar x} & \tilde P^\g \ar[r] & \ocM_{\tilde X, \bar x}^\g, 
  }
\]
and therefore also a pushout square
\[ 
  \xymatrix{
    P^\g \ot\Lambda \ar[r]\ar[d] & \ocM^\g_{X, \bar x} \ot\Lambda \ar[d] \\
    \tilde P^\g\ot\Lambda \ar[r] & \ocM_{\tilde X, \bar x}^\g \ot\Lambda.
  }
\]
But $\tilde P^\g$ is $N$-divisible for all $N$ invertible in $k$, so $\tilde P^\g\ot\Lambda = 0$, yielding the desired exactness.
\end{proof}

The complex of nearby cycles is the complex $\Psi_{X/S} := R\tilde\varepsilon_* \Lambda$ of discrete $\li P$-modules on $X_\et$. Its cohomology is described by the following analog of Theorem~\ref{logsymb.t}

\begin{theorem} \label{logsymbet.t}
  There are canonical isomorphisms
  \[
    \sigma^q : \bigwedge^q \cM^\g_{X/S}\otimes\Lambda(-q)\isomlong \Psi^q_{X/S}
  \]
  for all $q$. In particular, the logarithmic inertia group $\li P$ acts trivially on $\Psi^q_{X/S}$.
\end{theorem}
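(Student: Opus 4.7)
The plan is to adapt the argument in the proof of Theorem \ref{logsymb.t} to the Kummer \'etale setting, substituting the log Kummer sequence \eqref{logkummer.e} for the log exponential sequence \eqref{logexp.e} and using Theorem \ref{rqeps.t} (the absolute case, due to Kato--Nakayama and Kato--Nakayama--Illusie) in place of its analytic counterpart. The existence of Theorem \ref{rqeps.t} already handles the ``hard analytic input'' in our situation, so what remains is purely a matter of propagating it across the base change $\tilde S\to S$ and identifying the result with $\cM_{X/S}^\g$.

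First I would construct the map $\sigma^1$. Let $\tilde X = X\times_S \tilde S$, viewed as a filtered inductive system of fs log schemes $X_i := X \times_S S_i$ indexed by the cofiltered system of Kummer covers $S_i \to S$, and consider the logarithmic Kummer sequence \eqref{logkummer.e} on each $X_{i,\ket}$. Applying $\varepsilon_{X_i*}$ and passing to the limit over $i$, then to the quotient by $\oh{\tilde X}^*$, produces a homomorphism
\[
  \sigma^1_0 \colon \ocM_{\tilde X}^\g \To R^1 \tilde\varepsilon_* \Lambda(1).
\]
By Lemma \ref{exactmodn.l}, the natural map $\ocM_X^\g \to \ocM_{\tilde X}^\g$ becomes, after tensoring with $\Lambda$, the canonical projection $\ocM_X^\g \otimes \Lambda \to \cM_{X/S}^\g \otimes \Lambda$, and so $\sigma^1_0$ factors through a map
\[
  \sigma^1 \colon \cM_{X/S}^\g \otimes \Lambda(-1) \To \Psi^1_{X/S} = R^1 \tilde\varepsilon_* \Lambda.
\]
Cup product then yields maps $\sigma^q \colon \bigwedge^q \cM_{X/S}^\g \otimes \Lambda(-q) \to \Psi^q_{X/S}$ for every $q$.

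Next I would check that $\sigma^q$ is an isomorphism. Since $\tilde\varepsilon_*$ and $R^q\tilde\varepsilon_*$ commute with filtered colimits of sites, one has $\Psi^q_{X/S} = \varinjlim_i R^q\varepsilon_{X_i*} \Lambda$. Theorem \ref{rqeps.t}, applied to each $X_i$, identifies $R^q\varepsilon_{X_i*}\Lambda$ with $\bigwedge^q \ocM_{X_i}^\g \otimes \Lambda(-q)$ compatibly in $i$; passing to the colimit and invoking Lemma \ref{exactmodn.l} identifies the target with $\bigwedge^q \cM_{X/S}^\g \otimes \Lambda(-q)$. A short verification, checking that the cup-product construction commutes with this identification in the limit, shows that this inverse agrees with our $\sigma^q$.

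Finally, the triviality of the $\li P$-action on $\Psi^q_{X/S}$ is automatic once $\sigma^q$ has been established: the source $\bigwedge^q \cM_{X/S}^\g \otimes \Lambda(-q)$ is the pullback of an \'etale sheaf on $X_\et$ and carries the trivial $\li P$-action by construction, and the isomorphism $\sigma^q$ is manifestly $\li P$-equivariant, as its defining data (the Kummer sequences on the $X_{i,\ket}$ and the projection $\ocM_X^\g \otimes \Lambda \twoheadrightarrow \cM_{X/S}^\g \otimes \Lambda$) are stable under $\li P = \mathrm{Aut}(\tilde S/S)$. I expect the main technical nuisance to be the commutation of $R\tilde\varepsilon_*$ with the filtered colimit defining $\tilde X$, which requires knowing that the sites $X_{i,\ket}$ and the transition morphisms behave well enough for standard limit formalism to apply; this is essentially contained in the foundational material of \cite{ill.ofknlec}.
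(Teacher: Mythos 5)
Your proposal is correct and follows essentially the same route as the paper: the paper's proof is a two-line reduction to Theorem~\ref{rqeps.t} applied to $\tilde X$, using the identification $X_\et = \tilde X_\et$ and Lemma~\ref{exactmodn.l} to rewrite $\ocM_{\tilde X}^\g\otimes\Lambda$ as $\cM_{X/S}^\g\otimes\Lambda$, exactly the two ingredients you use. Your version merely spells out the construction of $\sigma^1$ via the limit of Kummer sequences and the passage to the colimit over the Kummer covers, which the paper leaves implicit.
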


\begin{proof}
This follows from Theorem~\ref{rqeps.t} for $\tilde X$, using the identifications $X_\et = \tilde X_\et$ and $\cM^\g_{X/S}\otimes\Lambda \cong \bar\cM^\g_{\tilde X}\otimes\Lambda$.
\end{proof}

As before, we denote by $\lambda^q_\gamma : \Psi^q_{X/S}\to \Psi^{q-1}_{X/S}[1]$ the map induced by $\gamma-1 : \Psi_{X/S}\to \Psi_{X/S}$. The usual Kummer sequence on $X_\et$ yields a map $\oh X^\g\to \Lambda(1)[1]$, which composed with the map $\cM_{X/S}^\g\to \oh X^*[1]$ coming from the extension \eqref{mptompb.e} yields a map $\ch_{X/S} : \cM_{X/S}^\g \to \Lambda(1)[2]$.

With these in place, we can state the \'etale analog of Theorem~\ref{monthm.t}

\begin{theorem} \label{monthmetale.t}
  Let $f\colon X\to S$ be as above. Then:
  \begin{enumerate}
    \item  For each $q \ge 0$, the following diagram commutes:
      \[ 
        \xymatrix{
          \bigwedge^q \cM_{X/S}^\g\otimes\Lambda(-q)  \ar[r]^-{{\rm ch}_{X/S}^q} \ar[d]_{q!\sigma^q} & \bigwedge^{q-1}\cM_{X/S}^\g\otimes\Lambda(1-q)[2] \ar[d]^{q!\sigma^{q-1} } \\
          \Psi^q_{X/S} \ar[r]_{\delta^q} &  \Psi^{q-1}_{X/S}[2].
        }
      \]
    \item  For each $q\ge 0$  and each $\gamma \in \li P$, the following diagram commutes:
      \[ 
        \xymatrix{
          \bigwedge^q \cM_{X/S}^\g\otimes\Lambda(-q) \ar[r]^-{\kappa^q_\gamma} \ar[d]_{\sigma^q} & \bigwedge^{q-1} \cM_{X/S}^\g\otimes\Lambda(1-q) [1] \ar[d]^{\sigma^{q-1}} \\
          \Psi^q_{X/S}  \ar[r]_-{(-1)^{q-1}\lambda^q_\gamma}  & \Psi^{q-1}_{X/S} [1] ,
        }
      \]
{where $\kappa^q_\gamma$ is as in Proposition~\ref{hom.p}.}
  \end{enumerate}
\end{theorem}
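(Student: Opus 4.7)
The plan is to adapt the proof of Theorem~\ref{monthm.t} from Section~\ref{monpf.s} to the Kummer \'etale setting, with the logarithmic Kummer sequence~(\ref{logkummer.e}) playing the role of the logarithmic exponential sequence and $\Lambda(1) = \mu_N(k)$ replacing $\bz(1)$. Both parts of the theorem are handled by essentially the same mechanisms as in the analytic case, once the appropriate \'etale ingredients are identified.

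For statement (1), I would first establish the case $q=1$ by a direct calculation. The short exact sequence $0 \to \oh X^* \to \cM^\g_{X/P} \to \cM^\g_{X/S} \to 0$ of~(\ref{xsmp.e}) spliced with the ordinary Kummer sequence $0 \to \Lambda(1) \to \oh X^* \rTo{N} \oh X^* \to 0$ exhibits $\ch_{X/S}$ as a two-step extension class $\cM^\g_{X/S} \to \Lambda(1)[2]$ in $D(X_\et, \Lambda)$, in direct analogy with~(\ref{logch.e}). On the other hand, since $\tilde P^\g$ is $N$-divisible (any homomorphism $P \to Q$ with $N$-divisible $Q^\g/P^\g$ belongs to the inductive system defining $\tilde S$), the composite $P^\g \to \cM^{\ket,\g}_{\tilde S} \to \cM^{\ket,\g}_{\tilde S}/N$ vanishes, so the log Kummer sequence on $\tilde X_\ket$ descends to a two-term complex of sheaves on $X_\et$ representing $\tau_{\le 1}\Psi_{X/S}(1)$, playing the role of Steenbrink's quasi-isomorphism~(\ref{steenfor.e}). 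Comparing these two presentations identifies $\delta^1$ with $\ch_{X/S}$ under $\sigma$. For general $q$, the isomorphism $\sigma^q$ of Theorem~\ref{logsymbet.t} is constructed by cup product, and the first-order attachment maps $\delta^q$ satisfy the derivation rule of Lemma~\ref{d2mult.l}; Proposition~\ref{alunique.p} then yields $q!\,\delta^q = q!\,\ch^q_{X/S}$.

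For statement (2), I would transcribe the four-cell diagram chase of \S\ref{ss:monpf}, which is largely formal and carries over with cosmetic changes. The essential geometric input is the \'etale analog of Lemma~\ref{lemma:ses-comp}: the diagram comparing the $(-1)$-twist of the log Kodaira-Spencer sequence with the Leray exact sequence $0 \to f^*\Psi^1_S \to \Psi^1_X \to \Psi^1_{X/S} \to 0$ must commute, which follows from the naturality of the map $\sigma$ of Theorem~\ref{logsymbet.t} with respect to $f$. With this in hand, cells~(1) and~(2) are formal consequences of the functoriality of the exterior power construction from \S\ref{ss:koszul} and the definition of the pairing by $\gamma$. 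Cell~(4) is the compatibility of pushouts with restriction maps in group cohomology, and cell~(3) requires Proposition~\ref{rgami.p} applied to the procyclic subgroup $\I \subseteq \li P = \Hom(P^\g,\hat\bz'(1))$ topologically generated by $\gamma$; since the action of $\li P$ on the $\Lambda$-module $\Psi_{X/S}$ factors through a finite cyclic quotient, this reduces to the case of a finite cyclic group, for which the cocone construction and its formal properties translate verbatim.

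The main obstacle will be establishing the Kummer-theoretic analog of Steenbrink's quasi-isomorphism~(\ref{steenfor.e}) with the correct Tate twists and signs, and, more generally, keeping track of the signs and twists throughout the transcription. Once this bookkeeping is in place, no essentially new idea is required beyond what is already present in Sections~\ref{lognearby.s} and~\ref{monpf.s}, since the relevant homological algebra and group cohomology statements are independent of the particular sheaf-theoretic site on which they are formulated.
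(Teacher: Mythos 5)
Your strategy is the one the paper itself follows: for part (1) the paper constructs precisely the quasi-isomorphism you describe --- the two-term complex $[\cM^\g_{\tilde X}\tto{N}\cM^\g_{\tilde X}]$ on $X_\et$, obtained by adjunction from the logarithmic Kummer sequence on $\tilde X_\ket$ and mapping isomorphically onto $\tau_{\leq 1}\Psi_{X/S}(1)$ --- and then concludes for general $q$ via the derivation property of Lemma~\ref{d2mult.l} and the uniqueness statement of Proposition~\ref{alunique.p}, exactly as you propose. For part (2) the paper likewise asserts that the four-cell argument of \S\ref{ss:monpf} carries over and omits the details. So part (1) of your proposal is essentially correct and matches the paper.

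In part (2), however, your justification of cell (3) contains a concrete error. You propose to apply Proposition~\ref{rgami.p} by observing that the action factors through a finite cyclic quotient, ``for which the cocone construction and its formal properties translate verbatim.'' They do not: for a finite cyclic group $C_n$ the sequence $0\to\Lambda[C_n]\tto{\gamma-1}\Lambda[C_n]\to\Lambda\to 0$ is not exact (the norm element lies in $\Ker(\gamma-1)$), so $\Lambda$ admits no two-term free resolution over $\Lambda[C_n]$ and the cocone of $\gamma-1$ does not compute $R\Gamma_{C_n}$; indeed $H^i(C_n,\Lambda)\cong\Lambda$ for every $i\geq 0$ once $N\mid n$. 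Moreover, even when an action factors through a finite quotient, the cohomology of the profinite group is not that of the quotient, so the proposed reduction would change the answer. The correct substitute for Proposition~\ref{rgami.p} is the continuous cohomology of the procyclic group $\hat\bz'$ topologically generated by $\gamma$, acting on complexes of \emph{discrete $N$-torsion} modules: for such coefficients one does have $R\und\Gamma_\I(K)\cong\Con'(\gamma-1\colon K\to K)$, the derived-category form of the standard facts $H^0=\Ker(\gamma-1)$, $H^1=\Cok(\gamma-1)$, $H^i=0$ for $i\ge 2$, together with the identification of $b\circ a$ with cup product by the canonical class $\theta\in H^1(\I,\Lambda)$. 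With that statement in place, the rest of your transcription of the four cells (including the étale analogue of Lemma~\ref{lemma:ses-comp}, which you correctly identify as the essential geometric input) goes through as in \S\ref{ss:monpf}.
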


\begin{proof}
The proof of (1) relies on the following analogue of the isomorphism \eqref{steenfor.e}. The exact sequence \eqref{logkummer.e} provides a quasi-isomorphism on $\tilde X_\ket$
\[ 
  \Lambda(1) \isomlong \left[ \cM^{\ket,\g}_{\tilde X}\to \cM^{\ket,\g}_{\tilde X}   \right]
\]
and the morphism of complexes:
\[ 
  \left[ \tilde\varepsilon^* \cM^\g_{\tilde X}  \tto{N} \tilde\varepsilon^* \cM^\g_{\tilde X} \right]
  \To
  \left[ \cM^{\ket,\g}_{\tilde X}\tto{N} \cM^{\ket,\g}_{\tilde X}   \right]
\]
induces by adjunction a morphism
\[ 
  \left[ \tilde\varepsilon^* \cM^\g_{\tilde X}  \tto{N} \tilde\varepsilon^* \cM^\g_{\tilde X} \right]
  \To \Psi_{X/S}(1).
\]
This morphism induces an isomorphism $[\tilde\varepsilon^* \cM^\g_{\tilde X}  \tto{N} \tilde\varepsilon^* \cM^\g_{\tilde X}] \cong \tau_{\leq 1} \Psi_{X/S}$, our analogue of \eqref{steenfor.e}. Then assertion (1) follows exactly as before. The proof of (2) follows the lines of our second proof of the analogous assertion in \S\ref{ss:monpf}. We omit the details. 
\end{proof}
\section{Curves}
\label{curves.s}

The goal of the present section is to illustrate
Theorems~\ref{monthm.t} and \ref{idealcut.t} for curves.  We shall attempt to convince our
readers that the combinatorics arising from the log structures
are  essentially equivalent to the data usually expressed in terms
of the ``dual graph'' of a degenerate curve,
for example in~\cite[Exp. IX 12.3.7]{g.sga7I}.
In particular, we show how the classical Picard--Lefschetz formula
for curves can be derived from our monodromy formula.
In this section we work over the field $\bc$ of complex numbers.

\subsection{Log curves and their normalizations}\label{lcn.ss}

Our exposition is based on  F.~Kato's  study of 
the moduli of log curves and their relation to the classical
theories~\cite{kaf.logcurve}.  Let us recall his basic notions.
 
 \begin{definition}\label{logcurve.d}
Let $S$ be a fine saturated  and locally noetherian log scheme.  
A~\textit{log curve} over $S$  is a smooth, finite type,   and saturated  morphism $f  \colon X \to S$  of fine saturated log schemes such that every geometric fiber of $\und f \colon \und X \to \und S$
has pure dimension one. 
 \end{definition}

Kato requires that $\und X$ be connected, a condition we have dropped from our definition. 
If $\und X/\bc$ is a smooth curve and $\und Y$
is a finite set of closed points  of $ \und X$, then the compactifying
log structure  associated with the open subset
$\und X \setminus \und Y$ of $\und X$ is fine and saturated, and
the resulting log scheme is a log curve over $\bc$.  In fact,
every log curve over $\bc$ arises in this way, so that to give a 
log curve over $\bc$ is equivalent to giving 
a~smooth curve with a set (not a sequence)  of marked points.

For simplicity, we we concentrate  on the case 
of vertical log curves over the standard log point $S := \spec (\bn \to \bc)$. 
Then a morphism of fine saturated log schemes $X \to S$ 
is automatically integral~\cite[4.4]{kato.lsfi}, and if it is  smooth, it is saturated
if and only if its fibers are
reduced~\cite[{II.4.2}]{ts.smls},~{\cite[IV.4.3.6]{o.llogg}}.
Since $X/S$ is vertical,
the  sheaf   $\cM_{X/S} := \cM_X/f^*\cM_{S}$ is in fact  a sheaf of groups.
Corollary~\ref{vertons.c} says that the set $Y := \{x \in X :\cM_{X/S} \neq 0\}$ is
closed in $X$, that its complement $U$ is open and dense, and that
the underlying scheme $\und U$ of $U$ is smooth.  In fact 
Kato's analysis of log curves gives the following detailed local 
description of $X/S$.

\begin{theorem}[F. Kato] \label{fkato.t}
  Let $f \colon X \to S$ be a  vertical log curve over the standard log point $S$ and let $x$ be a closed point of $\und X$. 
  \begin{enumerate}
  \item 
The underlying scheme $\und X$ is smooth  at  $x$ if and only 
if there is an isomorphism $\ocM_{X,x} \cong \bn$.  If this is the case, 
there exist an \'etale neighborhood $V$ of $x$ and a commutative diagram
  \[
    \xymatrix{ 
      V \ar[r]^-g\ar[rd] & S  \times {\ls  \bz} \ar[d] \\
       & S,
    }
  \]
  where $g$ is strict and  \'etale.  %and $\ms \bn = \spec ( \bc[t])$.  
\item The underlying scheme $\und X$ is singular at $x$ if and only
if there  exist an integer $n$ and an 
isomorphism $\ocM_{X,x} \cong Q_n$, where 
$Q_n$  is the monoid given by  generators $q_1,q_2, q$ satisfying the  relation $q_1+q_2 = nq$.   In this case there exist
an \'etale neighborhood $V$ of $x$  and a~commutative diagram
%chart $\beta \colon Q_n \to \cM_X$ on $V$ inducing a
  \[
    \xymatrix{ 
      V \ar[r]^-g\ar[rd] &\ls{Q_n,J}\ar[d]^{\ls \theta} \\
       & S,}
  \]
  where  $g$ is strict and \'etale,   where
$J$ is the ideal  of $Q_n$ generated by $q$, and 
where  $\theta \colon \bn \to Q_n$ is the homomorphism sending $1$ to $q$.  \qed
  \end{enumerate}
\end{theorem}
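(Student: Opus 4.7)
The plan is to produce a local chart at $x$ using F.~Kato's criterion for log smoothness~\cite{kato.lsfi}, to classify the resulting monoid $Q := \ocM_{X, \bar x}$ and the homomorphism $\theta \colon \bn \to Q$ induced by~$f^\flat$, and to identify each case with one of the two strict \'etale models in the theorem. After replacing $X$ by a small \'etale neighborhood~$V$ of~$x$, Kato's criterion provides a chart $\beta \colon Q \to \cM_V$ lifting the identity on $\ocM_V$ at~$\bar x$, compatible via~$\theta$ with the standard chart of~$S$, and such that the induced morphism $V \to S \times_{\ls{\bn}} \ls{Q}$ is \'etale on underlying schemes. Saturation of~$f$ implies that $\theta^\g$ is injective with torsion-free cokernel, and verticality implies that $Q / \theta(\bn)$ is a group.

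Since $X/S$ is a log curve, $\Omega^1_{X/S}$ is locally free of rank~$1$, which through Kato's criterion forces the rank of $Q^\g$ to be at most~$2$. If it equals~$1$, then $Q = \bn$ and saturation forces $\theta = \id$. If it equals~$2$, then $Q$ is a sharp fs submonoid of $Q^\g \cong \bz^2$, hence equal to $\sigma \cap \bz^2$ for a strongly convex rational cone $\sigma \subset \br^2$ with primitive edge generators $q_1, q_2$; verticality places $q := \theta(1)$ in the relative interior of~$\sigma$, and saturation forces $q$ to be primitive. The additional requirement that the underlying scheme $\spec \bc[Q]/(e^q)$ of $S \times_{\ls{\bn}} \ls{Q}$ be reduced (which follows from saturation of~$f$) forces the ideal $(q) \subset Q$ to be radical; a direct combinatorial check shows that this holds if and only if $q_1 + q_2 \in \bz q$ in~$Q^\g$, and writing $q_1 + q_2 = nq$ identifies $Q$ with $Q_n$ for a unique $n \ge 1$.

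To conclude, in the first case ($Q = \bn$, $\theta = \id$), the log structure on~$V$ is pulled back from~$S$, and enlarging the chart to~$\bn \oplus \bz$ to record an \'etale coordinate on the smooth curve~$\und V$ near~$x$ yields a strict \'etale map $V \to S \times \ls{\bz}$, proving~(1). In the second case ($Q = Q_n$), the scheme $\spec \bc[Q_n]/(e^q) \cong \spec \bc[u,v]/(uv)$ is a nodal curve, and the idealized log structure on it coincides with that of~$\ls{Q_n, J}$ with $J = (q)$, yielding the required strict \'etale map $V \to \ls{Q_n, J}$, proving~(2). The main obstacle is the rank-$2$ classification: ruling out~$\bn^2$ (excluded by verticality, since $\bn^2/\theta(\bn)$ is never a group for injective~$\theta$) and identifying~$Q$ with~$Q_n$ require a combinatorial analysis of rational cones in~$\br^2$ together with the reducedness constraint on~$\bc[Q]/(e^q)$.
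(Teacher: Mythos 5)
Your overall strategy --- take a neat chart $Q \cong \ocM_{X,\bar x}$ with $V \to S\times_{\ls{\bn}}\ls{Q}$ \'etale, bound the rank of $Q^\g$ by a dimension count, and then classify the rank-two case --- is essentially the paper's, which likewise reduces to a local model $\ls{Q,J}$ and pins down $Q$ using the saturation of $\theta\colon\bn\to Q$. The difference is in how the rank-two monoid is identified: the paper invokes the structure theorem for saturated homomorphisms (every element of $Q$ is uniquely of the form $nq+m_iq_i$), whereas you route through reducedness of the fiber, i.e.\ radicality of the ideal $(q)$. That route can be made to work, but two of your concrete claims are wrong as stated.

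First, the asserted equivalence ``$(q)$ is radical iff $q_1+q_2\in\bz q$'' fails in the ``if'' direction, and consequently ``writing $q_1+q_2=nq$ identifies $Q$ with $Q_n$'' is not a valid step: the relation $q_1+q_2=nq$ alone does not determine $Q$. Take $\sigma$ spanned by $q_1=(1,0)$ and $q_2=(1,3)$, and $q=(2,3)=q_1+q_2$; then $q$ is primitive and interior and $q_1+q_2=1\cdot q$, but $(q)$ is not radical (one has $(1,1)\in\sqrt{(q)}\setminus(q)$) and $Q=\sigma\cap\bz^2$ has four Hilbert basis elements, so $Q\not\cong Q_1=\bn^2$. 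The correct combinatorial statement is that $(q)$ is radical iff $q$ generates the interior ideal of $Q$ (since $\sqrt{(q)}$ equals the interior ideal whenever $q$ is interior); it is this that forces $Q=\langle q_1,q_2,q\rangle$, and only then does writing $q_1+q_2=nq$ give $Q\cong Q_n$. Second, your parenthetical ``ruling out $\bn^2$\,, excluded by verticality, since $\bn^2/\theta(\bn)$ is never a group for injective $\theta$'' is false and contradicts your own conclusion: for $\theta(1)=(1,1)$ the quotient $\bn^2/\theta(\bn)\cong\bz$ is a group, and $\bn^2$ is exactly $Q_1$, i.e.\ the ordinary node $xy=t$, which is the most important instance of part (2) and must not be excluded. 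Verticality rules out $q$ lying on a proper face (e.g.\ a marked point, where $\theta$ is the inclusion of a factor of $\bn^2$); it does not rule out $\bn^2$ itself.
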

\begin{proof}
  
For the convenience of the reader we give an indication of the proofs, using the point of view developed in Corollary~\ref{vertons.c}.  We saw there that the set $U := \{x \in X : \ocM_{X,x} = \bn\}$ is open in $X$. Morover  $\und U$ is smooth over $\bc$,  so it can be covered by open sets $V$ each of which
admits   an \'etale map $U \to {\bf G_m} = \ls \bz$.
Since the morphism $U \to \und U \times S$ is an isomorphism,
we find  a diagram as in case (1) of the theorem.

Suppose on the other hand that $x \in Y := X\setminus U$.  Since 
the sheaf of groups $\cM_{X/S}^\g$ is torsion free,
one sees from  \cite[IV, 3.3.1]{o.llogg} that in a neighborhood $V$ of 
$x$, there exists a chart for $f$  which is neat and smooth at $x$.  That is, there 
exist a fine saturated monoid $Q$, an injective homomorphism $\theta \colon \bn \to Q$,
and a map $V \to \ls Q$ such that induced map $V \to S \times_{\ls \bn} \ls Q$
is smooth and such that the homomorphism $Q^\g/\bz \to \cM_{X/S,x}$
is an isomorphism.   By \cite[III, 2.4.5]{o.llogg}, the chart $Q \to \cM_{X}$
is also neat at $x$.  Let $J$ be the ideal of $Q$ generated by $q:= \theta(1)$. 
Then $S\times_{\ls \bn} \ls Q = \ls {(Q,J)}$.  Since $\theta$ is vertical,
$J$ is the interior ideal of $Q$, and the set of minimal primes containing
it is the set of height one primes of $Q$.  Thus  the dimension of  $\ms {(Q,J)}$ 
is the dimension of $\ms F$, where $F$ is any facet of $Q$. 
This dimension is the rank of $F^\g$; if it is zero, then $Q^\g$ has rank at most one,
hence $\bn \to Q$ is an isomorphism, contradicting our assumption that $x \in Y$. 
Thus $ Q^\g$ has rank at least two. 
Since $\und V$ has dimension one and is smooth over $\ms {(Q,J)}$, it follows that
in fact $F^\g$ has rank one, that $Q^\g$  has rank two, 
 and that $\und V$ is \'etale over $\ms {(Q, J)}$.  
Since $Q^\g/\bz \cong \cM_{X/S}^\g$  and $Q^*$ maps to zero in $\cM_{X/S}^\g$
it follows that $Q$ is sharp
of dimension two, and hence has exactly two faces $F_1$ and $F_2$. 
Each of these is canonically  isomorphic to $\bn$; let $q_i$ be 
the unique generator of $F_i$.  Since $\theta$ is saturated,
every element of $Q$ can be written uniquely in the form
$nq + m_i q_i$ where $n, m_i \in \bn$ and $i \in \{1,2\}$~\cite[I, 4.8.14]{o.llogg}.
Writing  $q_1 +q_2$ in this form,  we see that necessarily $m_i = 0$
(otherwise $q$ would belong to a proper face).  Thus $Q$
is generated by $q_1$ and $q_2$,  subject to the relation
$nq = q_1 + q_2$ for some $n > 0$.  Since $Q$ has dimension two,
it is necessarily isomorphic to $Q_n$, and since the chart $Q \to \cM_{X}$
is neat at $x$, in fact $Q_n \cong \ocM_{X,x}$. 
Finally, we note that $\und X$ is singular at $x$,
so $U$ is precisely the smooth locus of $\und X$. 
\end{proof}
Let us remark that the isomorphism $\ocM_{X,x} \cong \bn$
  in (1) is unique, since the monoid
$\bn$ has no nontrivial automorphisms.  In case  (2),
the integer $n$ is unique, and   
 there are exactly two isomorphisms $\ocM_{X,x} \cong Q_n$, since $Q_n$ has a unique
nontrivial automorphism, which exchanges $q_1$ and $q_2$.  

Thanks to Kato's result,
 we can give the following more  explicit
version of Corollary~\ref{vertons.c} in the cases of log curves.
{Since we are working over the standard (split) log point $S$,
  we have a map $\bn \to \cM_S \to \cM_X$, and we let
  $\cM_{X/\bn} := \cM_X/\bn$.}

\begin{proposition}\label{mxptomxp.p}
Let $X/S$ be a vertical log curve over the standard log  point, let $\ep \colon \und
X' \to\und X$ be its normalization, and let $X'/\bc$  (resp. $X''$) be
the log curve obtained by endowing $\und X'$ with the compactifying log structure
associated to the open embedding $U' \to U$ (resp. with the induced
log structure from $X$.)
  \begin{enumerate}
\item There is a unique morphism of log schemes
$h  \colon X'' \to X'$ which is the identity on the  underlying schemes. 
\item  The maps
  $\ocM^\g_{X'} \to \ep^{-1}(\cM_{X/S})$ and
  $\cM_{X'}^{\g} \to \ep^*(\cM_{X/\bn})$  
induced by $h$  are isomorphisms,
  where $\ep^*(\cM_{X/\bn}):= \oh {X'}^* \times_{\ep^{-1}(\oh X^*)}
  \ep^{-1}(\cM_{X/\bn})$.  
\item  Let $X''' := X'\times_{\und S} S$,   and let $g \colon X'' \to
  X'''$ be the map induced by $f \circ \ep$ and $h$.  
Then the morphism $g $
identifies $X''$ with a strict log transform of $X'''$, 
 \ie, the closure of $U'$ in the 
log blowup of $X'''$ along  a coherent sheaf of ideals  of $\cM_{X'''}$,
(made explicit below). 
  \end{enumerate}
\end{proposition}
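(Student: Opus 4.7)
My plan is to verify everything locally via the \'etale-local models of Theorem~\ref{fkato.t}. Statement~(1) and the first isomorphism of~(2) are immediate specializations of Corollary~\ref{vertons.c}(3)--(4) to the standard log point, noting that the verticality of $f$ makes $\cM_{X/S}$ a sheaf of groups, so that $\ep^{-1}(\cM_{X/S}) = \ep^{*}(\cM_{X/S}^\g)$ (where $\ep^*$ of an abelian sheaf is simply $\ep^{-1}$). For the second isomorphism in~(2), I would check on stalks: at a smooth point of~$\und X$ both sides are trivial, and at a node where $\ocM_{X,x}\cong Q_n = \bn\langle q_1,q_2,q\mid q_1+q_2=nq\rangle$, a direct calculation of the monoid quotient $Q_n/\bn q$ (which collapses to $\bz$ in the groupification, with $q_2 = -q_1$) identifies $(\cM_{X/\bn,x})^\g$ with $\bz\oplus\oh{X,x}^*$. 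Pulling back along $\ep$ to the branch $x'_i$ matches this $\bz$ with the groupification of $\ocM_{X',x'_i} = \bn\langle \tilde q_i\rangle$ via the map sending $\tilde q_i \mapsto q_i$.

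For statement~(3), I would work \'etale-locally near a branch $x'_i$ of the normalization of a node. Here $X'''=X'\times_{\und S}S$ admits a chart given by the sharp monoid $P:=\bn^2 = \bn\langle \tilde q_i, s\rangle$ with $\tilde q_i\mapsto\tilde q_i\in\oh{X',x'_i}$ and $s\mapsto 0$, while $X''$ is charted by $Q_n$ via $q_i\mapsto\tilde q_i$, $q_{3-i}\mapsto 0$, $q\mapsto 0$. The map $g\colon X''\to X'''$ then corresponds to the inclusion of monoids $P\hookrightarrow Q_n$ given by $\tilde q_i\mapsto q_i$ and $s\mapsto q$. The ideal $\cJ\subseteq\cM_{X'''}$ I would blow up is the coherent monoidal ideal locally generated by $s^n$ and $\tilde q_i$; the key observation is that the element $q_{3-i} = ns-\tilde q_i\in Q_n\setminus P$ is precisely the generator introduced by the log blow-up.

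The core computation is then to describe the two affine charts $\spec\bc[P[\cJ/s^n]]$ and $\spec\bc[P[\cJ/\tilde q_i]]$ of the log blow-up as submonoids of $P^\g=\bz^2$. Comparing the determinants of their extreme rays, the first is a free monoid of rank~$2$ (extreme rays $s$ and $\tilde q_i-ns$, determinant $1$), while the second is isomorphic to~$Q_n$ (extreme rays $\tilde q_i$ and $ns-\tilde q_i$, determinant $n$) via $\tilde q_i\leftrightarrow q_i$, $ns-\tilde q_i\leftrightarrow q_{3-i}$, $s\leftrightarrow q$. After imposing the $S$-structure $e^s=0$, the first chart collapses onto an exceptional fiber over $x\in X'''$ disjoint from $U'$, whereas the second has underlying scheme $\spec\bc[v,w']/(vw')$, whose component $\{w'=0\}\cong\und X'$ contains $U'$ as a dense open subset and carries precisely the log structure of $X''$. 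The main obstacle I anticipate is the careful bookkeeping of the $S$-structure in each chart, in particular computing the monomial complement of the ideal $(e^s)$ in $\bc[P[\cJ/\tilde q_i]]$ and verifying that the surviving component has the correct~$Q_n$ log structure. Patching this local picture across the two branches at each node yields the desired global identification of $X''$ with the strict log transform of $X'''$ along~$\cJ$.
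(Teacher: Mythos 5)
Your proof is correct and follows essentially the same route as the paper: parts (1)--(2) are read off from Corollary~\ref{vertons.c} (the paper gets the second isomorphism in (2) by viewing the map as a morphism of $\oh{X'}^*$-torsors over the first, rather than by your stalk computation), and for (3) you blow up the same sheaf of ideals (generated by $\cM_{X'}^+$ and $n\cM_S^+$) and identify the same two affine charts, with the $Q_n$-chart carrying the strict transform. The one device the paper uses that would settle the ``patching'' step you flag as a concern is to note that the ideal generated by $\cK$ in $\cM_{X''}$ is invertible (in $Q_n$ one has $(q_2,nq)=(q_2)$ since $nq=q_1+q_2$), so that by the universal property of log blow-ups the map $g\colon X''\to X'''$ factors canonically through the blow-up; your chart computation then merely verifies that this canonical map is an isomorphism onto the strict transform, with no gluing left to perform.
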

%\arthur{nonvertical case could be added.}
\begin{proof}
Statement (3) of Corollary~\ref{vertons.c} implies statement (1) of
Proposition~\ref{mxptomxp.p},  statement (4) implies that
$h$ induces an isomorphism
$ \theta \colon \ocM_{X'}^\g \to \ep^{-1}(\cM_{X/S})$, and
it follows that $\cM_{X'}^\g \to \ep^*(\cM_{X/P})$ 
is an isomorphism, since this map is morphism of
$\oh {X'}^*$ torsors  over $\theta$.  This proves statements (1) and (2);
we should remark that they are quite simple to prove directly
in the case of curves, because the normalization
$\und X'$ of $\und X$ is smooth.

Our proof of (3) will include an explicit description
of a sheaf of ideals defining the blowup.  For each point $y'$ of $Y'$,
let $n$ be the integer such that $\ocM_{X,\ep(y')} \cong Q_n$, let  
$\cK_{y'}$ be the ideal of $\cM_{X''',y'}$ generated by
$\cM^+_{X',y'}$ and $n\cM^+_S$, and let 
$\cK := \cap\{  \cK_{y'}:  y' \in Y'\}$,
a coherent sheaf of ideals in $\cM_{X'''}$.
Observe   that the ideal of $\cM_{X''}$ generated by $\cK$
is invertible.  This is clear at points $x' $ of $ U'$.  If $y' \in Y'$, the ideal
$\cK_{y'}$ is generated by the images of $q_2$ and $n q$, and
in  $Q_n$  the ideal $(q_2, nq)$ is generated by $q_2$, since $nq = q_1 + q_2$.    
Thus the map $X''  \to X'''$ factors through the log blowup~\cite[4.2]{niz.tslbgr}.
A chart for $X'''$ near $y'$ is given by $\bn \oplus \bn$ mapping
$(1,0) $ to $q_2$ and $(0,1) $ to $q$, and the log blowup of the ideal
$(q_2, nq)$ has a standard affine cover consisting of two open sets.
The first is obtained by adjoining $nq-q_2$, and the corresponding
monoid is $Q_n$; and the second by adjoining $q_2-nq$. 
The closure of $U'$ is contained in the first affine piece, so we
can ignore the second. Thus
the induced map
is indeed an isomorphism as described.
\end{proof}

Proposition~\ref{mxptomxp.p}  shows that one can recover the log curve $X''/S$ 
directly from the log curve $X'$ together with the function $\nu \colon Y'  \to \bz^+$
taking a  point $y'$ to the number $n$ such that $\ocM_{\ep(y')} \cong Q_n$. 
In fact there are additional data at our disposal, 
%that help recover the original curve $X/S$, 
as the following proposition shows. 

\begin{proposition}\label{exdata.p}
  Let $X/S$ be a vertical log curve over the standard log point and let
  $X'/\bc$ be the corresponding log curve over $\bc$ as described in
  Proposition~\ref{mxptomxp.p}. Then $X'/\bc$ is naturally equipped  with the following additional data.
\begin{enumerate}
\item A fixed point free involution $\iota$ of $\und Y'$.
\item A mapping  $ \nu \colon \und Y' \to \bn$ such that
$\nu(y') = \nu(\iota(y'))$ for every $y' \in Y'$.
\item A trivialization of the invertible sheaf
$N_{\und Y'/\und X'} \ot \iota^*(N_{\und Y'/\und X'})$ on $\und  Y'$. 
\end{enumerate}
\end{proposition}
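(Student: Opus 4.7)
The three pieces of structure will all be extracted from Theorem~\ref{fkato.t}(2), which identifies the \'etale-local model of $X$ at each singular point $x \in \und Y$ as $\ls{Q_n,J}$ for some integer $n = n(x) \ge 1$. For the involution in (1), I observe that the underlying scheme of $\ls{Q_n,J}$ is the ordinary node $\spec \bc[q_1,q_2]/(q_1 q_2)$; its normalization consists of the two branches, which correspond to the two height-one faces $F_1, F_2$ of $Q_n$ and are exchanged by the unique nontrivial monoid automorphism of $Q_n$. Globally, this identifies $\ep^{-1}(x)$ with a two-element set at each $x \in \und Y$, and $\iota$ will be defined as the swap of these two preimages; it is then manifestly fixed-point-free. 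For (2), I set $\nu(y') := n(\ep(y'))$, which is intrinsic to $\ocM_{X,\ep(y')}$ by the uniqueness clause of Theorem~\ref{fkato.t}(2). The relation $\nu \circ \iota = \nu$ is then immediate, since $\nu$ depends only on $\ep(y')$.

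For (3), the trivialization will be constructed \'etale-locally at each node and then glued. Near a singular point $x$, choose a chart $\beta \colon Q_n \to \cM_X$ compatible with the given chart $\bn \to \cM_S$, so that $\beta(q) = f^\flat(1)$ is fixed. Set $u := \alpha_X(\beta(q_1))$ and $v := \alpha_X(\beta(q_2))$, viewed as functions on $\und X$ near $x$. Since $\alpha_S(1) = 0$ on the split log point and $q_1 + q_2 = nq$ in $Q_n$, one has $uv = 0$ in $\oh{X,x}$; and after pulling back to $\und X'$, $u$ becomes a uniformizer at $y'_1 \in X_{F_1}$ (identically zero on the other branch) while $v$ is a uniformizer at $y'_2 := \iota(y'_1)$. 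The dual derivations then produce a local candidate
\[
  \partial_u \ot \partial_v \;\in\; T_{y'_1}\und X' \ot T_{y'_2}\und X' \;=\; N_{\und Y'/\und X'}(y'_1) \ot N_{\und Y'/\und X'}(y'_2).
\]

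The central step will be to check that $\partial_u \ot \partial_v$ does not depend on the choice of the chart $\beta$. Any other compatible chart $\beta'$ satisfies $\beta'(q_i) = \lambda_i + \beta(q_i)$ in $\cM_{X,x}$ for some units $\lambda_i \in \oh{X,x}^*$ (with $\beta'(q) = \beta(q)$ pinned down by compatibility with $S$), and the relation $q_1+q_2 = nq$ forces $\lambda_1 \lambda_2 = 1$. Under this change $u' = \lambda_1 u$, $v' = \lambda_2 v$, so $\partial_{u'} = \lambda_1(y'_1)^{-1}\partial_u$ and $\partial_{v'} = \lambda_2(y'_2)^{-1}\partial_v$. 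Here is the one small but essential observation on which the whole plan rests: both $y'_1$ and $y'_2$ map to the same scheme-theoretic point $x$, so $\lambda_i(y'_j) = \lambda_i(x)$ for all $i,j$; consequently
\[
  \lambda_1(y'_1)\,\lambda_2(y'_2) \;=\; (\lambda_1 \lambda_2)(x) \;=\; 1,
\]
and $\partial_u \ot \partial_v$ is unchanged. The local sections therefore glue to a global nowhere-vanishing section of $N_{\und Y'/\und X'} \ot \iota^*(N_{\und Y'/\und X'})$ over $\und Y'$. The main (and really only) obstacle is this chart-invariance calculation; the construction of $\iota$ and $\nu$ is purely formal once one has Theorem~\ref{fkato.t}(2) in hand.
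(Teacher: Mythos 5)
Your argument is correct and follows essentially the same route as the paper: $\iota$ and $\nu$ are read off from F.~Kato's local structure theorem, and the trivialization of $N_{\und Y'/\und X'} \ot \iota^*(N_{\und Y'/\und X'})$ comes from the relation $q_1+q_2=nq$ combined with the splitting of the standard log point, which is exactly what forces your units to satisfy $\lambda_1\lambda_2=1$. The paper packages the chart-independence by working with the canonical invertible sheaves $\cLL_{\ov m_1}$, $\cLL_{\ov m_2}$ attached to the (chart-independent) sections $\ov m_i$ of $\ocM_X$ and the canonical generator of $\cLL_{1,S}$, whereas you verify the same cancellation $\lambda_1(y'_1)\lambda_2(y'_2)=(\lambda_1\lambda_2)(x)=1$ by hand in local coordinates; the two are the same computation in different clothing.
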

\begin{proof}
  These data arise as follows.
Each  fiber of the map $\und \ep \colon  \und Y' \to \und Y$ 
has cardinality two, and hence there is a unique
involution $\iota$ of $\und Y'$ which interchanges
the points in each fiber.  
%For each $y' \in \und Y'$, we define  $n(y') := n(\ep(y'))$.  
The function $\nu$ is defined as above: 
$\nu(y')$ is the integer $n$
such that $\ocM_{X,\ep(y')}  \cong Q_n$. 
To obtain the trivialization in (3), let $y'$ be a point of $Y'$ and 
 let $y:= \ep(y')$.
Recall from Remark~\ref{xlogbund.r} that  if $X$
is a fine  log space and $\ov m \in \Gamma(X, \ocM_X)$,
there is an associated invertible sheaf $\cLL_{\ov m}$ whose
local generators are the sections of $\cM_X$ mapping to $\ov m$. 
Observe that,  since the log point $S$ is equipped with
a splitting $\ocM_S \to \cM_S$, there is a 
a canonical generator $m_S$  of the invertible sheaf
$\cLL_{1,S}$ on $S$.
% corresponding  to the generator$1$ of $\bn = \Gamma(S, \ocM_S)$.
 Let us use the notation of the proof of Proposition~\ref{mxptomxp.p}.
Endow $Y$ with the log structure from $X$ and 
choose a point $y$ of $Y$.  The choice of a chart at $y$
defines sections $m_1$ and $m_2$ of $\cM_{Y,y}$, whose
images $\ov m_1$ and $\ov m_2$ in $\Gamma(y, \ocM_{Y,y})$  are independent
of the choice of the chart and define
one-dimensional vector spaces $\cLL_{\ov m_i}$.
The equality $\ov m_1 + \ov m_2 = n\ov f^\flat(1)$ induces
an isomorphism $\cLL_{\ov  m_1} \ot \cLL_{\ov m_2} \cong
\cLL_{1,s}^n \cong \bc$.  As we have seen, the element $m_2$
corresponds to a generator of the ideal of the point $y'_1$ in $\und X'_1$, 
so   there is a canonical isomorphism $\ep^*(\cLL_{\ov m_2}) \cong N_{\und Y'/\und   X',y'_1}^{-1} $;  similarly   $\ep^*(\cLL_{\ov m_1}) \cong N_{\und Y'/\und X',y_2}^{-1}$.
 Thus we find isomorphisms:
\[
N_{\und Y'/\und X', y'_1} \ot (\iota^*N_{\und Y'/\und X'})_{y'_1} \cong
N_{\und Y'/\und X', y'_1} \ot N_{\und Y'/\und X,y'_2} \cong
\ep^*(\cLL_{\ov m_2})^{-1} \ot \ep^*(\cLL_{\ov m_1})^{-1} \cong \bc. \qedhere 
\]
\end{proof}

In fact the data in Proposition~\ref{exdata.p} are enough to reconstruct
the original log curve $X/S$.  (For an analogous result 
in the context of semistable reduction, see \cite[III, Prop. 1.8.8]{o.llogg}.)
Rather that write out the proof, 
let us explain how one can construct the fibration
$X_\lag \to \sone$ directly from $X'$ together with these data.

 It will be notationally convenient
for us to extend $\iota$ to a set-theoretic involution
on all of $\und X'$, acting as the identity on $\und U'$.
If $y' \in Y'$ and $v $ is a nonzero element of $N_{y'/\und X'}$,
let $\iota(v)$ be the element of $N_{\iota(y')/\und X'}$ which is
dual to $v$ with respect to the  pairing defined by (3) above.  Then
$\iota(\lambda v) = \lambda^{-1}\iota(v)$ for all $v$. 
Note that since $\ocM_{X',y'} = \bn$ for every $y' \in Y'$
and vanishes otherwise, we have a natural  set-theoretic action of
$\sone$ on $X'_\lag$ covering the identity of $X'$.    
Thus the following sets of data are equivalent:
\begin{enumerate}
\item a trivialization of $N_{\und Y'/\und X'} \ot \iota^*(N_{\und Y'/\und X'})$;
\item  an involution $\iota$ of $N_{\und Y'/\und X'}$, covering
the involution $\iota$ of $Y'$, 
such that $\iota (\lambda v) = \lambda^{-1}\iota (v)$ for $\lambda \in \bc^*$
and $v \in N_{Y'/X}$;
\item an involution  of $\iota$ of $\sone(N_{\und Y'/\und X'})$ (the circle
bundle of $N_{\und Y'/\und X'}$), covering the involution $\iota$  of $Y'$,
such that $\iota(\lambda(v)) = \lambda^{-1}(\iota(v))$
for   $\lambda \in \sone$ and  $v \in \sone(N_{\und Y'/\und X'})$;
\item an involution  $\iota$ of   $X'_\lag$  such that 
$\tau_{X'}(\iota(x'_\lag)) = \iota(\tau_{X'}(x'_\lag))$
and $ \iota(\zeta x'_\lag) = \zeta^{-1}\iota (x'_\lag)$ for 
 $\zeta \in \sone$ and  $x'_\lag \in X'_\lag$.
\end{enumerate}
The data in  (3) and (4) are equivalent thanks to Remark~\ref{xlogbund.r}.
We should also point out that
these data are unique up to (non-unique) isomorphism.

\begin{proposition}\label{gluexlag.p}
  Let $X/S$ be a log curve and let $X'$  and $\iota$ be as
  above.  Let $\nu(x'_\lag) := \nu(\ep(\tau_{X'}(x'_\lag)))$ and
define $\iota$ on $X'_\lag \times \sone$ by
\[\iota(x'_\lag,\zeta) := (\zeta^{\nu(x'_\lag)}\iota(x'_\lag),\zeta).\]
Then $X_\lag$ is the quotient of $X'_\lag \times \sone$
by the equivalence relation generated by $\iota$. 
\end{proposition}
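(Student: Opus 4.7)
The plan is to apply Corollary~\ref{glue.c} with $P = \bn$ (so $\lt P = \sone$) to obtain the proper surjection $p \colon X'_\lag \times \sone \to X_\lag$, which is a homeomorphism over $U_\lag$. Since $p$ is proper and surjective, $X_\lag$ carries the quotient topology, so it suffices to verify that the equivalence relation ``same fiber of $p$'' is exactly the one generated by $\iota$. The convention is that $\nu$ is extended by zero on $\und U'$ and $\iota$ acts as the identity there, so on the preimage $p^{-1}(U_\lag)$ the involution $\iota$ is trivial, matching the fact that $p$ is bijective over $U_\lag$. Thus everything reduces to a computation at the nodes.

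Fix a node $y \in Y$ with $\ocM_{X,y} \cong Q_n$ and preimages $y'_1, y'_2 \in Y'$ interchanged by $\iota$; choose a chart with generators $q_1, q_2, q$ satisfying $q_1+q_2 = nq$. Next, I would compute both $\tau_X^{-1}(y)$ and $p^{-1}(\tau_X^{-1}(y))$ explicitly. For any $\sigma \in \sone$ (\ie, a point of $S_\lag$), the fiber of $\tau_X^{-1}(y)$ over $\sigma$ is the torsor
\[
  \{ (\sigma_1, \sigma_2) \in \sone \times \sone : \sigma_1 \sigma_2 = \sigma^n \}.
\]
Using Proposition~\ref{pastelog.p} (homeomorphism $X''_\lag \cong X'_\lag \times S_\lag$) and the identification $\cM_{X', y'_1}^\g \cong \ep^*\cM^\g_{X/\bn}$ from Proposition~\ref{mxptomxp.p}, the fiber above $y'_1$ in $X'_\lag \times \sone$ is $\sone$ (parametrized by the $q_2$-coordinate $\sigma_2$), and $p$ sends $(\sigma_2, \sigma)$ to $(\sigma^n\sigma_2^{-1}, \sigma_2, \sigma)$. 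Symmetrically, above $y'_2$ it sends $(\sigma_1, \sigma)$ to $(\sigma_1, \sigma^n\sigma_1^{-1}, \sigma)$. Consequently, the two parametrizations of $\tau_X^{-1}(y)$ agree precisely when $\sigma_1\sigma_2 = \sigma^n$, making each fiber of $p$ consist of exactly one point above $y'_1$ and one point above $y'_2$.

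Finally, I would verify that the formula $\iota(x'_\lag, \zeta) = (\zeta^{\nu(x'_\lag)}\iota(x'_\lag), \zeta)$ realizes this pairing. Using the property $\iota(\zeta x'_\lag) = \zeta^{-1}\iota(x'_\lag)$ of the involution on $X'_\lag$ (data of type (4) in the list preceding the proposition, produced from the trivialization of $N_{\und Y'/\und X'} \ot \iota^*N_{\und Y'/\und X'}$), a point over $y'_1$ with $q_2$-coordinate $\sigma_2$ is sent by $\iota$ to the point over $y'_2$ whose $q_1$-coordinate is $\sigma^n \sigma_2^{-1}$. Applying $p$ to this yields $(\sigma^n\sigma_2^{-1}, \sigma_2, \sigma)$, agreeing with $p(x'_\lag,\zeta)$. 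Hence $p \circ \iota = p$, and since fibers of $p$ over a node have cardinality two interchanged by $\iota$, while fibers over $U$ are singletons (and fixed by $\iota$), the equivalence relation defined by $p$ coincides with the one generated by $\iota$.

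The main obstacle — and the part that warrants careful checking — is the normalization of signs in the involution $\iota$ on $X'_\lag$ arising from the trivialization of $N_{\und Y'/\und X'}\otimes \iota^*N_{\und Y'/\und X'}$, in particular that the relation $\iota(\lambda v) = \lambda^{-1}\iota(v)$ combines with the exponent $\nu(x'_\lag) = n$ to yield exactly the multiplicative relation $\sigma_1\sigma_2 = \sigma^n$ cutting out $\tau_X^{-1}(y)$; this is what forces the specific power $\zeta^{\nu(x'_\lag)}$ in the formula defining $\iota$ on $X'_\lag\times \sone$.
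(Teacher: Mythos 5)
Your proposal is correct and follows essentially the same route as the paper: the paper's proof also reduces to a chart computation at a node, using the stalk decompositions $\ocM^\g_{X'',y'_i}\cong \ocM^\g_{X',y'_i}\oplus\ocM^\g_S$ and the relation $\ov m_1 = n\ov m - \ov m_2$, which in multiplicative ($\sone$-valued) terms is exactly your relation $\sigma_1\sigma_2=\sigma^n$; the quotient-topology step via properness of $p$ from Corollary~\ref{glue.c} is the same ingredient the paper sets up just before the proposition. Your explicit tracking of the normalization of the trivialization of $N_{\und Y'/\und X'}\ot\iota^*N_{\und Y'/\und X'}$ is a welcome elaboration of a point the paper leaves implicit, but it does not change the argument.
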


\begin{proof}
Let $y$ be a point of $Y$ and let $\ep^{-1}(y) := \{ y'_1, y'_2\}$.  
We can check the formula with the aid of charts, using again the notation
of the proof of  Proposition~\ref{mxptomxp.p}. 
Then 
$\ocM^\g_{X'', y'_1} \cong \ocM_{X',y'_1}^\g \oplus \ocM_S^\g$ is free with basis $\ov m_2, \ov m$
and $\ocM_{X'', y'_2} \cong \ocM_{X',y'_2}^\g \oplus \ocM_S^\g$ is free with basis $\ov m_1, \ov m$.
We have isomorphisms
\[\xymatrix{
\ocM^\g_{X'',y''_1} &\ar[l]  \ocM^\g_{X,y} \ar[r] &\ocM^\g_{X'',y''_2}}\]
sending $\ov m $ to $\ov m $ and $\ov m_2$ to $\ov m_1 = n\ov m  -\ov m_2$. 
  Then the formula follows immediately.
\end{proof}

This gluing map is compatible with the map $X_\lag \to  \sone$. 
Figure~\ref{glue.f} illustrates the restriction of this gluing to
$\tau^{-1}(y) \to \sone$, when pulled back to
via the exponential map  $[0, 2\pi i] \to \sone$. 
\begin{figure}[h!]
  \centering
  \includegraphics[scale=.25]{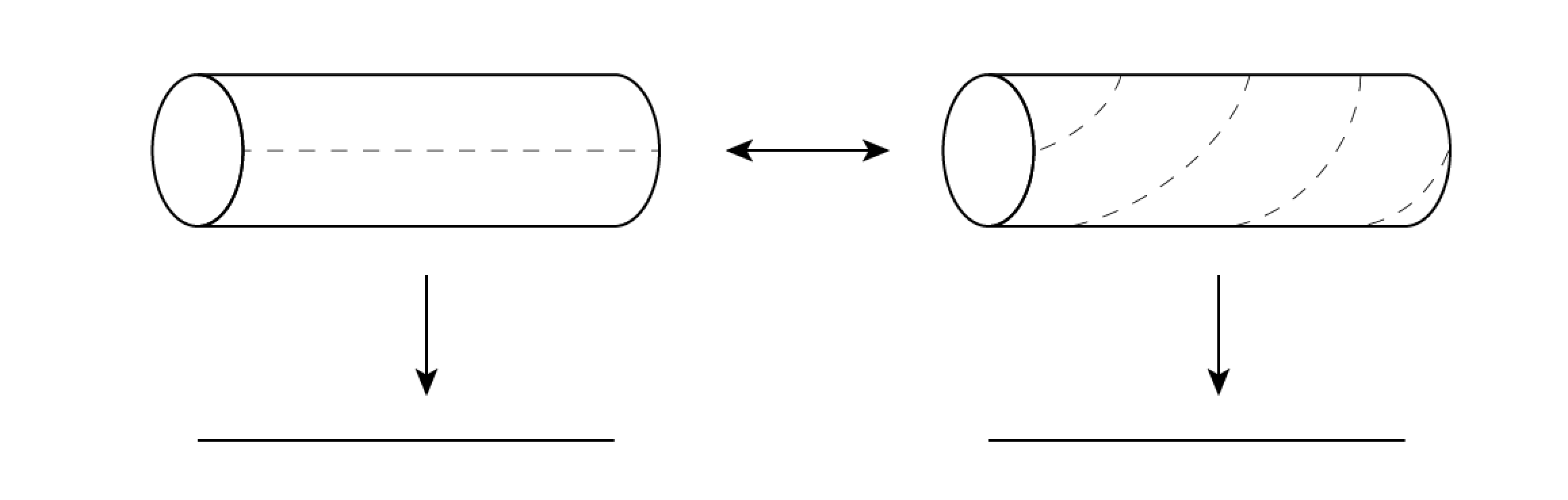}
  \caption{Gluing log fibers}
  \label{glue.f}
\end{figure}
The reader may recognize the gluing   map   as a Dehn twist.
It appears here as  gluing data, not monodromy.
It is of course also possible to see the monodromy from this
point of view as well, using a chart for $X$ at a point $y$ of $Y$.
Since this description is well-known but not functorial,
we shall not develop it here.  

\subsection{Log combinatorics and the dual graph}

Proposition~\ref{mxptomxp.p} and the data of Proposition~\ref{exdata.p}
will enable us to give a combinatorial description of 
the sheaf $\cM_{X/S}$ on $X$.  In fact there are
two ways to do this, each playing its own role and each
related to the ``dual graph'' associated to the underlying
nodal curve of $X/S$.  

We begin with the following elementary construction.
Let $B$ be a finite set with an  involution $\iota$ and let 
$\ep \colon B \to E$ be its  orbit space. 
There are two
natural exact sequences of $\bz[\iota]$-modules:
\begin{eqnarray}\label{eb.e}
0 \rTo{} \bz_{E/B} \rTo{i} \bz^B \rTo{s} \bz^E \rTo{} 0   \cr
0 \rTo{} \bz^E \rTo{j} \bz^B \rTo{p} \bz_{B/E} \rTo{} 0  
\end{eqnarray}
  
The map $s$ in the first sequence sends a basis vector $b$
of $\bz^B$ to the basis vector $ \ep(b) $  of $\bz^E$, 
and $i$ is the  kernel of $s$.  The map $j$ in the second sequence
sends a basis vector $e $ to $\sum \{ b \in \ep^{-1}(e) \}$,
and $p$ is the cokernel of $j$.  
%If $\iota$ is fixed point free,  these sequences split as  $\bz$-modules but not as $\bz[\iota]$-modules.
These two sequences  are naturally dual to each other, and in particular
$\bz_{E/B}$ and $\bz_{B/E}$ are naturally dual. 
For each $b \in B$, let  $d_b :=  b - \iota(b) \in \bz_{E/B}$
and $p_b := p(b) \in \bz_{B/E}$.  Then $\pm d_b$ (resp. $\pm p_b$)
depends only on $\ep(b)$.
There is a well-defined  isomorphism of $\bz[\iota]$-modules defined by
\begin{equation}\label{betoeb.e}
t\colon \bz_{B/E} \to \bz_{E/B} : p_b := p(b) \mapsto  d_b := b- \iota(b).
  \end{equation}
 The resulting duality 
$\bz_{B/E} \times \bz_{B/E} \to \bz$ is  positive definite,
and the set of classes of elements $\{ p(b) : b \in B\}$
forms an orthonormal basis.

We apply these constructions to the involution $\iota $ of $\und Y'$
and regard $\ep \colon  \und Y' \to \und Y$ as the orbit space of this action.
The construction of $\bz_{Y/Y'}$ and $\bz_{Y'/Y}$ is compatible
with localization on $Y$ and hence these form sheaves of groups on $Y$. 
Since we are assuming that $X/S$ is vertical, $\iota$ is fixed point
free.  As we shall see, there are natural identifications of the  sheaf $\cM_{X/S}$ 
both with $\bz_{Y/Y'}$ and with $\bz_{Y'/Y}$.  We begin with the
former.

Because $\alpha_{X'}$ is the compactifying log structure associated
to   the set of marked points $Y'$, there are natural isomorphisms
of sheaves of monoids on $\und X'$:
\begin{equation}\label{cmy.e}
\ocM_{X'} \cong \und \Gamma_{\und Y'}(\Div^+_{\und X'})  \cong
\bn_{Y'}.
  \end{equation}
 Combining this identification with the isomorphism $\ep^{-1}(\cM_{X/S}) \cong \ocM_{X'}^\g$
of  statement (2) of Proposition~\ref{mxptomxp.p}, we find an isomorphism
$\ep^{-1} (\cM_{X/S}) \cong \bz_{\und Y'}$, and hence an
injection
$$\psi \colon \cM_{X/S} \to \ep_*\ep^{-1}(\cM_{X/S}) \cong \ep_*(\bz_{\und   Y'}).$$ 
% The following proposition identifies $\cM_{X/S}$ with the 
% twist  of $\bz_Y$ associated with the double covering $Y' \to Y$
% and the automorphism $-1$ of $\bz$.  

\begin{proposition}\label{cmup.p}
  The inclusion $\psi$  defined above fits into an exact sequence:
\[    0 \rTo{} \cM_{X/S} \rTo{\psi} \ep_* \bz_{\und Y'} \rTo{s} \bz_{\und Y}
  \rTo{} 0,\]
and hence induces an isomorphism
\[ \psi_{X/S} \colon  \cM_{X/S} \isomlong \bz_{Y/Y'}.\]
 If $\ell \in
\Gamma(X, \cM_{X/S})$ and $ \cLL_{\ell}$ is the corresponding
invertible sheaf on $X$ coming from the exact sequence~(\ref{xsmp.e}), 
 then $\ep^*(\cLL_{\ell}) \cong \oh{\und   X'}(-\psi(\ell))$.
\end{proposition}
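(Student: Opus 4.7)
The plan is to establish the short exact sequence stalk-by-stalk, deduce the isomorphism $\psi_{X/S}$ formally, and finally identify the invertible sheaf by pulling back the extension~\eqref{xsmp.e} to $X'$.

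First I would observe that all three terms of the claimed exact sequence are supported on $Y$: this is immediate for $\ep_*\bz_{\und Y'}$ and $\bz_{\und Y}$, while $\cM_{X/S}$ vanishes on $U = X \setminus Y$ by Corollary~\ref{vertons.c}(1). Thus it suffices to fix a point $y \in Y$ and compute stalks. By Theorem~\ref{fkato.t}(2), there is an identification $\ocM_{X,y} \cong Q_n$ under which $\ep^{-1}(y) = \{y'_1, y'_2\}$ corresponds to the two facets $F_i = \bn q_i$; the relation $q_1 + q_2 = nq$ gives $\cM^\g_{X/S,y} \cong Q_n^\g/\bz q \cong \bz$, generated by $\ov q_1 = -\ov q_2$. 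Using the isomorphism $\ocM^\g_{X'} \cong \ep^{-1}(\cM^\g_{X/S})$ of Proposition~\ref{mxptomxp.p}(2), together with~\eqref{cmy.e} identifying the generator of $\ocM^\g_{X', y'_i}$ with the basis vector $y'_i$ of $\ep_*\bz_{\und Y'}|_y$, I would argue that this generator is sent to $\ov q_i$: at $y'_i$ the face $F_i$ describes the relevant branch of the normalization, so the chart element $q_i$ becomes a nonzero-divisor generator of the maximal ideal at $y'_i$. Consequently $\psi(\ov q_1) = y'_1 - y'_2$, which generates the kernel of $s$ at $y$, yielding exactness. The isomorphism $\psi_{X/S}\colon \cM_{X/S} \isomlong \bz_{Y/Y'}$ is then immediate by comparison with the defining sequence~\eqref{eb.e}.

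For the invertible sheaf statement, I would pull back the extension~\eqref{xsmp.e} by $\ep$ and extend structure group from $\ep^{-1}\oh X^*$ to $\oh{\und X'}^*$. Via the identifications of Corollary~\ref{vertons.c}(4) --- namely $\cM^\g_{X''} \cong \cM^\g_{X'} \oplus P^\g$ from the splitting $\rho$, and $\ocM^\g_{X'} \cong \ep^*(\cM^\g_{X/S})$ --- the resulting sequence of $\oh{\und X'}^*$-torsors becomes isomorphic to the compactifying-log extension
\[
  0 \To \oh{\und X'}^* \To \cM^\g_{X'} \To \ocM^\g_{X'} \To 0
\]
on $X'$. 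This identifies $\ep^*(\cLL_\ell) \cong \cLL_{\tilde\ell}$, where $\tilde\ell \in \ocM^\g_{X'}$ is the image of $\ell$ under the right-hand isomorphism. By~\eqref{cmy.e} this section $\tilde\ell$ corresponds to the divisor $\psi(\ell)$ on $\und X'$, and by Remark~\ref{xlogbund.r}, $\cLL_{\tilde\ell} \cong \oh{\und X'}(-\psi(\ell))$.

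The main obstacle is the sign and compatibility bookkeeping in the stalk computation: at $y'_1$, $q_1 \in \cM^\g_{X''}$ already lies in $\cM^\g_{X'}$ with order $1$ at $y'_1$, whereas at $y'_2$ one has $\rho(q_1) = n$, so $q_1$ projects to $q_1 - nq = -q_2 \in \cM^\g_{X'}$, a section with zero of order $-1$ at $y'_2$. Tracking this carefully at both points is what makes $\psi$ land in $\bz_{Y/Y'}$ (rather than merely in $\ep_*\bz_{\und Y'}$) with the correct sign, and ensures the line-bundle identification gives $-\psi(\ell)$ rather than its negative.
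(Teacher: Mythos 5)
Your proposal is correct and follows essentially the same route as the paper: reduce to stalks at a node $y$, use the local chart $\ocM_{X,y}\cong Q_n$ and the identifications of Proposition~\ref{mxptomxp.p}(2) and~\eqref{cmy.e} to compute that $\psi$ sends a generator of $\cM_{X/S,y}\cong\bz$ to $\pm(1_{y'_1}-1_{y'_2})$, which generates $\Ker(s)$; and then identify $\ep^*(\cLL_\ell)$ with the ideal sheaf of the divisor $\psi(\ell)$ via the isomorphism $\cM^\g_{X'}\to\ep^*(\cM^\g_{X/\bn})$. Your labeling of which $q_i$ is the coordinate at which branch is the opposite of the paper's, but this is just the ambiguity in the two isomorphisms $\ocM_{X,y}\cong Q_n$ and does not affect the argument.
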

\begin{proof}
  Since the maps $\psi $ and $s$ are already defined globally, it is
  enough to check that the sequence is exact at each point $y$ of $Y$.  
We work in  charted neighborhood of  a point $y \in Y$ as in the proof of 
Proposition~\ref{mxptomxp.p},
 using the notation there.   Then $\cM_{X/S,y}$  is the free
abelian group  generated by the image $\ell_2$ of $q_2$, and $\ell_1
= - \ell_2$.  The pullback $t'_2$ of $t_2$ to $\und X'$ is a local
coordinate near $y'_1$ and defines a generator for $\ocM_{X',y'_1}$
mapping   to $1_{y'_1} \in \bz_{Y'}$.
% and to$\ov m_2 = -\ov m_1$ in $\cM_{X/S,y}$. 
 The analogous formulas
hold near $y'_2$, and hence $\psi(\ell_2) = 1_{y'_1} - 1_{y'_2}$.  
This implies that $s \circ \psi = 0$ and that the sequence is exact.
Furthermore, it follows from Proposition~\ref{mxptomxp.p} that 
$\ep^*(\cLL_{\ell}) = \cLL_{\ov m'}$, where $\ov m' \in \Gamma(X',\ocM_{X'}^\g)$
corresponds to $\ell \in \Gamma(X,\cM_{X/S})$ via the isomorphism
$\ocM_{X'}^\g \to \cM_{X/S}^\g$ in statement (2) of that proposition. 
The sheaf $\cLL_{\ov m'}$ is the ideal sheaf of the divisor $D$
corresponding to $\ov m'$, \ie,  $\oh {X'}(-D)$, and $D = \psi(\ell)$.
\end{proof}

The relationship between $\cM_{X/S}$ and $\bz_{Y'/Y}$ is more subtle
and involves the integers $\nu(y)$.  First
consider the commutative diagram of exact sequences:

\begin{equation} \label{eq:q-vs-qprime}
  \xymatrix{
    0 \ar[r] & \bz_X \ar[r]\ar[d]^{\rm res} & \ep_* (\bz_{X'})
    \ar[d]^{\rm res}\ar[r] & \bz_{X'/X}\ar[d]^{\rm res} \ar[r] & 0 \\
    0 \ar[r] & \bz_Y \ar[r] & \ep_* (\bz_{Y'}) \ar[r] & \bz_{Y'/Y} \ar[r] & 0,
  }
\end{equation}
where $\bz_{X'/X}$ is  by definition the cokernel in the top row.
As $\ep$ is an isomorphism over  $U$,
the right vertical map %isommap
is an isomorphism and we will allow ourselves to identify
its source and target without further comment. 
Note that since $\und X'/\bc$ is smooth, the natural
map $\bz_{X'} \to j'_*(\bz_{U})$ is an isomorphism, 
hence $\ep_*(\bz_{X'}) \cong j_*(\bz_U)$, 
and the top row of the 
above diagram can be viewed as an  exact sequence:
\begin{equation}\label{bzux.e}
  0 \to \bz_X \to j_*(\bz_{U}) \to \bz_{X'/X} \to 0 .
\end{equation}
Since $X/S$ is vertical, it follows from Theorem~\ref{fkato.t} that  
$Y$ is precisely the support of  $\cM_{X/S}$  and that the map $\bz \to \ov \cM_X^\g$ is an isomorphism on $U$.  Thus there is a natural map:
 \begin{equation}
   \label{phix.e}
  \phi_X \colon \ocM_X^\g \to j_*j^*(\ov \cM_X^\g) \cong j_*(\bz_U)
  \cong  \ep_*(\bz_{X'}).   
 \end{equation}
In fact, the map $\phi_X$ is the adjoint of the homomorphism
${\ov \rho}^\g \colon \ocM_{X''}^\g \cong \ep^{-1}(\ocM_X^\g) \to \bz_{X'}$ deduced
from the homomorphism $\rho \colon \cM_{X''} \to P_{X'} = \bn_{X'}$ defined
in (4) of Corollary~\ref{vertons.c}.
% The next proposition shows that the log Kodiara Spencer sequence \ref{}
% is a pullback of the sequence~\ref{}.

 \begin{proposition}\label{norm.p}
Let $X/S$ be a vertical log curve. 
% \before{The homomorphism $\phi_X$ % (\ref{phix.e})  
%   defined above fits into a commutative diagram with exact rows:}
The homomorphisms $\psi_{X/S}$ of Proposition~\ref{cmup.p}
and $\phi_X$   defined above fit into a commutative diagram with exact rows:
\begin{equation} \label{mtoz.e}
  \xymatrix{ 
   & 0 \ar[r] & f^{-1}\ocM_S^\g  \ar[r]\ar[ld]_\id& \ocM_X^\g \ar[r]^\pi \ar[ld]_{\id} & \cM_{X/S} \ar[r] \ar[ld]_{\psi_{X/S}}\ar@{.>}[ldd]^-{\phi_{X/S}}& 0 \\
    0 \ar[r] & f^{-1}\ocM_S^\g  \ar[r]\ar[d]_\cong& \ocM_X^\g \ar@{.>}[r] \ar[d]^{\phi_X}& \bz_{Y/Y'} \ar[r] \ar[d]_{c_{X/S}}& 0 \\
    0 \ar[r] & \bz_X  \ar[r]&  \ep_* (\bz_{X'}) \ar[r]^p & \bz_{Y'/Y} \ar[r] & 0,\\
  }\end{equation} 
where $c_{X/S}$ is the map sending $d_{y'}$ to $-\nu(y') p_{y'}$ for every $y' \in Y'$.  
 \end{proposition}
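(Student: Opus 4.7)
The three rows of \eqref{mtoz.e} are the log Kodaira--Spencer sequence~\eqref{logks.e}, a ``relabelled'' copy of it obtained via the isomorphism $\psi_{X/S}$ from Proposition~\ref{cmup.p}, and the sequence~\eqref{bzux.e}. I take the dotted horizontal map $\ocM_X^\g \to \bz_{Y/Y'}$ in the middle row to be $\psi_{X/S} \circ \pi$; this automatically makes the middle row exact and the top--middle face of the diagram commute. All that remains is to check the commutativity of the middle--bottom face, and to construct $\phi_{X/S}$. Both questions are local, and by Theorem~\ref{fkato.t} it suffices to consider the two cases: a smooth point of $\und X$ (where everything is trivial) and a node of $\und X$ described by a chart $\ocM_{X,y}\cong Q_n$ with generators $q_1,q_2,q$ satisfying $q_1+q_2=nq$.

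For the middle--left square, I would observe that $\phi_X$ is adjoint to $\ov\rho^\g$, where $\rho$ is the splitting from Corollary~\ref{vertons.c}(4) satisfying $\rho\circ f''^\flat=\id_P$. Hence the composition $f^{-1}\ocM_S^\g \to \ocM_X^\g \to \ep_*\bz_{X'}$ sends the canonical generator $1\in\ocM_S^\g$ to the locally constant section $1$ of $\ep_*\bz_{X'}$, which is exactly the image of $1\in\bz_X$ under $\bz_X\hookrightarrow\ep_*\bz_{X'}$. This verifies the left square globally.

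For the middle--right square, the assertion to verify is the equality $p\circ\phi_X = c_{X/S}\circ\psi_{X/S}\circ\pi$ as maps $\ocM_X^\g \to \bz_{Y'/Y}$. Both sides vanish on $f^{-1}\ocM_S^\g$, since $\phi_X$ sends it into $\bz_X=\ker p$ by the previous paragraph. At a node $y$ with $\ep^{-1}(y)=\{y'_1,y'_2\}$ and chart $Q_n$, the relation $\rho\circ f''^\flat=\id$ and the description of $\cM_{X''}$ at $y'_j$ force $\ov\rho^\g(q_j)=0$ and $\ov\rho^\g(q_{3-j})=n$ at $y'_j$, so that
\[
\phi_X(q_i)_y \;=\; n\cdot 1_{y'_i} \;\in\; (\ep_*\bz_{X'})_y,
\qquad
p\circ\phi_X(q_i) \;=\; n\, p_{y'_i}.
\]
On the other side, Proposition~\ref{cmup.p} gives $\psi_{X/S}(\pi(q_i))=\pm d_{y'_i}$, and since $\nu(y'_i)=n$, the definition of $c_{X/S}$ yields $c_{X/S}(\psi_{X/S}(\pi(q_i)))=\mp n\, p_{y'_i}$. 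Using the relation $p_{y'_1}+p_{y'_2}=0$ in $\bz_{Y'/Y}$ (from the exact sequence \eqref{eb.e}), the two expressions match. The main care is keeping signs straight in the conventions for $d_{y'}=y'-\iota(y')$ versus $p_{y'}=p(y')$; the outcome is precisely the coefficient $-\nu$ in the definition of $c_{X/S}$.

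Finally, the existence of the 3D dotted arrow $\phi_{X/S}\colon \cM_{X/S}\to\bz_{Y'/Y}$ is forced: since $p\circ\phi_X$ kills $f^{-1}\ocM_S^\g$, it descends uniquely to $\cM_{X/S}$, and by what we just proved this descent coincides with $c_{X/S}\circ\psi_{X/S}$. The main technical obstacle of the argument is keeping the various identifications straight at the node --- in particular the two choices $q_1\leftrightarrow q_2$ of isomorphism $\ocM_{X,y}\cong Q_n$ correspond to the involution $\iota$ exchanging $y'_1,y'_2$, and one must check the formula is independent of this choice (which it is, because $c_{X/S}(d_{y'_2})=-n\,p_{y'_2}=n\,p_{y'_1}=c_{X/S}(-d_{y'_1})$, so the sign flips on both sides simultaneously).
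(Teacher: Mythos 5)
Your proposal is correct and follows essentially the same route as the paper: both reduce to a stalk computation at a node in the standard chart $Q_n$, using the splitting $\rho$ of Corollary~\ref{vertons.c}(4) (equivalently, restriction of $\ov m_1,\ov m_2,\ov m$ to the two branches of $U$ near the node) to evaluate $p\circ\phi_X$ on $q_1,q_2$, and the computation of Proposition~\ref{cmup.p} to evaluate $c_{X/S}\circ\psi_{X/S}\circ\pi$. One bookkeeping caution: with your stated convention that $\ov\rho^\g(q_j)=0$ at $y'_j$ (i.e.\ $q_j$ generates $\ocM_{X',y'_j}$), you would get $\phi_X(q_i)=n\cdot 1_{y'_{3-i}}$ rather than $n\cdot 1_{y'_i}$; the paper's labelling, in which $\ov m_2$ generates $\ocM_{X',y'_1}$, is what gives $\phi_X(\ov m_i)=\nu(y)b_i$ together with $\psi_{X/S}(\pi(\ov m_1))=y'_2-y'_1=-d_{y'_1}$, whence $c_{X/S}(-d_{y'_1})=\nu(y)p_{y'_1}=p(\phi_X(\ov m_1))$. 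Since the sign in $c_{X/S}(d_{y'})=-\nu(y')p_{y'}$ is the whole content of the statement (it is what later produces the monodromy pairing with the correct sign in the Picard--Lefschetz formula), the ``$\pm$''/``$\mp$'' in your node computation should be pinned down within a single fixed labelling: your closing remark that both sides transform the same way under the involution $\iota$ correctly shows independence of the labelling, but the relation $p_{y'_1}+p_{y'_2}=0$ cannot by itself reconcile a sign discrepancy on one side only, because $2np_{y'_i}\neq 0$ in $\bz_{Y'/Y}$.
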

\begin{proof}
  We compute the stalk of  the map $\phi_X$ at a point $x$ of $X$. If $x$ belongs to $U$,  the  maps $\bz  \to \ocM_{X,x}^\g$ and
  $\ocM^\g_{X,x} \to \ep_*(\bz_X')_x$ are isomorphisms, and hence so is $\phi_X$.  If $x$  belongs  to $Y$,  we call it $y$ and
work in a neighborhood  as in the proof of Proposition~\ref{mxptomxp.p}.
Then $\und X$ is the analytic space associated to $\spec\left (\bc[x_1, x_2]/(x_1x_2)\right)$, endowed with the log structure associated to the homomorphism $\beta \colon Q_n \to \bc[x_1,x_2]/(x_1x_2)$ sending  $q_i$ to $x_i$ and $q$ to $0$. 
   The point $y:=x$ is defined by $x_1=x_2= 0$, and has a basis of neighborhoods $W$ defined by $|x_i| < \ep$.  
On  the connected component $W_1 \cap U$ of $W \cap U$,  the coordinate $x_1 $ vanishes and $x_2$ becomes a unit.
 Let $\ov m_i$ (resp. $\ov m$) be the image of $q_i$ (resp. of $q$) in $\ocM_X$.    
The stalk of $\ep_*(\bz_{X'})\cong j_* (U)$ at $x$  is free with basis
$(b_1, b_2)$, where $b_i$ is the germ of the characteristic function of $W_i \cap U$ at $x$.
The isomorphism $res \colon \ep_*(\bz_{X'})_y \isomlong  \ep_*(\bz_{Y'})_y$ takes $b_i$ to the basis
element $y'_i$.
The  restriction of the sheaf $\ocM_X$  to $W_1\cap U$ is constant and freely generated by 
$\ov m_{|_{W_1\cap U}}$, while $\ov m_{1|_{W_1\cap U}} = \nu(y) \ov m_{|_{W_1 \cap U}}$ and $\ov m_{2|_{W_1\cap U}} = 0$.
  Thus  $\phi_X(\ov m_i) = \nu(y)  b_i$ and $\phi_X(\ov m) = b_1 + b_2$. 
In particular, $p(\phi(\ov m_i)) = \nu(y) p(y'_i)$.  
On the other hand, we saw in the proof of Proposition~\ref{cmup.p} that
$\psi_{X/S} (\pi(\ov m_1)) = y'_2 - y'_1 = -d_{y'_1} \in \bz_{Y/Y'}$.  Thus
\[c_{X/S} (\psi_{X/S}(\pi(\ov m_1)))  = c_{X/S}(-d_{y'_1} )= \nu(x) p_{y'_1} = p (\phi_X(\ov m_1)). \qedhere \]
% In particular, $\ov m_1$ maps to $n_{y_1}$ in $\bz_{X'/X}$ and hence to $n_{y_1} - n_{y_2} = -n\psi(\ov m_1)$
% in $\bz_{Y/Y'}$. 
\end{proof}

Since $\psi_{X/S}$ is an isomorphism, the middle row of the diagram
(\ref{mtoz.e}) above contains the same information
as the top row, a.k.a. the log Kodaira-Spencer sequence.  Furthermore, the bottom row identifies
with the  exact sequence (\ref{bzux.e}). 
The following corollary relates the 
corresponding derived morphisms of these sequences.

\begin{corollary}\label{kapkod.c}
  Let $\kappa_{X/S}  \colon \cM_{X/S}^\g \to \bz[1]$ be morphism associated to the log Kodaira--Spencer
  sequence~(\ref{logks.e}) and
   let $\kappa_{A/S} \colon \bz_{X'/X} \to \bz[1]$ be the morphism associated to the exact sequence~(\ref{bzux.e}). Then 
$\kappa_{X/S}=   \kappa_{A/S} \circ  c_{X/S}\circ \psi_{X/S} $
\end{corollary}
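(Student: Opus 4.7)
This corollary is a direct consequence of the naturality of the morphism $\xi_u$ of Definition~\ref{xiu.d}, applied to the two morphisms of short exact sequences packaged into diagram~(\ref{mtoz.e}) of Proposition~\ref{norm.p}. Recall the general principle: given a commutative diagram of short exact sequences
\[
  \xymatrix{
    0 \ar[r] & A \ar[r]^{u} \ar[d]_{f_A} & B \ar[r] \ar[d] & C \ar[r] \ar[d]_{f_C} & 0 \\
    0 \ar[r] & A' \ar[r]_{u'} & B' \ar[r] & C' \ar[r] & 0,
  }
\]
the associated morphisms satisfy $\xi_{u'} \circ f_C = f_A[1] \circ \xi_u$ in the derived category.

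First I will apply this principle to the upper half of diagram~(\ref{mtoz.e}), in which the left and middle vertical maps are the identity and the right vertical map is the isomorphism $\psi_{X/S}$. This yields the identity $\kappa_2 \circ \psi_{X/S} = \kappa_{X/S}$, where $\kappa_2 \colon \bz_{Y/Y'} \to \bz[1]$ denotes the morphism attached to the middle row. Next I will apply the same principle to the lower half, in which the left vertical map is the canonical isomorphism $f^{-1}\ocM_S^\g \cong \bz_X$ (recall $\ocM_S^\g \cong \bz$ for the standard log point), the middle vertical map is $\phi_X$, and the right vertical map is $c_{X/S}$. Before doing so I will identify the bottom row of~(\ref{mtoz.e}) with the sequence~(\ref{bzux.e}) by combining the isomorphism $\ep_*(\bz_{X'}) \cong j_*(\bz_U)$ (which follows from the smoothness of $\und X'$) with the right-column isomorphism $\bz_{X'/X} \cong \bz_{Y'/Y}$ of diagram~(\ref{eq:q-vs-qprime}). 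This gives $\kappa_{A/S} \circ c_{X/S} = \kappa_2$, and composing with the previous equality yields $\kappa_{A/S} \circ c_{X/S} \circ \psi_{X/S} = \kappa_{X/S}$.

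There is no substantive obstacle here: the combinatorial content of the statement has already been extracted into Proposition~\ref{norm.p}, and the argument is merely an instance of functoriality of connecting morphisms. The one thing worth a careful check is that no stray sign is introduced either through the sign convention $\xi_{-u} = -\xi_u$ recorded after Definition~\ref{xiu.d}, or through the identifications used to match row~3 of~(\ref{mtoz.e}) with~(\ref{bzux.e}); since~(\ref{mtoz.e}) is commutative on the nose (with no compensating sign) and the identifications are the canonical ones, this check is routine.
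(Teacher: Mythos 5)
Your argument is correct and is essentially the paper's own proof: the paper likewise converts the commutative diagram~(\ref{mtoz.e}) into a diagram of distinguished triangles and reads off the formula from the functoriality of the connecting morphisms, using that the left-hand vertical arrows are identifications. Your write-up merely makes explicit the two applications of naturality of $\xi_u$ and the identification of the bottom row with~(\ref{bzux.e}) that the paper leaves implicit.
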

\begin{proof}
The diagram   (\ref{mtoz.e}) of exact sequences yields a diagram of distinguished triangles:
\[\xymatrix{ 
   &  f^{-1}\ocM_S^\g  \ar[r]\ar[ld]_\id& \ocM_X^\g \ar[r]^\pi \ar[ld]_{\id}& \cM_{X/S} \ar[r]^{\kappa_{X/S}} \ar[ld]_{\psi_{X/S}}\ar@{.>}[ldd]& f^{-1}\ocM_S^\g[1]\ar[ld]\ar@{.>}[ldd] \\
 f^{-1}\ocM_S^\g  \ar[r]\ar[d]_\cong& \ocM_X^\g \ar@{.>}[r] \ar[d]&
 \bz_{Y/Y'} \ar[r] \ar[d]_{c_{X/S}} & f^{-1}\ocM_S^\g[1] \ar[d] \\
 \bz_X  \ar[r]&  \ep_* (\bz_{X'}) \ar[r]^p & \bz_{Y'/Y} \ar[r]^{\kappa_{A/S}} & \bz_X[1]\\
  }\]
The arrows on the right are all identifications, and the formula in
the corollary follows.
\end{proof}

\begin{remark} \label{signissue.r}
The sheaf $\bz_{X'/X}$ can be naturally identified with $\Hh^1_Y(\bz)$. 
In fact there are two such natural identifications differing by sign. The first identification is the boundary map $\delta:\cQ = \Hh^0_Y(\bz_{X'/X}) \to \Hh^1_Y(\bz)$ in the long exact sequence obtained by applying the 
cohomological $\delta$-functor
 $\Hh^*_Y(-)$ to the short exact sequence~\ref{bzux.e}.
It is an isomorphism because $\Hh_Y^i(j_*\bz_U)=0$ for $i=0,1$. To define the second, recall that, by the construction of local cohomology, there is a canonical 
exact sequence:
\[
 0 \to H^0_Y(X,\bz) \to H^0(X,\bz) \to H^0(U,\bz) \to H^1_Y(X,\bz) \to H^1(X,\bz) \to \cdots,
\]
compatible with restriction to open subsets $V \subseteq X$.  In our situation, $H^0_Y(V,\bz) = 0$ for all $V$ and
$H^1(V) = 0$ for a neighborhood basis of every point of $X$. Replacing $X$ by $V$ and $U$ by $V\cap U$ for varying open
$V$ and sheafifying yields a map $j_* (\bz_U )\to \Hh^1_Y(\bz)$ which factors through an isomorphism $\delta':\bz_{X'/X}\isommap
\Hh^1_Y(\bz)$.  It follows from \cite[Cycle 1.1.5]{sga4.5}) that $\delta = -\delta'$. 
\end{remark}

 We shall see that there is a very natural connection
between the  log structures associated to a log curve
over the standard log point and the 
 the  ``dual graph'' of the underlying marked nodal curve. 
The precise meaning of this graph seems to vary from author to author;
the original and most precise definition we have found is   due to
Grothendieck~\cite[Exp. IX 12.3.7]{g.sga7I}. 
 We use the following variant,
corresponding to what some authors call an ``unoriented multi-graph.''

\begin{definition} \label{graph.d}
  A \textit{graph}  $\Gamma$ consists of two mappings between finite sets: $ \ep  \colon B \to E$ and $\zeta \colon B \to V$, where for each $e  \in E$, the cardinality of $\ep^{-1}(e)$ is either one  or two. A morphism
of graphs $\Gamma_1 \to \Gamma_2$ consists of morphisms $f_B \colon B_1 \to B_2$, $f_E \colon E_1 \to E_2$
and $f_V \colon V_1 \to V_2$ compatible with $\ep_i$ and  $ \zeta_i$ in the evident sense. 
\end{definition}

The set $V$ is  the set of ``vertices''  of $\Gamma$, the set $E$ is
the set of ``edges'' of $\Gamma$, and the set $B$ is the set of
``endpoints'' of the edges of $\Gamma$.  For each edge $e$, the set
$\ep^{-1}(e) $ is the set of endpoints of the edge $e$,
and for each $b  \in B$, $\zeta(b) $ is the vertex of $\Gamma$
corresponding to the endpoint $b$.   There is a natural involution $b
\mapsto \iota (b)$ of $B$, defined so that for each $b \in B$,
$\ep^{-1}(\ep(b)) = \{b, \iota(b)\}$.  
 The notion of a graph could equivalently be defined as a~map $\zeta
 \colon B \to V$ together with an involution of $B$;
 the  map $\ep \colon B \to E$ is then just
the projection to the orbit space of the involution. 

\begin{definition}\label{dualcurve.d}
 Let $\und X$ be a nodal  curve.  The 
 \textit{dual graph} $\Gamma(\und X)$ of $\und X$ consists
of the following data:
\begin{enumerate}
\item $V$ is the set of irreducible components of $\und X$, or equivalently, the set of connected
components of the normalization $\und X'$ of $\und X$. 
\item $E$ is  the set $Y$ of nodes of $\und X$. 
\item $B := \ep^{-1}(E)$, the inverse image of $E$ in the
  normalization $\und X'$ of $\und X$.
\item $\zeta \colon B \to V$ is the map taking a point $x'$ in $\und X'$ to  the connected component
of $\und X'$ containing it. 
\end{enumerate}
\end{definition}

The involution of the graph of a nodal curve is fixed point free,
since each $\ep^{-1}(y)$ has exactly two elements. 
A morphism of nodal curves $f \colon \und X_1 \to \und X_2$
induces a morphism of graphs provided that $f$ takes
each node of $\und X_1$ to a node of  $\und X_2$. 

\begin{definition}\label{grcomp.d}
  Let $\Gamma$ be a graph in the sense of Definition~\ref{graph.d}.
  Suppose that $\iota$ is fixed-point free, so that each $\ep^{-1}(e)$ has   cardinality  two.
  \begin{enumerate}
    \item $C_\cx(\Gamma)$ is the chain complex $C_1(\Gamma) \to C_0(\Gamma)$: 
\[ \bz_{E/B} \rTo{d_1} \bz^V, \]
    where $d_1$ is the composition $\bz_{E/B}  \rTo{i} \bz^B \rTo{\zeta_*} \bz^V$,
where $i $ is as shown in (\ref{eb.e}), and where
$\zeta_* $ sends $b$ to $\zeta(b)$. 
       \item   
$C^\cx(\Gamma)$ is the cochain complex $C^0(\Gamma) \to C^1(\Gamma)$:
      \[ \bz^V \rTo{d_0} \bz_{B/E}, \]
where $d_0$ is the composition $\bz^V \rTo{\zeta^*} \bz^B \rTo{p} \bz_{B/E}$,
where $p  $ is as shown in (\ref{eb.e}), and  where $\zeta^*(v) = \sum \{ b : \zeta(b) = v \}$. 
\item 
  \begin{trivlist}
  \item 
$\angles \ \  \colon C_i(\Gamma) \times C^i(\Gamma) \to \bz$ is the  (perfect) pairing
induced by the evident bases for $\bz^B$ and $\bz^V$, 
\item $\b \ \  \colon C^1 (\Gamma) \times C^1(\Gamma)  \to \bz$ is the (perfect)
pairing defined by $\angles \ \ $ and the isomorphism $t \colon C^1 (\Gamma) \to C_1(\Gamma)$~(\ref{betoeb.e}). 
\end{trivlist}
 \end{enumerate}
\end{definition}
It is clear from the definitions that the complexes $C_\cx(\Gamma)$ that
a morphism of graphs $f \colon \Gamma_1 \to \Gamma_2$ induces morphisms of complexes. 
\[
  C_\cx(f) \colon C_\cx(\Gamma_1) \to C_\cx(\Gamma_2) \quad \text{and} \quad  
 C^\cx(f) \colon C^\cx(\Gamma_2)  \to C^\cx(\Gamma_1),
\]
compatible with $d_1$ and $d^0$.

The proposition below is of course well-known.  We explain it here
because our constructions are somewhat nonstandard.  Statement
(3) explains the relationship between the pairings we have defined
and intersection multiplicities.

\begin{proposition} \label{graph.p}
  Let $\Gamma$ be a finite graph such that $\ep^{-1}(e)$ has cardinality 
two for every $e \in E$.  Let $C_\cx(\Gamma)$ and $C^\cx(\Gamma)$ be the  complexes defined in Definition~\ref{grcomp.d},
 and let $H_*(\Gamma)$ and $H^*(\Gamma)$ the corresponding (co)homology groups. 
For each pair of elements $(v,w)$ in $V$, let 
$$E(v,w) := \ep(\zeta^{-1}(v)) \cap \ep(\zeta^{-1}(w)) \subseteq E $$
and let $ e(v,w)$ be the cardinality of $E(v,w)$.
  \begin{enumerate}
\item The homomorphisms $d_1 \colon C_1(\Gamma) \to C_0(\Gamma)$ 
and $d^0 \colon C^0(\Gamma) \to C^1(\Gamma)$ are adjoints, with respect to the
pairings defined above.
\item 
      The groups $H_*(\Gamma)$ and $H^*(\Gamma)$ are torsion free, and the inner product on $C_1(\Gamma)$ (resp. on $C_0(\Gamma)$) defines a perfect pairing $\angles \ \ $ between  $H^1(\Gamma)$ and $H_1(\Gamma)$  (resp., between $H_0(\Gamma)$ and $H^0(\Gamma)$).  In fact, $H_0(\Gamma)$ identifies with the free abelian group on  $V/\sim$, where $\sim $ is the equivalence relation generated by the set of pairs $(v, v')$ such that $E(v,v') \neq \emptyset$. 
    \item For each $v \in V$,
      % For each $v \in V$, let  $V'_v $ denote the set of $v' \in V \setminus \{ v\} $ such that  there exists a pair $(b, b')$ with $\zeta(b) = v$ and $\zeta(b') = v'$.   Then 
      \[
        d_1( t (d^0(v))) = \sum_{ v' \neq v}  e(v, v') (v -  v'),\mbox{      and}
      \]
      \[
         \b {d^0(v)}{ d^0({w})}  =
          \begin{cases}
            - e(v, w)& \text{if } v \neq  w \\
 \sum_{ v'\neq v} e(v, v')& \text{if } v =  w.
          \end{cases}
      \]
\item  Let $h^i(\Gamma)$ denote the rank of $H^i(\Gamma)$ and let $\chi(\Gamma) := h^0(\Gamma) - h^1(\Gamma)$.  Then 
\[\chi(\Gamma) = |V|-|E|\].  
% If all vertices of $\Gamma$ are $\sim$-equivalent, then
% $h^1(\Gamma) = 1-|B| +|E|$.   
  \end{enumerate}
\end{proposition}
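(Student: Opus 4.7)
The plan is to prove the four statements essentially in order, with (1) providing the algebraic backbone on which the rest rests. For (1), I would unpack the definitions: $d_1 = \zeta_* \circ i$ and $d^0 = p \circ \zeta^*$, and both of the pairings $\angles{\ }{\ }$ are induced from the standard inner products on $\bz^V$ and $\bz^B$. The adjointness of $\zeta^*$ and $\zeta_*$ is tautological, while the adjointness of $i \colon \bz_{E/B}\hookrightarrow \bz^B$ and $p\colon \bz^B\twoheadrightarrow \bz_{B/E}$ is exactly the content of the duality between the two sequences in (\ref{eb.e}). Composing yields $\angles{d_1 x}{y}_0 = \angles{x}{d^0 y}_1$.

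For (2), observe that $H_1(\Gamma) = \Ker(d_1) \subseteq C_1(\Gamma) = \bz_{E/B}$ is a subgroup of a free abelian group, hence free. For $H_0(\Gamma)$, the image of $d_1$ is generated by the elements $\zeta(b)-\zeta(\iota(b))$ for $b \in B$, which are exactly the differences $v-v'$ for pairs $(v,v')$ joined by an edge. Thus $H_0(\Gamma) \cong \bz^{V/\sim}$, a free abelian group. Once both homology groups are known to be torsion-free, the perfect pairings on (co)homology follow from the adjointness established in (1) together with the universal coefficient theorem: the pairing identifies $C^\cx(\Gamma)$ with $\Hom(C_\cx(\Gamma),\bz)$, so $H^i(\Gamma) \cong \Hom(H_i(\Gamma),\bz)$ because $H_\cx(\Gamma)$ is torsion-free.

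For (3), I would compute directly. Starting from $d^0(v) = p\bigl(\sum_{\zeta(b)=v} b\bigr)$ and applying $t$, which sends $p(b)$ to $b - \iota(b)$, gives $t(d^0(v)) = \sum_{\zeta(b) = v}(b - \iota(b))$. Applying $d_1 = \zeta_*$, the contribution of each endpoint $b$ at $v$ is $v - \zeta(\iota(b))$, and summing and grouping by the vertex at the other end of the edge yields the stated formula (edges with both endpoints at $v$ contribute $0$). The second formula then follows at once from $\b{d^0(v)}{d^0(w)} = \angles{t(d^0(v))}{d^0(w)}_1 = \angles{d_1(t(d^0(v)))}{w}_0$, using (1), and inspecting when $w = v$ versus $w \neq v$. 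Finally, (4) is the rank identity $\chi(\Gamma) = \text{rk}\,C_0(\Gamma) - \text{rk}\,C_1(\Gamma) = |V|-|E|$, since $\bz_{E/B}$ has rank $|B|-|E| = 2|E|-|E| = |E|$.

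The entire proposition is of an elementary combinatorial nature, and I do not anticipate any genuine obstacles; the only mildly delicate point is the perfect-pairing assertion in (2), but it reduces cleanly to UCT once torsion-freeness of $H_0$ has been secured via the explicit description as $\bz^{V/\sim}$.
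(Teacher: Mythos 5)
Your proposal is correct and follows essentially the same route as the paper: adjointness from the duality of the two sequences in (\ref{eb.e}), torsion-freeness of $H_0(\Gamma)$ via an explicit description of the image of $d_1$, the perfect pairings deduced from adjointness (the paper leaves the universal-coefficient step implicit where you spell it out), and direct computation for (3) and (4). The only cosmetic difference is in (2), where the paper reduces to a single equivalence class and identifies $\im(d_1)$ with the kernel of the augmentation $\bz^V\to\bz$, whereas you identify $\cok(d_1)$ with $\bz^{V/\sim}$ directly; these are the same argument.
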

%The map $d_0\circ d^1$ is  called the \textit{Laplacian} of $\Gamma$.
\begin{proof}
Statement (1) is clear from the construction, since $d_0$ is dual to
$d^1$ and $\zeta_*$ is dual to $\zeta^*$.  

To prove (2), observe that each equivalence
class of $E$ defines a subgraph of $\Gamma$, that
$\Gamma$ is the disjoint union of these
subgraphs, and that the complex $C_\cx(\Gamma)$
is the direct sum of the corresponding complexes.
Thus we are reduced to proving (2) when there
is only one such equivalence class.   There
is a natural augmentation
$\alpha  \colon \bz^V \to \bz$ sending each
basis vector $v$ to $1$, and if $b \in B$, 
$\alpha(d_1(b -\iota(b))) = \alpha(\zeta(b) - \zeta(\iota(b))) = 0$,
so $d_1$ factors through $\Ker (\alpha)$.  
Thus it will suffice to prove that $d_1$ maps surjectively
to this kernel.  Choose some $v_0 \in V$; then
$\{ v - v_0 : v \in V, v \neq v_0\}$ is a basis for $\Ker(\alpha)$.  
Say  $(v, v')$ is a~pair of  distinct elements of $V$ and $E(v,v') \neq \emptyset$.
Choose $e \in \ep(\zeta^{-1}(v)) \cap \ep (\zeta^{-1}(v'))$ and 
$b \in \ep^{-1}(e) \cap \zeta^{-1}(v)$.  Then necessarily
$\zeta(\iota(b)) = v'$, so $d_1(b - \iota(b)) = v - v'$.  
Since any two elements of $E$ are equivalent, given any $v  \in V$,
there is a sequence $(v_0, v_1, \ldots, v_n)$ with each $v_{i-1} \sim v_i$,
 and  for each such pair choose $b_i$
with $d_1(b_i - \iota(b_i)) = v_i - v_{i-1}$.  
Then $d_1(b_1 + \cdots b_n) = v_n - v_0$.  

It follows that $H_0(\Gamma)$ is torsion free.  Then the duality 
 statement follows from the fact that $d^0$
is dual to $d_1$. 

The formulas for $d_1$ and $d^0$  imply that for $b \in B$ and $v \in V$,
\begin{eqnarray*}
  d_1(d_b) & = & \zeta(b) - \zeta(\iota(b)) \cr
d^0(v) & = & \sum_{b \in \zeta^{-1}(v)} p_b.
\end{eqnarray*}
Hence if $v$ in $V$, 
\begin{eqnarray*}
  d_1 ( t(d^0(v))) &=& d_1\left( \sum_{b \in \zeta^{-1}(v)} d_b\right ) \cr
            & = &  \sum_{b \in \zeta^{-1}(v)} \zeta(b) - \zeta(\iota(b)).
\end{eqnarray*}
But if $b \in \zeta^{-1}(v)$,
\[
  \zeta(b) - \zeta(\iota(b)) = 
    \begin{cases}
      v -\zeta(\iota(b)) &\text{if  $\zeta(b) \neq \zeta(\iota(b))$} \\
      0 &\text{otherwise}.
    \end{cases}
\]
For each $v' \in V \setminus \{v\}$, the map $\ep$ induces a bijection
from $\{ b \in \zeta^{-1}(v) : \zeta(\iota(b)) = v'\}$ to $E(v,v')$.
Thus
\[ \sum_{b \in \zeta^{-1}(v)} \zeta(b) - \zeta(\iota(b)) = \sum_{v' \in V} e(v,v') v - e(v,v')v'  
\]
and the first formula of (3) follows. 
Then 
\[
  \b{d^0v} {d^0w}  = \b{d_1 (t(d^0(v)))} {w} = \sum_{v' \neq v} e(v,v') \b v w -  \sum_{v'  \neq v} e(v,v')\b {v'} w ,\]
and the second formula follows.  Statement (4) is immediate. 
\end{proof}

% We shall see that the  complex $C^\cx(\Gamma)$ is related  to the local cohomology of $\und X$ with supports in $\und Y$ and allows one to compute the cohomology of $\und X$ from the cohomology of $\und X'$ and the cohomology of $\Gamma(X)$.  To this end, let $\cQ = j_*(\bz_U)/\bz_X$, so that we have a short exact sequence 
% \begin{equation} \label{eq:def-q}
% 0 \to \bz \to j_*(\bz_U) \to \cQ \to 0.
% \end{equation}

 The geometric meaning
of the cochain complex of a nodal curve is
straightforward and  well-known.

\begin{proposition}\label{cohgraph.p}
Let  $\und X/\bc$ be a nodal curve and let $\Gamma(\und X)$ be
its dual graph.  Then there is a commutative diagram
\[\xymatrix{
H^0(\und X',\bz) \ar[r]^-A\ar[d]_\cong & H^0(\und X,\bz_{X'/X})\ar[d]^\cong  \cr
C^0(\Gamma(\und X)) \ar[r]_{d^0} & C^1(\Gamma(\und X)),
}\]
where the homomorphism $A$ comes from the map also denoted by $A$ in the short exact sequence:
\begin{equation}\label{zxx.e}
 0 \To \bz_X \To \ep_*(\bz_{X'} ) \rTo A  \bz_{X'/X} \To 0.  
  \end{equation}
Consequently there is an exact sequence:
\begin{equation}\label{hx.e}
    0 \To H^1(\Gamma(\und X)) \To H^1(\und X,\bz) \To H^1(\und X',\bz) \To 0.  
  \end{equation}
\end{proposition}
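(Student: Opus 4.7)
The plan is to identify both cohomology groups $H^0(\und X',\bz)$ and $H^0(\und X,\bz_{X'/X})$ with the explicit combinatorial groups $C^0(\Gamma(\und X))$ and $C^1(\Gamma(\und X))$ of Definition~\ref{grcomp.d}, check that the map $A$ corresponds under these identifications to the differential $d^0$, and finally extract the sequence~(\ref{hx.e}) from the long exact sequence of~(\ref{zxx.e}).

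First, since $V$ is by definition the set of connected components of $\und X'$, one has a tautological identification $H^0(\und X',\bz) = \bz^V = C^0(\Gamma(\und X))$. For the right-hand vertical isomorphism, note that $\bz_{X'/X}$ is a skyscraper sheaf supported on the finite set $Y=E$ of nodes, so $H^0(\und X,\bz_{X'/X}) = \bigoplus_{y\in E} (\bz_{X'/X})_y$. At each node $y \in E$, the stalk $\ep_*(\bz_{X'})_y$ is canonically $\bz^{\ep^{-1}(y)}$, and the kernel $\bz_y$ is the diagonal generated by $\sum_{b \in \ep^{-1}(y)} b$. Summing over $y$, this is precisely the definition of $\bz_{B/E} = C^1(\Gamma(\und X))$ via the second exact sequence in~(\ref{eb.e}).

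Next I would verify commutativity by tracking a basis vector $v \in V$. Its image in $H^0(\und X',\bz)$ is the characteristic function of the component $\und X'_v$. Near a node $y$ with $\ep^{-1}(y) = \{b_1,b_2\}$, this function takes value $1$ at those $b_i$ with $\zeta(b_i)=v$ and $0$ elsewhere, so its image under $A$ at $y$ is $\sum_{b \in \ep^{-1}(y),\ \zeta(b)=v} b$ modulo $\bz\cdot(b_1+b_2)$. Summing over $y$ yields $\sum_{b \in \zeta^{-1}(v)} p_b$, which is exactly $d^0(v)$ by Definition~\ref{grcomp.d}(2). Thus the square commutes.

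Finally, to obtain~(\ref{hx.e}), I would take the long exact cohomology sequence attached to~(\ref{zxx.e}). Since $\ep$ is finite (so $R\ep_*=\ep_*$), one has $H^i(\und X,\ep_*\bz_{X'}) = H^i(\und X',\bz)$, and since $\bz_{X'/X}$ is a skyscraper supported on the zero-dimensional set $Y$, $H^i(\und X,\bz_{X'/X})$ vanishes for $i\geq 1$. The long exact sequence therefore reduces to
\[
H^0(\und X',\bz)\rTo{A} H^0(\und X,\bz_{X'/X}) \rTo{} H^1(\und X,\bz) \rTo{} H^1(\und X',\bz) \rTo{} 0,
\]
whose middle portion is, by the commutative square just proved, isomorphic to $C^0(\Gamma) \rTo{d^0} C^1(\Gamma) \to H^1(\und X,\bz)/\text{(image of $H^1(\Gamma)$)}$. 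Since $\Coker(d^0) = H^1(\Gamma(\und X))$ by definition, this yields the short exact sequence~(\ref{hx.e}). The main ``obstacle'' is really only bookkeeping: checking that the identification of stalks of $\bz_{X'/X}$ with the local pieces of $\bz_{B/E}$ respects the sign/quotient conventions used to define $d^0$, which is exactly what the computation in the second paragraph confirms.
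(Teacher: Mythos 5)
Your proposal is correct and follows exactly the route the paper takes: the paper declares the commutativity of the square ``an immediate consequence of the definitions'' and then reads off~(\ref{hx.e}) from the cohomology sequence of~(\ref{zxx.e}), using precisely the facts you cite (finiteness of $\ep$ and the skyscraper nature of $\bz_{X'/X}$). Your stalkwise verification that $A$ sends the characteristic function of a component $v$ to $\sum_{b\in\zeta^{-1}(v)}p_b=d^0(v)$ just supplies the bookkeeping the paper omits.
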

\begin{proof}
The  commutative diagram is an immediate consequence of the definitions. 
The cohomology sequence
attached to the exact sequence (\ref{zxx.e}) reads:
\[
  0 \To H^0(\und X,\bz) \To H^0(\und X',\bz) \rTo A H^0(\und X,\bz_{X'/X})\To H^1(\und X, \bz) \To H^1(\und X', \bz) \To  0,
\]
and the sequence (\ref{hx.e}) follows immediately.
\end{proof}

Note that $H^1(\und X',\bz)$ vanishes if and only if each irreducible component of $\und X$ is rational, a typical situation.

\subsection{The nearby cycles spectral sequence}

We now consider the associated morphism $f_\lag \colon X_\lag \to S_\lag$. Our goal is to use  the nearby cycles diagram~(\ref{taudiag.e}) and Theorem~\ref{monthm.t} to describe the general fiber $X_\eta$ of $f_\lag$, together with its monodromy action.

\begin{theorem} \label{curvethm1.t}
Let  $f  \colon X \to S$ be a vertical  log curve
over the standard log point $S$.   The morphism 
\[
  f_\lag \colon X_\lag \to  S_\lag = \sone
\]
is a topological submersion whose fibers are
topological manifolds of real dimension $2$.  If $f$ is proper and $\und X$ is connected, then  the morphism $f_\lag$ is a locally trivial fibration, its general
 fiber $X_\eta$ is  compact,  connected, and orientable,  and 
its genus is $1+g( \und X') +    h^0(Y) - h^0(\und X')$.
\end{theorem}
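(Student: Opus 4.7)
The plan is to combine F.~Kato's local structure theorem (Theorem~\ref{fkato.t}) with the gluing description in Proposition~\ref{gluexlag.p}. First I would establish the submersion and $2$-manifold-fiber assertions \'etale locally on $X$. At a smooth point, Theorem~\ref{fkato.t}(1) provides a strict \'etale chart $V \to S \times \ls\bz$ whose Betti realization is the obvious submersion $\sone \times \bc^* \to \sone$ with cylindrical fiber. At a node, Theorem~\ref{fkato.t}(2) gives a chart $V \to \ls{Q_n, J}$ with $\theta \colon \bn \to Q_n$ sending $1$ to $q$; here $\lslag{Q_n,J} \cong \lr{Q_n,J} \times \lt{Q_n}$, where $\lr{Q_n,J}$ is the one-dimensional ``cross'' consisting of two half-lines meeting at the origin (since $\rho(q_1)\rho(q_2) = \rho(q)^n = 0$ forces $\rho(q_i) = 0$ for some~$i$) and $\lt{Q_n}$ is a $2$-torus. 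The map to $S_\lag = \sone$ is $(\rho,\sigma) \mapsto \sigma(q)$, and its fiber over $c$ is $\lr{Q_n,J} \times \{\sigma : \sigma(q) = c\}$, a cross times a circle, which is the gluing of two half-cylinders along a common boundary circle; this is a single cylinder, in particular a $2$-manifold, and the projection is a topological submersion on a neighborhood.

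Assuming $f$ proper, I would invoke \cite[5.1]{no.rr} (as used in Lemma~\ref{trivfib.l}) to upgrade $f_\lag$ to a topological fiber bundle over the connected base $S_\lag = \sone$, so that all fibers are homeomorphic compact $2$-manifolds. To describe a single fiber, I apply Proposition~\ref{gluexlag.p}: $X_\lag$ is the quotient of $X'_\lag \times \sone$ by the involution $(x'_\lag,\zeta) \mapsto (\zeta^{\nu(x'_\lag)}\iota(x'_\lag),\zeta)$, which preserves the $\sone$-factor; hence $X_\eta = f_\lag^{-1}(c)$ is the quotient of $X'_\lag$ by the involution $\iota_c(x') := c^{\nu(x')}\iota(x')$. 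This involution is trivial on the interior $U'_\lag$ and pairs up the boundary circles of $X'_\lag$ over the $\iota$-orbits in $Y'$. Thus $X_\eta$ is built from the oriented surface-with-boundary $X'_\lag$ by gluing boundary circles in pairs according to the dual graph of $\und X$.

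Connectedness of $X_\eta$ follows because $\und X$ is connected iff its dual graph is connected (cf.\ Proposition~\ref{graph.p}(2)), and gluing the components of $X'_\lag$ (one per vertex) along pairs of boundary circles (one pair per edge) produces a connected surface exactly when this graph is connected. For orientability, each component of $X'_\lag$ is orientable as the real oriented blowup of a smooth Riemann surface; a local-coordinate analysis near a node in a smoothing $xy = t$ identifies the gluing with $(x,0,|x|=\varepsilon) \sim (0,t/x,|t/x|=\varepsilon)$, which reverses the natural boundary orientations induced by the complex structure on each branch---precisely the condition ensuring the glued surface is orientable.

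Finally, the genus is computed by Euler characteristic: gluing along circles preserves $\chi$ (since $\chi(\sone) = 0$), and $X'_\lag$ deformation-retracts onto $\und X' \setminus Y'$, giving $\chi(X_\eta) = \chi(X'_\lag) = \chi(\und X') - |Y'| = 2h^0(\und X') - 2g(\und X') - 2h^0(Y)$, using $|Y'| = 2|Y| = 2h^0(Y)$ because $\iota$ is fixed-point-free with orbits of size two. Setting $\chi(X_\eta) = 2 - 2g(X_\eta)$ yields the stated formula. The main subtlety I anticipate is the verification at a node that the apparently singular cross-times-torus local model genuinely produces a topological submersion with $2$-manifold fibers: one must observe that a cross times a circle is in fact a smooth cylinder, which resolves the only point where the local picture looks problematic.
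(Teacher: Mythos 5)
Your proposal is correct, but it reaches the genus formula by a genuinely different route from the paper. For the first statement the paper simply cites \cite{no.rr} (while remarking that the log-point case is elementary); you carry out that elementary verification explicitly with Kato's charts, and your local model at a node is right --- though $\lr{Q_n,J}$ is two half-lines joined at their common endpoint (so homeomorphic to $\br$) rather than a ``cross,'' your subsequent description as two half-cylinders glued along a boundary circle is exactly correct. For the genus, the paper runs the nearby-cycle spectral sequence $E_2^{p,q}=H^p(X,\Psi^q_{X/S}(1))\Rightarrow H^{p+q}(\tX_\lag,\bz(1))$, uses Theorem~\ref{logsymb.t} and Proposition~\ref{cmup.p} to identify $\Psi^1_{X/S}(1)$ with $\bz_{Y/Y'}$, and extracts $h^1(X_\eta)=h_1(\Gamma(\und X))+h^1(\und X)$ from the resulting four-term exact sequence; you instead invoke the gluing description of Proposition~\ref{gluexlag.p} to present $X_\eta$ as $X'_\lag$ with boundary circles identified in pairs and compute the Euler characteristic directly. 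Both computations agree (with $g(\und X'):=\tfrac12 h^1(\und X')$ for disconnected $\und X'$). What each buys: the paper's cohomological route produces the exact sequence and $E_2$-identifications that are reused immediately afterwards in Proposition~\ref{nearbygraph.p} and the Picard--Lefschetz discussion, so it is not wasted machinery; your route is more elementary, and it has the genuine advantage of delivering connectedness of $X_\eta$ from connectedness of the dual graph, a point the paper's proof leaves implicit (rank one of $H^2(\tX_\lag,\bz(1))$ in Lemma~\ref{bettitilde.l} already presupposes it). The one soft spot is orientability: your boundary-orientation-reversal argument via a smoothing $xy=t$ is only heuristic here (the local model is $xy=t^{\nu(y)}$, and one should phrase the gluing intrinsically via the involution of Proposition~\ref{gluexlag.p}, which inverts the circle coordinate); the paper sidesteps this by citing \cite{no.rr}, so if you want to avoid that citation you should make this step precise.
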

% \arthur{in fact \cite{no.rr} defines a canonical isomorphism between the orientation
% sheaf of $X_\eta$ and $\bz(1)$, and it seems to me we need this isomorphism
% in the proof of P-L.}
\begin{proof}
The first statement is proved in  \cite{no.rr}, although
it is much more elementary over a log point
as here.   Suppose  $f$ is proper.
Then so is $f_\lag$, and it follows that $X_\eta$ is compact.
Its  orientability is proved in \cite{no.rr}.

To compute the  cohomology of $X_\eta$, 
observe that since the fibration $\tX_\lag \to \br(1)$ is necessarily trivial, 
$X_\eta$ and $\tX_\lag$ have the same homotopy type, 
and in particular their homology groups are isomorphic. 
The  spectral sequence of nearby cycles  for the sheaf $\bz(1)$ on  $\tX_\lag$ reads:
\[
  E_2^{p,q}(1)= H^p(X,\Psi^q_{X/S}(1))   \quad \Rightarrow \quad H^{p+q}(\tilde X_\lag, \bz(1)).
\]
Theorem~\ref{logsymb.t} defines an isomorphism
$\sigma \colon \cM_{X/S} \isomlong \Psi^1_{X/S}(1)$,
and Proposition~\ref{cmup.p}  an isomorphism
$\psi_{X/S}  \colon \cM_{X/S} \isomlong \bz_{Y/Y'}$.     These sheavs are supported on the zero dimensional space $Y$, and $\Psi^q_{X/S}(1)$ vanishes for $q > 1$   Since $X$ has (real) dimension $2$, the only possible nonzero terms and arrows in the spectral sequence are:
\[
  \xymatrix{ 
    \bullet \ar[drr]^{d_2^{0,1}} \\
    \bullet & \bullet & \bullet
  }
\]
Hence $ E^{1,0}_\infty(1) = E_2^{1,0}(1)  = H^1(\und X, \bz(1))$, and there is an exact sequence:
\begin{equation}\label{nearex.e}
 0 \To E^{0,1}_\infty(1) \To H^0(\und X,\bz_{Y/Y'}) \rTo {d_2^{0,1}} H^2(\und X,\bz(1)) \To H^2(\tilde X_\lag, \bz(1)) \To 0.
\end{equation}
% and $E^{0,1}_\infty(1)$ is the kernel of the map 
% \[
%   d_2^{0,1} \colon H^0(X,\Psi^1_{X/S}(1)) \to  H^2(X,\bz(1)) .
% \]

Since the normalization map $\ep$ is proper and an isomorphism outside $Y'$, it 
 induces an isomorphism:
\[
  H^2(\und X,\bz(1)) \isomlong  H^2(\und X', \bz(1)).
\]
Since $\und X'$ is a smooth compact complex analytic manifold of dimension $1$,  the trace map induces a canonical isomorphism: $H^2(\und X', \bz(1)) \cong H^0(\und X',\bz)$.  Combining this isomorphism
with the one above, we obtain an isomorphism:
% \begin{equation} \label{x'x.e}
%   H^2(X',\bz(1)) 
%   \cong H^0(X',\bz) \cong H^0(\und X, \ep_*(\bz_{X'})) \cong
%   H^0(X', \bz).
% \end{equation}
\begin{equation} \label{x'x.e}
 \rm{ tr'} \colon H^2(\und X,\bz(1)) \isomlong H^0(\und X', \bz).
\end{equation}
 % Using the isomorphisms (\ref{x'x.e}),  (\ref{sigma.e}),  and  
% $c'$ from diagram~(\ref{needed.e}),
% we find from the nearby cycles spectral sequence
% the following exact sequence:
% \begin{equation}
%   \label{et2t.e}
% 0 \to E_\infty^{0,1}(1) \to H_Y^1(X, \bz) \rTo{d} H^0(X', \bz) \to
% H^2(\tilde X, \bz(1))  \to 0.
% \end{equation}

\begin{lemma}\label{bettitilde.l}
Let $X/S$ be a proper, connected, and vertical log curve over the standard log point,
and let $\und X$ be its underlying nodal  curve.  Then the Betti numbers  of $\Gamma(\und X)$, of $X$,
 and of  the general
fiber $X_\eta$ of the fibration $X_\lag \to \sone$, are given by the following formulas.
\begin{align*}
h^1(\Gamma(\und X)) & =  1 - h^0(\und X') + h^0(Y) \\
  h^1(\und X) & =  h^1(\Gamma{(\und X)}) + h^1(\und X') \\
  h^1(X_\eta) & =  h_1(\Gamma{(\und X)}) + h^1(\und X).
%      g(\tX) & =  1-h^0(X') +h^0(Y) + g(X')
\end{align*}
\end{lemma}

\begin{proof}
  The first formula follows from (4) of Proposition~\ref{graph.p} and the definition
of $\Gamma(\und X)$.  The second formula follows from the exact sequence (\ref{hx.e}).  
For the third formula, observe that $H^2(\tilde X_\lag, \bz(1))$ 
has rank one,  since $\tilde X_\lag$ has the same homotopy type
as $X_\eta$, which is a compact two-dimensional orientable manifold. 
It then  follows from   the exact seqeunce (\ref{nearex.e}) that
the rank $e_\infty^{0,1}(1)$ of $E_\infty^{0,1}(1)$ is given by
\begin{eqnarray*}
  e_\infty^{0,1}(1) &= &h^0(\und X, \bz_{Y/Y'}) - h^2(\und X, \bz(1)) + 1 \cr
    & = & h^0(\und X, \bz_{Y/Y'}) - h^0(\und X',\bz) + 1 \cr
    & = & h^0(\und X, \bz_{Y/Y'}) - h^0(\und X',\bz) + 1 \cr
    & = & |E(\Gamma(\und X)| - |V(\Gamma(\und X)| +1 \cr
     & = & 1- \chi(\Gamma(\und X)) \cr
     & = & h_1(\Gamma(\und X))
\end{eqnarray*} 
Then $h^1(X_\eta) = e_\infty^{0,1}(1)  + e_\infty^{1,0}(1) = h_1(\Gamma(\und X)) + h^1(\und X)$. 
\end{proof}
Combining the formulas of the lemma, we find
\[ h^1(X_\eta) = h_1(\Gamma) + h^1(\Gamma) + h^1(\und X') = 2 - 2h^0(\und X') + 2 h^0(Y) +2g (\und X'),\]
and hence $g(X_\eta) = 1 - h^0(\und X') + h^0(Y) + g(\und X')$. 
\end{proof}

The following more precise result shows that the differential in the nearby spectral
sequence can be identified with the differential in the chain complex $C_\cx$ attached
to the dual graph of $\und X$.  

\begin{proposition} \label{nearbygraph.p}
Let $X/S$ be  a proper and vertical 
 log curve {over the standard log point} and let $\und X$ be its underlying nodal curve.
 Then the following diagram commutes.
\[
  \xymatrix{ 
    &H^0(X, \Psi_{X/S}^1(1)) \ar[r]^-{-d_2^{0,1}} &H^2(\und X,\bz(1))\ar[d]^{\rm tr'}_\cong\cr
    H^0(X,\bz_{Y/Y'})\ar[ru]^\cong\ar[dr]_\cong& \ar[l]_{\psi_{X/S}}^\cong H^0(\und X,\cM_{X/S})\ar[d]^\cong
 \ar[u]^\sigma_\cong\ar[r] &H^0(\und X',\bz)\ar[d]^\cong\cr
    &C_1(\Gamma(\und X)) \ar[r]^{d_1} & C_0(\Gamma(\und X))
  }
\]
Consequently there is a canonical isomorphism $E_\infty^{0,1}(1) \cong H_1(\Gamma(\und X))$ and hence an exact sequence
\begin{equation}
  \label{hgamma.e}
  0 \to H^1(\und X,\bz(1)) \to H^1(\tX,\bz(1)) \to H_1(\Gamma(\und X)) \to 0
\end{equation}
\end{proposition}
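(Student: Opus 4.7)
The plan is to trace a global section $\ell\in H^0(\und X,\cM_{X/S}^\g)$ around the diagram, reducing the commutativity to an elementary divisor computation on the normalization, and then to read off the exact sequence~(\ref{hgamma.e}) from the collapse of the spectral sequence.

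First, I would apply Theorem~\ref{monthm.t}(1) with $q=1$ (so $q!=1$). Transported via $\sigma$, this identifies the map $d_2^{0,1}\colon H^0(X,\Psi^1_{X/S}(1))\to H^2(\und X,\bz(1))$ with the map on $H^0$ induced by the log Chern class morphism $\ch_{X/S}\colon \cM^\g_{X/S}\to \bz(1)[2]$ coming from (\ref{logch.e}). By construction of the torsor $\cLL_\ell$ from the short exact sequence~(\ref{xsmp.e}) and the splicing with the exponential sequence, $\ch_{X/S}(\ell)$ is precisely the first Chern class $c_1(\cLL_\ell)\in H^2(\und X,\bz(1))$.

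Second, I would use Proposition~\ref{cmup.p} to convert this Chern class into combinatorial data on the dual graph. The normalization $\ep\colon \und X'\to \und X$ induces an isomorphism $\ep^*\colon H^2(\und X,\bz(1))\isomlong H^2(\und X',\bz(1))$ compatible with first Chern classes of invertible sheaves. By Proposition~\ref{cmup.p}, $\ep^*\cLL_\ell\cong \oh{\und X'}(-\psi(\ell))$, so $\ep^*c_1(\cLL_\ell) = -c_1(\oh{\und X'}(\psi(\ell)))$. Under the trace isomorphism $H^2(\und X',\bz(1))\isomlong H^0(\und X',\bz) = C_0(\Gamma(\und X))$, the class $c_1(\oh{\und X'}(\psi(\ell)))$ is sent to the function $v\mapsto \deg_{X'_v}\psi(\ell) = \sum_{y'\in\zeta^{-1}(v)}a_{y'}$, where one writes $\psi(\ell)=\sum a_{y'}y'\in \bz_{Y/Y'}\subseteq \bz^{Y'}$. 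By the definition of $d_1$ (Definition~\ref{grcomp.d}), this is exactly $d_1(\psi(\ell))$. Combining the two displays gives $\tr'\circ d_2^{0,1}\circ\sigma = -d_1\circ\psi_{X/S}$, which is the claimed commutativity of the outer rectangle; the other cells commute by construction of the maps.

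Third, for the exact sequence~(\ref{hgamma.e}), recall from the proof of Theorem~\ref{curvethm1.t} that $\Psi^q_{X/S}$ is supported on the zero-dimensional set $Y$ for $q\geq 1$ and vanishes for $q\geq 2$, while $\und X$ has real dimension $2$. The only possibly nontrivial differential meeting $E_2^{0,1}$ is $d_2^{0,1}$, so $E_\infty^{0,1}(1)=\ker(d_2^{0,1})$; by the commutativity just established this kernel equals $\ker d_1 = H_1(\Gamma(\und X))$, since the dual graph has no $2$-cells. Similarly $E_\infty^{1,0}(1)=E_2^{1,0}(1)=H^1(\und X,\bz(1))$. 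The two-step filtration on $H^1(\tX_\lag,\bz(1))$ then yields the stated short exact sequence.

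The main obstacle is sign bookkeeping: the sign $-d_2^{0,1}$ in the statement is meant to be produced by the $-\psi(\ell)$ in Proposition~\ref{cmup.p}, and one must verify that no additional sign is contributed either by the normalization of the Chern class morphism coming from (\ref{logch.e}) or by the choice of trace isomorphism, in the spirit of Remark~\ref{signissue.r}. Once the sign conventions are reconciled with those fixed in \S\ref{ss:homalg-notation}, the argument above provides the required commutativity.
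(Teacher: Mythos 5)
Your argument is correct and follows essentially the same route as the paper's proof: both reduce the commutativity to Theorem~\ref{monthm.t}(1) (identifying $d_2^{0,1}$ on global sections with the Chern class $c_1(\cLL_\ell)$), then pull back along the normalization and invoke Proposition~\ref{cmup.p} ($\ep^*\cLL_\ell\cong\oh{\und X'}(-\psi(\ell))$) together with the degree/trace computation on $\und X'$, the minus sign in $-d_2^{0,1}$ coming exactly from the $-\psi(\ell)$ as you indicate. The deduction of the exact sequence from the collapse of the spectral sequence likewise matches the analysis already carried out in the proof of Theorem~\ref{curvethm1.t}.
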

\begin{proof}
The commutativity of this diagram follows from Proposition~\ref{cmup.p}
and statement (1) of Theorem~\ref{monthm.t}.
To write  out  the proof in detail, 
we use the notation of the proof of that result. It suffices to check
what happens to each basis element of the free abelian group 
$H^0(X,\cM_{X/S})$. 
Let  $y$ be a point of $Y$ and let $m_1$ and $m_2$ be the elements of $\cM_{X,y}$ as 
in the proof of Proposition~\ref{exdata.p},
with images $\ell_1$ and $\ell_2$ in $\Gamma(X, \cM_{X/S})$. 
Then $\ell_1 = -\ell_2$ is a typical basis element of $H^0(X, \cM_{X/S})$.
Theorem~\ref{monthm.t} says that 
$d_2^{0,1}(\ell_1) $ is the Chern class $c_1(\cLL_{\ell_1})$  of 
$\cLL_{\ell_1}$, where
$\cLL_{\ell_1}$ is the invertible sheaf on $X$ coming from the exact sequence (\ref{xsmp.e}).
Then 
$\ep^*(c_1(\cLL_{\ell_1})) = c_1(\ep^*(\cLL_{\ell_1})) = c_1( \oh {X'}(-\psi(\ell_1))$, 
by Proposition~\ref{cmup.p}.   But 
 if $p$ is a point  of the (smooth) curve $\und X'$, then $\tr (c_1( \oh {X'}(D))$ is the basis  element
of $H^0(\und X',\bz)$ corresponding the connected component of $\und X'$ containing $p$.
 The corresponding  generator  of $C_0(\Gamma)$ is precisely $\zeta(p)$.  This proves that the diagram commutes.
\end{proof}

\subsection{Monodromy and the Picard--Lefschetz formula}

We can now compute the monodromy action on $H^1(\tilde X, \bz)$. 
 \begin{theorem} \label{moncurve.t}
  Let $X/S$ be a log curve {over the standard log point}. Choose $\gamma \in \li \bn  = \bz(1)$,  let $\rho_\gamma$ be the corresponding automorphism of $H^1(\tilde X,\bz)$,
%\edit{and recall that, as explained in the introduction  $\rho_\gamma - \id$ factors through a map}
{ and let 
 $N_\gamma \colon E_\infty^{0,1} \to E_\infty^{1,0}$
 be the map induced by $\rho_\gamma-\id$~(see~\ref{Ngamma.e}).}
Let 
\[\kappa'_{X/S} := \kappa_{X/S} \circ \psi_{X/S}^{-1} \colon \bz_{Y/Y'}  \To \cM_{X/S} 
\To \bz_X[1] \]
    Then there is a commutative diagram:
  \[
    \xymatrix{ 
      &H^1(\tX,\bz)\ar[ddl]\ar[d]^\gamma\ar[r]^{\rho_\gamma-\id} & H^1(\tX, \bz) \cr
      &H^1(\tX, \bz(1))\ar[d]\ar[dl]_b &  \cr
      H_1(\Gamma(\und X))\ar[rdd]_i &E^{0,1}_\infty(1) \ar[l]_\cong \ar[dd]^\cong\ar[r]^{N_\gamma}& E^{1,0}_\infty \ar[uu]&  \cr
      & &H^1(\und X,\bz)\ar@/_1.5pc/[uuu]_{c}\ar[u]^\cong & \ar[l]H^1(\Gamma(\und X))\ar[luuu]_a\cr
      & H^0(X,\bz_{Y/Y'}) \ar[r]_{c_{X/S}} \ar[ur]^{\kappa'_{X/S}} & H^0(X,\bz_{Y'/Y})\ar[u]_{\kappa_{A/S}}\ar[ru]_p
    }
  \]
\end{theorem}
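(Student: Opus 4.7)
The proof will assemble the main results of the paper specialized to curves. The core commutativity to establish is that the monodromy map $\rho_\gamma-\id$ on $H^1(\tX,\bz)$ is computed, after passing to the associated graded for the nearby cycles filtration, by the combinatorial composition $C_1(\Gamma)\to C^1(\Gamma)\to H^1(\Gamma)$ coming from the graph of $\und X$.

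The plan is as follows. First, apply Theorem~\ref{monthm.t}(2) with $q=1$ to the vertical log curve $X/S$. Since $\Psi^0_{X/S}=\bz$, the theorem identifies the monodromy operator $\lambda^1_\gamma\colon \Psi^1_{X/S}\to \bz[1]$ with the morphism $\kappa^1_\gamma\colon \cM_{X/S}^\g(-1)\to\bz[1]$ via $\sigma^1$; here $\kappa^1_\gamma$ is by definition the pushout of the log Kodaira--Spencer extension along $\gamma\colon P^\g\to \bz(1)$. Second, invoke Corollary~\ref{kapkod.c}, which factors $\kappa_{X/S}$ as $\kappa_{A/S}\circ c_{X/S}\circ \psi_{X/S}$. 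Pushing out along $\gamma$ respects this factorization, so that the twisted map $\kappa^1_\gamma$ factors as
\[
\cM_{X/S}^\g(-1) \rTo{\psi_{X/S}} \bz_{Y/Y'}(-1) \rTo{c_{X/S}} \bz_{Y'/Y}(-1) \rTo{\gamma\,\kappa_{A/S}} \bz[1].
\]

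Third, take $H^0(X,-)$ and match with the spectral sequence. The map $N_\gamma$ is by construction the graded piece of $\rho_\gamma-\id$; by Proposition~\ref{hom.p} (applied to the distinguished triangle of Proposition~\ref{rgami.p}) and the degeneration of the nearby cycles spectral sequence at $E_\infty$ in this situation (cf.\ Theorem~\ref{curvethm1.t}), it coincides with the composition $E_\infty^{0,1}\hookrightarrow H^0(X,\Psi^1_{X/S})\rTo{H^0(\lambda^1_\gamma)} H^1(X,\bz)=E_\infty^{1,0}$. Under the identifications of Proposition~\ref{nearbygraph.p} (for $\sigma\circ\psi_{X/S}$) and Proposition~\ref{cohgraph.p} (for the boundary map of \eqref{bzux.e}, which is $H^0$ applied to $\kappa_{A/S}$, factoring through $H^1(\Gamma(\und X))\hookrightarrow H^1(\und X,\bz)$), the upper two rectangles of the target diagram commute. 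The bottom two triangles, expressing that $c_{X/S}\colon C_1(\Gamma)\to C^1(\Gamma)$ and the map $H_1(\Gamma)\to H^1(\Gamma)$ fit together correctly, reduce to checking on generators via the explicit description of $c_{X/S}$ in Proposition~\ref{norm.p} and the formulas for $d^0,d_1$ in Proposition~\ref{graph.p}.

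The main obstacle is bookkeeping: the twist by $\gamma$ (which accounts for the passage from $\bz$-coefficients to $\bz(1)$-coefficients in the upper left of the diagram), the placement of the edge maps $a,b,c$ and injection $i$, and the compatibility of the graph-theoretic maps with their sheaf-theoretic counterparts must all be consistently tracked. Once this is done, the commutativity of each subdiagram is either a direct invocation of an earlier result or a straightforward check on the canonical generators of $C_1(\Gamma(\und X))$ and $C^1(\Gamma(\und X))$ using the local description of $X$ near a node given in Theorem~\ref{fkato.t}.
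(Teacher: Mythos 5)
Your proposal is correct and follows essentially the same route as the paper: reduce the monodromy on $H^1(\tX,\bz)$ to $H^0(\lambda^1_\gamma)$ via the diagram defining $\lambda^1_\gamma$ and the edge maps of the nearby-cycle spectral sequence, identify $\lambda^1_\gamma$ with the pushed-out log Kodaira--Spencer class by Theorem~\ref{monthm.t}(2) for $q=1$, and then feed in the factorization $\kappa_{X/S}=\kappa_{A/S}\circ c_{X/S}\circ\psi_{X/S}$ of Corollary~\ref{kapkod.c} together with the dual-graph identifications of Propositions~\ref{norm.p}, \ref{cohgraph.p} and \ref{nearbygraph.p}. The paper's proof is exactly this assembly, so no further comparison is needed.
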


\begin{proof}
Applying $H^1$ to the commutative diagram defining $\lambda^1_\gamma$
\[ 
  \xymatrix{
    \Psi_{X/S}\ar[d]_{\rho_\gamma - \id} \ar[r] & \Psi^1_{X/S}[-1] \ar[d]^{\lambda^1_\gamma[-1]} \\
    \Psi_{X/S} & \Psi^0_{X/S} \ar[l]
  }
\]
yields a commutative diagram 
\[
  \xymatrix{
    H^1(\tX, \bz) \ar[d]_{\rho_\gamma - \id} \ar[r] & H^0(\und X, R^1 \ttau_* \bz) \ar[d] \\
    H^1(\tX, \bz) & H^1(\und X, R^0 \ttau_* \bz). \ar[l]
  }
\]
Thanks to  the identifications 
\[H^0(X, R^1 \ttau_* \bz(1)) \cong E^{0, 1}_2(1) \cong H^0(X, \bz_{Y/Y'})\]
 \[H^1(X, R^0\ttau_* \bz) = H^1(\und X, \bz) = E^{1, 0}_\infty,\] 
our monodromy formula from Theorem~\ref{monthm.t}(2) shows that the following diagram commutes
\[ 
  \xymatrix{
    H^1(\tX, \bz) \ar[d]_{\rho_\gamma - \id} \ar[r]^-\gamma & H^1(\tX, \bz(1)) \ar[r] &  H^0(\und X, \bz_{Y/Y'}) \ar[d]^{\kappa'_{X/S}} \\
    H^1(\tX, \bz) & & H^1(\und X, \bz). \ar[ll]^c 
  } 
\]
The rest of the big diagram commutes by the preceding discussion of the dual graph.
\end{proof}

\begin{remark}\label{vcycle.r}
  The dual of the exact sequence (\ref{hgamma.e})
can be written :
\[
  0  \to H_1(\Gamma(\und X))^\vee\to  H_1(\tX, \bz(-1)) \to H_1(\und X,\bz(-1)) \to 0,
\]
so that the  elements of  
$H_1(\Gamma(\und X))^\vee \cong H^1(\Gamma(\und X))$  can be interpreted   as \textit{vanishing cycles} on $\tX$.  The exact sequence~(\ref{hx.e}) shows
that they can also be interpreted as \textit{vanishing cocycles} on $\und X$.
\end{remark}

The  monodromy formula expressed by Theorem~\ref{moncurve.t}
can be made more explicit in terms of vanishing cycles.  For each
node $y \in Y$, choose a branch
$y' \in \ep^{-1}(y)$ and note that $\pm p_{y'}  \in \Gamma(X, \bz_{Y'/Y})$ 
depends only on $y$.  Write
$\angles \ \  $  for the pairing $\bz_{Y/Y'} \times \bz_{Y'/Y} \to \bz$ and let
$h_y \colon \bz_{Y/Y'} \to \bz_{Y'/Y}$ be the map
$\angles \ {p_{y'}} p_{y'}$. Then $h_y$ 
 depends only on $y$ and not on $y'$, 
and, by Proposition~\ref{norm.p}, we can write
\[
  c_{X/S} = \sum_y -\nu(y) h_y = \sum_y -\nu(y)   \angles {\ } {p_{y'}} {p_{y'}}. 
\]
(N.B. The map $c_{X/S}$ above encodes the `monodromy pairing' of Grothendieck, cf. \cite[Exp.\ IX, \S 9 and 12.3]{g.sga7I}). 
Then the composition
\[
  H^1(\tX,\bz) \rTo{b\circ\gamma} H_1(\Gamma) \rTo{p\circ c_{X/S}\circ i}
  H^1(\Gamma) \rTo{a} H^1(\tX,\bz)
\]
is the map sending an element $x$ to
$\sum_y - \nu(y)\angles {b\circ \gamma (x)} {p_y} a(p_y)$. The following  formula
is then  immediate.

\begin{corollary} \label{ourpl.c}
  If $\gamma \in\li P$ and $  x \in H^1(\tilde X,\bz)$,
  \[ 
    \rho_\gamma(x) = x -\sum_y \nu(y)\angles  {b\circ\gamma(x) }  {p_y}a(p_y). \eqno \qed
  \]
\end{corollary}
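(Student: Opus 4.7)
The plan is to read off the formula directly from the commutative diagram of Theorem~\ref{moncurve.t}, once one makes the map $c_{X/S}$ fully explicit in terms of the pairing between $\bz_{Y/Y'}$ and $\bz_{Y'/Y}$. First I would trace an element $x \in H^1(\tilde X, \bz)$ around the outside of that diagram. Since the triangle at the top of the diagram commutes, the map $\rho_\gamma - \id$ equals the composition
\[
H^1(\tilde X, \bz) \xrightarrow{\gamma} H^1(\tilde X, \bz(1)) \xrightarrow{b} E^{0,1}_\infty(1) \xrightarrow{N_\gamma} E^{1,0}_\infty \xrightarrow{c} H^1(\tilde X, \bz),
\]
and by the rest of the diagram, $N_\gamma$ (under the identifications $E^{0,1}_\infty(1)\cong H_1(\Gamma(\und X))$ and $E^{1,0}_\infty \cong H^1(\und X,\bz)$) factors as $a \circ p \circ c_{X/S} \circ i$, where $i \colon H_1(\Gamma) \hookrightarrow C_1(\Gamma) = H^0(X, \bz_{Y/Y'})$ is the canonical inclusion and $p \colon C^1(\Gamma) = H^0(X, \bz_{Y'/Y}) \twoheadrightarrow H^1(\Gamma)$ is the canonical projection. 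Thus for any $x \in H^1(\tilde X, \bz)$,
\[
\rho_\gamma(x) - x \;=\; c\bigl(a\bigl(p\bigl(c_{X/S}(i(b\circ\gamma(x)))\bigr)\bigr)\bigr).
\]

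Next I would unpack $c_{X/S}$ explicitly. By Proposition~\ref{norm.p}, $c_{X/S}$ sends the basis element $d_{y'} \in \bz_{Y/Y'}$ to $-\nu(y') p_{y'} \in \bz_{Y'/Y}$ for each $y' \in Y'$. Under the perfect pairing $\angles{\ }{\ }\colon \bz_{Y/Y'} \times \bz_{Y'/Y} \to \bz$ of Definition~\ref{grcomp.d}, the families $\{d_{y'}\}$ and $\{p_{y'}\}$ are (up to signs encoded by the involution $\iota$) dual bases, so any section $\xi \in H^0(X, \bz_{Y/Y'})$ satisfies $\xi = \sum_y \angles{\xi}{p_{y'}}\, d_{y'}$ (one choice of $y'$ over each node $y$, and the ambiguity of sign is irrelevant because $d_{y'}$ and $p_{y'}$ flip signs together under $\iota$). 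Applying $c_{X/S}$ term by term and using $\nu(y')=\nu(y)$, this gives
\[
c_{X/S}(\xi) \;=\; -\sum_y \nu(y)\,\angles{\xi}{p_{y'}}\,p_{y'} \;=\; -\sum_y \nu(y)\,h_y(\xi),
\]
which is exactly the expression for $c_{X/S}$ anticipated in the paragraph preceding the corollary.

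Finally, I would substitute $\xi := i(b \circ \gamma(x))$ into this formula, push the result through $p$, $a$, and $c$, and collect. Since $a$ and $c$ are $\bz$-linear and $\angles{\xi}{p_{y'}}$ is an integer that can be pulled outside, the final expression is
\[
\rho_\gamma(x) - x \;=\; -\sum_y \nu(y)\,\angles{b\circ\gamma(x)}{p_{y'}}\,(c\circ a)(p_{y'}),
\]
which, after renaming $(c\circ a)(p_{y'}) = a(p_y)$ in the notation of the corollary (the slight abuse being harmless because the sign ambiguity in choosing $y'$ over $y$ occurs the same number of times in $\angles{\ }{p_{y'}}$ and in $a(p_{y'})$), yields the claimed Picard--Lefschetz formula.

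The main obstacle is bookkeeping rather than genuinely new content: one must verify that the identification $E^{0,1}_\infty(1) \cong H_1(\Gamma(\und X))$ used implicitly in writing $b\circ\gamma(x) \in H_1(\Gamma) \subseteq C_1(\Gamma)$ is compatible with the isomorphism $\psi_{X/S}\colon\cM_{X/S}\isommap \bz_{Y/Y'}$ of Proposition~\ref{cmup.p}, and that the sign conventions in Definition~\ref{grcomp.d}, Proposition~\ref{norm.p}, and Theorem~\ref{moncurve.t} are mutually consistent so that no spurious sign survives. Proposition~\ref{nearbygraph.p} does exactly this compatibility check for $d_2^{0,1}$ up to a sign $-1$ already absorbed in Theorem~\ref{moncurve.t}, so all that remains is a direct substitution.
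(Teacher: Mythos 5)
Your proposal is correct and follows essentially the same route as the paper: the paper's argument is precisely the paragraph preceding the corollary, which writes $c_{X/S}=\sum_y-\nu(y)\angles{\ }{p_{y'}}p_{y'}$ via Proposition~\ref{norm.p} and then reads the formula off the commutative diagram of Theorem~\ref{moncurve.t} as the composition $a\circ(p\circ c_{X/S}\circ i)\circ(b\circ\gamma)$. Your added bookkeeping (dual bases $\{d_{y'}\},\{p_{y'}\}$ and the sign-cancellation under $\iota$) just makes explicit what the paper leaves as "immediate."
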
 

When  all $\nu(y) = 1$, the formula of Corollary~\ref{ourpl.c} is the standard Picard--Lefschetz formula~\cite[Exp. XV]{dk.sga7II}.  To verify this, we must check the compatibility of the pairing $\angles \ \ $ used above with the standard pairing on cohomology.  As usual the determination
of signs is delicate; we give a (somewhat heurstic) argument below.

 Recall that we have a proper fibration $\tilde X \to \br(1)$, and hence for all $i$, $H^i(\tilde X, \bz) \cong H^i(\tilde X_0)$ where $\tX_0$ is the fiber of $\tX \to \br_\ge$  over zero (equivalently, the fiber of $X_\lag \to \sone$ over $1$).  Thus we can replace $\tilde X$ by $\tilde X_0$ in the diagrams above.  Since  $\tilde X_0$ is a compact manifold, whose orientation sheaf identifies with $\bz(1)$~\cite{no.rr},  we have a perfect pairing 
\[ 
  \b \ \  \colon H^1(\tilde X_0, \bz(1) ) \times H^1(\tilde X_0, \bz) \to  H^2(\tilde X_0,\bz(1)) \rTo{\tr} \bz,
\]
defined by cup-product and trace map.  For each $y$, let $v_y := a(\delta_y ) \in H^1(\tX_0,\bz)$.  Then the  usual Picard--Lefschetz formula \cite[Exp.~XV, Th\'eor\`eme~3.4]{dk.sga7II} reads:
\begin{equation} \label{usualpl.e}
  \rho_\gamma(x) =         x - \sum_y \nu(y)\b  {\gamma(x) }  {v_y}v_y.
\end{equation}
As we shall see from Proposition~\ref{cupcomp.p} below,  for $x \in H^1(\tilde X_0,\bz(1))$ and $y \in Y$, 
\[ 
  \angles {b(x)}{\delta_y} = \b x {a(\delta_y}).
\]
Thus Corollary~\ref{ourpl.c}  implies the Picard--Lefschetz formula~(\ref{usualpl.e}).

\begin{proposition} \label{cupcomp.p}
  The maps 
  \[
    a \colon  H^1_Y(X,  \bz) \To H^1(\tilde X_0, \bz) \quad \mbox{and} \quad
    b  \colon H^1(\tilde X_0, \bz(1)) \To H^1_Y(X,\bz)
  \]
  of the diagram in Theorem~\ref{moncurve.t} are mutually dual, where we use the standard cup-product and trace map:
  \[
    H^1(\tilde X_0, \bz(1)) \ot H^1(\tilde X_0, \bz) \To H^2(\tilde X_0, \bz(1)) \rTo{\rm tr} \bz 
  \]
  pairing and the form $\b \ \ $ \eqref{grcomp.d}  on  $H^1_Y(X,\bz) \cong C^1(\Gamma)$.  
\end{proposition}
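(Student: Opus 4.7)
The plan is to prove the duality $\b{x}{a(\beta)} = \b{b(x)}{\beta}$ for $x \in H^1(\tilde X_0, \bz(1))$ and $\beta \in H^1_Y(X, \bz)$ by reducing to a local computation near each node $y$ and interpreting both sides geometrically as ``integration over the vanishing cycle.''  Since $\{\delta_y : y \in Y\}$ is an orthonormal basis of $C^1(\Gamma) \cong H^1_Y(X, \bz)$ for the form $\b{\ }{\ }$ (Proposition~\ref{graph.p}), the claim reduces by linearity to showing, for each $y\in Y$, that
\[
  {\rm tr}_{\tilde X_0}(x \cup a(\delta_y)) = \b{b(x)}{\delta_y}.
\]
I would interpret both sides as $\int_{v_y} x$, where $v_y := \ttau_X^{-1}(y) \cap \tilde X_0 \cong \sone$ is the vanishing circle, with orientation determined by the chosen branch $y'$.

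For the right-hand side, I would use the description of $b$ as the edge map of the nearby-cycle spectral sequence, composed with the Kato--Nakayama isomorphism $\sigma^1 \colon \cM_{X/S}^\g(-1) \isommap \Psi^1_{X/S}$ (Theorem~\ref{logsymb.t}) and the isomorphism $\psi_{X/S} \colon \cM_{X/S} \isommap \bz_{Y/Y'}$ (Proposition~\ref{cmup.p}).  The $y$-component of $b(x) \in H^0(X, R^1\ttau_{X*}\bz(1))$ is the restriction of $x$ to the fiber $\ttau_X^{-1}(y)$.  A direct analysis of this fiber as in the proof of Theorem~\ref{logsymb.t} shows $\ttau_X^{-1}(y) \simeq \sone \times \br(1)$, which deformation-retracts onto $v_y$, so this restriction coincides with $\int_{v_y} x \in H^1(v_y, \bz(1)) \cong \bz$.

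For the left-hand side, I would identify $a(\delta_y)$ with the Poincar\'e dual of $[v_y] \in H_1(\tilde X_0, \bz)$.  The class $\delta_y \in H^1_Y(X, \bz) \cong H^0(Y, \Hh^1_Y(\bz_X))$ corresponds via Remark~\ref{signissue.r} to the characteristic function of the branch $y'$ in $\bz_{X'/X}$.  Its image under $a$ factors through the local cohomology $H^1_{v_y}(\tilde X_0, \bz) \cong \bz$ on the oriented surface $\tilde X_0$, and equals the Thom class of the normal bundle $N_{v_y/\tilde X_0}$.  Compatibility of the Thom isomorphism with Poincar\'e duality then gives ${\rm tr}_{\tilde X_0}(x \cup a(\delta_y)) = \int_{v_y} x$, matching the right-hand side.

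The main obstacle will be tracking signs and orientations consistently.  One must verify that $\sigma^1$, constructed via the logarithmic exponential sequence~\eqref{logexp.e}, matches the classical generator of $H^1(\sone, \bz(1))$ under the local identification $\cM_{X/S, y}^\g = \bz \cdot \ell_2$; that the orientation of $v_y$ induced by the branch $y'$ matches the one implicit in the Thom isomorphism; and that the sign ambiguity in Remark~\ref{signissue.r} between $\delta$ and $\delta'$ is consistent with the convention defining $a$.  Combined with the identifications $\cM_{X/S} \cong \bz_{Y/Y'}$ and $C_1(\Gamma) \cong C^1(\Gamma)$ via $t$, all of these interact nontrivially, and verifying their consistency is the delicate part of the proof.
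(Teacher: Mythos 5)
Your geometric strategy---reduce to one node at a time using the orthonormal basis of $C^1(\Gamma)$ given by the classes $p_{y'}$, and identify both $\mathrm{tr}(x\cup a(\delta_y))$ and the pairing of $b(x)$ with $\delta_y$ as the integral of $x$ over the vanishing circle $v_y=\ttau_0^{-1}(y)$---is sound, and it is in substance the same localization the paper performs (the paper reduces to the standard model $x_1x_2=0$ and computes with explicit generators $u$, $v$, $w$ of $H^1_Y(X,\bz)$, $H^1_c(\tilde X_0,\bz)$, $H^1(\tilde X_0,\bz(1))$; your Thom-class/Poincar\'e-duality packaging of the left-hand side is just a different dressing of the same annulus computation). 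The genuine gap is that you explicitly defer every sign and orientation verification, and those verifications are the entire content of the proposition: it asserts an exact equality, not an equality up to sign, and that sign feeds directly into the Picard--Lefschetz formula. The paper's local computation shows the signs are genuinely nontrivial: with its conventions one finds $a(u)=v$ but $b(w)=-u$ and $\langle w,v\rangle=-1$, i.e.\ two compensating minus signs. The first arises because, under $\sigma\circ\psi_{X/S}^{-1}$, the class $p_{y'_1}$ corresponds to $\phi_1^*\theta=-\phi_2^*\theta$ on the circle $\{\phi_1\phi_2=1\}$, whereas the generator your ``restriction to the fiber'' description naturally produces is $\phi_2^*\theta$ (coming from the coordinate on the branch through $y'_1$); the second arises from comparing the co-orientation implicit in your Thom class with the $\bz(1)$-orientation of the surface $\tilde X_0$. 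As written, your argument establishes the duality only up to an undetermined sign at each node.

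To close the gap you would need to: fix the orientation of $v_y$ attached to the chosen branch $y'$; verify that $\sigma^1\circ\psi_{X/S}^{-1}(p_{y'})$ restricts to exactly that oriented generator (the relation $\phi_1^*+\phi_2^*=0$ on $v_y$ produces a sign here); check that $a(\delta_y)$ is $+1$ times, not $-1$ times, the Thom class for the corresponding co-orientation---this is where the discrepancy $\delta=-\delta'$ of Remark~\ref{signissue.r} enters, since $a$ is built from one of the two natural identifications $\bz_{X'/X}\cong\Hh^1_Y(\bz)$---and finally confirm that the resulting signs cancel. This is precisely what the paper's three local claims accomplish.
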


\begin{proof}
We start by reducing to the local case. Since we will have to deal with non-proper $X$, we need to modify the map $a$ slightly,
letting:
\[ 
  a: H^1_Y(X, \bz) \To H^1_c(X, \bz) \To H^1_c(\tilde X_0, \bz),
\]
where the first map is induced by the natural transformation $\Gamma_Y\to \Gamma_c$ (defined because $Y$ is proper), and the other map is pull-back by $\ttau_0:\tilde X_0\to X$ (defined because $\ttau_0$ is proper). Note that $a$ is well-defined in the situation when $X$ is not proper, and that it coincides with $a$ defined previously in case $X$ is proper. Moreover, the map $b$ makes sense for non-proper $X$, and both maps are functorial with respect to (exact) open immersions in the following sense: if $j:U\to X$ is an open immersion, then the following squares commute:
\[ 
  \xymatrix{
    H^1_Y(X, \bz) \ar[r]^a & H^1_c(\tilde X_0, \bz) & & H^1(\tilde X_0, \bz(1)) \ar[r]^b \ar[d]^{j^*} & H^1_Y(X, \bz) \ar[d]^{j^*} \\
    H^1_{Y\cap U}(U, \bz) \ar[r]_a \ar[u]_{j_*} & H^1_c(\tilde U_0, \bz) \ar[u]_{j_*} & & H^1(\tilde U_0, \bz(1)) \ar[r]_b & H^1_{Y\cap U}(U, \bz). 
  }
\]
The two pairings in question are similarly functorial. Recall that 
\[ 
  H^1_Y(X, \bz) = \bigoplus_{y\in Y} H^1_{\{y\}}(X, \bz)
\]
is an orthogonal decomposition; let $a_y$ (resp. $b_y$) be the composition 
\begin{align*}
  a_y&\colon H^1_{\{y\}}(X, \bz) \To H^1_Y(X, \bz)\rTo{a} H^1_c(\tilde X_0, \bz) \\
  (\text{resp. }b_y&\colon H^1(\tilde X_0, \bz(1)) \To H^1_Y(X, \bz)\To H^1_{\{y\}}(X, \bz)).
\end{align*}
To check that $a$ and $b$ are mutually dual, it suffices to check that $a_y$ and $b_y$ are mutually dual for all $y\in Y$. Fix $y\in Y$, and let $U$ be a standard neighborhood of $y$. The functoriality of $a$ and $b$ discussed above implies that it suffices to prove the proposition for $X=U$.

We henceforth assume that $X=\{ (x_1, x_2)\,:\, x_1x_2=0\}$. So $Y=\{y\}$, $y=(0,0)$, and $X=X_1\cup X_2$ where $X_i= \{ x_i = 0 \}$. The choice of ordering of the branches at $y$ yields generators of the three groups in question as follows. First, the class of $X_1$ (treated as a section of $j_*\bz_U$, where $U=X\setminus Y$) gives a generator $u$ of $H^1_Y(X, \bz)$. Second, the loop in the one-point compactification of $\tilde X_0$ going from the point at infinity through $X_2$ and then $X_1$ gives a basis of its fundamental group, and hence a basis element $v$ of $H^1_c(\tilde X_0, \bz)$. Finally, identifying the circle $\tilde Y_0 = \ttau^{-1}_0(y) = \{ (\phi_1, \phi_2)\in\sone\,:\, \phi_1\phi_2 = 1\}$ with the unit circle in $X_1$ via the map $(\phi_1, \phi_2)\mapsto \phi_1$ yields a generator $w$ of $H^1(\tilde X_0, \bz(1)) \cong H^1(\tilde Y_0, \bz(1))$.

The assertion of the proposition will now follow from the three claims below:
\begin{enumerate} %[(1)]
  \item $a(u) = v$,
  \item $b(w) = -u$,
  \item $\langle v,w \rangle = 1$. 
\end{enumerate}

To check the first claim, note that we have a similarly defined basis element $v'$ of $H^1_c(X, \bz)$ which pulls back to $v$. Let $\gamma:\br\cup\{\infty\}\to X\cup \{\infty\}$ be a loop representing $v'$, sending $0$ to $y$. Pull-back via $\gamma$ reduces the question to Lemma~\ref{lemma:real-axis} below.

For the second claim, recall first that $c'(u)=c'([X_1])=[q_1]$. Second, the isomorphism $\sigma:\cM_{X/S,y}^\g\to H^1(\tilde Y_0, \bz(1))$ sends $q_i$ to the pullback by $\phi_i$ of the canonical class $\theta \in H^1(\sone, \bz(1))$. On the other hand, since $x_2$ is the coordinate on $X_1$, $v = \phi_2^*\theta$. Since $\phi_1\phi_2 = 1$ on $\tilde X_0$, $\phi_1^* + \phi_2^* = 0$, and hence $b^{-1}(u) = \sigma(c'(u)) = \phi_1^* \theta = -\phi_2^* \theta = -w$. %    $c':\mathcal{H}^1_y$ %, we need to explicate the isomorphism $\sigma\circ c': \mathcal{H}^1_y(\bz_X) \cong \cM_{X/S,y}^\gp \cong H^1(\tilde Y_0, \bz(1))$. Let $\beta:Q_n\to \cM_X$ be the standard chart, then the image of $[X_1]$ under $c'$ is the image of $q_1$ under $Q_n\to \cM_{X/S,y}$. (...)

For the last claim, we note that the map
\[ 
    (r_1, \phi_1, r_2, \phi_2)\mapsto (r_1 - r_2, \phi_1): \tilde X_0 \to \br \times \sone
\]
is an orientation-preserving homeomorphism (where the orientation sheaves of both source and target are identified with $\bz(1)$). Under this identification, $w$ corresponds to the loop $0\times \sone$ (positively oriented), and $v$ correspond to the `loop' $\br \times \{1\}$ oriented in the positive direction. These meet transversely at one point $(0, 1)$, and their tangent vectors form a negatively oriented basis at that point, thus $\langle w, v\rangle = -1$.
\end{proof}

\begin{lemma} \label{lemma:real-axis}
Let $S=\br\cup \{\infty\}$ be the compactified real line, $Y=\{0\}$, $Z=\{\infty\}$, $X=\br = S\setminus Z$, $U=X\setminus Y$, $j:U\hookrightarrow X$. Let $e\in H^0(U, \bz)$ equal $1$ on $U_+ = (0, \infty)$ and $0$ on $U_- = (-\infty, 0)$. As before, we have a short exact sequence
\[ 
  0 \To \bz_X \To j_* \bz_U \To \mathcal{H}^1_Y(\bz_X) \To 0
\]
and hence an identification $H^1_Y(X, \bz) \cong H^0(X, \mathcal{H}^1_Y(\bz_X)) \cong H^0(U, \bz)/j^* H^0(X, \bz)$. The element $e$ thus gives a basis element $u$ of $H^1_Y(X, \bz)$. The orientation of the real axis gives a basis element of $\pi_1(S, \infty)$, and hence a basis element $v$ of $\Hom(\pi_1(S, \infty), \bz) \cong H^1(S, \bz) \cong H^1_c(X, \bz)$. Then the natural map $H^1_Y(X, \bz)\to H^1_c(X, \bz)$ sends $u$ to $v$.
\end{lemma}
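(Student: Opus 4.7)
The plan is to translate both groups into relative cohomology of pairs in the one-point compactification $S$, where both can be identified with $H^1(S,\bz)$, and then identify $u$ and $v$ by pairing with a common homology class.

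First, I would use excision to identify $H^1_Y(X,\bz) \cong H^1(S, S\setminus Y;\bz)$ (excising the point $Z$, which is disjoint from $Y$) together with the standard $H^1_c(X,\bz) \cong H^1(S, Z;\bz)$ coming from the short exact sequence $0\to j_!\bz_X \to \bz_S \to i_*\bz_Z \to 0$. Since the natural transformation $\Gamma_Y \hookrightarrow \Gamma_c$ corresponds under these identifications to pullback along the inclusion of pairs $(S, Z) \hookrightarrow (S, S\setminus Y)$, the assertion becomes: the corresponding map $H^1(S, S\setminus Y;\bz)\to H^1(S, Z;\bz)$ sends $\tilde u$ (the image of $u$ under excision) to $v$. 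Using that $S\setminus Y$ is connected, that $H^0(S)\to H^0(S\setminus Y)$ is an isomorphism, and that $H^1(S\setminus Y;\bz)=H^1(Z;\bz)=0$, the long exact sequences of both pairs yield isomorphisms $H^1(S, S\setminus Y;\bz)\cong H^1(S;\bz)\cong H^1(S, Z;\bz)$, all compatible with the natural map. It thus suffices to compare the images of $\tilde u$ and $v$ in $H^1(S;\bz)\cong\bz$.

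To do this I would pair against the generator $\gamma\in H_1(S, Z;\bz)$ given by the positively oriented loop of $S\cong S^1$ based at $\infty$. By definition of $v$ as the orientation class, $\langle v,\gamma\rangle = 1$. For $\tilde u$, use the adjunction $\langle i^*\tilde u,\gamma\rangle = \langle \tilde u, i_*\gamma\rangle$ where $i_*\colon H_1(S, Z;\bz)\to H_1(S, S\setminus Y;\bz)$. Under the excision isomorphism $H_1(S, S\setminus Y;\bz)\cong H_1(X, U;\bz)$, the class $i_*\gamma$ is represented by the restriction of $\gamma$ to a small neighborhood of $0$, i.e. by a path $\sigma\colon[0,1]\to X$ from a point $p_-\in U_-$ through $0$ to a point $p_+\in U_+$. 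The pairing of $u=\delta e$ with $\sigma$ reduces via the boundary map $H_1(X, U;\bz)\to H_0(U,\bz)$ to $\langle e,\partial\sigma\rangle = e(p_+)-e(p_-) = 1-0 = 1$. Hence both $v$ and the image of $u$ pair to $1$ with $\gamma$, so they coincide.

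The main obstacle is bookkeeping of signs and orientations: verifying that the excision map, the boundary map in the relative homology sequence, and the orientation conventions all compose to give the correct sign $+1$ rather than $-1$. In particular, one must check that the orientation of $S$ chosen to define $v$ matches the positive orientation of $\br$ used to define $\sigma$ (so that $\sigma$ crosses $Y$ from $U_-$ to $U_+$), and that the sign convention in the identification $H^1_Y \cong H^0(U)/H^0(X)$ agrees with the one chosen so that the class of $e$ maps to $u$. These are routine but require care.
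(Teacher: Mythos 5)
Your overall strategy is genuinely different from the paper's and is viable in outline: you translate both groups into relative (co)homology of pairs in $S$, reduce via the two long exact sequences to a comparison in $H^1(S,\bz)\cong\bz$, and evaluate against the class of a path crossing $0$ from $U_-$ to $U_+$. The paper instead uses the description of $H^1_Y(X,\bz)$ from SGA~4$\frac12$ [Cycle 1.1.4--5] in terms of $\bz_X$-torsors trivialized on $U$: it identifies $u$ with the partially trivialized torsor $(\bz_X,-e)$, glues $f$ on $S\setminus Y$ to $g$ on $X$ with $f-g=-e$, and reads off that the positively oriented loop has monodromy $+1$. Both routes end in an explicit $+1$ computation; the paper's buys a direct handle on the sign convention, yours buys a more familiar topological pairing.

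However, there is a genuine gap at exactly the point you defer as ``routine.'' The only content of this lemma is the sign: the map $H^1_Y(X,\bz)\to H^1_c(X,\bz)$ is an isomorphism of infinite cyclic groups, so $u\mapsto\pm v$ is automatic, and everything hinges on whether the identification $H^1_Y(X,\bz)\cong H^0(U,\bz)/j^*H^0(X,\bz)$ used to define $u$ matches the topological connecting map $\delta\colon H^0(U,\bz)\to H^1(X,U;\bz)$ that your pairing computation $\langle \delta e,\sigma\rangle=e(p_+)-e(p_-)$ implicitly assumes, or differs from it by a sign. This is not a generic worry: Remark~\ref{signissue.r} records that the two natural identifications of $j_*\bz_U/\bz_X$ with $\mathcal{H}^1_Y(\bz)$ differ by a sign ($\delta=-\delta'$ per [Cycle 1.1.5]), and the paper's proof must insert the resulting minus sign explicitly (the torsor is $(\bz_X,-e)$, not $(\bz_X,e)$) to land on $+v$ rather than $-v$. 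Until you pin down which of the two conventions the lemma's identification uses and verify its compatibility with the singular connecting map, your argument establishes the statement only up to sign, i.e.\ it establishes nothing beyond what is automatic. The fix is concrete but unavoidable: either work through [Cycle 1.1.4--5] as the paper does, or independently verify that the sheaf-theoretic boundary $H^0(U,\bz)\to H^1_Y(X,\bz)$ of the local cohomology sequence agrees (with sign) with the topological $\delta$ under your excision identifications.
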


\begin{proof}
By  \cite [Cycle 1.1.5, p. 132]{sga4.5}, $u$ corresponds to the partially trivialized $\bz_X$-torsor $(\bz_X, -e)$ (cf. Remark~\ref{signissue.r} and \cite[Cycle 1.1.4--5]{sga4.5}). Let $(\mathcal{F}, f)$ be a $\bz_S$-torsor with a section $f\in H^0(\mathcal{F}, S\setminus Y)$ such that there exists an isomorphism $\iota:\mathcal{F}|_X \cong \bz_X$ identifying $f|_{X\setminus Y}$ with $-e$. Then the class $[\mathcal{F}]$ of $\mathcal{F}$ in $H^1(S, \bz)=H^1_c(X, \bz)$ is the image of $u$. The image of $0$ under the isomorphism $\iota$ yields a trivializing section $g$ of $\mathcal{F}|_{X}$, and $f$ is a trivializing section of $\mathcal{F}|_{S\setminus Y}$. On the intersection $X\cap (S\setminus Y) = U$, we have $f-g = 0 - e$; thus $f$ is identified with $g$ on $U_-$, and $g$ is identified with  $f + 1$ on $U_+$. So the positively oriented loop has monodromy $+1$ on $\mathcal{F}$, i.e., $[\mathcal{F}]=v$ as desired.
\end{proof}
  
\bibliographystyle{plain} 
\bibliography{all,log,ogus}

\end{document}